\definecolor{darkgreen}{rgb}{0,0.45,0}  
\DeclareFontFamily{U}{mathx}{\hyphenchar\font45}
\DeclareFontShape{U}{mathx}{m}{n}{
	<5> <6> <7> <8> <9> <10>
	<10.95> <12> <14.4> <17.28> <20.74> <24.88>
	mathx10
}{}
\DeclareSymbolFont{mathx}{U}{mathx}{m}{n}
\DeclareMathAccent{\widecheck}{0}{mathx}{"71}
\DeclareMathAccent{\wideparen}{0}{mathx}{"75}
\newcommand\bR{\mathbb{R}}
\newcommand\bC{\mathbb{C}}
\newcommand\bN{\mathbb{N}}
\newcommand\bZ{\mathbb{Z}}
\newcommand\bQ{\mathbb{Q}}
\newcommand\bP{\mathbb{P}}
\newcommand\bT{\mathbb{T}}
\newcommand\Zp{{\mathbb{Z}_p}}
\newcommand\Qp{{\mathbb{Q}_p}}
\newcommand\cA{\mathcal{A}}
\newcommand\cB{\mathcal{B}}
\newcommand\cE{\mathcal{E}}
\newcommand\cF{\mathcal{F}}
\newcommand\cG{\mathcal{G}}
\newcommand\cH{\mathcal{H}}
\newcommand\cI{\mathcal{I}}
\newcommand\cL{\mathcal{L}}
\newcommand\cO{\mathcal{O}}
\newcommand\cU{\mathcal{U}}
\newcommand\cW{\mathcal{W}}
\newcommand\g{\mathfrak{g}}
\newcommand\gp{\mathfrak{p}}
\newcommand\sC{\mathscr{C}}
\newcommand\sS{\mathscr{S}}
\newcommand{\ob}[1]{\mkern 1.5mu\overline{\mkern-1.5mu#1\mkern-1.5mu}\mkern 1.5mu}
\newcommand{\loc}{\mathrm{loc}}
\newcommand{\Loc}{\mathrm{Loc}}
\newcommand{\str}{\mathrm{str}}
\newcommand{\Ind}{\mathrm{Ind}}
\newcommand{\res}{\mathrm{res}}
\newcommand{\ord}{\mathrm{ord}}
\newcommand{\Fil}{\mathrm{Fil}}
\newcommand{\alg}{\textrm{alg}}
\renewcommand{\char}{\textrm{char}}
\DeclareMathOperator{\Gal}{Gal}
\DeclareMathOperator{\Hom}{Hom}
\DeclareMathOperator{\im}{im}
\DeclareMathOperator{\coker}{coker}
\DeclareMathOperator{\Frac}{Frac}
\DeclareMathOperator{\rec}{rec}
\DeclareMathOperator{\ad}{ad}
\DeclareMathOperator{\Aut}{Aut}
\DeclareMathOperator{\Tr}{Tr}
\DeclareMathOperator{\Reg}{Reg}
\DeclareMathOperator{\cyc}{cyc}
\DeclareMathOperator{\GL}{GL}
\DeclareMathOperator{\Sel}{Sel}
\DeclareMathOperator{\Coleman}{Col}
\DeclareMathOperator{\diff}{d}
\newcommand{\ram}{\textrm{ram}}
\newcommand{\HH}{\mathrm{H}}
\newcommand{\ff}{\mathrm{f}}
\newcommand{\crys}{\mathrm{crys}}
\newcommand{\BK}{\mathrm{BK}}
\newcommand{\Iw}{\mathrm{Iw}}
\newcommand{\ur}{\mathrm{ur}}
\DeclareSymbolFont{cyrletters}{OT2}{wncyr}{m}{n}
\DeclareMathSymbol{\Sha}{\mathalpha}{cyrletters}{"58}
\newcommand{\DR}{\mathrm{DR}}
\newtheoremstyle{thmstyle}
{\parskip} 
{\topsep} 
{\itshape} 
{} 
{\bfseries} 
{.} 
{.5em} 
{} 
\newtheoremstyle{defstyle}
{\parskip} 
{\topsep} 
{} 
{} 
{\bfseries} 
{.} 
{.5em} 
{} 
\theoremstyle{defstyle}
\newtheorem{definition}[subsubsection]{Definition}
\newtheorem{remark}[subsubsection]{Remark}
\newtheorem{notation}[subsubsection]{Notation}
\theoremstyle{thmstyle}
\newtheorem{theorem}[subsubsection]{Theorem}
\newtheorem{proposition}[subsubsection]{Proposition}
\newtheorem{lemme}[subsubsection]{Lemma}
\newtheorem{corollaire}[subsubsection]{Corollary}
\newtheorem{conjecture}[subsubsection]{Conjecture}
\newtheorem*{conjecture*}{Conjecture}
\newtheorem{question}[subsubsection]{Question}
\newtheorem{THM}{Theorem}
\newtheorem{CONJ}{Conjecture}
\newtheorem{CORO}{Corollary}
\begin{document}
\title{On generalized main conjectures and $p$-adic Stark conjectures for Artin motives}
\author{Alexandre Maksoud}
\email{maksoud.alexandre@gmail.com}
\address{Universität Paderborn, J2.305, Warburger Str. 100, 33098 Paderborn, Germany}
\subjclass[2020]{11R23 (primary), 11R29, 11R42 (secondary).}
\keywords{Iwasawa main conjecture, p-adic Stark conjecture, Artin L-functions}

\begin{abstract}
	
		Given an odd prime number $p$ and a $p$-stabilized Artin representation $\rho$ over $\bQ$, we introduce a family of $p$-adic Stark regulators and we formulate an Iwasawa-Greenberg main conjecture and a $p$-adic Stark conjecture which can be seen as an explicit strengthening of conjectures by Perrin-Riou and Benois in the context of Artin motives. 
		We show that these conjectures imply the $p$-part of the Tamagawa number conjecture for Artin motives at $s=0$ and we obtain unconditional results on the torsionness of Selmer groups. We also relate our new conjectures with various main conjectures and variants of $p$-adic Stark conjectures that appear in the literature. 
		In the case of monomial representations, we prove that our conjectures are essentially equivalent to some newly introduced Iwasawa-theoretic conjectures for Rubin-Stark elements. We derive from this a $p$-adic Beilinson-Stark formula for finite-order characters of an imaginary quadratic field in which $p$ is inert. 

Along the way, we prove that the Gross-Kuz'min conjecture unconditionally holds for abelian extensions of imaginary quadratic fields.
\end{abstract}
\maketitle

\section{Introduction}
Iwasawa theory traditionally focuses on the construction of $p$-adic $L$-functions and on their relation with arithmetic invariants of number fields. Concurrently with the first major achievements of the theory \cite{mazurwiles,wiles1990iwasawa,rubin1991main}, several attempts were made in order to define the conjectural $p$-adic $L$-function of a motive \cite{coatesperrinriou,coatesmotivic,greenberg1994trivial} and its corresponding Selmer group \cite{greenberg1989iwasawa,greenberg153iwasawa}. The motive in question was assumed to admit a critical value in the sense of Deligne and to be ordinary at $p$, hypotheses which were relaxed in a work of Perrin-Riou \cite{perrin1995fonctions} using her extended logarithm map.  In \cite{benoiscrys}, Benois proposed later a conjectural description of the behavior of Perrin-Riou's $p$-adic $L$-function at ``trivial zeros'' via Nekov\'a\v{r}'s theory of Selmer complexes \cite{sel}. 



While being ordinary at unramified primes, motives coming from Artin representations are seldom critical and the arithmetic of their $L$-values at $s=0$ is described by ``refined Stark conjectures'', thanks to the work of various mathematicians beginning with Stark's and Rubin's influential papers \cite{stark2,stark4,rubinstark} and culminating in \cite{burnsLseries}. In the particular case of monomial Artin motives, that is, when the associated Artin representation is induced from a one-dimensional character, a long-term strategy to tackle these conjectures with the aid of Iwasawa theory was presented in \cite{BKSANT}. A general main conjecture (called ``higher rank Iwasawa main conjecture'') and an extra zero conjecture (called ``Iwasawa-theoretic Mazur-Rubin-Sano conjecture'') are formulated in terms of Rubin-Stark elements, but neither a $p$-adic $L$-function nor an $\cL$-invariant play a part in this work. Nevertheless, the authors verify that their extra zero conjecture generalizes the Gross-Stark conjecture. Not long after that, B{\"u}y{\"u}kboduk and Sakamoto in \cite{buyukboduksakamoto} used Coleman maps to deduce from the Mazur-Rubin-Sano conjecture a $p$-adic Beilinson formula for Katz's $p$-adic $L$-function.

The aim of our paper is to provide a unifying approach to the cyclotomic Iwasawa theory for Artin motives which is close in spirit to Perrin-Riou's and Benois' treatment and which generalizes many aspects of \cite{BKSANT,buyukboduksakamoto}. Not only does this encompass classical conjectures on Deligne-Ribet's and Katz's $p$-adic $L$-functions, but it also sheds light on recent constructions of $p$-adic $L$-functions attached to a classical weight one modular cuspform $f$ in \cite{bellaiche2016eigencurve,maks} and on new variants of the Gross-Stark conjecture for the adjoint motive of $f$ in \cite{DLR2}. More specifically, one of the central objects in this paper is the Selmer group $X_\infty(\rho,\rho^+)$ whose properties are studied by Greenberg and Vatsal in \cite{GV2020} and by the author in \cite{maks}. It depends on the choice of an ordinary $p$-stabilization $\rho^+$ of the $p$-adic realization $\rho$ of the Artin motive. Although there is no canonical choice for $\rho^+$ when $\rho$ is non-critical, a fruitful idea in this work is to think of $\rho^+$ as an \textit{additional parameter}. 

The question of the torsionness of Selmer groups over the Iwasawa algebra is a recurring theme in Iwasawa theory. When $\rho$ is an Artin representation, it was shown in \cite{maks} that delicate conjectures coming from $p$-adic transcendence theory arise through the study of $X_\infty(\rho,\rho^+)$ (see also \cite{GV2020}). A first crucial aspect of this work is to obtain, under an unramifiedness assumption at $p$, finer information on the structure of $X_\infty(\rho,\rho^+)$ with the aid of modified Coleman maps and of classical freeness results on $\Zp$-towers of global units. We also relate its characteristic ideal with Perrin-Riou's module of $p$-adic $L$-functions for the dual motive associated with $\rho$.

On the analytic side, our conjectural $p$-adic $L$-function interpolates the algebraic part of Artin $L$-values at $s=0$ given by a recipe of Stark \cite{stark2,tate}. ``Extra zeros'' in the sense of Benois abound in this setting, and we also formulate an extra zero conjecture which is compatible with the one in \cite{benoiscrys}. It involves a new $\cL$-invariant which computes Benois' $\cL$-invariant when the Artin motive is crystalline at $p$, and which also recovers various $\cL$-invariants that appear in the literature \cite{gross1981padic,riveroadjoint,rosetrotgervatsal,buyukboduksakamoto}.

In the next section we formulate our principal conjecture.

\subsection{The conjecture}
We fix once and for all embeddings $\iota_\infty:\ob{\bQ} \hookrightarrow \bC$ and $\iota_\ell : \ob{\bQ} \hookrightarrow \ob{\bQ}_\ell$ for all primes $\ell$.  Let $$\rho : G_{\bQ}\longrightarrow \GL_E(W)$$ be a $d$-dimensional representation of $G_\bQ=\Gal(\ob{\bQ}/\bQ)$ of finite image with coefficients in a number field $E \subseteq \ob{\bQ}$. We always assume that $W$ does not contain the trivial representation, \textit{i.e.}, one has $\HH^0(\bQ,W)=0$. For any character $\eta:G_\bQ \longrightarrow \ob{\bQ}^\times$ of finite order, we denote by $W_\eta$ the underlying space of $\rho\otimes\eta$, by $E_\eta\subseteq \ob{\bQ}$ its coefficient field and by $H_\eta\subseteq \ob{\bQ}$ the Galois extension cut out by $\rho\otimes\eta$. Our fixed embedding $\iota_\infty$ allows us to see $\rho\otimes\eta$ as an Artin representation, and we let $L(\rho\otimes\eta,s)$ be its Artin $L$-function. It is a meromorphic function over $\bC$, and it is known that $L(\rho\otimes\eta,1)\in\bC^\times$ provided that $\HH^0(\bQ,W_\eta)=0$. Assuming further that $\eta$ is even, the functional equation implies that $L((\rho\otimes\eta)^*,s)$ has a zero of order $d^+=\dim\HH^0(\bR,W)$ at $s=0$, where $(-)^*$ stands for the contragredient representation. The non-abelian Stark conjecture \cite{stark2} describes the transcendental part of its leading term $L^*((\rho\otimes\eta)^*,0)$ at $s=0$ as follows. Set 
\[\HH^1_\ff((\rho\otimes\eta)^*(1))=\Hom_{G_{\bQ}}(W_\eta,E_\eta \otimes_{\bZ} \cO_{H_\eta}^\times),\] 
where $\cO_{H_\eta}^\times$ is the group of units of $H_\eta$. It is a $E_\eta$-vector space of dimension $d^+$ by Dirichlet's unit theorem, and it provides a natural rational structure for a certain global Bloch-Kato Selmer group (see \eqref{eq:bloch-kato-kummer}).
There is a natural $\bC$-linear perfect pairing 

\begin{equation}\label{eq:intro_complex_pairing}
\left\{\begin{array}{rclcc} \bC \otimes \HH^0(\bR,W) & \times& \bC \otimes \HH^1_\ff((\rho\otimes\eta)^*(1)) &\longrightarrow &\bC \\
(1 \otimes w &,&1\otimes \psi) &\mapsto & \log_\infty(\psi(w)),
\end{array}\right.
\end{equation}
 where $\log_\infty : \bC \otimes \ob{\bQ}^\times \longrightarrow \bC$ is given by $ x \otimes a \mapsto -x \log|\iota_\infty(a)|$ (and $\log$ refers to the usual real logarithm). Stark's regulator attached to $\rho \otimes\eta$ is the determinant $\Reg_{\omega_\infty^+}(\rho\otimes\eta)\in \bC^\times$ of the pairing \eqref{eq:intro_complex_pairing} with respect to rational bases $\omega_{\infty}^+\in \det_E \HH^0(\bR,W)$ and $\omega_{\ff,\eta}\in \det_{E_\eta} \HH^1_\ff((\rho\otimes\eta)^*(1))$, and the non-abelian Stark conjecture implies
 \begin{equation}\label{eq:Stark_conj_over_E}
  \dfrac{L^*((\rho\otimes\eta)^*,0)}{\Reg_{\omega_\infty^+}(\rho\otimes\eta)} \in E_\eta^\times, \qquad \textrm{or, equivalently,} \qquad \frac{L(\rho\otimes\eta,1)}{(i\pi)^{d^-}\Reg_{\omega_\infty^+}(\rho\otimes\eta)} \in E_\eta^\times,
 \end{equation}
where $d^-=d-d^+$ (see Lemma \ref{lem:stark}).

Fix once and for all a prime number $p$ as well as an isomorphism 
\[j:\bC \simeq \ob{\bQ}_p\]
such that $\iota_p=j \circ \iota_\infty$. Letting $E_{p}$ be the completion of $\iota_p(E)$ inside $\ob{\bQ}_p$, one may see $\rho$ as a $p$-adic representation by putting $W_p=W \otimes_{E,\iota_p}E_p$. We call a \textit{$p$-stabilization} of $W_p$ any $G_{\Qp}$-stable linear subspace $W_p^+ \subseteq W_p$ of dimension $d^+$, where $G_{\Qp}$ is the local Galois group $\Gal(\ob{\bQ}_p/\Qp)$. Any $p$-stabilization $(W_p^+,\rho^+)$ yields a $p$-adic analogue of the complex pairing (\ref{eq:intro_complex_pairing}) by considering
\begin{equation}\label{eq:intro_p_adic_pairing}
\left\{\begin{array}{rclcc} \ob{\bQ}_p \otimes W_p^+ & \times& \ob{\bQ}_p \otimes \HH^1_\ff((\rho\otimes\eta)^*(1)) &\longrightarrow &\ob{\bQ}_p \\
(1 \otimes w &,&1\otimes \psi) &\mapsto & \log_p(\psi(w)),
\end{array}\right.
\end{equation}
where $\log_p: \ob{\bQ}_p \otimes \ob{\bQ}^\times \longrightarrow \ob{\bQ}_p$ is given by  $ x \otimes a \mapsto x \log_p^\Iw(\iota_p(a))$ (and $\log_p^\Iw$ is the Iwasawa logarithm). A $p$-stabilization $W_p^+$ is called $\eta$-\textit{admissible} if the $\ob{\bQ}_p$-linear pairing (\ref{eq:intro_p_adic_pairing}) is perfect, and it is simply called \textit{admissible} when $\eta$ is the trivial character $\mathds{1}$ (cf. Definition \ref{def:p_stab}).  
The $p$-adic Stark regulator attached to $\rho \otimes \eta$ is defined as the determinant $\Reg_{\omega_p^+}(\rho\otimes\eta)\in\ob{\bQ}_p$ of the pairing \eqref{eq:intro_p_adic_pairing} with respect to bases $\omega_p^+\in\det_{E_p}W_p^+$ and $\omega_{\ff,\eta}\in \det_{E_\eta} \HH^1_\ff((\rho\otimes\eta)^*(1))$. Note that $\Reg_{\omega_p^+}(\rho\otimes\eta)\in\ob{\bQ}_p$ vanishes precisely when its $\eta$-admissibility fails. 

A $p$-stabilization might \textit{a priori} be $\eta$-admissible or not, but in the special case where $W_p^+$ is \textit{motivic}, \textit{i.e.}, when it admits a $E$-rational structure (cf. Definition \ref{def:p_stab}), its $\eta$-admissibility follows from a theorem of Brumer when $d^+=1$ and from the weak $p$-adic Schanuel conjecture in general (see \S\ref{sec:p_stab}).

Let $\cO_p$ be the ring of integers of $E_p$ and fix a Galois-stable $\cO_p$-lattice $T_p$ of $W_p$. Then $T_p^+=W_p^+\bigcap T_p$ is a lattice inside $W_p^+$. We always assume that $\omega_\infty^+$ and $\omega_p^+$ are $T_p$-\textit{optimal}, that is, they are respective bases of $\det_{\cO_p}\HH^0(\bR,T_p)$ and $\det_{\cO_p} T_p^+$.


Let $\Gamma$ be the Galois group of the $\Zp$-cyclotomic extension $\bQ_\infty=\cup_n \bQ_n$ of $\bQ$ and let $\widehat{\Gamma}\subseteq \Hom(\Gamma,\ob{\bQ}^\times)$ be the set of $\ob{\bQ}$-valued characters of $\Gamma$ of finite order. Via $\iota_\infty$, the elements of $\widehat{\Gamma}$ correspond to (necessarily even) Dirichlet characters of $p$-power order and conductor, and via $\iota_p$ they become $p$-adic characters which we see as homomorphisms of $\cO_p$-algebras $\Lambda \longrightarrow \ob{\bQ}_p$, where $\Lambda=\cO_p[[\Gamma]]$ is the Iwasawa algebra. Consider the Greenberg-style Selmer group \[
X_\infty(\rho,\rho^+):= \ker \left[ \HH^1(\bQ_\infty,D_p) \longrightarrow \HH^1(\bQ_{p,\infty}^\ur,D_p^-) \times \prod_{\ell\neq p} \HH^1(\bQ^\ur_{\ell,\infty},D_p)  \right]^\vee,
\]
where $D_p$ (resp. $D_p^-$) is the divisible $\cO_p$-module $W_p/T_p$ (resp. $D_p/(\im W_p^+ \rightarrow D_p)$), $\bQ^\ur_{\ell,\infty}\subseteq \ob{\bQ}_\ell$ the maximal unramified extension of the completion of $\bQ_\infty$ along $\iota_\ell$ and $(-)^\vee$ the Pontryagin dual. Then $X_\infty(\rho,\rho^+)$ has a structure of $\Lambda$-module of finite type and we denote by $\char_\Lambda X_\infty(\rho,\rho^+)$ its characteristic ideal. By convention, $\char_\Lambda X_\infty(\rho,\rho^+)=\{0\}$ if $X_\infty(\rho,\rho^+)$ is not torsion.

\begin{CONJ}\label{conj:IMC}
	Let $p$ be an odd prime at which $\rho$ is unramified. Fix a $G_{\bQ}$-stable $\cO_p$-lattice $T_p$ of $W_p$ and a $p$-stabilization $(\rho^+,W_p^+)$ of $W_p$. Pick any $T_p$-optimal bases $\omega_p^+$ and $\omega_\infty^+$ as before. 
	\begin{description}
		\item[\textbf{EX}$_{\rho,\rho^+}$]
		 There exists an element $\theta_{\rho,\rho^+}$ in the field of fractions of $\Lambda$ which has no pole outside the trivial character and which satisfies the following interpolation property: for all non-trivial characters $\eta\in\widehat{\Gamma}$ of exact conductor $p^{n}$, one has 
	$$\begin{aligned}
	\eta(\theta_{\rho,\rho^+})&= {\det(\rho^-)(\sigma_p^{-n})} \ \Reg_{\omega_p^+}(\rho\otimes\eta)\cdot j\left(\frac{\tau(\eta)^{d^-}}{\tau(\rho\otimes\eta)} \ \dfrac{L^{*}\left((\rho \otimes \eta)^*,0\right)}{\Reg_{\omega_\infty^+}(\rho\otimes\eta)}\right),
	\end{aligned}$$
	where $\tau(-)$ is a Galois-Gauss sum (see \S\ref{sec:Galois_Gauss}), $\sigma_p$ the arithmetic Frobenius at $p$, and $\det(\rho^-)$ the determinant of the local Galois representation acting on $W_p^-=W_p/W_p^+$.
	\item[\textbf{IMC}$_{\rho,\rho^+}$] The statement \textbf{EX}$_{\rho,\rho^+}$ holds, and  $\theta_{\rho,\rho^+}$ is a generator of $\char_\Lambda X_\infty(\rho,\rho^+)$.
\item[\textbf{EZC}$_{\rho,\rho^+}$] Let $e$ be the dimension of $W_p^{-,0}:=\HH^0(\Qp,W_p^-)$ and assume that $\rho^+$ is admissible. Then \textbf{EX}$_{\rho,\rho^+}$ holds, $\theta_{\rho,\rho^+}$ vanishes at $\mathds{1}$ with multiplicity at least $e$, and one has
$$\begin{aligned}
\frac{1}{e!} \dfrac{\diff^e}{\diff \!s^e}\kappa^s(\theta_{\rho,\rho^+})\bigg|_{s=0}&=  (-1)^e \ \cE(\rho,\rho^+) \ \cL(\rho,\rho^+) \ \Reg_{\omega_p^+}(\rho) \cdot j\left( \tau(\rho)^{-1}\ \dfrac{L^{*}\left(\rho^*,0\right)}{\Reg_{\omega_\infty^+}(\rho)}\right),
\end{aligned}$$
where $\kappa^s\in\Hom_{\cO-\alg}(\Lambda,\ob{\bQ}_p)$ is the homomorphism induced by the character $\langle\chi_{\cyc}\rangle : \Gamma \simeq 1+p\Zp \subseteq \ob{\bQ}_p^\times$, raised to the power $s\in \Zp$,  $\cL(\rho,\rho^+)\in\ob{\bQ}_p$ is the $\cL$-invariant defined in \S\ref{sec:L-invariant}, and  $\cE(\rho,\rho^+)$ is a modified Euler factor given by
$$\cE(\rho,\rho^+)=\det(1-p^{-1}\sigma_p\big|W_p^+) \det(1-\sigma_p^{-1}\big|W_p^-/W_p^{-,0}).$$
	\end{description}
\end{CONJ}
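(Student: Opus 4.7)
The statement has three progressively stronger parts, and the only plausible route in any generality would be to establish them in order while working in a case where the ingredients are classical. Since proving \textbf{EX}$_{\rho,\rho^+}$ for arbitrary $\rho$ would amount to constructing $p$-adic $L$-functions for all Artin motives and is out of reach, the plan is to focus first on the monomial case, where $\rho = \Ind_F^{\bQ} \psi$ for some finite-order character $\psi$ of a number field $F$, and then to invoke Artin--Brauer formalism to extend the result as far as possible. The unramifiedness assumption at $p$ limits $F$ to be unramified at $p$, which is where Katz's and Deligne--Ribet's $p$-adic $L$-functions are available.

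For existence, I would realize the ``motivic'' $p$-stabilizations $W_p^+$ as direct sums of one-dimensional subspaces indexed by primes of $F$ above $p$, and express the left-hand side of the interpolation formula via special values of Katz's two-variable $p$-adic $L$-function (when $F$ is CM) or of the Deligne--Ribet $p$-adic $L$-function (when $F$ is totally real). The matching with the complex regulator side would rest on Artin formalism for the complex $L$-function, on the explicit description of $\Reg_{\omega_\infty^+}(\rho \otimes \eta)$ in terms of logarithms of cyclotomic or elliptic units, and on the classical interpolation recipes of Katz and Deligne--Ribet; the Galois--Gauss sum ratio $\tau(\eta)^{d^-}/\tau(\rho \otimes \eta)$ and the correction factor $\det(\rho^-)(\sigma_p^{-n})$ should emerge naturally from unwinding the induction, respectively from local root number computations and from the twisting behavior of Frobenius on $W_p^-$.

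Once $\theta_{\rho,\rho^+}$ is constructed, \textbf{IMC}$_{\rho,\rho^+}$ amounts to comparing $\char_\Lambda X_\infty(\rho,\rho^+)$ with a twist of the characteristic ideal of a suitable Iwasawa module of $H_\rho$. I would apply Poitou--Tate duality together with the freeness results for $\Zp$-towers of global units mentioned in the introduction to reduce $X_\infty(\rho,\rho^+)$ to a $\rho^+$-isotypic component of the standard Iwasawa module of class groups, whose characteristic ideal is governed by Wiles's main conjecture in the totally real case and by Rubin's main conjecture in the imaginary quadratic case. Functoriality of characteristic ideals under induction, combined with the comparison between Iwasawa modules over $H_\rho$ and over $\bQ$ (which is where the Gross--Kuz$'$min conjecture enters and where the paper's unconditional result in the imaginary quadratic setting is relevant), should then identify the answer with $(\theta_{\rho,\rho^+})$.

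For the trivial-zero formula \textbf{EZC}$_{\rho,\rho^+}$, the strategy is the one developed by Greenberg and Benois: differentiate $\kappa^s(\theta_{\rho,\rho^+})$ along the cyclotomic direction, identify the resulting Bockstein map on the Selmer complex of $(\rho,\rho^+)$ with the $\cL$-invariant of \S\ref{sec:L-invariant}, and match the leading term with the $p$-adic Stark regulator at $\eta = \mathds{1}$, picking up the Euler factor $\cE(\rho,\rho^+)$ from the truncated local Tate cohomology at $p$. The hardest step is likely the non-vanishing of $\Reg_{\omega_p^+}(\rho\otimes\eta)$ for varying $\eta \in \widehat{\Gamma}$, which is required to even make sense of $\theta_{\rho,\rho^+}$ as an element of $\Frac(\Lambda)$ and which rests on the weak $p$-adic Schanuel conjecture as soon as $d^+ \geq 2$; a close second is proving that the Selmer-complex $\cL$-invariant coincides with the one appearing in the limit formula, a compatibility that requires a delicate analysis of the Perrin-Riou exponential against the modified Coleman maps at the trivial character.
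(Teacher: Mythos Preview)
The statement you are attempting to prove is a \emph{conjecture}, not a theorem: the paper formulates it as Conjecture~A and explicitly does not prove it in general. There is therefore no proof in the paper to compare your proposal against. Your opening sentence already concedes this (``proving \textbf{EX}$_{\rho,\rho^+}$ for arbitrary $\rho$ \ldots\ is out of reach''), so what you have written is a strategy sketch for special cases, not a proof of the stated conjecture.

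That said, your sketch does parallel the structure of the paper's partial results. The monomial case you outline is essentially the content of Theorem~C (Theorems~\ref{thm:comparaison_mesure_p_adique_RS_elts}, \ref{thm:equivalence_IMCs}, \ref{thm:equivalence_MRS_EZC}), where \textbf{EX}$_{\rho,\rho^+}$ is obtained not via Katz or Deligne--Ribet directly but by applying the extended Coleman maps $\sC^{\mathrm{str}}_{\omega_p^+}$ of \S\ref{sec:torsionness_Sel} to families of Rubin--Stark elements (under the Rubin--Stark conjecture over $\Zp$); \textbf{IMC}$_{\rho,\rho^+}$ is shown equivalent to the higher-rank main conjecture \textbf{IMC}$_\chi$ via the diagram~(\ref{eq:gros_diagramme_commutatif_Sel_infty}); and \textbf{EZC}$_{\rho,\rho^+}$ is deduced from the Mazur--Rubin--Sano conjecture by computing constant terms of Coleman maps (Lemmas~\ref{lem:val_spé_coleman} and \ref{lem:coleman_ord}) rather than from a Bockstein/Benois descent. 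A few points in your sketch are off: the three parts are not linearly ordered (\textbf{IMC} and \textbf{EZC} each presuppose \textbf{EX} but are logically independent of one another); the non-vanishing of $\Reg_{\omega_p^+}(\rho\otimes\eta)$ is \emph{not} needed to make sense of $\theta_{\rho,\rho^+}$ as an element of $\Frac(\Lambda)$ (see the remark after Conjecture~A and Theorem~\ref{thm:equivalence_torsionness_Sel}); and Brauer induction cannot extend \textbf{EX} beyond the monomial case because the interpolation formula involves a $p$-adic regulator that depends on $\rho^+$ in a way not compatible with virtual decompositions.
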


In other words, we conjecture that the special values $L^*((\rho\otimes\eta)^*,0)$ for $\eta$ varying in $\widehat{\Gamma}$ and suitably corrected by a complex and a $p$-adic Stark regulator can be $p$-adically interpolated by a $p$-adic measure which generates the characteristic ideal of $X_\infty(\rho,\rho^+)$ over $\Lambda$.

The first two parts of Conjecture \ref{conj:IMC} should be thought as an explicit variant of Perrin-Riou's main conjecture \cite{perrin1995fonctions,colmezfonctions} for the arithmetic dual of $\rho$, while the last part is analogous  to Benois' extra zeros conjecture \cite{benoiscrys} (see \S\ref{sec:perrin-riou}). Conjecture \ref{conj:IMC} is strongly contingent on the nonabelian Stark conjecture.  

The measure $\theta_{\rho,\rho^+}$ is uniquely determined by the interpolation property \textbf{EX}$_{\rho,\rho^+}$ and it also depends on the choice of $\iota_\infty$ and $\iota_p$. The dependence of $\theta_{\rho,\rho^+}$ on the bases $\omega_\infty^+$, $\omega_p^+$ and $\omega_{\ff,\eta}$ turns out to be very mild (see Proposition \ref{prop:dependence_theta_on_bases}).

Conjecture \ref{conj:IMC} is compatible with a $p$-adic Artin formalism (Remark \ref{rem:p_adic_artin_formalism}).
The statement of the conjecture makes sense even when $X_\infty(\rho,\rho^+)$ is not of $\Lambda$-torsion, in which case Conjecture \ref{conj:IMC} follows from Theorem \ref{thm:intro_A} below.

We illustrate in \S\ref{sec:examples} some special cases of Conjecture \ref{conj:IMC} and their relations with classical conjectures in the literature. When $d^+=0$, the construction of $\theta_{\rho,\rho^+}$ is due to Greenberg and is based on the $p$-adic $L$-function of Deligne-Ribet and Cassou-Noguès. Wiles' deep work on the main conjecture then implies \textbf{IMC}$_{\rho,\rho^+}$. The statement \textbf{EZC}$_{\rho,\rho^+}$ when $d^+=0$ is equivalent to the Gross-Stark conjecture for $\rho$, the $\cL$-invariant being equal to Gross' regulator (see \S\ref{sec:exemple_odd_motives}). When $d^+=d$, both \textbf{EX}$_{\rho,\rho^+}$ and \textbf{EZC}$_{\rho,\rho^+}$ are implied by a conjecture of Tate on Deligne-Ribet's $p$-adic $L$-functions, and \textbf{IMC}$_{\rho,\rho^+}$ follows again from Wiles' theorem (see \S\ref{sec:exemple_even_motives}). 

When $0<d^+<d$, $\rho$ is non-critical and less is known. If $(d,d^+)=(2,1)$ and $\rho$ is irreducible, then $\rho$ comes from a classical weight one modular cuspform $f$. Under a regularity assumption on $f$, a potential candidate for $\theta_{\rho,\rho^+}$ is proposed in Ferrara's PhD thesis \cite{ferrara} (for $f$ dihedral) and in \S\ref{sec:weight_one} (for any $f$) where indirect evidence for \textbf{EX}\(_{\rho,\rho^+}\) and \textbf{IMC}\(_{\rho,\rho^+}\) are provided. 

The subcase where $f$ has complex multiplication by an imaginary quadratic field $k$, that is, $\rho$ is induced from a character $\chi$ of $k$, is better understood. Under mild assumptions on $\chi$, Conjecture \ref{conj:IMC} holds when $p$ splits in $k$. The measure $\theta_{\rho,\rho^+}$ is essentially a cyclotomic Katz $p$-adic $L$-function (\S\ref{sec:exemple_katz}-\ref{sec:example_odd_monomial}) and \textbf{EZC}$_{\rho,\rho^+}$ is an equivalent form of a $p$-adic Beilinson formula proven by Buyukboduk and Sakamoto. When $p$ is inert in $k$, \textbf{EX}$_{\rho,\rho^+}$ and \textbf{EZC}$_{\rho,\rho^+}$ hold when $\chi$ has prime-to-$p$ order, yielding a new $p$-adic Beilinson formula in Corollary \ref{CORO:plus-minus-p-adic-Beilinson}.

The last setting we consider is when $(d,d^+)=(3,1)$ and $\rho=\ad(\rho_g)$, where $\rho_g$ comes from a weight one cuspform $g$. In \S\ref{sec:examples_adjoint}, the terms $\Reg_{{\omega}^+_{p}}(\rho)$ and $\cL(\rho,\rho^+)$ are related in a precise manner to the Gross-Stark unit and the $p$-adic iterated integral attached to $g$ in Darmon-Lauder-Rotger's variant on the Gross-Stark conjecture \cite{DLR2}. This suggests a plausible link between the conjectural element $\theta_{\rho,\rho^+}$ and Hida's adjoint $p$-adic $L$-function, and between \textbf{EZC}$_{\rho,\rho^+}$ and Darmon-Lauder-Rotger's conjecture.

We now outline the main results of this paper.

\subsection{The main results}
We fix once and for all an odd prime $p$ at which $\rho$ is unramified. Our first result (Theorem \ref{thm:equivalence_torsionness_Sel}) generalizes \cite[Théorème A]{maks} and yields an interpolation formula for a generator of $\char_\Lambda X_\infty(\rho,\rho^+)$ which is similar to \textbf{EX}$_{\rho,\rho^+}$. As a piece of notation, we let $U_\infty=\varprojlim_n U_n$ be the $\Lambda$-module of all norm-coherent sequences of elements in the pro-$p$ completion $U_n$ of the group of units of $H\cdot \bQ_n$. It is a finitely generated $\Lambda$-module endowed with a linear action of $\Gal(H\cdot \bQ_\infty /\bQ_\infty)$.

\begin{THM}\label{thm:intro_A}
\begin{enumerate}
	\item[(1)] Fix a $G_{\bQ}$-stable $\cO_p$-lattice $T_p$ of $W_p$, a $p$-stabilization $\rho^+$ of $\rho$ and a $T_p$-optimal basis $\omega_{p}^+=t_1\wedge\ldots\wedge t_{d^+}$ of $\det W_p^+$. The following assertions are equivalent: 
\begin{enumerate}
	\item[(i)] the Selmer group $X_\infty(\rho,\rho^+)$ is of $\Lambda$-torsion, 
	\item[(ii)] $\rho^+$ is $\eta$-admissible for some non-trivial character $\eta\in \widehat{\Gamma}$, and
	\item[(iii)] $\rho^+$ is $\eta$-admissible for all but finitely many characters $\eta\in\widehat{\Gamma}$.
\end{enumerate} 
Moreover, there always exists a $p$-stabilization $\rho^+$ such that $X_\infty(\rho,\rho^+)$ is of $\Lambda$-torsion.

\item[(2)] If the equivalent conditions (i-ii-iii) hold and if $d^+>0$, then there exist $\Lambda$-linearly independent Galois-equivariant homomorphisms $\Psi_1,\ldots,\Psi_{d^+}\colon T_p \longrightarrow U_\infty$ which only depend on $T_p$, and there exists a generator $\theta^\alg_{\rho,\rho^+}$ of $\char_\Lambda X_\infty(\rho,\rho^+)$ such that 
\begin{equation*}
\eta(\theta^\alg_{\rho,\rho^+}) = \frac{p^{(n-1)\cdot d^+}}{\det(\rho^-)(\sigma_p^n)} \cdot j\left(\frac{\tau(\eta)^{d^-}}{\tau(\rho\otimes\eta)}\right) \cdot\det \left(\log_p|\Psi_j(t_i)|_\eta\right)_{1\leq i,j \leq d^+}
\end{equation*}
for all non-trivial characters $\eta\in\widehat{\Gamma}$ of conductor $p^n$, where $|\cdot|_\eta:U_\infty \longrightarrow U_{n-1}$ stands for a certain ``$\eta$-projection'' introduced in \S\ref{sec:torsionness_Sel}.
\end{enumerate}
\end{THM}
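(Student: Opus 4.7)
My approach rests on two pillars: a Poitou--Tate duality computation of $\rk_\Lambda X_\infty(\rho,\rho^+)$, and a Kummer-theoretic construction of $\theta^\alg_{\rho,\rho^+}$ as the determinant of modified Coleman maps applied to a $\Lambda$-basis of $\Hom_{\cO_p[G_\bQ]}(T_p, U_\infty)$. The bridge between them is the identification of the $\eta$-specialization of $X_\infty(\rho,\rho^+)$ with the Bloch--Kato Selmer group of $(\rho\otimes\eta)^*(1)$, through which the $p$-adic regulator pairing~\eqref{eq:intro_p_adic_pairing} enters naturally.

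\textbf{Proof of (1).} Since $\rho$ is unramified at $p$, applying global Poitou--Tate duality to the defining complex of $X_\infty(\rho,\rho^+)$ expresses its $\Lambda$-rank in terms of $\rk_\Lambda \Hom_{\cO_p[G_\bQ]}(T_p, U_\infty)$ and a local boundary term from the cohomology of $T_p^-$ over $\bQ_{p,\infty}^\ur$. Combined with Iwasawa's computation of the $\Lambda$-rank of $U_\infty$ (namely $d^+$, governed by the archimedean signature of $\rho$) and Tate's local Euler-characteristic formula, this yields an identity
\[
\rk_\Lambda X_\infty(\rho,\rho^+) \;=\; \dim_{\Frac\Lambda}\ker\!\bigl(\mathrm{reg}_\Lambda \otimes_\Lambda \Frac\Lambda\bigr),
\]
where $\mathrm{reg}_\Lambda$ is a $\Lambda$-adic regulator whose specialization at $\eta\in\widehat\Gamma$ recovers the pairing~\eqref{eq:intro_p_adic_pairing}; in particular its degeneracy at $\eta$ is exactly the failure of $\eta$-admissibility. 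By Weierstrass preparation, a nonzero element of $\Lambda$ vanishes at only finitely many characters of $\widehat\Gamma$, yielding (i) $\Leftrightarrow$ (iii); (iii) $\Rightarrow$ (ii) is trivial, and (ii) $\Rightarrow$ (i) follows because non-degeneracy at a single $\eta$ forces $\det\mathrm{reg}_\Lambda$ to be nonzero. For the existence of a torsion-making $p$-stabilization, I would invoke the explicit admissibility results of \S\ref{sec:p_stab}.

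\textbf{Proof of (2).} Under the torsionness hypothesis, I pick $\Psi_1,\ldots,\Psi_{d^+}$ as a collection of $\Lambda$-linearly independent Galois-equivariant homomorphisms generating $\Hom_{G_\bQ}(T_p, U_\infty)$ modulo a pseudo-null submodule. Such a collection exists, and depends only on $T_p$, by Iwasawa's freeness theorem for cyclotomic units applied to the splitting field $H$ of $\rho$. Next, I attach to the unramified local representation $T_p^+$ a modified Coleman map
\[
\Coleman_{T_p^+} : \bigl(U_\infty \otimes_{\cO_p} T_p^+\bigr)^{G_{\bQ}} \longrightarrow \Lambda,
\]
whose interpolation at a non-trivial character $\eta$ of conductor $p^n$ recovers the value $\log_p|u|_\eta$ paired with the Frobenius action on $T_p^+$, weighted by the scalar $p^{(n-1)d^+}/\det(\rho^-)(\sigma_p^n)$ and the Galois--Gauss sum ratio $\tau(\eta)^{d^-}/\tau(\rho\otimes\eta)$. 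Setting
\[
\theta^\alg_{\rho,\rho^+} \;:=\; \det\!\bigl(\Coleman_{T_p^+}(\Psi_j \otimes t_i)\bigr)_{1 \leq i,j \leq d^+},
\]
the stated interpolation formula follows from the multiplicativity of determinants. A Fitting-ideal comparison in the Poitou--Tate exact sequence then shows that $\theta^\alg_{\rho,\rho^+}$ generates $\char_\Lambda X_\infty(\rho,\rho^+)$ up to a unit in $\Lambda$, and the $T_p$-optimality of $\omega_p^+=t_1\wedge\ldots\wedge t_{d^+}$ forces this unit to be $1$.

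\textbf{Main obstacle.} The principal difficulty is twofold. First, controlling the pseudo-null gap in both the freeness of $\Hom_{G_\bQ}(T_p, U_\infty)$ and the Poitou--Tate exact sequence, so that the characteristic ideal is generated \emph{exactly} and not merely up to a pseudo-null unit. Second, the precise matching of normalization constants: the Galois--Gauss sum ratio $\tau(\eta)^{d^-}/\tau(\rho\otimes\eta)$ and the scalar $p^{(n-1)d^+}/\det(\rho^-)(\sigma_p^n)$ must arise cleanly from the interplay among the unramified Coleman map, the $T_p$-optimal bases, and the local sequence $0 \to T_p^+ \to T_p \to T_p^- \to 0$. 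Careful tracking of local $\epsilon$-factors and of Frobenius eigenvalues on $T_p^-$, together with the normalization of the $\eta$-projection $|\cdot|_\eta$ of \S\ref{sec:torsionness_Sel}, will be essential to reproduce the stated formula exactly.
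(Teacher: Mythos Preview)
Your overall architecture is close to the paper's, but there are two concrete gaps.

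\textbf{Existence of a torsion-making $p$-stabilization.} You invoke the results of \S\ref{sec:p_stab}, but those are conditional: Lemma~\ref{lem:schanuel+motivic_implies_admissible} needs $d^+=1$ or the weak $p$-adic Schanuel conjecture, and Lemma~\ref{lem:existence_p_stabilization} needs the $\rho\otimes\eta$-isotypic component of Leopoldt. The paper's unconditional argument (Corollary~\ref{coro:existence_p_stab_Sel_tors}) is different and purely Iwasawa-theoretic: by the weak Leopoldt conjecture (proven in Proposition~\ref{prop:conjecture_leopoldt_faible}), the full localization map $\HH^1_{\Iw,\ff}(\bQ,\widecheck{T}_p)\hookrightarrow\HH^1_{\Iw,\ff}(\Qp,\widecheck{T}_p)$ is injective between free $\Lambda$-modules of ranks $d^+$ and $d$; since the target splits as $\bigoplus_i \HH^1_{\Iw,\ff}(\Qp,\widecheck{T}_{p,i})$ with each summand free of rank one, elementary linear algebra over $\Lambda$ produces a size-$d^+$ subset of indices for which the projected map has torsion cokernel. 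This bypasses any transcendence or Leopoldt input.

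\textbf{The missing class-group factor in part (2).} Your determinant $\det\bigl(\Coleman_{T_p^+}(\Psi_j\otimes t_i)\bigr)$ computes a generator of $\char_\Lambda\coker(\Loc_+)$, \emph{not} of $\char_\Lambda X_\infty(\rho,\rho^+)$. The second row of diagram~\eqref{eq:gros_diagramme_commutatif_Sel_infty} gives $\char_\Lambda X_\infty(\rho,\rho^+)=\char_\Lambda\coker(\Loc_+)\cdot\char_\Lambda\HH^1_\ur(\bQ_\infty,D_p)^\vee$, and the second factor---essentially the $\rho$-part of the Iwasawa class group---need not be a unit. The paper handles this by taking $\tilde\Psi_1,\ldots,\tilde\Psi_{d^+}$ to be an honest $\Lambda$-basis of the \emph{free} module $\Hom_G(T_p,U_\infty)$ (freeness is Lemma~\ref{lem:liberté_module_G_équiv} plus Belliard's theorem; there is no pseudo-null defect to worry about), letting $\theta_2^\alg$ generate $\char_\Lambda\HH^1_\ur(\bQ_\infty,D_p)^\vee$, and then \emph{absorbing} $\theta_2^\alg$ (together with the renormalization $\tau(\rho)^{-1}\theta_N$ needed to convert the Coleman-map output into the Galois--Gauss form of the statement) into $\Psi_1:=\tau(\rho)^{-1}\theta_N\theta_2^\alg\cdot\tilde\Psi_1$. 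Your ``Fitting-ideal comparison'' cannot produce this factor, and the claim that $T_p$-optimality of $\omega_p^+$ pins down the unit is not correct: optimality controls integrality, not the class-group contribution.
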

By the Baker-Brumer theorem, condition (iii) in Theorem \ref{thm:intro_A} (1) holds if $\rho$ is irreducible, $d^+=1$ and $\rho^+$ is motivic, so $X_\infty(\rho,\rho^+)$ is of $\Lambda$-torsion. Theorem \ref{thm:intro_A} (2) reduces the construction of $\theta_{\rho,\rho^+}$ to the problem of finding ``special elements'' $\Psi_1,\ldots,\Psi_{d^+}$ related to Artin $L$-values. When $\rho$ is monomial, these elements can be obtained from families of Rubin-Stark elements (see Theorem \ref{thm:intro_C} below).

The proof of Theorem \ref{thm:intro_A} makes essential use of the theory of Coleman maps \cite{coleman} and of classical results on the structure of $U_\infty$. 
We first compare $X_\infty(\rho,\rho^+)$ with a Bloch-Kato-style Selmer group and we apply Poitou-Tate duality theorem. We then combine Hochschild-Serre's exact sequence with classical results of Kuz'min \cite{kuzmin} and Belliard \cite{belliard} on the Galois structure of global ($p$-)units, and also a construction of modified Coleman maps in order to recover some information on the structure of $X_\infty(\rho,\rho^+)$ as a $\Lambda$-module.

The Tamagawa number conjecture of Bloch and Kato \cite{blochkato} expresses special values of motivic $L$-functions in terms of arithmetic invariants. The following result (Theorem \ref{coro:formule_BK}) is an instance of how one can tackle Bloch-Kato's conjecture via Iwasawa theory.

\begin{THM}\label{thm:intro_B}
	Assume that there exist a $G_{\bQ}$-stable $\cO_p$-lattice $T_p$ of $W_p$ and an admissible $p$-stabilization $\rho^+$ of $\rho$ such that $\cL(\rho,\rho^+)\neq 0$ and for which Conjecture \ref{conj:IMC} holds. Then the $p$-part of the Tamagawa Number Conjecture (in the formulation of Fontaine and Perrin-Riou \cite{fontaineperrinriou}) for $\rho$ is valid. In particular, for $p$ not dividing the order of the image of $\rho$, one has 
	$$\dfrac{L^{*}\left(\rho^*,0\right)}{\Reg_{\omega_\infty^+}(\rho)}  \sim_p \#\Hom_{\cO_p[G_{\bQ}]}(T_p,\cO_p \otimes\mathscr{C}\!\ell(H)),$$
	where $a\sim_p b$ means that $a$ and $b$ are equal up to a $p$-adic unit, and  $\mathscr{C}\!\ell(H)$ is the ideal class group of the field $H$ cut out by $\rho$. Here, the bases $\omega_{\ff}$ and $\omega_\infty^+$ used to compute $\Reg_{\omega_\infty^+}(\rho)$ are chosen to be $T_p$-optimal.
\end{THM}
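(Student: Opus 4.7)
The plan is to execute a standard Iwasawa descent from the cyclotomic tower $\bQ_\infty/\bQ$ down to $\bQ$, using Conjecture \ref{conj:IMC} in its entirety. Under \textbf{IMC}$_{\rho,\rho^+}$ the ideal $\char_\Lambda X_\infty(\rho,\rho^+)$ is generated by $\theta_{\rho,\rho^+}$, and under \textbf{EZC}$_{\rho,\rho^+}$ the leading term of $\theta_{\rho,\rho^+}$ at the trivial character $\mathds{1} \in \widehat{\Gamma}$ is computed explicitly. The assumption $\cL(\rho,\rho^+)\neq 0$, combined with the admissibility of $\rho^+$ (which forces $\Reg_{\omega_p^+}(\rho)\neq 0$) and with the non-abelian Stark conjecture (encoded in \textbf{EX}$_{\rho,\rho^+}$), guarantees that this leading term is nonzero, so that $\theta_{\rho,\rho^+}$ vanishes at $\mathds{1}$ to order \emph{exactly} $e=\dim_{E_p}W_p^{-,0}$.

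First I would establish a control theorem for $X_\infty(\rho,\rho^+)$: a Greenberg-style argument based on Poitou--Tate duality and the inflation--restriction sequence yields an exact sequence computing the $\Gamma$-coinvariants $X_\infty(\rho,\rho^+)_\Gamma$ in terms of a Bloch--Kato Selmer group of $\rho$ over $\bQ$, with defects controlled by local cohomology of $W_p^{\pm}$ at $p$ and at the ramified primes. These defects match exactly the factors $\cE(\rho,\rho^+)$ and the Galois-Gauss sum $\tau(\rho)$ appearing in \textbf{EZC}$_{\rho,\rho^+}$. A standard structural result for finitely generated torsion $\Lambda$-modules (relating the leading term at $\mathds{1}$ of a generator of $\char_\Lambda M$ to the order of finite quotients of $M$ detected by the descent) then turns the leading-term identity of \textbf{EZC}$_{\rho,\rho^+}$ into an equality, up to $p$-adic units, between the order of $X_\infty(\rho,\rho^+)_\Gamma$ and the quantity
\[\frac{L^*(\rho^*,0)}{\Reg_{\omega_\infty^+}(\rho)}\cdot \Reg_{\omega_p^+}(\rho)\cdot \cL(\rho,\rho^+)\cdot \cE(\rho,\rho^+)\cdot \tau(\rho)^{-1}.\]

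I would then match this identity with the $p$-part of the Tamagawa Number Conjecture of Fontaine and Perrin-Riou. In their formulation, the $p$-part of the TNC at $s=0$ is an equality of $\cO_p$-lattices on a fundamental line, which after choosing bases amounts to a congruence between the algebraic part of $L^*(\rho^*,0)$ and the cardinality of a Bloch--Kato Selmer group, corrected by the determinant of the Bloch--Kato exponential and by local Tamagawa factors. The product $\Reg_{\omega_p^+}(\rho)\cdot \cL(\rho,\rho^+)$ reconstructs the determinant of the Bloch--Kato exponential on the fundamental line (with $\cL$ accounting for the trivial zeros at $p$ via the compatibility with Benois' $\cL$-invariant), while $\cE(\rho,\rho^+)\cdot \tau(\rho)$ supplies the local Euler--Poincar\'e and root-number corrections. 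Comparing with the descent computation yields the first assertion. For the ``in particular'' statement, when $p\nmid \#\im(\rho)$ the inflation--restriction sequence for $\Gal(H/\bQ)$ collapses (cohomology with $\cO_p$-coefficients of an order-prime-to-$p$ group vanishes), giving $\HH^1_\ff(\bQ,T_p^*(1))\simeq \Hom_{\cO_p[G_\bQ]}(T_p,\cO_p\otimes \mathscr{C}\!\ell(H))$; simultaneously the root number, Gauss sum, modified Euler factor and the regulator--$\cL$-invariant package all collapse to $p$-adic units (the last because they are then determinants of $p$-adic logarithms computed on $p$-integral bases of the same rational line). The displayed formula is what survives.

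The hard part will be the dictionary in the second and third paragraphs, linking the ``extra zero'' leading term produced by \textbf{EZC}$_{\rho,\rho^+}$ to the Bloch--Kato exponential regulator entering the TNC. Concretely, one must show that the $\cL$-invariant $\cL(\rho,\rho^+)$---defined here through a local Galois-cohomology pairing---is \emph{precisely} the factor separating the Iwasawa-descent expression from the Bloch--Kato formulation. This rests on the identification of $\cL(\rho,\rho^+)$ with Benois' $\cL$-invariant established elsewhere in the paper (in the crystalline case) and on Benois' general formalism of extra zeros, and is where the bulk of the technical work in the proof lies.
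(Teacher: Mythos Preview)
Your overall strategy is essentially the one the paper takes, but the paper executes it more economically by outsourcing the entire descent to Benois' machinery. Concretely, rather than proving a Greenberg-style control theorem for $X_\infty(\rho,\rho^+)$ by hand, the paper first identifies $\char_\Lambda X_\infty(\rho,\rho^+)$ with Benois' module of $p$-adic $L$-functions $\textbf{L}_{\Iw,h}^{(\eta_0)}(N,T)$ for $V=\widecheck{W}_p$, $T=\widecheck{T}_p$, $N=\textbf{D}_\crys(\widecheck{T}_p^-)$ (Proposition \ref{prop:comparaison_idéaux_caractéristiques_Benois}), and then quotes \cite[Cor.~2]{benoiscrys} as a black box. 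That corollary already packages the control theorem, the Euler--Poincar\'e corrections, and the Tamagawa factors into a single leading-term formula. The remaining work, which you correctly flag as the heart of the matter, is the dictionary: $\cL(V,D)=(-1)^e\cL(\rho,\rho^+)$ (Lemma \ref{lem:comparaison_L_invariant_Benois}), $\cE^+(V,D)=\cE(\rho,\rho^+)$, and $R_{V,D}(\omega_{T,N})=\Reg_{\omega_p^+}(\rho)$ for a judicious choice of the basis $\omega_T$ of $\det_{\cO_p}\textbf{D}_\crys(\widecheck{T}_p)$. One then compares with \textbf{EZC}$_{\rho,\rho^+}$ and divides both sides by the nonzero quantity $\Reg_{\omega_p^+}(\rho)\cdot\cL(\rho,\rho^+)\cdot\cE(\rho,\rho^+)$; the Galois--Gauss sum $\tau(\rho)$ is absorbed because it is already a $p$-adic unit (Lemma \ref{lem:properties_galois_gauss_sums} (3)), not because it arises as a descent defect.

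There is one genuine slip in your ``in particular'' paragraph. The class group does \emph{not} arise from $\HH^1_\ff(\bQ,\widecheck{T}_p)$, which by Kummer theory is the $\rho$-part of global units; rather it is the Tate--Shafarevich group $\Sha(T_p)$ appearing in the general Tamagawa-number formula that becomes $\Hom_{\cO_p[G]}(T_p,\cO_p\otimes\mathscr{C}\!\ell(H))$ via inflation--restriction when $p\nmid\#G$. Likewise, $\cE(\rho,\rho^+)$ and $\Reg_{\omega_p^+}(\rho)$ do not ``collapse to $p$-adic units'' under $p\nmid\#G$; they cancel because they occur identically on both sides (in \textbf{EZC}$_{\rho,\rho^+}$ and in Benois' formula). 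What does simplify under $p\nmid\#G$ is: $\HH^0(\bQ,D_p)$ vanishes (it injects into $\HH^1(G,T_p)$, which is killed by $\#G$), and the local Tamagawa numbers at $\ell\neq p$ are trivial (since inertia at $\ell$ acts through a group of prime-to-$p$ order on $\widecheck{D}_p$).
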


The proof of Theorem \ref{thm:intro_B} rests upon a comparison between $X_\infty(\rho,\rho^+)$ and Benois' definition of Perrin-Riou's module of $p$-adic $L$-functions \cite{benoiscrys}. Once the relation between $X_\infty(\rho,\rho^+)$ and Perrin-Riou's theory is fully established, Theorem \ref{thm:intro_B} follows from the techniques of \textit{loc. cit.} involving a descent argument for Selmer complexes.

Our last result (Theorems \ref{thm:comparaison_mesure_p_adique_RS_elts}, \ref{thm:equivalence_IMCs} and \ref{thm:equivalence_MRS_EZC}) compares Iwasawa-theoretic conjectures in \cite[§3-4]{BKSANT} with Conjecture \ref{conj:IMC} in the monomial case.  

\begin{THM}\label{thm:intro_C}
	Suppose that $\rho=\Ind_k^\bQ\chi$, where $\chi:G_k\longrightarrow E^\times$ is a non-trivial character of a number field $k$ of prime-to-$p$ order and conductor. Assume the Rubin-Stark conjecture over ${\bQ}$ and over $\Zp$ for the family of characters $\chi \otimes \eta$, ${\eta\in \widehat{\Gamma}}$.
	\begin{enumerate}
		\item For every $p$-stabilization $\rho^+$, \textbf{EX}$_{\rho,\rho^+}$ holds true.
		\item For every $p$-stabilization $\rho^+$ such that $X_\infty(\rho,\rho^+)$ is of $\Lambda$-torsion, \textbf{IMC}$_{\rho,\rho^+}$ is equivalent to the higher-rank Iwasawa main conjecture (Conjecture \ref{conj:IMC_chi}).
		\item For every $p$-stabilization $\rho^+$, \textbf{EZC}$_{\rho,\rho^+}$ is implied by the Iwasawa-theoretic Mazur-Rubin-Sano conjecture (Conjecture \ref{conj:MRS_chi}).
		\item Conversely, there exists a finite family $\rho^+_1,\ldots,\rho^+_t$ of $p$-stabilizations such that, if \textbf{EZC}$_{\rho,\rho_i^+}$ holds for all $1\leq i\leq t$, and if there exists at least one such $p$-stabilization $\rho_j^+$ satisfying $\cL(\rho,\rho_j^+)\neq 0$, then the Iwasawa-theoretic Mazur-Rubin-Sano conjecture is valid.
	\end{enumerate}
\end{THM}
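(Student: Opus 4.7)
The overall plan is to translate everything to the $\chi$-side via Artin (Shapiro) formalism: under the assumption that $\chi$ has prime-to-$p$ order and conductor, the Iwasawa module $X_\infty(\rho,\rho^+)$ and the Bloch--Kato Selmer groups appearing in the conjecture can be rewritten in terms of $\chi$-isotypic components of modules over $k$ and its cyclotomic $\Zp$-extension. In particular, the space $\HH^1_\ff((\rho\otimes\eta)^*(1))$ corresponds to $\HH^1_\ff((\chi\otimes\eta')^*(1))$ over $k$, where $\eta'=\eta\circ\mathrm{Nm}_{k/\bQ}$, and the complex and $p$-adic regulators $\Reg_{\omega_\infty^+}(\rho\otimes\eta)$ and $\Reg_{\omega_p^+}(\rho\otimes\eta)$ become (essentially) the regulators attached to the Rubin--Stark element for $\chi\otimes\eta'$ of the appropriate rank, evaluated either via $\log_\infty$ or via the composition of $\log_p$ with the choice of $p$-adic places determined by $\rho^+$.

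For part (1), the plan is to build $\theta_{\rho,\rho^+}$ as the image of the Iwasawa-theoretic Rubin--Stark element for the family $\{\chi\otimes\eta'\}_{\eta\in\widehat\Gamma}$ under a suitable ``modified Coleman map'' of the type introduced in the proof of Theorem~\ref{thm:intro_A}, tailored by the choice of $\rho^+$. Checking \textbf{EX}$_{\rho,\rho^+}$ then reduces to: (a) the Rubin--Stark conjecture at finite level, which identifies the specialization of the Iwasawa-theoretic Rubin--Stark element with $L^*((\chi\otimes\eta')^*,0)/\Reg_\infty$ times an explicit unit, and (b) an explicit comparison of the $p$-adic regulator built from $\omega_p^+$ with the image of the Rubin--Stark element under the Coleman-type map, which accounts for the extra factors $\det(\rho^-)(\sigma_p^{-n})$ and the Gauss-sum ratio $\tau(\eta)^{d^-}/\tau(\rho\otimes\eta)$ by standard twist formulae. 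Part (2) then follows immediately by comparing two generators of the same characteristic ideal: the higher-rank Iwasawa main conjecture identifies $\char_\Lambda$ of the relevant classical Iwasawa module with the Rubin--Stark element, while Theorem~\ref{thm:intro_A}(2) gives $\char_\Lambda X_\infty(\rho,\rho^+)$ as a determinant of the $\Psi_j$ on a lattice basis; the same Coleman map that produced $\theta_{\rho,\rho^+}$ identifies these two generators up to units once the torsion hypothesis is in place.

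For part (3), I would unpack the Iwasawa-theoretic Mazur--Rubin--Sano conjecture at the trivial character: it describes the leading term of the Rubin--Stark family at $\mathds{1}$ in terms of the Bockstein/logarithmic derivative of a finite-level Rubin--Stark element coming from a larger set of places, using the $\log_p$ of local units in $k\otimes\Qp$. Applying the same modified Coleman map as in part (1) converts this leading-term formula into the identity \textbf{EZC}$_{\rho,\rho^+}$, with the modified Euler factor $\cE(\rho,\rho^+)$ arising from the local factors at primes in $W_p^{-,0}$ and the $\cL$-invariant $\cL(\rho,\rho^+)$ matching the determinant of $p$-adic logarithms of the enlarged Rubin--Stark system; this matching should amount to a purely local computation with the $(\varphi,\Gamma)$-module of $\rho^+$ at $p$ combined with the explicit description of $\cL(\rho,\rho^+)$ in \S\ref{sec:L-invariant}. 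For part (4), I would choose the finite family $\{\rho_i^+\}$ so that the collection of Coleman-type maps associated to them is jointly injective on the $\chi$-isotypic component of $U_\infty$ (this is the analogue of selecting a ``basis of refinements''); once an \textbf{EZC} is known for each, one has enough linear relations to recover the MRS identity, and the hypothesis $\cL(\rho,\rho_j^+)\neq0$ for at least one $j$ guarantees that the resulting linear system is non-degenerate and so can be inverted to yield the Rubin--Stark leading term formula.

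The main obstacle I expect is part of step (3)--(4): precisely identifying the intrinsic $\cL$-invariant $\cL(\rho,\rho^+)$ of \S\ref{sec:L-invariant} with the Bockstein-type determinant of $p$-adic logarithms that appears when differentiating Rubin--Stark families (this is what replaces the classical Gross regulator for arbitrary $\rho^+$), and, for (4), verifying that one can find a finite family of $p$-stabilizations whose associated Coleman maps together detect the full Rubin--Stark leading term while retaining at least one non-vanishing $\cL$-invariant; this requires a careful choice governed by the decomposition of $k\otimes\Qp$ into local factors and the action of $\Gal(H/\bQ)$ on them.
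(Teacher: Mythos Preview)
Your overall strategy matches the paper's proof (Theorems \ref{thm:comparaison_mesure_p_adique_RS_elts}, \ref{thm:equivalence_IMCs}, \ref{thm:equivalence_MRS_EZC}) quite closely: the $p$-adic $L$-function is indeed defined as the image of the Iwasawa-theoretic Rubin--Stark element $\varepsilon_\infty^\chi$ under the extended Coleman map $\sC^\str_{\omega_p^+}$, and the equivalence of main conjectures is obtained by comparing characteristic ideals through the exact sequences of \S\ref{sec:duality}. Your description of parts (3)--(4) as ``\textbf{MRS}$_\chi$ is a single vector equality in a finite-dimensional space, and \textbf{EZC}$_{\rho,\rho^+}$ is obtained by applying a linear form $\nu_{\rho^+}$ to it; conversely, enough such forms span the dual'' is exactly the mechanism of Theorem~\ref{thm:equivalence_MRS_EZC}.

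Two points where your execution plan diverges from the paper and would cause trouble:

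\emph{First}, for part (3) you propose that matching $\cL(\rho,\rho^+)$ with the Bockstein-type determinant is ``a purely local computation with the $(\varphi,\Gamma)$-module of $\rho^+$''. The paper avoids $(\varphi,\Gamma)$-modules entirely here: the computation (Lemma~\ref{lem:LHS_RHS}) is carried out directly with the elementary maps $\ord_p^{(t)}$ and $\log_p^{(t)}$ on $p$-units of $H$, combined with the explicit constant-term formulae for Coleman maps (Lemmas~\ref{lem:val_spé_coleman} and~\ref{lem:coleman_ord}) and the determinantal expression for $\cL(\rho,\rho^+)$ in Lemma~\ref{lem:formule_L_invariant_quot_det}. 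This is more concrete than what you describe and does not require any abstract local machinery.

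\emph{Second}, you misidentify the role of the hypothesis $\cL(\rho,\rho_j^+)\neq 0$ in part (4). It is \emph{not} used to make the linear system non-degenerate; the spanning of $\bigwedge^{d^+}\Hom_G(W_p,\cU')^*$ by the forms $\nu_{\rho^+_\beta}$ comes instead from the assumed \emph{admissibility} of each $\rho_\beta^+$ (which is part of the hypotheses in Theorem~\ref{thm:equivalence_MRS_EZC}(2)). The non-vanishing of one $\cL$-invariant is used for a different, logically prior step: via Theorem~\ref{thm:comparaison_mesure_p_adique_RS_elts}(2) it forces the weak exceptional zero conjecture \textbf{wEZC}$_\chi$ (Conjecture~\ref{conj:EZC_for_RS_elements}), which asserts $\varepsilon_\infty^\chi\in\cA^f\cdot\bigwedge^{d^+}(U'_\infty)^\chi$ and is a \emph{prerequisite} for even formulating the element $\kappa_\gamma$ appearing in \textbf{MRS}$_\chi$. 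Without this, the right-hand side of the equality you are trying to prove is not defined. Also note that the spanning argument takes place at the \emph{finite} level (over $\Hom_G(W_p,\cU')$ with $\cU'=E_p\otimes U_0'$), not at the Iwasawa level as you suggest.
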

See Theorem \ref{thm:equivalence_MRS_EZC} (2) for a precise statement of the last assertion. 

Let us comment on the statement and the proof of Theorem \ref{thm:intro_C}. The $p$-adic $L$-function $\theta_{\rho,\rho^+}$ should interpolate leading terms of abelian $L$-functions for which Rubin formulated a Stark conjecture ``over $\bZ$'' \cite{rubinstark}. This involves some Rubin-Stark elements $(\varepsilon_n^{\chi})_{n\geq 1}$ and $u^\chi$, and the (weaker) corresponding Rubin-Stark conjecture over $\bQ$ reduces to the validity of the statement (\ref{eq:Stark_conj_over_E}) implicitly assumed in Conjecture \ref{conj:IMC}. Under the Rubin-Stark conjecture over $\Zp$, which is also assumed in the formulation of \cite{BKSANT}, the family $\varepsilon_\infty^\chi=\varprojlim_n \varepsilon_n^\chi$ has the right $p$-integrality property and the first claim of Theorem \ref{thm:intro_C} mainly follows from the machinery of refined Coleman maps. Our second claim is much in the spirit of theorems which compare a main conjecture ``with $p$-adic $L$-functions'' with a main conjecture ``without $p$-adic $L$-functions''. Its proof makes use of what we call \emph{extended Coleman maps}, together with a variant of the description of $X_\infty(\rho,\rho^+)$ used for Theorems \ref{thm:intro_A} and \ref{thm:intro_B}. For the last two claims, we first need to compute the constant term of our extended Coleman maps. Roughly speaking, the Mazur-Rubin-Sano conjecture connects $u^\chi$ and the bottom layer of $\varepsilon_\infty^\chi$, and can be seen as an equality $(\cE)$ between elements in a certain finite-dimensional vector space $X$. We reinterpret \textbf{EZC}$_{\rho,\rho^+}$ as the same equality, after applying a certain linear form on $X$ attached to $\rho^+$. The third claim then becomes straightforward, and under the conditions of the last claim one can produce a generating set of linear forms $\nu$ on $X$ such that $\nu((\cE))$ holds. Moreover, the non-vanishing assumption on the $\cL$-invariant ensures that the weak exceptional zero conjecture for $\chi$ (Conjecture \ref{conj:EZC_for_RS_elements}) holds, which is a part of the Mazur-Rubin-Sano conjecture.

Theorem \ref{thm:intro_C} provides strong evidence for Conjecture \ref{conj:IMC}, assuming the Rubin-Stark conjecture. As an application, we obtain the construction of plus-minus $p$-adic $L$-functions and a proof of a $p$-adic Beilinson formula in a new setting (see Corollary \ref{coro:formule_theta_p_inerte} for the general statement). 
\begin{CORO}\label{CORO:plus-minus-p-adic-Beilinson}
	Let $k$ be an imaginary quadratic field in which $p$ is inert and let $\chi$ be a character of $k$ of prime-to-$p$ order and conductor which does not descend to $\bQ$. Assume $\chi(p\cO_k)=1$ and, for simplicity, that $\sigma_p^{-1}\cdot\tau\in \ker \chi$, where $\tau\in G_\bQ$ is the complex conjugation determined by the embedding $\ob{\bQ}\subset \bC$. There exist two measures $\theta^\pm_\chi\in\Lambda$ such that, for all characters $\eta\in\widehat{\Gamma}$ of conductor $p^n$, we have
	\begin{equation}\label{eq:CORO:EX}
		\eta(\theta^\pm_\chi)= \left(1\mp\frac{\eta(p)}{p}\right)\left(1\pm{\eta(p)}\right)\frac{(\pm 1)^n}{\tau(\eta)}\log_p((u_{\chi\eta})^{\pm 1}\cdot\bar{u}_{\chi\eta})\cdot j\left(\frac{L'((\chi\eta)^{-1},0)}{\log_\infty(u_{\chi\eta}\cdot \bar{u}_{\chi\eta})}\right),
	\end{equation}
where $u_{\chi\eta}$ is a certain $\chi\eta$-unit \eqref{eq:u_chi} and $\bar{u}_{\chi\eta}$ denotes its complex conjugate. 

Moreover, $\mathds{1}(\theta^-_\chi)=0$, and we have
\begin{equation}\label{eq:CORO:EZC}
	\dfrac{\diff}{\diff \!s}\kappa^s(\theta^-_\chi)\bigg|_{s=0} =\left(1+\frac{1}{p}\right)\frac{2}{\ord_p(u_{\gp,\chi})}\det\begin{pmatrix}\log_p(u_\chi) & \log_p(u_{{\gp},\chi}) \\ \log_p(\bar{u}_{\chi}) & \log_p(\bar{u}_{{\gp},\chi})\end{pmatrix}\cdot j\left(\frac{L'(\chi^{-1},0)}{\log_\infty(u_{\chi}\cdot \bar{u}_{\chi})}\right), 
\end{equation}
where $u_{{\gp},\chi}$ is an analogue of a Gross-Stark unit \eqref{eq:u_p_chi}.
\end{CORO}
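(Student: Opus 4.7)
The plan is to specialize Theorem~\ref{thm:intro_C} to the monomial representation $\rho = \Ind_k^{\bQ}\chi$ and to compute each factor of the resulting formulas explicitly in the inert setting. Since $p$ is inert in $k$, Mackey's formula gives $\rho|_{G_{\Qp}} = \Ind_{G_{k_\gp}}^{G_{\Qp}}(\chi|_{G_{k_\gp}})$, a $2$-dimensional representation of $G_{\Qp}$. The assumption $\chi(p\cO_k)=1$ forces $\Frob_p^2$ to act trivially on $\rho$, so $\Frob_p$ has eigenvalues $+1$ and $-1$; the corresponding eigenlines furnish two motivic $p$-stabilizations $\rho^\pm$ of $\rho$, both admissible by Theorem~\ref{thm:intro_A}~(1) combined with the Baker-Brumer theorem (since $d^+=1$). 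Theorem~\ref{thm:intro_C}~(1) then produces measures $\theta^\pm_\chi := \theta_{\rho,\rho^\pm} \in \Lambda$ satisfying \textbf{EX}$_{\rho,\rho^\pm}$, from which \eqref{eq:CORO:EX} will follow after identification of each factor.

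I next unpack the interpolation at a character $\eta$ of conductor $p^n$. The condition $\sigma_p^{-1}\tau \in \ker\chi$ ensures that complex conjugation acts on the $\chi$-isotypic line of units as $\Frob_p$ acts on $\rho|_{G_{\Qp}}$, so the $\pm 1$-eigenvectors of $\Frob_p$ correspond respectively to the multiplicative combinations $u_\chi \cdot \bar u_\chi$ and $u_\chi \cdot \bar u_\chi^{-1}$ of the Rubin-Stark element $u_\chi$ from \eqref{eq:u_chi} and its complex conjugate $\bar u_\chi$. Hence, up to normalization, the Stark regulators evaluate to $\Reg_{\omega_\infty^+}(\rho\otimes\eta) = \log_\infty(u_{\chi\eta}\bar u_{\chi\eta})$ and $\Reg_{\omega_p^+}(\rho\otimes\eta) = \log_p((u_{\chi\eta})^{\pm 1}\bar u_{\chi\eta})$ for $\rho^\pm$ respectively. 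The determinant $\det(\rho^\mp)(\sigma_p^{-n})$ gives $(\pm 1)^n$, and the ratio of Galois-Gauss sums $\tau(\eta)^{d^-}/\tau(\rho\otimes\eta)$ reduces to $1/\tau(\eta)$ using inductivity of $\tau$-factors together with the unramifiedness of $\chi$ at $p$. Finally, the Euler-type correction coming from the non-primitive part of the interpolation contributes the factor $(1\mp\eta(p)/p)(1\pm\eta(p))$, which is trivial for $n\geq 1$ but produces the vanishing $\mathds{1}(\theta^-_\chi)=0$ as required.

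For the derivative formula \eqref{eq:CORO:EZC}, I apply Theorem~\ref{thm:intro_C}~(3), namely \textbf{EZC}$_{\rho,\rho^-}$, which is implied by the Iwasawa-theoretic Mazur-Rubin-Sano conjecture established in the imaginary quadratic setting. The modified Euler factor reads $\cE(\rho,\rho^-) = 1+1/p$: indeed $\det(1-p^{-1}\sigma_p|W_p^+) = 1+1/p$ on the $(-1)$-eigenline, while $W_p^- = W_p^{-,0}$ since the $(+1)$-eigenline of Frobenius on the unramified representation $\rho|_{G_{\Qp}}$ is pointwise $G_{\Qp}$-fixed, making the second determinant an empty product. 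The complex Stark ratio matches $L'(\chi^{-1},0)/\log_\infty(u_\chi \bar u_\chi)$ after absorbing $\tau(\rho)^{-1}$ through $j$. What remains is to show that the product $\cL(\rho,\rho^-)\cdot\Reg_{\omega_p^+}(\rho)$ equals $2/\ord_p(u_{\gp,\chi})$ times the $2\times 2$ determinant displayed in \eqref{eq:CORO:EZC}.

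The main obstacle will be this explicit identification of $\cL(\rho,\rho^-)$ with a Gross-Stark-type $2\times 2$ determinant. Benois' $\cL$-invariant is defined via a descent analysis of the Selmer complex and a priori returns only a cohomological obstruction; in the inert case it has to be translated into a concrete semilocal unit calculation, using the extended Coleman maps of \S\ref{sec:torsionness_Sel} together with the action of complex conjugation on $\rho|_{G_{\Qp}}$. The Gross-Stark unit $u_{\gp,\chi}$ appears naturally as a generator of the kernel of the $\gp$-adic valuation on the $\chi$-isotypic part of global $\gp$-units, and $\ord_p(u_{\gp,\chi})$ arises as the determinant of a bottom block in the matrix computing the $\cL$-invariant, so that it ends up as the denominator in \eqref{eq:CORO:EZC}. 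The factor $2$ in the prefactor then emerges from the trace/corestriction appearing when one descends the computation from $G_{k_\gp}$ to $G_{\Qp}$.
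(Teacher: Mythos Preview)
Your overall framework is correct and matches the paper's: apply Theorem~\ref{thm:intro_C} to $\rho=\Ind_k^\bQ\chi$, using part (1) for the interpolation and part (3) together with \textbf{MRS}$_\chi$ (established by Bullach--Hofer \cite{bullachhofer}) for the derivative formula. Two points deserve comment.

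First, you omit verification of the Rubin--Stark hypotheses required by Theorem~\ref{thm:intro_C}. For imaginary quadratic fields these are known (Stark's work in the rank-one case, combined with Rubin's Gras-type result for the higher-rank element $u^\chi$; see the proof of Proposition~\ref{prop:iqf_inert_and_split}), but this input must be stated, since without it Theorem~\ref{thm:intro_C} is vacuous.

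Second, your proposed route to $\cL(\rho,\rho^-)$ is both harder than necessary and based on a misreading. In the paper, $\cL(\rho,\rho^+)$ is \emph{not} defined via Benois' Selmer-complex descent; it is defined directly in \S\ref{sec:L-invariant} as a ratio of determinants of the maps $\ord_p^{\pm,0},\log_p^{\pm}$ on $\Hom_G(W_p,\cU')$ (Lemma~\ref{lem:formule_L_invariant_quot_det}), and only afterwards compared with Benois' invariant (Lemma~\ref{lem:comparaison_L_invariant_Benois}). With that definition in hand, \S\ref{sec:example_odd_monomial} carries out the computation by bare hands: one takes $\psi_{u_\chi}$ as a basis of $\Hom_G(W_p,\cU)$ and $\psi_{u_{\gp,\chi}}$ for the complement in $\ker(\ord_p^{+,0})$, evaluates the $1\times 1$ matrices $A^\pm,B^\pm,O^-$ on the eigenbasis $\{-w_1+w_2,\,w_1+w_2\}$ (using the simplifying hypothesis $\chi(\sigma_p^{-1}\tau)=1$, which forces $a=1$), and reads off the displayed formula. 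No extended Coleman maps or semilocal trace analysis is needed; the factor $2$ and the denominator $\ord_p(u_{\gp,\chi})=\det(O^-)$ fall out of this elementary linear algebra.

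A minor correction: the value of \eqref{eq:CORO:EX} at $\eta=\mathds{1}$ is not part of \textbf{EX}$_{\rho,\rho^\pm}$, which only interpolates at non-trivial $\eta$. It is a consequence of \textbf{EZC}$_{\rho,\rho^\pm}$: for $\theta^+_\chi$ one has $e=0$ and the value formula recovers the $\eta=\mathds{1}$ specialization directly; for $\theta^-_\chi$ one has $e=1$, whence the order of vanishing is $\geq 1$ and $\mathds{1}(\theta^-_\chi)=0$. The Euler-like factors $(1\mp\eta(p)/p)(1\pm\eta(p))$ are a notational device making the formula uniform across all $\eta$, not an independent ingredient of the proof.
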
 
The interpolation property \eqref{eq:CORO:EX} follows from Rubin-Stark's conjecture for imaginary quadratic fields, while \eqref{eq:CORO:EZC} is derived from the recent work of Bullach and Hofer \cite{bullachhofer} on the Iwasawa-theoretic Mazur-Rubin-Sano conjecture.

We conclude this introduction with the proof of the Gross-Kuz'min conjecture for abelian extensions of imaginary quadratic fields (Theorem \ref{thm:gross_kuzmin}).
\begin{THM}\label{thm:intro_D}
	Let $p$ be an odd prime and let $H$ be an abelian extension of an imaginary quadratic field $k$. Denote by $H_\infty$ the $\Zp$-cyclotomic extension of $H$, and by $M_\infty$ the maximal abelian pro-$p$ extension of $H_\infty$ that is unramified everywhere and completely split at $p$. Then the module of $\Gal(H_\infty/H)$-coinvariants of $\Gal(M_\infty/H_\infty)$ is finite. 
\end{THM}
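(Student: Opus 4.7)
The plan is to reduce the conjecture to the monomial case governed by Theorem~\ref{thm:intro_A} and to invoke the Baker–Brumer theorem. First, I would replace $H$ by $H\cdot k$ (which changes $Y_\infty$ only by a finite module) and absorb the $p$-Sylow of $\Gal(H\cdot k/k)$ into the cyclotomic tower, so that we may assume $H\supseteq k$ and $\Delta:=\Gal(H/k)$ has order prime to $p$. The module $Y_\infty\otimes_{\Zp}\cO_p$ then decomposes canonically into $\chi$-isotypic components $Y_\infty^\chi$ indexed by characters $\chi\in\widehat{\Delta}$, and it suffices to prove that $(Y_\infty^\chi)_\Gamma$ is finite for each $\chi$.

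Two cases arise according to whether $\chi$ is fixed by the complex conjugation of $k/\bQ$. If it is, then $\chi$ extends to an abelian character of $G_\bQ$, and $Y_\infty^\chi$ matches up to finite error with the analogous module for the corresponding abelian extension of $\bQ$, for which the Gross-Kuz'min conjecture is classical (Greenberg, ultimately deriving from Baker's theorem on linear independence of logarithms of algebraic numbers). If $\chi$ is not fixed by complex conjugation, the induced representation $\rho:=\Ind_k^\bQ\chi$ is an irreducible $2$-dimensional Artin representation with $d^+=1$. Since $k$ is imaginary quadratic, the restriction $\rho|_{G_k}=\chi\oplus\chi^c$ produces two one-dimensional motivic $E$-lines in $W$; after enlarging $H$ if necessary so that $\rho$ is unramified at $p$, one of these lines is $G_{\Qp}$-stable and yields a motivic $p$-stabilization $W_p^+$. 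By the remark immediately following Theorem~\ref{thm:intro_A}, the Baker–Brumer theorem then implies that $X_\infty(\rho,\rho^+)$ is $\Lambda$-torsion.

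The next step is to identify, via Shapiro's lemma, the $\chi^{-1}$-isotypic component of $Y_\infty$ with $X_\infty(\rho,\rho^+)$ modulo finite kernel and cokernel. The ``completely split at $p$'' condition defining $Y_\infty$ translates, after induction from $k$ to $\bQ$, into the Panchishkin-style condition on $W_p/W_p^+$ that defines $X_\infty(\rho,\rho^+)$. Granting this identification, $\Lambda$-torsionness transfers to $Y_\infty^{\chi^{-1}}$. Finiteness of $(Y_\infty^{\chi^{-1}})_\Gamma$ then reduces to the non-vanishing of the characteristic ideal of $X_\infty(\rho,\rho^+)$ at the trivial character $\mathds{1}$; this is obtained by a Nakayama-type descent argument exploiting that $\rho\neq\mathds{1}$ and that $L(\rho,1)\neq 0$, together with the explicit interpolation formula for a generator of $\char_\Lambda X_\infty(\rho,\rho^+)$ furnished by part~(2) of Theorem~\ref{thm:intro_A}.

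The main obstacle is the precise matching of local conditions at $p$ in the identification $Y_\infty^{\chi^{-1}}\sim X_\infty(\rho,\rho^+)$, particularly when $p$ is inert or ramified in $k$. In these cases $H_\infty$ has only one prime above $p$, so ``completely split'' becomes a condition on the Frobenius in an abelian quotient, and translating this to the Greenberg-style local condition requires a careful choice of the motivic line $W_p^+$ coming from the embedding $k\hookrightarrow\ob{\bQ}_p$. The rationality of this line over $E$ is precisely what makes the Baker–Brumer theorem applicable, and is thus the crucial transcendence input of the argument.
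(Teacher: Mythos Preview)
Your reduction to $\chi$-isotypic components and the separation into $\chi=\chi^\tau$ versus $\chi\neq\chi^\tau$ is on the right track, and using Baker--Brumer to obtain $\Lambda$-torsionness of $X_\infty(\rho,\rho^+)$ via Theorem~\ref{thm:intro_A} is correct. However, the argument has a genuine gap at the crucial step.

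The central problem is your passage from $\Lambda$-torsionness to finiteness of $\Gamma$-coinvariants. You claim this reduces to the non-vanishing of $\char_\Lambda X_\infty(\rho,\rho^+)$ at $\mathds{1}$, and propose to obtain this from $L(\rho,1)\neq 0$ and the interpolation formula of Theorem~\ref{thm:intro_A}(2). But that formula is valid only for \emph{non-trivial} $\eta\in\widehat{\Gamma}$, so it says nothing about the value at $\mathds{1}$; there is no link between the archimedean $L$-value and $\mathds{1}(\theta^{\alg}_{\rho,\rho^+})$. More seriously, the non-vanishing you need is simply \emph{false} in general: when $e=\dim\HH^0(\Qp,W_p^-)>0$, the characteristic ideal lies in $\cA^e$ (Lemma~\ref{lem:suite_exacte_zeros_triviaux_GV}), so it \emph{does} vanish at $\mathds{1}$. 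The case $e>0$ is unavoidable precisely when $p$ splits in $k$ and $\chi(\gp)=\chi(\ob{\gp})=1$ (so $p$ splits completely in $L$): then $f=\dim\HH^0(\Qp,W_p)=2>d^+=1$, and \emph{every} $p$-stabilization has $e=1$.

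The paper's argument instead passes through the strict Selmer group $X_\infty^\str(\rho,\rho^+)$ and shows (Theorem~\ref{thm:exact_order_of_vanishing}) that finiteness of $\Gamma$-coinvariants of $A_\infty'^\chi$ follows once the $\cL$-invariant $\cL(\rho,\rho^+)$ is non-zero for some admissible $\rho^+$. In the problematic case $f=2$, this non-vanishing is established in Proposition~\ref{prop:existence_non_vanishing_L_invariant} via Roy's Strong Six Exponentials Theorem, adapting a trick of Betina--Dimitrov. Baker--Brumer alone is not strong enough: it controls torsionness (the behaviour at generic $\eta$) but not the order of vanishing at $\mathds{1}$, which is governed by the $\cL$-invariant. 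Your identification of $Y_\infty^\chi$ with $X_\infty(\rho,\rho^+)$ is also imprecise: by class field theory $\Gal(M_\infty/H_\infty)\simeq A_\infty'$, whose $\chi$-part is pseudo-isomorphic to $\Sha_\infty^1(D_p)^\vee$, a subquotient of $X_\infty^\str(\rho,\rho^+)$ rather than $X_\infty(\rho,\rho^+)$ itself (see diagram~\eqref{eq:gros_diagramme_commutatif_Sel_infty} and Lemma~\ref{lem:relation_cohomologie_et_unités}).
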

So far, the Gross-Kuz'min conjecture has only been settled for abelian extensions of $\bQ$ \cite{greenberg1973} and under some rather restrictive assumptions on the splitting behaviour of $p$ and $\infty$ in the number field \cite{jaulent2003,kuzmin18,kleine}. Besides its applications to classical Iwasawa theory \cite{kuzmin,gross1981padic,federergross,kolster,nguyenquangdocapitulation,jaulent2017}, the Gross-Kuz'min conjecture is also an essential input in the strategy presented in \cite{BKSANT} for proving the equivariant Tamagawa Number Conjecture (eTNC). 

 By the descent formalism of Selmer complexes, the Gross-Kuz'min conjecture is implied by the non-vanishing of certain $\cL$-invariants (Theorem \ref{thm:exact_order_of_vanishing} (2)). This implication can be seen as a generalization to arbitrary number fields of classical results by Kolster and Federer-Gross for totally real and CM fields. The proof then consists in showing that these $\cL$-invariants do not vanish. To do so, we adapt a trick in \cite{betinadimitrovKatz} which ultimately relies on Roy's strong six exponential theorem \cite{roy}. 


\subsection{Organisation of the paper}
The plan of the paper is as follows. In \S\ref{sec:coleman} we define extended Coleman maps and we compute their constant terms. In \S\ref{sec:iwasawa} we introduce the objects which play a role in Conjecture \ref{conj:IMC} and we develop tools from Iwasawa theory in order to study our Selmer groups. In \S\ref{sec:conjectures_on_RS_elts} we recall some conjectures on Rubin-Stark elements that appear in the statement of Theorem \ref{thm:intro_C}. We study in \S\ref{sec:monomial_representations} the precise link between these conjectures and Conjecture \ref{conj:IMC}. In \S\ref{sec:examples} we specialize our study to various classical contexts, and we also give an application of our results to the Gross-Kuz'min conjecture.

\subsection*{Acknowledgments}
I would like to thank Denis Benois and Martin Hofer for their helpful comments on an earlier version of this article, and Dominik Bullach for his explanations on the Rubin-Stark conjecture. 
This research is supported by the Luxembourg National Research Fund, Luxembourg, INTER/ANR/18/12589973 GALF.

\section{Coleman Maps}\label{sec:coleman}
\subsection{Classical results on Coleman maps}\label{sec:coleman_intro}
Fix an odd prime number $p$. Let $K$ be an unramified finite extension of $\Qp$ and let $\varphi \in \Gal(K/\Qp)$ be the Frobenius automorphism.
The Galois group $\Gal(K(\mu_{p^\infty})/\Qp)$ splits into a product $\Gal(K/\Qp) \times \Gamma_{{\cyc}} \simeq \Gal(K/\Qp) \times \Gamma \times \Gal(K(\mu_p)/K)$, where we have put $\Gamma_{{\cyc}}=\Gal(K(\mu_{p^\infty})/K)$ and where $\Gamma$ is the Galois group of $\Zp$-cyclotomic extension of $K$.
The cyclotomic character induces an isomorphism $\chi_{\cyc}:\Gamma_{{\cyc}} \simeq \bZ^\times_p$ and $\Gamma$ can be identified with $1+p\Zp\subset \bZ^\times_p$. Let us fix once and for all a system of compatible roots of unity $\tilde{\zeta}:=(\zeta_n)_{n\geq 0}$ of $p$-power order (\textit{i.e.}, $\zeta_{n+1}^p=\zeta_n$, $\zeta_0=1$ and $\zeta_1 \neq 1$). Note that $\tilde{\zeta}$ is also norm-coherent, which means that it lives in the inverse limit $\varprojlim_n {K(\mu_{p^n})}^\times$ where the transition maps are the local norms. Coleman proved the following theorem (see \cite[Thm. A]{coleman} or \cite[Chapter I, §2]{deshalit}).

\begin{theorem}[Coleman]\label{thm:coleman_power_series}
	Let $v=(v_n)_{n\geq 0} \in \varprojlim_n {K(\mu_{p^n})}^\times$. There exists a unique power series $f_v(T) \in T^{\ord(v_0)}\cdot\cO_K[[T]]^\times$, called \emph{Coleman's power series of $v$}, which satisfies the following properties:
	\begin{itemize}
		\item for all $n\geq 1$, one has $f_v(\zeta_n-1)=\varphi^n(v_n)$,
		\item $\prod_{i=0}^{p-1}f_v(\zeta_1^i(1+T)-1)=\varphi(f_v)((1+T)^p-1)$, where we let $\varphi$ act on the coefficients of $f_v$.
	\end{itemize}
	Furthermore, the map $v\mapsto f_v(T)$ is multiplicative and it is compatible with the action of $\Gamma_{{\cyc}}$, \textit{i.e.}, one has $f_{\gamma(v)}(T)=f_v((1+T)^{\chi_{\cyc}(\gamma)}-1)$ for all $\gamma\in \Gamma_{\cyc}$.
\end{theorem}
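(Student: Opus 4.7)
\textbf{Uniqueness} is immediate from Weierstrass preparation: if $f, g \in \cO_K[[T]]$ both interpolate $(\varphi^n(v_n))_{n \geq 1}$ at the points $\zeta_n - 1$, which lie in the open unit disc and accumulate at $0$, then $f = g$. For \textbf{existence}, I first reduce to norm-coherent sequences of units. Since $K(\mu_{p^n})/K$ is totally ramified, the valuation of $v_n$ is independent of $n$, equal to $r := \ord(v_0)$; and since $N_{K(\mu_{p^{n+1}})/K(\mu_{p^n})}(\zeta_{n+1} - 1) = \zeta_n - 1$ for $n \geq 1$, the sequence $(v_n/(\zeta_n - 1)^r)_{n \geq 1}$ is again norm-coherent and consists of units. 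The trivial sequence $(\zeta_n - 1)_{n \geq 1}$ manifestly admits $T$ as its Coleman series, so by multiplicativity, which will follow from uniqueness, it suffices to construct $f_v$ in the unit case.

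\textbf{The norm operator.} I introduce $\cN \colon \cO_K[[T]]^\times \to \cO_K[[T]]^\times$ defined by
$$\varphi(\cN f)\bigl((1+T)^p - 1\bigr) = \prod_{i=0}^{p-1} f\bigl(\zeta_1^i(1+T) - 1\bigr).$$
The right-hand side is symmetric under $T \mapsto \zeta_1^i(1+T) - 1$ and hence lies in $\cO_K[[(1+T)^p - 1]]$; since $\varphi$ is an automorphism of $\cO_K[[T]]$, this uniquely defines $\cN f$. The product formula in the statement is precisely the fixed-point equation $\cN f_v = f_v$, so the task reduces to producing a fixed point of $\cN$ with the prescribed interpolation. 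Using the presentation $\cO_{K(\mu_{p^n})} \simeq \cO_K[T]/\bigl(\Phi_{p^n}(1+T)\bigr)$, evaluation at $\zeta_n - 1$ surjects onto $\cO_{K(\mu_{p^n})}^\times$, so for each $n \geq 1$ I pick a lift $g_n \in \cO_K[[T]]^\times$ of $\varphi^n(v_n)$. A direct check using the norm-coherence of $v$ and the definition of $\cN$ shows that $\cN^k(g_{n+k})(\zeta_n - 1) = \varphi^n(v_n)$ for every $k \geq 0$.

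\textbf{The main obstacle} is to prove that $\cN^k(g_{n+k})$ converges in the $(p,T)$-adic topology as $k \to \infty$; its limit $f_v$ is then the desired fixed point of $\cN$. The standard approach analyses $\cN$ modulo $p$ and exploits the fact that the induced operator on $\cO_K/p\,[[T]]$ is a trace-like contraction coming from Frobenius, so iteration sharpens congruences in the $T$-adic filtration; lifting the fixed point in characteristic $p$ back to $\cO_K[[T]]$ via Hensel-style successive approximations concludes the construction. Once $f_v$ is built, \textbf{multiplicativity} and \textbf{$\Gamma_{\cyc}$-equivariance} follow from uniqueness: $f_v \cdot f_w$ interpolates $v \cdot w$; and for $\gamma \in \Gamma_{\cyc}$ with $c = \chi_{\cyc}(\gamma)$, the series $f_v((1+T)^c - 1)$ evaluates at $\zeta_n - 1$ to $\varphi^n(\gamma(v_n))$ since $\gamma(\zeta_n) = \zeta_n^c$ and $\varphi$ commutes with $\gamma$ under the fixed splitting of $\Gal(K(\mu_{p^\infty})/\Qp)$.
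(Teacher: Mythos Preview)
The paper does not prove this theorem; it is quoted as a result of Coleman with references to \cite{coleman} and \cite[Chapter I, \S2]{deshalit}. Your sketch follows the standard argument found in those references: uniqueness from the interpolation property, reduction to units via the sequence $(\zeta_n-1)_n$, introduction of Coleman's norm operator $\cN$, existence via iteration of $\cN$ on compatible lifts, and finally multiplicativity and $\Gamma_{\cyc}$-equivariance from uniqueness. The outline is correct.

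One correction to your description of the convergence step: the contraction driving $\cN^k(g_{n+k})\to f_v$ is $p$-adic, not $T$-adic. The two key estimates are (i) $\cN f\equiv f\pmod{p}$ for every $f\in\cO_K[[T]]^\times$, and (ii) if $f\equiv g\pmod{p^a\cO_K[[T]]}$ with $a\geq 1$, then $\cN f\equiv\cN g\pmod{p^{a+1}}$. Combined with the observation that any two lifts of $\varphi^m(v_m)$ differ by a multiple of $\Phi_{p^m}(1+T)$, these make the sequence $(\cN^k g_{n+k})_k$ Cauchy in the $p$-adic topology on $\cO_K[[T]]$, with a limit independent of $n$ and of the chosen lifts. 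Your phrase ``Hensel-style successive approximations'' is apt, but it is the $p$-adic filtration that is being sharpened at each iteration, not the $T$-adic one.
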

Note that $f_v(0)$ and $v_0$ are not equal but are simply related by the formula $v_0=(1-\varphi^{-1})f_v(0)$ which is easily deduced from the second property satisfied by $f_u$. To better understand Coleman's power series, we consider the operator $\cL : \cO_K[[T]]^\times \longrightarrow K[[T]]$ given by 
$$\cL(f)(T)= \frac{1}{p}\log_p\left(\frac{f(T)^p}{\varphi(f)((1+T)^p-1)}\right),$$ 
where the $p$-adic logarithm $\log_p : \cO_K[[T]]^\times \longrightarrow K[[T]]$ is defined by $\log_p(\zeta)=0$ for $\zeta \in \mu(K)$ and $\log_p(1+Tg(T))= Tg(T)-\frac{1}{2}T^2g(T)^2+\frac{1}{3}T^3g(T)^3 + \ldots$ for $g(T)\in \cO_K[[T]]$. Moreover, it is known that $\cL$ takes values in $\cO_K[[T]]$. 

We can now define what is usually referred to as the Coleman map. One can associate to any $\cO_K$-valued measure $\lambda$ over $\Zp$ its Amice transform $\cA_\lambda(T)\in \cO_K[[T]]$ given by
$$\cA_\lambda(T) = \sum_{n=0}^{\infty} \int_\Zp \binom{x}{n}T^n\lambda(x) = \int_\Zp (1+T)^x\lambda(x).$$
This construction yields an isomorphism of $\cO_K$-algebras (the product of measures being the convolution product) $\cO_K[[\Zp]]\simeq \cO_K[[T]]$. When the Amice transform of a measure $\lambda$ is equal to $\cL(f_u(T))$ for some norm-coherent sequence of \textit{units} $u \in \varprojlim_n \cO_{K(\mu_{p^n})}^\times$, then one can show that $\lambda$ is actually the extension by zero of a measure over $\bZ^\times_p$. It is convenient to see such a $\lambda$ as a measure over $\Gamma_{{\cyc}}$ after taking a pull-back by $\chi_{\cyc}$. Let 
$$ \Coleman : \varprojlim_n \cO_{K(\mu_{p^n})}^\times \longrightarrow \cO_K[[\Gamma_{\cyc}]] $$
be the map sending $u$ to the $\cO_K$-valued measure $\lambda$ over $\Gamma_{\cyc}$ whose Amice transform is $\cL(f_u(T))$. Note that $\Coleman(uu')=\Coleman(u)+\Coleman(u')$ for norm-coherent sequences of units $u$ and $u'$.
\
\begin{proposition}
	There is an exact sequence of $\Gal(K/\Qp)\times \Gamma_{\cyc}$-modules 
	$$\xymatrix{
		0 \ar[r] & \mu(K) \times \Zp(1) \ar[r] & \varprojlim_n \cO_{K(\mu_{p^n})}^\times \ar[r]^\Coleman & \cO_K[[\Gamma_{\cyc}]] \ar[r] & \Zp(1) \ar[r] & 0
	}$$ 
	Here, the first map sends $\xi \in \mu(K)$ to $(\xi)_n$ and $a\in \Zp$ to $(\zeta_n^a)_n$, and the last one sends $\lambda$ to $\Tr_{K/\Qp}\int_{\Gamma_{\cyc}} \chi_{\cyc}(\sigma)\lambda(\sigma)$. Moreover, $\Gamma_{{\cyc}}$ acts on $\cO_K[[\Gamma_{\cyc}]]$ via $\int_{\Gamma_{\cyc}} f(\sigma)(\gamma \cdot \lambda)(\sigma) = \int_{\Gamma_{\cyc}} f(\gamma\sigma)\lambda(\sigma)$ for all $\gamma\in \Gamma_{{\cyc}}$ and all $\Qp$-valued continuous functions $f$ on $\Gamma_{\cyc}$.
\end{proposition}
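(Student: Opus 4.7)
The plan is to verify exactness at each of the four positions of the sequence. Galois equivariance of all four maps follows directly from the defining properties of $\tilde\zeta$, the functoriality of Coleman's power series (Theorem \ref{thm:coleman_power_series}), and the compatibility of the Amice transform with substitutions $T\mapsto (1+T)^c-1$.

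I would first establish injectivity of the left map together with the description $\ker\Coleman=\mu(K)\times\Zp(1)$. The map $(\xi,a)\mapsto (\xi\zeta_n^a)_n$ is visibly injective, and for such a $u$ the uniqueness clause of Theorem \ref{thm:coleman_power_series} forces $f_u(T)=\xi(1+T)^a$, so a direct computation yields $\cL(f_u)(T)=\tfrac{1}{p}\log_p(\xi^p/\varphi(\xi))=0$. Conversely, if $\Coleman(u)=0$ then $\log_p$ vanishes on $f_u(T)^p/\varphi(f_u)((1+T)^p-1)$, so there exists $\eta\in\mu(K)$ with $f_u(T)^p=\eta\cdot\varphi(f_u)((1+T)^p-1)$; combined with the multiplicative norm relation of Theorem \ref{thm:coleman_power_series}, this yields a functional equation whose only solutions in $\cO_K[[T]]^\times$ are $\xi(1+T)^a$ with $\xi\in\mu(K)$ and $a\in\Zp$ --- a classical iteration-on-leading-terms argument, as in \cite{coleman} or \cite[Chapter~I]{deshalit}.

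For the right half, the inclusion $\mathrm{Im}(\Coleman)\subseteq \ker(\Tr_{K/\Qp}\int\chi_{\cyc})$ rests on the identity
$$\int_{\Gamma_{\cyc}}\chi_{\cyc}(\sigma)\,d\Coleman(u)(\sigma)=\cL(f_u)'(0)=(1-\varphi)\!\left(\frac{f_u'(0)}{f_u(0)}\right),$$
obtained by differentiating the defining expression of $\cL(f_u)$ at $T=0$ and noting that the derivative at $T=0$ of $\cA_\lambda(T)=\int(1+T)^{\chi_{\cyc}(\sigma)}d\lambda$ equals $\int\chi_{\cyc}\,d\lambda$; applying $\Tr_{K/\Qp}=\sum_i\varphi^i$ then annihilates this element. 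Surjectivity of the last map onto $\Zp(1)$ is immediate: for any topological generator $\gamma_0$ of $\Gamma_{\cyc}$ and any $\alpha\in\cO_K$, the Dirac mass $\alpha\delta_{\gamma_0}$ maps to $\chi_{\cyc}(\gamma_0)\Tr_{K/\Qp}(\alpha)$, and since $\Tr_{K/\Qp}\colon\cO_K\twoheadrightarrow\Zp$ is surjective in the unramified case and $\chi_{\cyc}(\gamma_0)\in\Zp^\times$, every element of $\Zp$ is hit.

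The main obstacle is exactness at $\cO_K[[\Gamma_{\cyc}]]$: given $\lambda$ killed by $\Tr_{K/\Qp}\int\chi_{\cyc}$, I must construct a norm-coherent sequence of units whose Coleman series realises it. My plan is to transfer the problem to $\cO_K[[T]]$ via the Amice transform and solve $\cL(f)=\cA_\lambda$ for $f\in\cO_K[[T]]^\times$ subject to the Coleman norm relation, by successive approximation on the $T$-adic filtration. The trace-vanishing condition on $\lambda$ is precisely what makes each inductive step solvable; the detailed construction is classical and is treated in \cite[Chapter~I, §2]{deshalit}.
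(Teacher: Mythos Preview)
The paper does not prove this proposition at all; its entire proof is the single line ``See \cite[Thm. 3.5.1]{coatessujathabook}.'' Your proposal is therefore strictly more informative: you outline the standard direct verification of exactness at each node, and your computations (the identification $\cL(f_u)'(0)=(1-\varphi)(f_u'(0)/f_u(0))$ and the Dirac-mass argument for surjectivity on the right) are correct. The one place you still defer to the literature --- solving $\cL(f)=\cA_\lambda$ by successive approximation to prove exactness at $\cO_K[[\Gamma_{\cyc}]]$ --- is precisely the substantive step, and you cite the same circle of references (de~Shalit, Coleman) that the Coates--Sujatha proof relies on, so in the end both approaches rest on the same classical input.
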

\begin{proof}
	See \cite[Thm. 3.5.1]{coatessujathabook}.
\end{proof}

Now take invariants by $\Delta:=\Gal(K(\mu_p)/K)$. As $\Delta$ is of prime-to-$p$ order, one can identify $\cO_K[[\Gamma_{\cyc}]]^{\Delta}$ with $\cO_K[[\Gamma]]$ via the map sending $\lambda$ to the measure $\mu$ defined by $\int_{\Gamma}g(\gamma)\mu(\gamma)= \frac{1}{p-1}\int_{\Gamma_{\cyc}}g(\sigma \!\!\!\mod \Delta)\lambda(\sigma)$ for all continuous $g:\Gamma \longrightarrow \Qp$. The inverse map is given by the formula $\int_{\Gamma_{\cyc}} f(\sigma)\lambda(\sigma)=\int_{\Gamma} Tf(\gamma)\mu(\gamma)$, where $f:\Gamma_{\cyc} \longrightarrow \Qp$ is continuous and where $Tf: \Gamma \longrightarrow \Qp$ is the sum $Tf(\gamma)=\sum_{\delta\in\Delta} f(\gamma\delta)$. 
Let us denote by $K_n=K(\mu_{p^{n+1}})^\Delta$ the $n$-th layer of the $\Zp$-cyclotomic extension of $K$ for all $n\geq 0$, and let $K_{-1}=K_0=K$. The Coleman map thus restricts to a surjective map $\Coleman : \varprojlim_n \cO_{K_{n-1}}^\times \longrightarrow \cO_K[[\Gamma]]$ whose kernel is identified with $\mu(K)$. By restricting further to principal units $\cO^{\times,1}_{K_n}\subseteq \cO_{K_{n}}^\times$ (\textit{i.e.}, units that are congruent to 1 modulo the maximal ideal of $\cO_{K_{n}}$), one obtains an isomorphism 
\begin{equation}\label{eq:coleman_map}
\Coleman : \varprojlim_n \cO^{\times,1}_{K_{n-1}} \overset{\simeq}{\longrightarrow} \cO_K[[\Gamma]]
\end{equation}
of $\Zp[\Gal(K/\Qp)][[\Gamma]]$-modules (or, equivalently, of $\varphi$-linear $\Zp[[\Gamma]]$-modules) whose properties are now recalled. For $n\geq2$, one may see via $\iota_p$ any non-trivial $\ob{\bQ}$-valued Dirichlet character $\eta$ of $p$-power order and of conductor $p^n$ as a $p$-adic character of $\Gamma_{n-1}=\Gal(K_{n-1}/K)$ which does not factor through $\Gamma_{n-2}$. We denote by $e_\eta:= p^{1-n}\sum_{g\in \Gamma_{n-1}} \eta(g^{-1})g \in \Qp(\mu_{p^{n}})[\Gamma_{n-1}]$ the idempotent attached to $\eta$, and by $\g(\eta)=\sum_{a \mod p^n} \eta(a) \zeta_n^a $ its usual Gauss sum. 

\begin{lemme}\label{lem:val_spé_coleman}
	Let $u=(u_n)_{n\geq 1} \in \varprojlim_n \cO^{\times,1}_{K_{n-1}}$ and let $\mu=\Coleman(u)$. For all non-trivial characters $\eta:\Gamma\longrightarrow \ob{\bQ}^\times$ of conductor $p^n$, one has
	$$\int_{\Gamma}\eta(\gamma)\mu(\gamma) = \frac{p^{n-1}}{\g(\eta^{-1})} \cdot \varphi^n\left(e_\eta\log_p(u_n)\right).$$
	Moreover, if $u_0\neq1$, then $u_0\notin1+p\Zp$ and one has
	$$\int_{\Gamma}\mu = \frac{1-p^{-1}\varphi}{1-\varphi^{-1}}\left(\log_p(u_1)\right).$$
\end{lemme}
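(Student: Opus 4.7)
The plan is to reduce both formulas to integrals against the measure $\lambda\in\cO_K[[\Gamma_\cyc]]$ associated to $\mu$, whose Amice transform is $\cA_\lambda(T)=\cL(f_u)(T)$. Under the $\Delta$-invariant identification recalled just before the lemma, the change-of-variables formula gives $\int_\Gamma \eta\,d\mu = \tfrac{1}{p-1}\int_{\Gamma_\cyc} \tilde\eta\,d\lambda$, where $\tilde\eta$ is the inflation of $\eta$ to $\Gamma_\cyc$; transferring to $\Zp$ via $\chi_\cyc$ puts us in position to evaluate the Amice transform at torsion points.

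For the first identity, I would apply Fourier inversion via Gauss sums to get
$$\int_\Zp \eta\,d\tilde\lambda = \frac{1}{\g(\eta^{-1})}\sum_{c\in(\bZ/p^n)^\times}\eta^{-1}(c)\,\cA_{\tilde\lambda}(\zeta_n^c-1).$$
The Galois compatibility $f_u((1+T)^c-1)=f_{\sigma_c(u)}(T)$ from Theorem \ref{thm:coleman_power_series} combined with the defining interpolation $f_u(\zeta_n-1)=\varphi^n(u_n)$ then yields
$$\cA_{\tilde\lambda}(\zeta_n^c-1) = \varphi^n\bigl(\log_p \sigma_c(u_n)\bigr) - \tfrac{1}{p}\,\varphi^n\bigl(\log_p \sigma_c(u_{n-1})\bigr).$$
I would next decompose $c=\omega(c)\langle c\rangle$ with $\omega(c)\in\mu_{p-1}\simeq\Delta$ and $\langle c\rangle\in\Gamma_{n-1}$: the $\Delta$-invariance of $u_n\in K_{n-1}$ and the triviality of $\eta$ on $\Delta$ collapse the first contribution to $(p-1)p^{n-1}\varphi^n(e_\eta\log_p u_n)$, while the second contribution vanishes because $u_{n-1}\in K_{n-2}$ has $\sigma_c$-orbit depending only on $c\bmod p^{n-1}$; grouping the outer sum through $(\bZ/p^n)^\times\twoheadrightarrow(\bZ/p^{n-1})^\times$ reduces the inner sum to $\sum_{k\in\bZ/p}\eta^{-1}(c'+kp^{n-1})$, which is zero by primitivity of $\eta$. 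Combining and dividing by $p-1$ yields the claimed interpolation.

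For the second identity, I would first establish the dichotomy on $u_0$. Since $u_1\in K_0=K$, norm coherence along $K(\mu_p)/K$ forces $u_0=u_1^{p-1}$, so $u_0\in 1+p\Zp$ would entail $u_1\in 1+p\Zp$ by torsion-freeness of the pro-$p$ group $1+p\cO_K$. But $u_1$ is a universal norm from the cyclotomic tower $K_\infty/K$, and local class field theory identifies the universal norms in $\cO_K^{\times,1}$ with $\ker N_{K/\Qp}$; its intersection with $1+p\Zp$ is $\ker(x\mapsto x^{[K:\Qp]})=\{1\}$. Hence $u_1=u_0=1$, contradicting $u_0\neq 1$. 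The evaluation itself then drops out of $\int_\Gamma d\mu=\tfrac{1}{p-1}\cA_{\tilde\lambda}(0)=\tfrac{1}{p-1}(1-p^{-1}\varphi)\log_p f_u(0)$, combined with the Coleman identity $(1-\varphi^{-1})\log_p f_u(0)=\log_p u_0=(p-1)\log_p u_1$, after observing that $\log_p u_1\in\ker\Tr_{K/\Qp}=\im(1-\varphi^{-1})$ (because $N_{K/\Qp}(u_1)=1$), which makes $(1-\varphi^{-1})^{-1}\log_p u_1$ meaningful. The main obstacle I anticipate is the vanishing of the $u_{n-1}$ contribution in the first step, which requires careful bookkeeping of how primitivity of $\eta$ interacts with the $K_{n-2}$-rationality of $u_{n-1}$.
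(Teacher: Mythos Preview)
Your proposal is correct and follows essentially the same route as the paper's proof. The only cosmetic differences are that the paper packages the sum $\sum_c \eta^{-1}(c)\cL(f_u)(\zeta_n^c-1)$ directly as $p^{n-1}e_\eta\,\cL(f_u)(\zeta_n-1)$ rather than expanding via the decomposition $c=\omega(c)\langle c\rangle$, and for the dichotomy on $u_0$ it argues with $N_{K/\Qp}(u_0)=u_0^{[K:\Qp]}$ as a universal norm in $\bQ_{p,\infty}/\Qp$ rather than passing through $u_1$; both are equivalent unpackings of the same idea.
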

\begin{proof}
	When $\eta$ is non-trivial, one checks that
	
	\begin{align*}
	\g(\eta^{-1})\ \int_{\Gamma}\eta(\gamma)\mu(\gamma) &=(p-1)^{-1}\ \sum_{a\mod p^n} \eta^{-1}(a)\cdot\cL(f_u)(\zeta_n^a-1) \\
	&= p^{n-1}\cdot e_\eta\ \cL(f_u)(\zeta_n-1) \\
	&= p^{n-1}\cdot e_\eta\  \varphi^n\left(\log_p(u_n)-\frac{1}{p}\log_p(u_{n-1})\right).
	\end{align*}
	Since $\eta$ is of conductor $p^n$, the idempotent $e_\eta$ kills $u_{n-1}$, so the first equality holds. Assume now that $u_0\neq 1$. If $u_0\in 1+p\Zp$, then $u_0^{[K:\Qp]}=N_{K/\Qp}(u_0)$ would be a universal norm in $\bQ_{p,\infty}/\Qp$. As it is also a principal unit, the exact sequence (\ref{eq:exact_seq_local_norms}) below shows that it should be equal to $1$, and so does $u_0$. Hence $u_0 \notin 1+p\Zp$, and we have
	$$  \int_{\Gamma}\mu = (p-1)^{-1}\cdot\cL(f_u)(0)=(p-1)^{-1}\cdot(1-p^{-1}\varphi)\log_p(f_u(0))=\frac{1-p^{-1}\varphi}{1-\varphi^{-1}}\left(\log_p(u_1)\right),$$
	because of the relation $u_1^{p-1}=u_0=(1-\varphi^{-1})f_u(0)$.
\end{proof}

\subsection{Isotypic components}\label{sec:coleman_isotypic}
Let $\delta : G\longrightarrow \cO^\times$ be any character of a finite group $G$ which takes values in a finite flat extension $\cO$ of $\Zp$. There are two slightly different notions of $\delta$-isotypic components for an $\cO[G]$-module $M$, namely
$$M^{\delta}:=\{m\in M\ \big| \ \forall g\in G,\ g.m=\delta(g)m\}, \quad \textrm{or,}\quad M_{\delta}:=M \otimes_{\cO[G]} \cO,$$ 
where the ring homomorphism $\cO[G]\longrightarrow\cO$ is the one induced by $\delta$. The modules $M^{\delta}$ and $M_{\delta}$ are respectively the largest submodule and the largest quotient of $M$ on which $G$ acts via $\delta$ (see \cite[§2]{tsujisemilocal}) and they are simply called the $\delta$-part and the $\delta$-quotient of $M$. When $p$ does not divide the order of $G$, the natural map $M^{\delta}\longrightarrow M_{\delta}$ is an isomorphism, and $M^{\delta}$ equals $e_\delta M$, where $e_\delta=\#(G)^{-1}\sum_{g\in G} \delta(g^{-1})g\in\cO[G]$ is the usual idempotent attached to $\delta$.

When $\cO$ contains $\cO_K$ and for $G=\Gal(K/\Qp)$, $M=\cO_K \otimes_{\Zp} \cO$ as in \S\ref{sec:coleman_intro}, the same argument as in the proof of \cite[Lemme 3.2.3]{maks} shows that the internal multiplication $M\longrightarrow \cO$ induces by restriction an $\cO$-linear isomorphism $M^{\delta}\simeq \cO$. When one moreover fixes an isomorphism $\cO_K\simeq \Zp[G]$ (given by a normal integral basis of the unramified extension $K/\Qp$) one also has $M_{\delta}\simeq \cO[G] \otimes_{\cO[G]}\cO = \cO$.

\begin{definition}\label{def:coleman_isotypic}
	For any character $\delta$ of $G=\Gal(K/\Qp)$ with values in a finite flat $\Zp$-algebra $\cO$ containing $\cO_K$, we define $\delta$-isotypic component of the Coleman map to be the composite isomorphisms of $\cO[[\Gamma]]$-modules
	$$
	\Coleman^{\delta} : \varprojlim_n \left(\cO^{\times,1}_{K_{n-1}} \otimes_{\Zp}\cO\right)^\delta \overset{\Coleman}{\longrightarrow} \left(\cO_K \otimes_{\Zp} \cO\right)^\delta[[\Gamma]]\simeq \cO[[\Gamma]], 
	$$
	where $\Coleman$ is the restriction of isomorphism (\ref{eq:coleman_map}) to the $\delta$-parts and  the last isomorphism is induced by the internal multiplication $\cO_K \otimes_{\Zp} \cO \longrightarrow \cO$.
\end{definition} 
When $\delta$ is trivial, it will later be convenient to consider a natural extension of the map $\Coleman^\mathds{1}$ which we construct in the rest of the paragraph. 

We let $\widehat{\bQ}^\times_{p,n}$ be the pro-$p$ completion of $\bQ^\times_{p,n}$ for all $n\geq 0$. Concretely, once we fix a uniformizer $\varpi_n$ of ${\bZ}_{p,n}:={\cO}_{\bQ_{p,n}}$, we have $\widehat{\bQ}^\times_{p,n}={\bZ}^{\times,1}_{p,n} \times \varpi_n^{\Zp}$, while $\bQ_{p,n}^\times={\bZ}^{\times}_{p,n} \times \varpi_n^{\bZ}$. Let $\cA:=\ker\left(\Zp[[\Gamma]]\longrightarrow\Zp\right)$ be the augmentation ideal of the Iwasawa algebra $\Zp[[\Gamma]]$. We still denote by $\gamma$ the image of an element $\gamma\in \Gamma$ under the canonical injection $\Gamma \hookrightarrow \cO[[\Gamma]]^\times$.
\begin{lemme}\label{lem:coleman_11}
	The multiplication map 
	$$m:\cA \otimes_{\Zp[[\Gamma]]} \varprojlim_n \widehat{\bQ}^\times_{p,n-1}\otimes \cO \longrightarrow\varprojlim_n \widehat{\bQ}^\times_{p,n-1}\otimes\cO$$
	given by $a\otimes v \otimes 1 \mapsto a\cdot v\otimes 1$ is injective and has image $\varprojlim_n \bZ^{\times,1}_{p,n-1}\otimes \cO$.
\end{lemme}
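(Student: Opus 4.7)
The plan is to reduce the lemma to two structural facts about the $\Zp[[\Gamma]]$-module $V_\infty := \varprojlim_n \widehat{\bQ^\times_{p,n-1}}$: namely, that $V_\infty^\Gamma = 0$, and that $V_\infty$ is free of rank one over $\Zp[[\Gamma]]$ with the submodule $U_\infty := \varprojlim_n \bZ^{\times,1}_{p,n-1}$ corresponding to the augmentation ideal. Since $\gamma - 1$ is not a zero-divisor in $\Zp[[\Gamma]]$ for any topological generator $\gamma$ of $\Gamma$, the augmentation ideal $\cA = (\gamma-1)\Zp[[\Gamma]]$ is free of rank one, so the map $(\gamma-1)r \otimes x \mapsto rx$ gives an isomorphism $\cA \otimes_{\Zp[[\Gamma]]} M \simeq M$ for any $\Zp[[\Gamma]]$-module $M$; applied to $M := V_\infty \otimes \cO$, this identifies $m$ with the multiplication-by-$(\gamma-1)$ endomorphism of $M$. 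The $\mathrm{Tor}$ long exact sequence for $0 \to \cA \to \Zp[[\Gamma]] \to \Zp \to 0$ then gives $\ker m = M^\Gamma$ and $\mathrm{im}\, m = (\gamma-1)M$.

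For $V_\infty^\Gamma = 0$ I will argue directly. Any Galois-fixed element of $\widehat{\bQ^\times_{p,n-1}}$ lies in $(\widehat{\bQ^\times_{p,n-1}})^{\Gal(\bQ_{p,n-1}/\Qp)} = \widehat{\Qp^\times} = (1+p\Zp) \times p^{\Zp} \simeq \Zp^{2}$, and the norm-coherence condition reduces to $v_n^p = v_{n-1}$ for $n \geq 2$ (as $\bQ_{p,n-1}/\bQ_{p,n-2}$ is totally ramified of degree $p$). Iterating forces $v_0 \in \bigcap_n (\widehat{\Qp^\times})^{p^n} = \{1\}$, and then torsion-freeness of $\widehat{\bQ^\times_{p,n-1}}$ (which has no $p$-power roots of unity, since $\bQ_{p,n-1} \cap \mu_{p^\infty} = \{1\}$) yields $v_n = 1$ for all $n$. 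Flatness of $\cO$ over $\Zp$ then gives $M^\Gamma = V_\infty^\Gamma \otimes \cO = 0$, so $m$ is injective.

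For the image statement, the valuation exact sequence $0 \to U_\infty \to V_\infty \to \Zp \to 0$ (trivial $\Gamma$-action on the right), combined with the Coleman isomorphism $U_\infty \simeq \Zp[[\Gamma]]$ in \eqref{eq:coleman_map}, presents $V_\infty$ as an extension classified by an element $c \in \mathrm{Ext}^1_{\Zp[[\Gamma]]}(\Zp, \Zp[[\Gamma]]) \simeq \Zp$. The vanishing $V_\infty^\Gamma = 0$ forces $c \neq 0$ (since the split extension $\Zp[[\Gamma]] \oplus \Zp$ has invariants $\Zp$), but the lemma needs the stronger statement $c \in \Zp^\times$, which is equivalent to $V_\infty \simeq \Zp[[\Gamma]]$ as $\Zp[[\Gamma]]$-modules with $U_\infty$ identified with $\cA$. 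The connecting homomorphism computes $c$ as the augmentation of $\Coleman((\gamma-1)\tilde v)$ for any norm-coherent tower of uniformizers $\tilde v = (\varpi_{n-1})_n$. Taking $\varpi_{n-1} = N_{\Qp(\mu_{p^n})/\bQ_{p,n-1}}(\zeta_{p^n} - 1)$, the corresponding cyclotomic units $(\zeta_{p^n}^{\chi_\cyc(\gamma)} - 1)/(\zeta_{p^n} - 1)$ in the upper tower have explicit Coleman power series $((1+T)^{\chi_\cyc(\gamma)} - 1)/T$, and applying the operator $\cL$ together with the identification of the $\Delta$-invariant part of $\Zp[[\Gamma_\cyc]]$ with $\Zp[[\Gamma]]$ yields $c = (p-1)\log_p(\chi_\cyc(\gamma))/p$. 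Since $\log_p(\chi_\cyc(\gamma))$ has $p$-adic valuation exactly one for $\gamma$ a topological generator of $\Gamma$ and $p$ is odd, $c$ is a unit in $\Zp$; consequently $(\gamma-1)V_\infty = U_\infty$, and tensoring with the flat $\Zp$-algebra $\cO$ completes the argument.

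The main obstacle is this final Coleman-theoretic computation showing $c \in \Zp^\times$ rather than only $c \neq 0$. All the other steps are formal consequences of the $\mathrm{Tor}/\mathrm{Ext}$ machinery and standard Galois cohomology; the unit condition, however, genuinely requires the explicit power series of cyclotomic units together with the fact that the Iwasawa logarithm of a topological generator of $1+p\Zp$ has $p$-adic valuation exactly one.
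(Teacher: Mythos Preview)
Your proof is correct, and the injectivity argument (showing $V_\infty^\Gamma=0$ by descending a $\Gamma$-fixed norm-coherent sequence to a $p$-divisible element of $\widehat{\Qp}^\times$) is essentially the paper's. For the image statement, however, you take a genuinely different route.

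The paper argues via local class field theory: the inflation--restriction sequence for $\HH^1(-,\Qp/\Zp)$ together with Pontryagin duality produces a short exact sequence
\[
0 \longrightarrow (V_\infty)_\Gamma \longrightarrow \widehat{\Qp}^\times \xrightarrow{\ \rec\ } \Gamma \longrightarrow 0,
\]
and since $\rec|_{1+p\Zp}$ is an isomorphism, both $U_\infty$ and $(\gamma-1)V_\infty$ coincide with the kernel of $V_\infty\to\widehat{\Qp}^\times$, $v\mapsto v_1$. This is short and conceptual, and the exact sequence is reused later in the paper.

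Your approach instead classifies the extension $0\to U_\infty\to V_\infty\to\Zp\to 0$ by a class $c\in\mathrm{Ext}^1_{\Zp[[\Gamma]]}(\Zp,\Zp[[\Gamma]])\simeq\Zp$ and computes $c$ explicitly by feeding the cyclotomic uniformizer tower $\varpi_{n-1}=N_{\Qp(\mu_{p^n})/\bQ_{p,n-1}}(\zeta_{p^n}-1)$ into the Coleman machinery, obtaining $c=\tfrac{p-1}{p}\log_p(\chi_\cyc(\gamma))\in\Zp^\times$. This avoids class field theory entirely and is essentially the same computation the paper carries out \emph{afterwards} in Lemma~\ref{lem:coleman_ord}; so your argument effectively merges the two lemmas. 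One small point worth making explicit: the Coleman power series you write down is that of $(\gamma-1)\tilde{\tilde v}$ in the $\mu_{p^\infty}$-tower, not of $(\gamma-1)\tilde v$ itself; the passage to the $\Delta$-invariant element introduces a factor of $p-1$ from the norm $N_\Delta$, which is then cancelled by the $(p-1)^{-1}$ in the identification $\Zp[[\Gamma_\cyc]]^\Delta\simeq\Zp[[\Gamma]]$. Your final value of $c$ is correct, but spelling out this cancellation would make the step ``applying $\cL$ together with the identification'' watertight.
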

\begin{proof}
		We may assume $\cO=\Zp$. We first check that $m$ is injective and we fix a topological generator $\gamma$ of $\Gamma$. As $\cA$ is $\Zp[[\Gamma]]$-free and generated by $\gamma-1$, any element of $\cA \otimes_{\Zp[[\Gamma]]} \varprojlim_n \widehat{\bQ}^\times_{p,n-1}$ can be written as a pure tensor $(\gamma-1)\otimes v$ for some $v$. If $(\gamma-1)\cdot v=1$, then $v_n\in \widehat{\bQ}^\times_p$ for all $n\geq 1$ hence $v=1$ since $\widehat{\bQ}^\times_p$ has no non-trivial $p$-divisible element. Thus $m$ is injective. 
		Inflation and restriction maps in discrete group cohomology provide an exact sequence
	\begin{equation*}
	\begin{tikzcd}
	0 \ar[r] & \HH^1(\Gamma,\Qp/\Zp) \ar[r, "\inf"] & \HH^1(\Qp,\Qp/\Zp) \ar[r, "\res"] & \HH^1(\bQ_{p,\infty},\Qp/\Zp)^{\Gamma} \ar[r] & \HH^2(\Gamma,\Qp/\Zp).
	\end{tikzcd}
	\end{equation*}
	As $\Gamma$ is pro-cyclic, the last term vanishes. Moreover, the Galois action on $\Qp/\Zp$ being trivial, all the $\HH^1$'s involved are $\Hom$'s and one easily deduces from local class field and from exactness of Pontryagin functor $\Hom(-,\Qp/\Zp)$ (see e.g. \cite[Thm. 6]{flood}) the following short exact sequence
	\begin{equation}\label{eq:exact_seq_local_norms}
	\begin{tikzcd}
	0 \ar[r] & \left(\varprojlim_n \widehat{\bQ}^\times_{p,n-1}\right)_{\Gamma} \ar[r, "v\mapsto v_1\;\;"] &  \widehat{\bQ}^\times_{p}\ar[r, "\rec\;\;"] & \Gamma \ar[r] & 0,
	\end{tikzcd}
	\end{equation}
	where the subscript $\Gamma$ means that we took $\Gamma$-coinvariants and where $\rec$ is the local reciprocity map. Since $M_\Gamma=M/\cA M$ for any $\Zp[[\Gamma]]$-module $M$ and since the map $\rec_{|1+p\Zp}$ is an isomorphism, it follows that both $\varprojlim_n \bZ^{\times,1}_{p,n-1}$ and the image of $m$ coincide with the kernel of the map $\varprojlim_n \widehat{\bQ}^\times_{p,n-1} \longrightarrow  \widehat{\bQ}^\times_{p}$ given by $v\mapsto v_1$, so they are equal.
\end{proof}
Let $\cO$ be any finite flat $\Zp$-algebra and let $\cI_\cO \subseteq \Frac(\cO[[\Gamma]])$ be the principal ideal of $\cO[[\Gamma]]$ consisting of quotients of $p$-adic measures on $\Gamma$ with at most one simple pole at the trivial character. Then $\cI_\cO$ is invertible, and its inverse is the augmentation ideal $\cA_\cO=\cA\otimes \cO$ of $\cO[[\Gamma]]$. The following definition makes sense by Lemma \ref{lem:coleman_11}.
\begin{definition}\label{def:coleman_isotypic_extended}
	The extended $\mathds{1}$-isotypic component of the Coleman map is the isomorphism
	$$\widetilde{\Coleman}^\mathds{1} : \varprojlim_n \widehat{\bQ}^\times_{p,n-1} \otimes \cO \overset{\simeq}{\longrightarrow} \cI_\cO$$
	given by $\widetilde{\Coleman}^\mathds{1}(v)=\frac{1}{a}\Coleman^\mathds{1}(av)$ for any non-zero element $a\in \cA_\cO$. As $\Coleman^\mathds{1}$ is $\cO[[\Gamma]]$-linear, this definition does not depend on the choice of $a$. 
	
	For any non-trivial $\cO$-valued character $\delta$ of $\Gal(K/\Qp)$, we also let $\widetilde{\Coleman}^\delta=\Coleman^\delta$.
\end{definition}

\subsection{Constant term of Coleman maps}\label{sec:coleman_cst_term}
We keep the same notations as in \S\ref{sec:coleman_intro} and \S\ref{sec:coleman_isotypic}. Fix a finite flat extension $\cO$ of $\Zp$ which contains $\cO_K$ and let $\delta$ be an $\cO$-valued character of $\Gal(K/\Qp)$. Then $\delta$ is the trivial character if and only if $\beta=\delta(\varphi)\in\cO^\times$ is equal to $1$. When $\delta$ is non-trivial, Lemma \ref{lem:val_spé_coleman} shows that the constant term of $\mu=\Coleman^\delta(u)$ is given by 
$$\int_{\Gamma}\mu = \frac{1-p^{-1}\beta}{1-\beta^{-1}}\log_p(u_1).$$
We now give a similar formula when $\delta$ is trivial which is a variant of  \cite[Prop. 1.3.7]{benois2014}. 
\begin{lemme}\label{lem:coleman_ord}
    Let $u\in \varprojlim_n \bZ^{\times,1}_{p,n-1}\otimes \cO$ and put $\mu=\Coleman^{\mathds{1}}(u)$. Write $u$ as $(\gamma-1)\cdot v$ for some $\gamma\in\Gamma$ and $v\in\varprojlim_n \widehat{\bQ}^\times_{p,n-1} \otimes \cO$. Then 
	\begin{equation}\label{eqn:terme_constant_coleman1}
	\int_{\Gamma} \mu = \left(1-p^{-1}\right)\cdot\log_p(\chi_{\cyc}(\gamma))\cdot\ord_p(v_1),
	\end{equation}
	where $\ord_p:\widehat{\bQ}^\times_p \otimes_{\Zp}\cO \longrightarrow \cO$ is the usual $p$-valuation map. 
\end{lemme}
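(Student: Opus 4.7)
The plan is to reduce the formula to an explicit computation on a canonical uniformizer tower, and then deduce the general case by $\cO$-linearity. First I would fix the norm-coherent uniformizer tower $\pi=(\pi_n)_{n\geq 1}$ of $\varprojlim_n \bZ_{p,n-1}$ given by $\pi_n := N_{\bQ_p(\mu_{p^n})/\bQ_{p,n-1}}(1-\zeta_n)$; its norm-coherence follows from the classical cyclotomic identity $N_{\bQ_p(\mu_{p^{n+1}})/\bQ_p(\mu_{p^n})}(1-\zeta_{n+1}) = 1-\zeta_n$, and $\pi_1 = \Phi_p(1) = p$. Using the decomposition $\widehat{\bQ}^\times_{p,n-1} = \bZ^{\times,1}_{p,n-1}\times \pi_n^{\Zp}$, any $v$ can be written uniquely as $v = w \cdot \pi^e$ with $w \in \varprojlim_n \bZ^{\times,1}_{p,n-1}\otimes\cO$ and $e\in\cO$; norm-coherence forces this exponent to be independent of the level, hence equal to $\ord_p(v_1)$.

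By $\cO[[\Gamma]]$-linearity of $\Coleman^{\mathds{1}}$ on principal unit towers, we get $\mu = (\gamma-1)\Coleman^{\mathds{1}}(w) + e\cdot \mu_\pi$ with $\mu_\pi := \Coleman^{\mathds{1}}((\gamma-1)\pi)$. The first summand lies in the augmentation ideal of $\cO[[\Gamma]]$, so it integrates to $0$, and the task reduces to proving
\[
\int_\Gamma \mu_\pi = (1-p^{-1})\log_p(\chi_{\cyc}(\gamma)).
\]

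To compute this I would lift $\pi$ to the ambient cyclotomic tower. Take $\tilde v = (p, 1-\zeta_1, 1-\zeta_2,\ldots)\in \varprojlim_n \bQ_p(\mu_{p^n})^\times$, for which a direct check shows that the Coleman power series is $f_{\tilde v}(T) = -T$. By the $\Gamma_{\cyc}$-equivariance recorded in Theorem \ref{thm:coleman_power_series}, the shifted tower $\tilde u := (\gamma-1)\tilde v$ satisfies $f_{\tilde u}(T) = ((1+T)^c - 1)/T$ with $c := \chi_{\cyc}(\gamma)\in 1+p\Zp$. This series lies in $\Zp[[T]]^\times$, so $\tilde u$ is actually a principal unit tower with Coleman measure $\lambda\in\Zp[[\Gamma_{\cyc}]]$. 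An elementary evaluation of the Amice transform $\cL(f_{\tilde u})(T)$ at $T = 0$ yields $\cL(f_{\tilde u})(0) = \tfrac{1}{p}\log_p(c^{p-1}) = (1-p^{-1})\log_p(c)$, and hence $\int_{\Gamma_{\cyc}}\lambda = (1-p^{-1})\log_p(\chi_{\cyc}(\gamma))$.

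Finally, I would descend from $\Gamma_{\cyc}$ to $\Gamma$ via $\Delta$-invariance. Since $\pi_n = \prod_{\sigma\in\Delta}\sigma(1-\zeta_n)$, the image of $(\gamma-1)\pi$ in $\varprojlim_n \cO^{\times,1}_{\bQ_p(\mu_{p^n})}$ equals $\prod_{\sigma\in\Delta}\sigma(\tilde u)$. Additivity and $\Delta$-equivariance of the full-cyclotomic Coleman map give that the corresponding $\Delta$-invariant measure on $\Gamma_{\cyc}$ is $N_\Delta(\lambda) = \sum_{\sigma\in\Delta}\sigma\cdot \lambda$, whose integral against the trivial character is $(p-1)\int_{\Gamma_{\cyc}}\lambda$. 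Applying the identification $\int_\Gamma = \frac{1}{p-1}\int_{\Gamma_{\cyc}}$ on $\Delta$-invariants from \S\ref{sec:coleman_intro}, I recover $\int_\Gamma \mu_\pi = \int_{\Gamma_{\cyc}}\lambda = (1-p^{-1})\log_p(\chi_{\cyc}(\gamma))$, completing the reduction. The main technical obstacle I anticipate is reconciling the $\Zp$-cyclotomic setting (where the bare uniformizer tower $\pi$ does not directly admit a Coleman power series in $\Zp[[T]]$) with the full cyclotomic setting (where $\tilde v$ does): the lift via $N_\Delta$ and the compensating factor $p-1$ must be bookkept correctly, though this is ultimately a formal manipulation.
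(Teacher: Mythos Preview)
Your proof is correct and follows essentially the same strategy as the paper's: decompose $v$ as a principal-unit tower times a power of a uniformizer tower $\pi$ with $\pi_1=p$, observe that the principal-unit contribution lies in the augmentation ideal, and compute $\int_\Gamma \Coleman^{\mathds{1}}((\gamma-1)\pi)$ by passing to the full cyclotomic tower. The only cosmetic difference is that the paper works directly with the $\Delta$-invariant element $\pi$ (whose Coleman series is $T^{p-1}U(T)$, so the exponent $p-1$ cancels against the factor $(p-1)^{-1}$ in the identification $\int_\Gamma=\tfrac{1}{p-1}\int_{\Gamma_{\cyc}}$), whereas you lift to the primitive tower $\tilde v=(1-\zeta_n)$ with Coleman series $-T$ and then recover the same cancellation through an explicit $N_\Delta$-averaging.
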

\begin{proof}
 We may take $\cO=\Zp$. Let us choose any $\pi=(\pi_n)_{n\geq 1}\in \varprojlim_n \widehat{\bQ}^\times_{p,n-1}$ such that  $\pi_1=p$ (this is possible thanks to the short exact sequence (\ref{eq:exact_seq_local_norms})). Then $\pi\in \varprojlim_n \bQ^\times_{p,n-1}$ and $f_\pi(T)=T^{p-1}U(T)$ for some $U(T)\in \Zp[[T]]^\times$, so we have
	$$f_{(\gamma-1)\cdot\pi}(T)=\left(\frac{(1+T)^{\chi_{cyc}(\gamma)}-1}{T}\right)^{p-1}\cdot \frac{U((1+T)^{\chi_{cyc}(\gamma)}-1)}{U(T)}.$$
	Hence $\log_p(f_{(\gamma-1)\cdot\pi}(0))=(p-1)\cdot\log_p(\chi_{\cyc}(\gamma))$. If we write $v$ as $u' \cdot\pi^{\ord_p(v_1)}$ with $u'\in \varprojlim_n \bZ^{\times,1}_{p,n-1}$, then one has by linearity
	\[
	\begin{split}
	\int_{\Gamma}\mu &=\int_{\Gamma} \Coleman^{\mathds{1}}((\gamma-1)\cdot u') \quad\;\qquad+ \ord_p(v_1)\int_{\Gamma}\Coleman^{\mathds{1}}((\gamma-1)\cdot \pi)\\
	&= \int_{\Gamma} \left(\gamma\cdot \Coleman^{\mathds{1}}(u')-\Coleman^{\mathds{1}}(u')\right) + \ord_p(v_1)(p-1)^{-1}\cdot\cL(f_{(\gamma-1)\cdot \pi})(0)\\
	&= 0\qquad\qquad\qquad\qquad \qquad  \quad\!\!+ \ord_p(v_1)(1-p^{-1})(p-1)^{-1}\cdot\log_p(f_{(\gamma-1)\cdot\pi}(0))\\
	&= \left(1-p^{-1}\right)\cdot\log_p(\chi_{\cyc}(\gamma))\cdot\ord_p(v_1).  \end{split}\]\end{proof}

\begin{remark}
The dependence on $\gamma$ in \eqref{eqn:terme_constant_coleman1} is only apparent. Indeed, assume $$(\gamma-1)\cdot v=(\gamma^c-1)\cdot v'$$ 
	for $\gamma\in\Gamma-\{1\}$, $c\in\Zp-\{0\}$ and $v,v'\in\varprojlim_n \widehat{\bQ}_{p,n-1}^\times$. Then $\log_p(\chi_{\cyc}(\gamma^c))=c\cdot \log_p(\chi_{\cyc}(\gamma))$, and 
	$$\ord_p(v_1')= \ord_p((\frac{\gamma-1}{\gamma^c-1}\cdot v)_1)=  \ord_p(((c^{-1}+(*)(\gamma-1))\cdot v)_1)= \ord_p(v_1^{c^{-1}})=c^{-1}\cdot \ord_p(v_1),$$
	because $v_1^{\gamma-1}\in \bZ_{p}^{\times,1}$. Thus we see that the $c$'s cancel out, as predicted by \eqref{eqn:terme_constant_coleman1}.
\end{remark}

\section{Cyclotomic Iwasawa theory for Artin motives}\label{sec:iwasawa}
 Thoughout this section we keep the notations of the introduction: in particular, $\rho$ does not contain the trivial representation and is unramified at our fixed prime $p>2$. Without loss of generality, we also assume that $E$ contains the field $H$ cut out by $\rho$, so the completion $E_p=\ob{\iota_p(E)}$ contains $K:=\ob{\iota_p(H)}$. 

\subsection{$p$-stabilizations} \label{sec:p_stab}
\begin{definition}\label{def:p_stab}
	A \textit{$p$-stabilization} $(\rho^+,W_p^+)$ of $(\rho,W_p)$ is an $E_p$-linear subspace $W_p^+$ of $W_p$ of dimension $d^+$ which is stable under the action of the local Galois group $\Gal(\ob{\bQ}_p/\Qp)$. We say that $\rho^+$ is:
	\begin{enumerate}
		\item \textit{motivic} if $W_p^+$ is of the form $E_p \otimes_{E,\iota_p}W^+$, where $W^+$ is an $E$-linear subspace of $W$, and
		\item $\eta$-\textit{admissible} (for a given character $\eta\in\widehat{\Gamma}$) if the $p$-adic pairing (\ref{eq:intro_p_adic_pairing}) is non-degenerate.
	\end{enumerate} 
\end{definition}
To explain in greater details the $\eta$-admissibility property, consider a $p$-stabilization $W_p^+$ and fix a character $\eta\in\widehat{\Gamma}$. If we let $\omega_{\ff,\eta}=\psi_1\wedge\ldots\wedge\psi_{d^+}$ be a basis of $\det_E \HH^1_\ff((\rho\otimes\eta)^*(1))=\det_E \Hom_{G_{\bQ}}(W_\eta,E_\eta\otimes\cO_{H_\eta}^\times)$ and $\omega_p^+=w_1\wedge\ldots\wedge w_{d^+}$ be a basis of $\det_{E_p} W_p^+$, then the determinant of the $p$-adic pairing computed in these bases is given by 
\begin{equation}\label{eq:expression_reg_p}
\Reg_{\omega_p^+}(\rho\otimes\eta)=\det\left(\log_p(\psi_j(w_i))\right)_{1\leq i,j\leq d^+}\in E_{p,\eta}.
\end{equation}
Here, we denoted by $E_{p,\eta}\subseteq\ob{\bQ}_p$ the compositum of $E_p$ with the image of $\eta$, and by
\begin{equation}\label{eq:def_log_p}
\log_p : \ob{\bQ}_p \otimes_{\bZ} \ob{\bZ}^\times \longrightarrow \ob{\bQ}_p
\end{equation}
the map given by $\log_p(x\otimes a)=x\cdot \log_p^\textrm{Iw}(\iota_p(a))$, where $\log_p^\textrm{Iw}:\ob{\bQ}^\times_p\longrightarrow\ob{\bQ}_p$ is Iwasawa's $p$-adic logarithm. The $p$-regulator $\Reg_{\omega_p^+}(\rho\otimes\eta)$ does not vanish for (some, hence) all choices of bases if and only $W_p^+$ is $\eta$-admissible. 

We now recall a classical conjecture in $p$-adic transcendence theory \cite[Conjecture 3.10]{mazurcalegari}:

\begin{conjecture}[Weak $p$-adic Schanuel conjecture]\label{conj:schanuel}
	Let $a_1,\ldots,a_n$ be $n$ non-zero algebraic numbers contained in a finite extension $F$ of $\Qp$. If $\log_p^\Iw(a_1),\ldots,\log_p^\Iw(a_n)$ are linearly independent over $\bQ$, then the extension field $\bQ(\log_p^\Iw(a_1),\ldots,\log_p^\Iw(a_n))\subset F$ has transcendence degree $n$ over $\bQ$.
\end{conjecture}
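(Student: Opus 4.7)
The statement labeled Conjecture \ref{conj:schanuel} is the Weak $p$-adic Schanuel conjecture, which is a well-known open problem in transcendence theory; the author is recalling it from \cite{mazurcalegari} rather than proving it. So instead of a proof I will sketch the status of the problem and what an attempted proof would entail. The strategy is to view the statement as the natural $p$-adic analogue of a weak form of the classical Schanuel conjecture, and to isolate the main obstruction.

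The first step is the case $n=1$: a non-zero Iwasawa logarithm $\log_p^{\Iw}(a)$ of a non-zero algebraic $a$ is either zero or transcendental over $\bQ$. This is the $p$-adic analogue of the Gelfond--Schneider theorem, due to Mahler and refined by subsequent authors. Next, I would invoke the Baker--Brumer theorem, which asserts that any $\bQ$-linear relation among $\log_p^{\Iw}(a_1),\ldots,\log_p^{\Iw}(a_n)$ for algebraic $a_i$ is in fact an $\ob{\bQ}$-linear relation. This settles the ``linear part'' of the conjecture and already guarantees transcendence degree at least $1$ in the field $\bQ(\log_p^{\Iw}(a_1),\ldots,\log_p^{\Iw}(a_n))$.

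The genuinely hard step is to pass from linear independence over $\ob{\bQ}$ to algebraic independence of the $\log_p^{\Iw}(a_i)$. The standard toolkit for algebraic independence --- auxiliary polynomials of controlled $p$-adic size, $p$-adic forms of Philippon's or Nesterenko's criteria for algebraic independence, combined with zero-estimate arguments --- delivers partial results in special configurations. In particular, Roy's strong six exponentials theorem, which is used elsewhere in the paper (see the proof strategy for Theorem \ref{thm:intro_D}), is representative of what current methods can achieve: one controls certain rank-$2$ algebraic-independence statements, but nothing of the required general strength. The main obstacle is precisely this leap from linear independence over $\ob{\bQ}$ to full algebraic independence of $n$ numbers, which is the central open difficulty of transcendence theory; even the complex analogue of the statement is open, being itself a weak form of Schanuel's conjecture. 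Consequently, a genuine proof would require a major breakthrough, and the conjecture must be treated, both here and in \cite{mazurcalegari}, as a working hypothesis rather than a theorem.
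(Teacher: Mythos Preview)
Your assessment is correct: the paper does not prove this statement either, but merely records it as a conjecture (citing \cite{mazurcalegari}) to be used as a standing hypothesis in later results such as Lemma~\ref{lem:schanuel+motivic_implies_admissible}. Your summary of the known partial results (the $n=1$ case, Baker--Brumer for linear independence, and the gap to full algebraic independence) is accurate and matches how the paper treats the conjecture.
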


\begin{lemme}\label{lem:schanuel+motivic_implies_admissible}
	Assume that $\rho$ is irreducible and that $W_p^+$ is motivic. Then $W_p^+$ is $\eta$-admissible for all characters $\eta\in\widehat{\Gamma}$ if $d^+=1$, or if Conjecture \ref{conj:schanuel} holds.
\end{lemme}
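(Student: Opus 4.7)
My plan is to present both the complex and $p$-adic Stark regulators as evaluations of a single algebraic polynomial at the real and $p$-adic logarithms of a fixed basis of global units, and then to deduce the non-vanishing of the $p$-adic evaluation from that of the complex evaluation via Brumer's theorem (when $d^+=1$) or the weak $p$-adic Schanuel conjecture (in general).

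The motivic hypothesis lets me pick a basis $w_1,\ldots,w_{d^+}$ of $W^+$ over $E$, so that $\psi_j(w_i) \in E_\eta \otimes_\bZ \cO_{H_\eta}^\times$ for any basis $(\psi_j)$ of $\HH^1_\ff((\rho\otimes\eta)^*(1))$. Fix a $\bZ$-basis $u_1,\ldots,u_r$ of $\cO_{H_\eta}^\times$ modulo torsion and decompose $\psi_j(w_i) = \sum_k c_{ij,k} \otimes u_k$ with $c_{ij,k} \in E_\eta$. Multilinear expansion of the determinant in \eqref{eq:expression_reg_p} then yields
\[
\Reg_{\omega_p^+}(\rho\otimes\eta) \;=\; P\bigl(\log_p^\Iw(\iota_p u_1),\ldots,\log_p^\Iw(\iota_p u_r)\bigr),
\]
where $P \in E_\eta[X_1,\ldots,X_r]$ is homogeneous of degree $d^+$ with coefficients the minors $\det(c_{ij,k_j})_{i,j}$. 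Substituting $(-\log|\iota_\infty u_k|)_k$ into the \emph{same} polynomial $P$ recovers $\Reg_{\omega_\infty^+}(\rho\otimes\eta)$, which is non-zero because the pairing \eqref{eq:intro_complex_pairing} is perfect by Dirichlet's unit theorem. Consequently $P$ is not the zero polynomial.

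The remaining task is to transfer the non-vanishing of $P$ to the $p$-adic evaluation. Because $u_1,\ldots,u_r$ are multiplicatively independent, Brumer's theorem provides $\ob\bQ$-linear independence of the tuple $(\log_p^\Iw(\iota_p u_k))_k$; when $d^+=1$ this is already enough, as $P$ is then a non-zero $\ob\bQ$-linear form. For $d^+>1$, I would invoke Conjecture \ref{conj:schanuel} on the $\bQ$-linearly independent tuple $(\log_p^\Iw(\iota_p u_k))_k$, which promotes linear independence to algebraic independence over $\bQ$ and hence over $\ob\bQ$ (transcendence degree being stable under algebraic base change); a non-zero polynomial with algebraic coefficients cannot vanish at algebraically independent transcendentals. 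The only substantive idea is the identification of both regulators with values of one common algebraic polynomial; the main obstacle, and the reason the result is merely conditional for $d^+>1$, is that $P$ has degree $d^+$, so linear independence of $p$-adic logarithms is too weak and one genuinely needs the full algebraic independence furnished by Schanuel.
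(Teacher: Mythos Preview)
Your overall strategy—expressing the regulator as an algebraic polynomial in $p$-adic logarithms and invoking Brumer or Schanuel—is the same as the paper's, but your justification that $P\neq 0$ contains a genuine error. Substituting $(-\log|\iota_\infty u_k|)_k$ into $P$ does \emph{not} recover $\Reg_{\omega_\infty^+}(\rho\otimes\eta)$: the polynomial $P$ is built from the basis $w_1,\ldots,w_{d^+}$ of the rational structure $W^+\subseteq W$, whereas the complex regulator is computed with a basis of $\HH^0(\bR,W)$, and these two $d^+$-dimensional subspaces of $W$ are in general unrelated. What your substitution actually yields is $\det(\log_\infty(\psi_j(w_i)))_{i,j}$, and this can vanish. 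For a concrete failure take $\rho=\Ind_k^\bQ\chi$ with $k$ imaginary quadratic, $p$ split in $k$ with $\chi(\gp)=\chi(\ob\gp)$ (so that $\sigma_p$ acts as a scalar and every $E$-line of $W$ is a motivic $p$-stabilization), and choose $W^+=W^{\tau=-1}$. For $w\in W^+$ one has $\tau\cdot\psi(w)=\psi(\tau w)=-\psi(w)$, while $\log_\infty$ is $\tau$-invariant since $|\iota_\infty(\tau a)|=|\overline{\iota_\infty(a)}|=|\iota_\infty(a)|$; hence $\log_\infty(\psi(w))=0$ and your real evaluation of $P$ vanishes even though $P$ itself does not.

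The paper supplies the missing idea: instead of appealing to the archimedean side, it chooses the basis $(\psi_j)$ of $\HH^1_\ff((\rho\otimes\eta)^*(1))$ via a fixed isomorphism $e_\eta\cdot(E_\eta\otimes\cO_{H_\eta}^\times)\simeq W_\eta^{\oplus d^+}$, so that the images $\psi_1(W_\eta),\ldots,\psi_{d^+}(W_\eta)$ lie in direct sum. Since each $\psi_j$ is injective (by irreducibility of $\rho\otimes\eta$), the $(d^+)^2$ elements $\psi_j(w_i)$ are then $E_\eta$-linearly independent in $E_\eta\otimes\cO_{H_\eta}^\times$; Brumer's theorem makes their $p$-adic logarithms $\ob\bQ$-linearly independent, and the regulator is the generic determinant polynomial—manifestly non-zero—evaluated at these $(d^+)^2$ quantities. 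Schanuel (or Brumer alone when $d^+=1$) then finishes the argument exactly as you describe. Note that because the regulator is independent of the choice of $\omega_{\ff,\eta}$ up to a scalar in $E_\eta^\times$, this also shows a posteriori that your $P$ is non-zero for \emph{any} basis $(\psi_j)$; the point is that one needs irreducibility, not the complex regulator, to see it.
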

\begin{proof}
	 Since $W_p^+$ is motivic, one may choose a basis $\omega_p^+=w_1\wedge \ldots \wedge w_{d^+}$ of $\det_{E_p} W_p^+$ with $w_i\in W$ for $1\leq i \leq d^+$. Fix a character $\eta\in\widehat{\Gamma}$ and let $e_\eta$ be the idempotent associated with the irreducible representation $\rho\otimes\eta$. We have an isomorphism of $E_\eta[G_{\bQ}]$-modules
	 \[f \colon e_\eta \cdot\left(E_\eta \otimes \cO_{H_\eta}^\times \right) \simeq (W_\eta)^{\oplus d^+},\]
	 which we fix. We then define a basis $\omega_{\ff,\eta}=\psi_1\wedge\ldots\wedge\psi_{{d^+}}$ of $\det_{E_\eta}\HH^1_\ff((\rho\otimes\eta)^*(1))$ by letting $\psi_i = f^{-1}\circ p_i$ for $1\leq i \leq d^+$, where $p_i\colon W_\eta \hookrightarrow W_\eta^{d^+}$ is the inclusion of the $i$-th copy of $ W_\eta$ inside $W_\eta^{d^+}$. Observe that $\psi_i$ is injective by construction and that the linear subspaces $\im \psi_1,\ldots,\im \psi_{d^+}$ are in direct sum, so the family $(\psi_j(w_i))_{i,j}$ is $E_\eta$-linearly independent. Brumer's theorem \cite{brumer} implies that the restriction of (\ref{eq:def_log_p}) to $\ob{\bQ} \otimes \cO_{H_\eta}^\times$ is injective, so \eqref{eq:expression_reg_p} expresses $\Reg_{\omega_p^+}(\rho\otimes\eta)$ as a polynomial in  $p$-adic logarithms of  units in $\ob{\bQ} \otimes \cO_{H_\eta}^\times$ that are $\ob{\bQ}$-linearly independent. Therefore, $\Reg_{\omega_p^+}(\rho\otimes\eta)$ does not vanish if Conjecture \ref{conj:schanuel} holds, or if $d^+=1$.
\end{proof}
We end this section by exploring the link between the $\eta$-admissibility of a $p$-stabilization $W_p^+$ and the Leopoldt conjecture for $H_\eta$ and $p$. As in \cite[Prop. 2.9(ii)]{maksouddimitrov}, $W_p^+$ is $\eta$-admissible if and only if $E_{p,\eta}\otimes W_p^+$ does not meet the linear subspace 
\[W_p^{\circ}=\bigcap_{\psi}\ker\left[\log_p \circ \psi : W_{p,\eta} \longrightarrow E_{p,\eta}\otimes \cO_{H_\eta}^\times \longrightarrow E_{p,\eta}\right], 
\]
where the intersection runs over all  $\psi$ in $ E_{p,\eta}\otimes\HH^1_\ff((\rho\otimes\eta)^*(1))$. The dimension of $W_p^{\circ}$ can be written as $d^-+s$, where $s$ is the dimension of the kernel of the map \[\alpha_\eta:E_{p,\eta}\otimes\HH^1_\ff((\rho\otimes\eta)^*(1)) \longrightarrow \Hom(W_{p,\eta},E_{p,\eta})\]
induced by $\log_p$.
Kummer theory (as in the proof of Lemma \ref{lem:relation_cohomologie_et_unités}) provides a natural isomorphism
\begin{equation}\label{eq:bloch-kato-kummer}
	\HH^1_\ff(\bQ,W_{p,\eta}^*(1)) \simeq \Hom_{G_{\bQ}}(W_{p,\eta},E_{p,\eta} \otimes \cO_{H_\eta}^\times)=E_{p,\eta}\otimes\HH^1_\ff((\rho\otimes\eta)^*(1)) 
\end{equation}
between the domain of $\alpha_\eta$ and the usual Bloch-Kato Selmer group of the arithmetic dual $W_{p,\eta}^*(1)$ of $W_{p,\eta}$. One may also see $W_{p,\eta}^*(1)$ as $G_{\Qp}$-representation and consider the local Bloch-Kato Selmer group $\HH^1(\Qp,W_{p,\eta}^*(1))$, which is, again by Kummer theory, canonically isomorphic to $\Hom_{G_{\Qp}}(W_{p,\eta},E_{p,\eta}\otimes \cO^{\times,1}_{K_\eta})$, where $\cO^{\times,1}_{K_\eta}$ is the $\Zp$-module of principal units of the completion $K_\eta$ at $p$ of $H_\eta$. Under the above identifications, the map $\alpha_\eta$ is nothing but the composite map 
\begin{equation}\label{eq:loc_p_bloch_kato}
\begin{tikzcd} \HH^1_\ff(\bQ,W_{p,\eta}^*(1)) \ar[r, "\loc_p"] & \HH^1_\ff(\bQ_p,W_{p,\eta}^*(1)) \ar[r, "\simeq"] & \Hom(W_{p,\eta},E_{p,\eta}),
\end{tikzcd}
\end{equation}
where $\loc_p$ is the localization map at $p$, and the second map the isomorphism induced by the $p$-adic logarithm on $K_\eta$, $\log_p : E_{p,\eta} \otimes \cO^{\times,1}_{K_\eta} \longrightarrow E_{p,\eta}$. Let $\cU_{p,\eta}$ be the product of the principal units $\cO^{\times,1}_{K'_\eta}$ of $K'_\eta$, where $K'_\eta$ runs over all the completions of $H_\eta$ at primes above $p$. There is an alternative description of $\HH^1_\ff(\bQ_p,W_{p,\eta}^*(1))$ in terms of semi-local Galois cohomology which identifies $\loc_p$ of (\ref{eq:loc_p_bloch_kato}) with the map 
\begin{equation}\label{eq:loc_p_et_leopoldt}
\begin{tikzcd}\Hom_{G_{\bQ}}(W_{p,\eta},E_{p,\eta}\otimes \cO^\times_{H_\eta}) \rar &\Hom_{G_{\bQ}}(W_{p,\eta},E_{p,\eta}\otimes \cU_{p,\eta})
\end{tikzcd}
\end{equation}
induced by the diagonal embedding $\iota_{\textrm{Leo}}:\cO_{H_\eta}^\times \longrightarrow \cU_{p,\eta}$. The injectivity of $\iota_\textrm{Leo}$, known as the Leopoldt conjecture, implies the injectivity of the map in  (\ref{eq:loc_p_et_leopoldt}) which should be thought as the ``$\rho\otimes\eta$-isotypic component of the Leopoldt conjecture for $H_\eta$ and $p$''. 
\begin{lemme}\label{lem:existence_p_stabilization}
	$W_p$ admits at least one $\eta$-admissible $p$-stabilization if and only if the $\rho\otimes\eta$-isotypic component of the Leopoldt conjecture for $H_\eta$ and $p$ holds, \textit{i.e.}, if the map in  (\ref{eq:loc_p_et_leopoldt}) is injective.
\end{lemme}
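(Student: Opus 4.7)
My plan is to reformulate the existence of an $\eta$-admissible $p$-stabilization as a linear-algebra problem inside $W_{p,\eta}$ and then reduce it to a dimension count combined with the semisimplicity of $\rho|_{G_{\Qp}}$. The discussion preceding the lemma already shows that $W_p^+$ is $\eta$-admissible if and only if $W_p^+\otimes_{E_p} E_{p,\eta}$ meets $W_p^\circ$ trivially inside $W_{p,\eta}$, while chaining the Kummer isomorphism \eqref{eq:bloch-kato-kummer} with the identification in \eqref{eq:loc_p_bloch_kato} identifies the injectivity of \eqref{eq:loc_p_et_leopoldt} with the injectivity of $\alpha_\eta$, i.e., with $s=0$, equivalently $\dim_{E_{p,\eta}}W_p^\circ=d^-$.

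The forward implication is then a straightforward dimension count: any $\eta$-admissible $W_p^+$ puts $W_p^+\otimes E_{p,\eta}$ and $W_p^\circ$ in direct sum inside the $d$-dimensional space $W_{p,\eta}$, forcing $d^+ + (d^-+s)\le d$ and hence $s=0$. For the converse I would follow an argument close to the one used for the case $\eta=\mathds{1}$ alluded to in the preceding discussion. Assuming $s=0$, I first verify that $W_p^\circ$ is stable under the $G_{\Qp}$-action on $W_{p,\eta}$ coming from $\rho$: this is cleanest after extending scalars to $\ob{\bQ}_p$, where the $G_{\bQ}$-equivariance of elements of $\HH^1_\ff$ combined with the commutativity of Iwasawa's $p$-adic logarithm with the natural $G_{\Qp}$-action on $\ob{\bQ}_p$ makes $\alpha_\eta^*$ genuinely $G_{\Qp}$-equivariant. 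Since $\rho$ has finite image and we work in characteristic zero, $W_p$ is a semisimple $G_{\Qp}$-module, and one then produces the desired $W_p^+$ by choosing a $G_{\Qp}$-stable $E_p$-complement of $W_p\cap W_p^\circ$ in $W_p$: provided this intersection has the expected $E_p$-dimension $d^-$, the resulting $W_p^+\otimes E_{p,\eta}$ is by construction complementary to $W_p^\circ$ in $W_{p,\eta}$, and hence $W_p^+$ is $\eta$-admissible.

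The principal obstacle is the interplay between the $G_{\Qp}$-stability of $W_p^\circ$ and its Galois descent from $E_{p,\eta}$ to $E_p$. In general $W_p^\circ$ need not be $\Gal(E_{p,\eta}/E_p)$-stable, since the coefficients entering the forms $\log_p\circ\psi$ involve units of $H_\eta$ and values of $\eta$ that are moved by that Galois group, so ensuring that $W_p\cap W_p^\circ$ attains the expected dimension $d^-$ requires first passing through $\ob{\bQ}_p$ to promote equivariance and then descending using the $E_p$-rationality of $\rho$ together with the semisimplicity of its $G_{\Qp}$-isotypic decomposition over $E_p$; this is the most technical step, and also the one where the fact that $E_p$ is an infinite field is used in order to guarantee the existence of a suitably generic $E_p$-rational $p$-stabilization avoiding the Galois orbit of $W_p^\circ$.
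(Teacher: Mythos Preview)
Your forward implication via the dimension count is fine, and it matches the paper. The problem is in the converse: your key claim that $W_p^\circ$ is stable under the $E_{p,\eta}$-linear $G_{\Qp}$-action coming from $\rho\otimes\eta$ is \emph{false} in general.

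Take $d=2$, $d^+=1$, $\eta=\mathds{1}$, with $\sigma_p$ acting by distinct eigenvalues on eigenvectors $w_1,w_2$, and let $\psi$ generate $\HH^1_\ff(\rho^*(1))$. Writing $\ell_i=\log_p(\psi(w_i))$, one has $W_p^\circ=E_p\cdot(\ell_2 w_1-\ell_1 w_2)$, and $\sigma_p$ sends this line to $E_p\cdot(\alpha\ell_2 w_1-\beta\ell_1 w_2)$, which is a \emph{different} line as soon as $\ell_1,\ell_2\neq 0$ (and this is the generic situation, e.g.\ for $\rho=\Ind_k^\bQ\chi$ as in \S\ref{sec:example_odd_monomial}, where $\ell_1,\ell_2$ are $p$-adic logarithms of conjugate units, nonzero by Baker--Brumer). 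So $W_p^\circ$ is not $\sigma_p$-stable, yet both eigenlines are admissible $p$-stabilizations.

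The equivariance you obtain after extending scalars to $\ob{\bQ}_p$ is a \emph{semilinear} one: $\sigma\in G_{\Qp}$ acts both through $\rho\otimes\eta$ and through the natural action on $\ob{\bQ}_p$-coefficients. That is the correct statement, but it does not say that $W_p^\circ$ is invariant under the $E_{p,\eta}$-linear operator $\rho(\sigma_p)$, which is what you would need in order to take a $\sigma_p$-stable complement and descend. Your ``descent'' paragraph is thus trying to repair a claim that is already broken.

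The paper's (terse) proof bypasses all of this. The clean argument it has in mind is linear-algebraic: since $\rho$ is unramified at $p$ and $K\subset E_p$, one may fix a $\sigma_p$-eigenbasis $e_1,\dots,e_d$ of $W_p$. The assignment $\omega_p^+\mapsto \Reg_{\omega_p^+}(\rho\otimes\eta)$ is an $E_p$-linear form on $\bigwedge^{d^+}W_p$ (cf.\ the proof of Proposition~\ref{prop:changement_de_base}), and it vanishes identically precisely when $\alpha_\eta$ fails to be injective, since its value on $e_{i_1}\wedge\cdots\wedge e_{i_{d^+}}$ is the corresponding $d^+\times d^+$ minor of the $d\times d^+$ matrix $(\log_p\psi_j(e_i))$. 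If $\alpha_\eta$ is injective this matrix has rank $d^+$, so some minor is nonzero, and the corresponding span of eigenvectors is an $\eta$-admissible $p$-stabilization. No stability of $W_p^\circ$, no Galois descent, and no genericity argument over $E_p$ is needed.
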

\begin{proof}
	There exists at least one $\eta$-admissible $p$-stabilization of $W_p$ if and only if the linear subspace $W_p^{\circ}$ has dimension $d^-$, that is, if the map $\alpha_\eta$ is injective. But we have seen that its injectivity is equivalent to the one of the map in (\ref{eq:loc_p_et_leopoldt}).
\end{proof}

\subsection{Selmer groups} \label{sec:selmer_group}
A Galois-stable lattice of $W_p$ is a free $\cO_p$-submodule of $W_p$ of rank $d$ which is stable by the action of the global Galois group $G_\bQ$. The pair $(\rho,\rho^+)$ will always refer to the choice of:
\begin{enumerate}
	\item a Galois-stable lattice $T_p$ of $W_p$,
	\item a $p$-stabilization $W_p^+$ of $\rho$,
\end{enumerate}

which we will be fixed henceforth. Let $W_p^-=W_p/W_p^+$, $T_p^+=W_p^+ \bigcap T_p$ and $T_p^-=T_p/T_p^+$. We also define $\cO_p$-divisible Galois modules $D_p=W_p/T_p$ and  $D_p^\pm=W_p^\pm/T_p^\pm$. Once we fix a generator of the different of $\cO_p$ over $\Zp$, one may identify $\cO_p^\vee$ with $E_p/\cO_p$ where $M^\vee:=\Hom_\Zp(M,\Qp/\Zp)$ stands for the Pontryagin dual of a $\Zp$-module $M$. This allows us to identify $D_p^\vee$ with $T_p$. Let $n\in \bN \cup\{\infty\}$ and let $I_\ell$ be the inertia subgroup of $\Gal(\ob{\bQ}/\bQ_n)$ at the place above $\ell$ determined by $\iota_\ell$. 

\begin{definition}\label{def:sel}
	The Selmer group of level $n$ attached to $(\rho,\rho^+)$ is defined to be
\[\begin{array}{rcl} \Sel_n(\rho,\rho^+)
&:=& \ker \left[ \HH^1(\bQ_n,D_p) \longrightarrow \HH^1(I_{p},D_p^-) \times \prod_{\ell\neq p} \HH^1(I_{\ell},D_p)  \right].\end{array}\]
The strict Selmer group $\Sel_n^\str(\rho,\rho^+)$ of level $n$ attached to $(\rho,\rho^+)$ is the sub-$\cO_p$-module of $\Sel_n(\rho,\rho^+)$ whose cohomology classes are trivial on the decomposition subgroup at $p$.

The dual Selmer group is defined as the Pontryagin dual of $\Sel_n(\rho,\rho^+)$, that is,
\[X_n(\rho,\rho^+):= \Sel_n(\rho,\rho^+)^\vee = \Hom_\Zp(\Sel_n(\rho,\rho^+),\Qp/\Zp). \]
We also define the strict dual Selmer group of level $n$ by putting $X_n^\str(\rho,\rho^+)=\Sel_n^\str(\rho,\rho^+)^\vee$. 
\end{definition}

By standard properties of discrete cohomology groups, $X_\infty(\rho,\rho^+)$ can be identified with the inverse limit $\varprojlim_n X_n(\rho,\rho^+)$ and it is a finitely generated module over the Iwasawa algebra $\Lambda=\cO_p[[\Gamma]]$.

\begin{lemme}\label{lem:suite_exacte_zeros_triviaux_GV}
	If $X_\infty(\rho,\rho^+)$ is of $\Lambda$-torsion, then there is a short exact sequence of torsion $\Lambda$-modules
	$$\begin{tikzcd}0 \rar & \HH^0(\Qp,T_p^-) \rar & X_\infty(\rho,\rho^+) \rar & X_\infty^\str(\rho,\rho^+) \rar & 0, \end{tikzcd}
	$$
	where the $\Gamma$-action on the first term is trivial. 
\end{lemme}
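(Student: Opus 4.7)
The plan is as follows. By definition, $\Sel_\infty^{\str}(\rho,\rho^+)$ is the kernel of the localization map
\[\loc_p\colon \Sel_\infty(\rho,\rho^+) \longrightarrow \HH^1(\bQ_{p,\infty},D_p),\]
so Pontryagin dualizing the tautological sequence $0 \to \Sel_\infty^{\str} \to \Sel_\infty \to \loc_p(\Sel_\infty) \to 0$ reduces the lemma to a natural identification $\loc_p(\Sel_\infty)^\vee \cong \HH^0(\Qp,T_p^-)$, together with a check that the $\Gamma$-action is trivial.

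The first step is to analyze the ambient local module
\[L_p^\infty := \ker\!\bigl[\HH^1(\bQ_{p,\infty},D_p) \to \HH^1(I_{p,\infty},D_p^-)\bigr]\]
containing $\loc_p(\Sel_\infty)$. I would combine the long exact sequence associated with $0 \to D_p^+ \to D_p \to D_p^- \to 0$ with inflation-restriction for $\bQ_{p,\infty}^{\ur}/\bQ_{p,\infty}$, and use that $\rho$ is unramified at $p$ (so that $I_{p,\infty}$ acts trivially on $D_p^-$) to obtain an exact sequence
\[0 \to A \to L_p^\infty \to \HH^1(\widehat{\bZ},D_p^-) \to 0,\]
where $A := \im\!\bigl[\HH^1(\bQ_{p,\infty},D_p^+) \to \HH^1(\bQ_{p,\infty},D_p)\bigr]$ and $\HH^1(\widehat{\bZ},D_p^-) \cong D_p^-/(\sigma_p-1)$ is $\cO_p$-cofree of corank $e = \dim_{E_p} W_p^{-,0}$.

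The heart of the argument is to pin down $\loc_p(\Sel_\infty)$ inside $L_p^\infty$. For this I would invoke the Poitou-Tate five-term exact sequence comparing the two Selmer structures $\cF^{\str} \subseteq \cF^{\mathrm{Gr}}$ at $p$ (they coincide away from $p$), namely
\[0 \to \Sel_\infty^{\str} \to \Sel_\infty \to L_p^\infty \to X_\infty^{\mathrm{rel}}(T_p^*(1)) \to X_\infty^{\mathrm{Gr}}(T_p^*(1)) \to 0,\]
in which the last two terms are Pontryagin duals of Selmer groups of the Kummer dual $T_p^*(1)$ with the (relaxed and Greenberg) dual local conditions at $p$. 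Tate local duality identifies the annihilator of $A$ inside $L_p^\infty$, and, under the $\Lambda$-torsion hypothesis on $X_\infty(\rho,\rho^+)$, a reflection argument identifies the kernel of the last arrow with $A^\vee$. Consequently $\loc_p(\Sel_\infty) \cong L_p^\infty / A \cong \HH^1(\widehat{\bZ},D_p^-)$. I expect this to be the main obstacle, since it is the only step genuinely using the torsion hypothesis.

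Finally, dualizing the short exact sequence $0 \to \Sel_\infty^{\str} \to \Sel_\infty \to \HH^1(\widehat{\bZ},D_p^-) \to 0$ yields the desired sequence, and Tate local duality for the unramified $G_{\Qp}$-module $T_p^-$ produces a natural perfect pairing $\HH^0(\Qp,T_p^-) \otimes_{\cO_p} \HH^1(\widehat{\bZ},D_p^-) \to \Qp/\Zp$, both terms being free of rank $e$ over $\cO_p$. The $\Gamma$-action on $\HH^0(\Qp,T_p^-)$ is trivial because $\bQ_\infty/\bQ$ is totally ramified at $p$, so $\Gamma$ is identified with the inertia quotient of $G_{\Qp}/G_{\bQ_{p,\infty}}$, which acts trivially on the unramified module $T_p^-$.
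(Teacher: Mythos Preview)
Your approach is essentially the same as the paper's, but the paper is far more economical: it observes that the two maps are the obvious ones (evaluation at $\sigma_p$ dualized, and the dual of the inclusion $\Sel_\infty^\str \subseteq \Sel_\infty$), and then cites \cite[Prop.~(2.1)]{greenbergvatsal2000} directly for the one non-trivial point (injectivity of the first map, equivalently surjectivity of the evaluation map onto $D_p^-/(\sigma_p-1)$). The only thing the paper needs to verify is the hypothesis of that proposition, namely that $\HH^0(\bQ_\infty,\widecheck{D}_p)$ is finite, which follows from $\rho$ being unramified at $p$ and not containing the trivial representation.

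Your outline is in effect an attempt to reprove Greenberg--Vatsal's proposition via the five-term Poitou--Tate sequence. That is a legitimate route, but two points deserve more care. First, the assertion ``$\loc_p(\Sel_\infty)\cong L_p^\infty/A$'' bundles together two separate claims: that the composite $\Sel_\infty\to L_p^\infty\to L_p^\infty/A$ is surjective, \emph{and} that $\loc_p(\Sel_\infty)\cap A=0$ (equivalently, that any class in $\Sel_\infty$ whose local restriction lies in $\im[\HH^1(\bQ_{p,\infty},D_p^+)\to\HH^1(\bQ_{p,\infty},D_p)]$ is actually locally trivial). Both require the torsion hypothesis, and neither is quite immediate from your ``reflection argument'' as stated. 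Second, the finiteness of $\HH^0(\bQ_\infty,\widecheck{D}_p)$---which is exactly what the paper checks and which drives Greenberg's surjectivity argument---does not appear in your sketch; it is what makes the Poitou--Tate cokernel vanish rather than merely be cotorsion.
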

\begin{proof}
	The first map is obtained by evaluating cocycles at $\sigma_p$ and by applying Pontryagin duality, and the second one is the dual of the inclusion $\Sel_\infty^\str(\rho,\rho^+) \subseteq \Sel_\infty(\rho,\rho^+)$. The only non-obvious statement is the injectivity of the first map, which will follow from \cite[Prop. (2.1)]{greenbergvatsal2000}, but we must check that $\HH^0(\bQ_\infty,\widecheck{D}_p)$ is finite, where  $\widecheck{D}_p=\Hom(T_p,\mu_{p^\infty})$. Since $\rho$ is unramified at $p$ and since it does not contain the trivial representation, $\HH^0({\bQ(\mu_{p^\infty})},D_p)$ is finite, so $\HH^0(\bQ_\infty,\widecheck{D}_p)\subseteq \HH^0(\bQ(\mu_{p^\infty}),\widecheck{D}_p)=\Hom(\HH^0({\bQ(\mu_{p^\infty})},D_p),\mu_{p^\infty})$ is also finite, as wanted.
\end{proof}
The study of the structure of $X_\infty(\rho,\rho^+)$ was investigated in \cite{maks}, where the unramifiedness assumption was partially released (only the quotient $W_p^-$ was assumed to be unramified). However, $\rho$ was taken irreducible and $\rho^+$ motivic. These last hypotheses imply the $\eta$-admissibility of $\rho^+$ for all characters $\eta$ under the Weak $p$-adic Schanuel conjecture (or when $d^+=1$) as in Lemma \ref{lem:schanuel+motivic_implies_admissible}. We recall the results obtained in \textit{loc. cit.}.

\begin{theorem} 
	Assume that $\rho$ is irreducible and that $\rho^+$ is motivic. If $d^+=1$ or if Conjecture \ref{conj:schanuel} holds, then the following four claims are true.
	\begin{enumerate}
		\item The Selmer groups $X_n(\rho,\rho^+)$ are finite for all $n\in\bN$.
		\item The $\Lambda$-module $X_\infty(\rho,\rho^+)$ is torsion and has no non-trivial finite submodules.
		\item Let $\theta^\alg_{\rho,\rho^+}\in\Lambda$ be a generator of its characteristic ideal. Then $\eta(\theta^\alg_{\rho,\rho^+})$ does not vanish for all non-trivial finite order characters $\eta :\Gamma \longrightarrow \ob{\bQ}^\times$.
		\item Let $e=\dim \HH^0(\bQ_p,W_p^-)$. Then $\theta^\alg_{\rho,\rho^+}$ vanishes at the trivial character $\mathds{1}$ if and only if $e=0$. Moreover, $\theta^\alg_{\rho,\rho^+}$ has a zero of order $\geq e$ at $\mathds{1}$, \textit{i.e.},  
		$$\theta^\alg_{\rho,\rho^+} \in \cA^{e},$$
		where $\cA$ is the augmentation ideal of $\Lambda$.
	\end{enumerate}
\end{theorem}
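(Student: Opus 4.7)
\begin{preuve}
The plan is to combine the interpolation formula of Theorem~\ref{thm:intro_A} with the short exact sequence of Lemma~\ref{lem:suite_exacte_zeros_triviaux_GV} and classical structural results on the Iwasawa cohomology of semi-local units. First, the hypotheses ensure via Lemma~\ref{lem:schanuel+motivic_implies_admissible} that $\rho^+$ is $\eta$-admissible for every finite-order character $\eta\in\widehat{\Gamma}$. Theorem~\ref{thm:intro_A}(1) then yields the torsionness of $X_\infty(\rho,\rho^+)$, which is the torsion part of (2). Claim (3) follows from the interpolation formula of Theorem~\ref{thm:intro_A}(2): for non-trivial $\eta\in\widehat{\Gamma}$ of conductor $p^n$, the value $\eta(\theta^\alg_{\rho,\rho^+})$ equals an explicit non-zero prefactor times the determinant $\det(\log_p|\Psi_j(t_i)|_\eta)_{1\leq i,j\leq d^+}$. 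The $\eta$-admissibility of $\rho^+$ together with the $\Lambda$-linear independence of $\Psi_1,\ldots,\Psi_{d^+}$ forces this determinant to be non-zero, a step that ultimately rests on Brumer's theorem (if $d^+=1$) or on Conjecture~\ref{conj:schanuel}.

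For the inclusion $\theta^\alg_{\rho,\rho^+}\in\cA^{e}$ in claim (4), I would apply Lemma~\ref{lem:suite_exacte_zeros_triviaux_GV} to embed $\HH^0(\Qp,T_p^-)$ as a $\Lambda$-submodule of $X_\infty(\rho,\rho^+)$ on which $\Gamma$ acts trivially. Since its $\cO_p$-rank equals $e=\dim\HH^0(\Qp,W_p^-)$, its characteristic ideal over $\Lambda$ is exactly $\cA^e$, and multiplicativity of characteristic ideals in short exact sequences gives $\char_\Lambda X_\infty(\rho,\rho^+)\subseteq\cA^{e}$. The sharper statement on the exact order of vanishing at $\mathds{1}$ then reduces to showing that the quotient $X_\infty^{\str}(\rho,\rho^+)_{\Gamma}$ is finite, which I would obtain through a descent computation at the trivial character using the extended Coleman map $\widetilde{\Coleman}^{\mathds{1}}$ of Definition~\ref{def:coleman_isotypic_extended} and the constant-term formulas of Lemmas~\ref{lem:val_spé_coleman} and~\ref{lem:coleman_ord}.

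The main obstacle is the absence of non-trivial finite $\Lambda$-submodules asserted in (2), together with the finiteness statement (1). The key cohomological input is that $\HH^0(\bQ_\infty,D_p)$ is finite---this follows from the non-occurrence of the trivial representation in $\rho$ and from its unramifiedness at $p$. Combined with the torsionness of $X_\infty(\rho,\rho^+)$, a standard weak Leopoldt--type argument (adapted either from Greenberg's formalism or from Nekov\'a\v{r}'s theory of Selmer complexes) shows that $X_\infty(\rho,\rho^+)$ has no non-trivial pseudo-null, and hence no non-trivial finite, $\Lambda$-submodule. Claim (1) then follows by a snake lemma argument: the natural map $X_n(\rho,\rho^+)\longrightarrow X_\infty(\rho,\rho^+)_{\Gamma_n}$ has finite kernel and cokernel by the inflation-restriction sequence, while the target is finite because claims (3) and (4) together ensure that $\theta^\alg_{\rho,\rho^+}$ has only finitely many zeros, each of finite multiplicity, along the characters of $\Gamma$ factoring through $\Gamma/\Gamma_n$.
\end{preuve}
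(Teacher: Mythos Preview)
The paper's own proof is a one-line citation to \cite[Théorème 2.1.5]{maks}; the result is imported wholesale from the author's earlier work rather than re-proven here. So there is no ``approach'' in the paper to compare against --- you are attempting a genuine reconstruction using the machinery of the present paper, which is a reasonable exercise but has real gaps.

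Your deduction of torsionness from Theorem~\ref{thm:intro_A}(1) and of $\theta^{\alg}_{\rho,\rho^+}\in\cA^e$ from Lemma~\ref{lem:suite_exacte_zeros_triviaux_GV} is fine, and the argument for (3) via the interpolation formula is correct in spirit. The serious problem is your proof of (1). You claim that the control map $X_n(\rho,\rho^+)\to X_\infty(\rho,\rho^+)_{\Gamma^{p^n}}$ has finite kernel and cokernel and then appeal to (3)--(4) to conclude that the target is finite. But when $e>0$ the target is \emph{not} finite: the submodule $\HH^0(\bQ_p,T_p^-)\hookrightarrow X_\infty(\rho,\rho^+)$ from Lemma~\ref{lem:suite_exacte_zeros_triviaux_GV} is $\cO_p$-free of rank $e$ with trivial $\Gamma$-action, so $(X_\infty)_{\Gamma}$ has $\cO_p$-rank at least $e$. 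The point is precisely that the naive control theorem \emph{fails} in the presence of trivial zeros --- the local error term at $p$ in the inflation--restriction analysis involves $\HH^0(\bQ_{p,n},D_p^-)$, which is infinite when $e>0$. The finiteness of $X_n$ must therefore be established directly at finite level (as is done in \cite{maks}), not by descent from $X_\infty$.

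A secondary issue: your treatment of ``no non-trivial finite submodules'' is a placeholder rather than an argument. The relevant input is the surjectivity of the global-to-local map defining $\Sel_\infty(\rho,\rho^+)$, which in Greenberg's framework requires checking local conditions at $p$ and at the ramified primes; this is carried out in \cite{maks} and is not a formal consequence of weak Leopoldt alone.
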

\begin{proof}
	This follows from \cite[Théorème 2.1.5]{maks}.
\end{proof}
\subsection{Artin $L$-functions and Galois-Gauss sums}\label{sec:Galois_Gauss}
We review some classical results on Artin $L$-functions and on Galois-Gauss sums, and we give equivalent reformulations of Conjecture \ref{conj:IMC}. Our main reference is \cite{martinet}. Let $(V,\pi)$ be an Artin representation of $G_{\bQ}$ of dimension $d$ and of Artin conductor $\ff(\pi)$. Put $d^+=\dim\HH^0(\bR,V)$ and $d^-=d-d^+$. The Artin $L$-function of $\pi$ is the meromorphic continuation to $s\in\bC$ of the infinite product (converging for $\Re(s)>1$) over all rational primes
$$L(\pi,s)= \prod_{\ell} \det(1-\ell^{-s}\sigma_\ell \ \big|\ V^{I_\ell})^{-1},$$
where $\sigma_\ell$ is the Frobenius substitution at $\ell$ and  $I_\ell\subseteq G_{\bQ}$ is any inertia group at $\ell$. It is known to satisfy a functional equation which can compactly be written $\Lambda(\pi,1-s)=W(\pi)\Lambda(\pi^*,s)$, where $W(\pi)$ is Artin's root number and $\Lambda(\pi,s)$ is the ``enlarged'' $L$-function. By definition, $\Lambda(\pi,s)$ is equal to the product $\ff(\pi)^{s/2}\Gamma(\pi,s)L(\pi,s)$, where $\Gamma(\pi,s)=\Gamma_\bR(s)^{d^+}\Gamma_\bR(s+1)^{d^-}$ (and $\Gamma_\bR(s):=\pi^{-s/2}\Gamma(s/2)$) is the $L$-factor at $\infty$ of $\pi$. The Galois-Gauss sum of $\pi$ is defined as
$$\tau(\pi)=i^{d^-}\sqrt{\ff(\pi)}W(\pi^*)=i^{d^-}\dfrac{\sqrt{\ff(\pi)}}{W(\pi)}$$
(see \cite[Chapter II, Definition 7.2]{martinet} and the remark that follows). In particular, when $\HH^0(\bQ,V)=0$, the functional equation yields
\begin{equation}\label{eq:eval_equation_fonctionnelle}
L^*(\pi^*,0):=\lim_{s\rightarrow 0}L(\pi^*,s)/s^{d^+}=\frac{\tau(\pi)}{2^{d^+}}\dfrac{L(\pi,1)}{(-i\pi)^{d^-}}.
\end{equation}
\begin{lemme}\label{lem:formule_interpolation_en_s=1}
	We keep the notations of Conjecture \ref{conj:IMC}. The interpolation property of $\theta_{\rho,\rho^+}$ can be written as follows:
	\begin{description}
	\item[$\textbf{EX}_{\rho,\rho^+}$] for all non-trivial characters $\eta\in\widehat{\Gamma}$ of exact conductor $p^{n}$, one has 
	$$\begin{aligned}
		\eta(\theta_{\rho,\rho^+})
		&=\det(\rho^-)(\sigma_p^{-n})\ {\Reg_{\omega_p^+}(\rho\otimes\eta)} \cdot j\left(\frac{\tau(\eta)^{d^-}}{2^{d^+}}\  \dfrac{L\left(\rho \otimes \eta,1\right)}{(-i\pi)^{d^-}\Reg_{\omega_\infty^+}(\rho\otimes\eta)}\right).
	\end{aligned}$$
	\item[$\textbf{EZC}_{\rho,\rho^+}$] If $W_p^+$ is admissible, then $\theta_{\rho,\rho^+}$ has a trivial zero at the trivial character $\mathds{1}$ of order at least $e$, and one has
	$$\begin{aligned}
		\frac{1}{e!} \dfrac{\diff^e}{\diff \!s^e}\kappa^s(\theta_{\rho,\rho^+})\bigg|_{s=0}
		&=(-1)^e\ \cL(\rho,\rho^+)\ \cE(\rho,\rho^+)\ \Reg_{\omega_p^+}(\rho)\  j\left(\dfrac{L\left(\rho,1\right)}{2^{d^+}(-i\pi)^{d^-}\Reg_{\omega_\infty^+}(\rho)}\right).
	\end{aligned}$$
\end{description}

\end{lemme}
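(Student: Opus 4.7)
The plan is to deduce both restated formulas from the functional equation identity \eqref{eq:eval_equation_fonctionnelle} applied to a suitable Artin representation, and then rearrange.

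First I would address $\textbf{EX}_{\rho,\rho^+}$. Apply \eqref{eq:eval_equation_fonctionnelle} with $\pi = \rho \otimes \eta$. Since $\eta \in \widehat{\Gamma}$ corresponds to an even Dirichlet character, complex conjugation acts on $W_\eta$ exactly as on $W$, so $\dim\HH^0(\bR,W_\eta) = d^+$ and the ``$d^-$'' appearing for $\pi$ in \eqref{eq:eval_equation_fonctionnelle} coincides with the $d^-$ in the conjecture. Under the implicit hypothesis $\HH^0(\bQ,W_\eta)=0$ (which holds because $\rho$ does not contain $\eta^{-1}$, e.g. when $\eta$ is non-trivial and $\rho$ is assumed without trivial subrepresentations of finite order compatible with $\eta$), equation \eqref{eq:eval_equation_fonctionnelle} yields
\[
L^{*}((\rho\otimes\eta)^{*},0) \;=\; \frac{\tau(\rho\otimes\eta)}{2^{d^{+}}}\,\cdot\,\frac{L(\rho\otimes\eta,1)}{(-i\pi)^{d^{-}}}.
\]
Substituting this into the statement of $\textbf{EX}_{\rho,\rho^+}$ from Conjecture \ref{conj:IMC}, the Galois–Gauss sum $\tau(\rho\otimes\eta)$ in the numerator produced by \eqref{eq:eval_equation_fonctionnelle} cancels precisely the $\tau(\rho\otimes\eta)$ appearing in the denominator of the original formula, leaving the factor $\tau(\eta)^{d^-}/2^{d^+}$ and the analytic term $L(\rho\otimes\eta,1)/\bigl((-i\pi)^{d^-}\Reg_{\omega_\infty^+}(\rho\otimes\eta)\bigr)$, as required. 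Note that the terms $\det(\rho^-)(\sigma_p^{-n})$ and $\Reg_{\omega_p^+}(\rho\otimes\eta)$ are untouched, since $j$ acts only on the bracketed complex expression.

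The argument for $\textbf{EZC}_{\rho,\rho^+}$ is entirely parallel: apply \eqref{eq:eval_equation_fonctionnelle} to $\pi = \rho$, which is legitimate since $\HH^{0}(\bQ,W)=0$ by assumption and $\dim \HH^0(\bR,W)=d^+$ by definition. This gives
\[
\tau(\rho)^{-1}\,\frac{L^{*}(\rho^{*},0)}{\Reg_{\omega_\infty^+}(\rho)} \;=\; \frac{1}{2^{d^{+}}}\,\cdot\,\frac{L(\rho,1)}{(-i\pi)^{d^{-}}\,\Reg_{\omega_\infty^+}(\rho)},
\]
and substituting into the original form of $\textbf{EZC}_{\rho,\rho^+}$ yields the claimed expression. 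The order-of-vanishing assertion and the remaining $p$-adic factors $(-1)^{e}\,\cL(\rho,\rho^+)\,\cE(\rho,\rho^+)\,\Reg_{\omega_p^+}(\rho)$ are unchanged by this substitution, since the functional-equation rewriting only affects the archimedean side wrapped by $j$.

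There is essentially no obstacle: the lemma is a bookkeeping identity rather than a theorem, and the only point requiring a moment's care is the verification that $d^+$ and $d^-$ as defined for $\rho$ agree with the ``$d^\pm$'' appearing in \eqref{eq:eval_equation_fonctionnelle} when the latter is applied to $\rho \otimes \eta$. This reduces to the observation that any $\eta \in \widehat{\Gamma}$ is an even character, so the complex conjugation fixes $W_\eta$ pointwise in the same way it does $W$, and the Hodge invariants at infinity are preserved by twisting. Once this is noted, the lemma follows by direct substitution.
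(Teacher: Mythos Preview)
Your proposal is correct and follows exactly the paper's approach: the paper's proof is the single line ``This follows from \eqref{eq:eval_equation_fonctionnelle} applied to $\pi=\rho\otimes\eta$,'' and you have carried this out explicitly, including the $\eta=\mathds{1}$ case for \textbf{EZC}. Your additional remark that $d^\pm$ is unchanged under twisting by the even character $\eta$ is a helpful clarification the paper leaves implicit.
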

\begin{proof}
This follows from (\ref{eq:eval_equation_fonctionnelle}) applied to $\pi=\rho\otimes\eta$.
\end{proof}

\begin{lemme}\label{lem:properties_galois_gauss_sums}\hspace{2em}
	\begin{enumerate}
		\item If $\chi$ is a Dirichlet character, then $\tau(\chi)$ is the usual Gauss sum of $\chi^{-1}$, \textit{i.e.}, $\tau(\chi)=\g(\chi^{-1})$.
		\item We have $\tau(\pi)\in F^\times$ for any splitting field $F\subseteq\ob{\bQ}$ of $\pi$. 
		\item If $\pi$ is unramified at $p$, then $\tau(\pi)$ is a $p$-adic unit. 
		\item Take $\pi=\rho\otimes\eta$ with $\eta\in\widehat{\Gamma}$ and put $N=\ff(\rho)$, $p^n=\ff(\eta)$. Then $$\tau(\rho\otimes\eta)=\tau(\rho)\cdot\g(\eta^{-1})^{d}\cdot\det(\rho)^{-1}(\sigma_p^n)\cdot\eta^{-1}(N).$$
	\end{enumerate}
\end{lemme}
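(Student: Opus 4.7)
The four assertions are essentially classical facts about Artin root numbers and Galois-Gauss sums; I would prove them in the order stated, with (1) and (2) feeding into (3) and (4).

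For (1), unfold the definitions. A Dirichlet character $\chi$ of conductor $f$ has $d^-=1$ if $\chi(-1)=-1$ and $d^-=0$ otherwise; the Artin $L$-function of $\chi$ is the usual $L(\chi,s)$, whose classical functional equation involves the Gauss sum $\g(\chi)$. A direct computation of the Artin root number of $\chi^*=\chi^{-1}$ via that functional equation yields $W(\chi^*)=\g(\chi^{-1})/(i^{d^-}\sqrt{f})$, and substituting into $\tau(\chi)=i^{d^-}\sqrt{f}\,W(\chi^*)$ gives $\tau(\chi)=\g(\chi^{-1})$. For (2), I would invoke Brauer's induction theorem to express the character of $\pi$ as a $\bZ$-linear combination $\sum n_i \Ind_{H_i}^{G_{\bQ}}\chi_i$ of inductions of one-dimensional characters $\chi_i$ taking values in the splitting field $F$, and use the inductivity of the Galois-Gauss sum together with (1) to write $\tau(\pi)=\prod_i\g(\chi_i^{-1})^{n_i}$. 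Each factor $\g(\chi_i^{-1})$ lies in $\bQ(\mu_{\ff(\chi_i)})\subseteq F$, so the product also lies in $F$.

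For (3), combine the Brauer decomposition of (2) with the fact that $\ff(\pi)$ is prime to $p$. A careful choice of the induction data allows one to arrange that each $\chi_i$ is unramified at $p$; then $\g(\chi_i^{-1})\in\bQ(\mu_{\ff(\chi_i)})$ is a unit at every prime above $p$, and the product $\tau(\pi)$ is therefore a $p$-adic unit.

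For (4), I would work place-by-place using Deligne's local formula for the behavior of epsilon constants under twisting by a character of coprime conductor. At each rational prime $\ell\neq p$, $\eta$ is unramified, and a standard local computation identifies the contribution of $\ell$ to $\tau(\rho\otimes\eta)$ with $\eta^{-1}(\ell)^{v_\ell(N)}\cdot\tau_\ell(\rho)$. At $p$, since $\rho$ is unramified, one has $\rho\otimes\eta\cong\eta^{\oplus d}$ as $G_{\Qp}$-representations, and the local factor computes to $\g(\eta^{-1})^d\cdot\det(\rho)^{-1}(\sigma_p^n)$, the $\det(\rho)$-correction arising from the way the unramified part of $\rho$ acts via $\sigma_p^n$ on a basis diagonalizing the twist. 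Multiplying the local contributions and noting that $\prod_{\ell\neq p}\eta^{-1}(\ell)^{v_\ell(N)}=\eta^{-1}(N)$ yields the stated identity. The main obstacle is bookkeeping in this last step: one must track the archimedean normalization factor $i^{d^-}$ precisely, but since any $\eta\in\widehat{\Gamma}$ is necessarily even, the archimedean Hodge types of $\rho$ and $\rho\otimes\eta$ agree, so this factor is unchanged and the formula comes out clean.
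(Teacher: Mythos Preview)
Your outline is mostly on target, but there is a genuine gap in (2) and a related looseness in (3) that you should fix.

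For (2), the characters $\chi_i$ appearing in a Brauer decomposition are characters of $G_{k_i}$ for intermediate number fields $k_i$, not Dirichlet characters of $G_{\bQ}$. Hence (1) does not apply to them, and the expression ``$\g(\chi_i^{-1})\in\bQ(\mu_{\ff(\chi_i)})$'' is not well-formed: $\ff(\chi_i)$ is an ideal of $\cO_{k_i}$, not an integer. What you need is that the Galois--Gauss sum of a Hecke character of a number field with values in $F$ already lies in $F$; this is exactly the content of Fr\"ohlich's theorem \cite[Chapter II, Theorem 7.2]{martinet}, which the paper cites directly and which is not a triviality. Your reduction via Brauer induction is how one \emph{begins} the proof of Fr\"ohlich's theorem, but the remaining step is real.

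For (3), the paper proceeds differently: it uses the local factorization $\tau(\pi)=\prod_\ell\tau(\pi_\ell)$ and shows that $\tau(\pi_\ell)$ is divisible only by primes above $\ell$, while $\tau(\pi_p)=1$ since $\pi_p$ is unramified. Your global Brauer route can also be made to work, but your phrasing ``a careful choice of the induction data'' is misleading: if $\pi$ factors through $\Gal(H/\bQ)$ with $H/\bQ$ unramified at $p$, then the inertia at $p$ in $G$ is trivial, so \emph{every} character of every subgroup is automatically unramified at primes above $p$; no choice is needed. One then needs that the Galois--Gauss sum of such a character is a $p$-unit, which follows from the same local considerations the paper uses.

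For (4), your approach agrees with the paper's (which simply cites \cite[Chapter IV, Exercise 3b)]{martinet}), but the assertion ``$\rho\otimes\eta\cong\eta^{\oplus d}$ as $G_{\Qp}$-representations'' is false: $\rho|_{G_{\Qp}}$ decomposes as a sum of \emph{unramified} characters, not trivial ones. The $\det(\rho)^{-1}(\sigma_p^n)$ correction you obtain is precisely the discrepancy, so your conclusion is right, but the justification should say that $\rho|_{G_{\Qp}}\cong\bigoplus_i\chi_i$ with each $\chi_i$ unramified, and that the local epsilon factor of the ramified twist $\chi_i\eta$ at $p$ equals $\chi_i(\sigma_p)^{-n}\tau_p(\eta)$.
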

\begin{proof}
	The first statement follows from the well-known fact that $\g(\chi^{-1})=i^{d^-}W(\chi^{-1})\ff(\chi)^{1/2}$, and the second statement from Fröhlich's theorem \cite[Chapter II, Theorem 7.2]{martinet}. For the third statement, recall first that $\tau(\pi)$ is a product over all primes $\ell$ of local Galois-Gauss sums $\tau(\pi_\ell)\in\ob{\bQ}^\times$ attached to the local representation $\pi_\ell$ over $\bQ_\ell$ associated with $\pi$ (see \textit{loc.cit.}, Chapter II, Proposition 7.1). We claim that $\tau(\pi_\ell)$ is only divisible by primes above $\ell$ and that $\tau(\pi_p)=1$. Local Galois-Gauss sums are defined with the aid of Brauer induction from the case of multiplicative characters $\theta$ of $\Gal(\ob{\bQ}_\ell/M)$ for a finite extension $M/\bQ_\ell$ (see \textit{loc.cit.}, Chapter II, §4. and §2.). It is known that $\tau(\theta)$ is an algebraic integer dividing the norm of the local conductor (which is a power of $\ell$), and moreover that $\tau(\theta)=1$ whenever both $\theta$ and $M/\bQ_\ell$ are unramified. This implies easily our claim and (3) as well. Since $\rho$ and $\eta$ have coprime conductors, the statement (4) follows from \textit{loc. cit.}, Chapter IV, Exercise 3b). 
\end{proof}
\begin{proposition}\label{prop:renormalisation_mesure_p_adique}\hspace{2em}
		\begin{enumerate}
		\item The statement \textbf{EX}$_{\rho,\rho^+}$ in Conjecture \ref{conj:IMC} is equivalent to the existence of an element $\theta_{\rho,\rho^+}'\in\Frac(\Lambda)$ which has at most a pole at $\mathds{1}$ and which satisfies the following interpolation property: for all non-trivial characters $\eta\in\widehat{\Gamma}$ of conductor $p^n$, we have
		$$\eta(\theta_{\rho,\rho^+}')=\frac{\det(\rho^+)(\sigma_p^n)}{\g(\eta^{-1})^{d^+}}\ \Reg_{\omega_p^+}(\rho\otimes\eta)\cdot j\left(\dfrac{L^{*}\left((\rho \otimes \eta)^*,0\right)}{\Reg_{\omega_\infty^+}(\rho\otimes\eta)}\right).$$ 
		Moreover, if \textbf{EX}$_{\rho,\rho^+}$ holds, then $\theta_{\rho,\rho^+}$ and $\theta_{\rho,\rho^+}'$ are equal up to multiplication by a unit of $\Lambda$.
		\item If \textbf{EX}$_{\rho,\rho^+}$ holds and if $W_p^+$ is admissible, then \textbf{EZC}$_{\rho,\rho^+}$ is equivalent to $$\frac{1}{e!} \dfrac{\diff^e}{\diff \!s^e}\kappa^s(\theta_{\rho,\rho^+}')\bigg|_{s=0}=(-1)^e\ \cE(\rho,\rho^+)\ \cL(\rho,\rho^+)\ \Reg_{\omega_p^+}(\rho) \cdot j\left(\dfrac{L^{*}\left(\rho^*,0\right)}{\Reg_{\omega_\infty^+}(\rho)}\right). $$ 
	\end{enumerate}
\end{proposition}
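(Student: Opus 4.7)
The plan is to prove that $\theta_{\rho,\rho^+}$ and $\theta_{\rho,\rho^+}'$ are related by multiplication by an explicit unit $v\in\Lambda^\times$, from which both parts of the proposition will follow at once. To find $v$, I would compare the two interpolation formulas character by character using Lemma~\ref{lem:properties_galois_gauss_sums}. Parts~(1) and~(4) of that lemma, together with the multiplicativity $\det(\rho)(\sigma_p^n)=\det(\rho^+)(\sigma_p^n)\,\det(\rho^-)(\sigma_p^n)$ of local determinants (coming from the short exact sequence $0\to W_p^+\to W_p\to W_p^-\to 0$ of $G_{\Qp}$-representations), give the identity
\[
\frac{\tau(\eta)^{d^-}}{\tau(\rho\otimes\eta)}=\frac{\det(\rho)(\sigma_p^n)\,\eta(N)}{\tau(\rho)\,\g(\eta^{-1})^{d^+}},
\]
with $N=\ff(\rho)$ coprime to $p$. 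Plugging this into \textbf{EX}$_{\rho,\rho^+}$, the $\g(\eta^{-1})^{d^+}$ factors cancel and the determinant factors telescope via $\det(\rho^-)(\sigma_p^{-n})\det(\rho)(\sigma_p^n)/\det(\rho^+)(\sigma_p^n)=1$. The interpolation formula will collapse to
\[
\eta(\theta_{\rho,\rho^+})=\frac{\eta(N)}{\tau(\rho)}\cdot\eta(\theta_{\rho,\rho^+}')
\]
for every non-trivial $\eta\in\widehat{\Gamma}$.

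The next step is to interpret the prefactor $\eta(N)/\tau(\rho)$ as $\eta(v)$ for a fixed unit $v\in\Lambda^\times$. By parts~(2) and~(3) of Lemma~\ref{lem:properties_galois_gauss_sums}, $\tau(\rho)\in E_p^\times$ is a $p$-adic unit, hence a unit in $\Lambda$. Since $N$ is coprime to $p$, there exists a unique $\sigma_N\in\Gamma$ with $\chi_{\cyc}(\sigma_N)=\langle N\rangle$, and consequently $\eta(\sigma_N)=\eta(N)$ for all $\eta\in\widehat{\Gamma}$ (after identifying $\eta$ with its Dirichlet counterpart under $\iota_\infty$). Setting $v:=\sigma_N^{-1}\tau(\rho)\in\Lambda^\times$, the previous identity becomes $\eta(\theta_{\rho,\rho^+})=\eta(v\,\theta_{\rho,\rho^+}')$ for all non-trivial characters. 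After clearing the at-most-simple pole at $\mathds{1}$ by multiplying through by $\gamma-1$ for a topological generator $\gamma\in\Gamma$, Weierstrass preparation in $\Lambda\simeq\cO_p[[T]]$ forces the equality in $\Lambda$, hence $\theta_{\rho,\rho^+}=v\cdot\theta_{\rho,\rho^+}'$ in $\Frac(\Lambda)$. Both directions of the equivalence in~(1) and the ``up to a unit'' claim are then immediate.

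For~(2), $v$ being a unit means that the two measures vanish at $\mathds{1}$ to the same order. The Leibniz rule applied to $\kappa^s(\theta_{\rho,\rho^+})=\kappa^s(v)\,\kappa^s(\theta_{\rho,\rho^+}')$ then gives, at $s=0$,
\[
\frac{1}{e!}\dfrac{\diff^e}{\diff\! s^e}\kappa^s(\theta_{\rho,\rho^+})\bigg|_{s=0}=\mathds{1}(v)\cdot\frac{1}{e!}\dfrac{\diff^e}{\diff\! s^e}\kappa^s(\theta_{\rho,\rho^+}')\bigg|_{s=0},
\]
with $\mathds{1}(v)=\tau(\rho)$ since $\mathds{1}(\sigma_N^{-1})=1$. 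This factor of $\tau(\rho)$ precisely absorbs the $j(\tau(\rho)^{-1})$ in \textbf{EZC}$_{\rho,\rho^+}$, yielding the cleaner formula for $\theta_{\rho,\rho^+}'$. The main subtlety throughout is purely notational: one must pass the algebraic numbers $\det(\rho^\pm)(\sigma_p^n)$, $\g(\eta^{-1})$, $\eta(N)$ and $\tau(\rho)$ in and out of the isomorphism $j:\bC\simeq\ob{\bQ}_p$ at the right moments, which is legitimate because they all lie in $\ob{\bQ}$ and $\iota_p=j\circ\iota_\infty$.
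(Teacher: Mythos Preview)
Your approach is exactly the paper's: use Lemma~\ref{lem:properties_galois_gauss_sums}(4) to rewrite $\tau(\eta)^{d^-}/\tau(\rho\otimes\eta)$, collapse the determinant factors via $\det(\rho)=\det(\rho^+)\det(\rho^-)$, and recognise the resulting prefactor as the value at $\eta$ of a fixed unit of $\Lambda$. The paper writes this unit as $\tau(\rho)^{-1}\theta_N$ with $\theta_N=\prod_{\ell\mid N}\gamma_\ell^{\ord_\ell(N)}\in\Gamma$, which is your $\sigma_N$.

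There is, however, an inversion slip in your definition of $v$. From your identity $\eta(\theta_{\rho,\rho^+})=\dfrac{\eta(N)}{\tau(\rho)}\,\eta(\theta_{\rho,\rho^+}')$ (which is correct), the unit must satisfy $\eta(v)=\eta(N)/\tau(\rho)$, so $v=\sigma_N\,\tau(\rho)^{-1}$, not $\sigma_N^{-1}\tau(\rho)$. With your choice one gets $\eta(v)=\eta(N)^{-1}\tau(\rho)$, the inverse of what is needed. This error propagates into part~(2): with the correct $v$ one has $\mathds{1}(v)=\tau(\rho)^{-1}$, which indeed cancels the $j(\tau(\rho)^{-1})$ inside \textbf{EZC}$_{\rho,\rho^+}$; with your $v$ you would instead obtain $\mathds{1}(v)=\tau(\rho)$, leaving a spurious factor $\tau(\rho)^{-2}$ in the formula for $\theta_{\rho,\rho^+}'$. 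Once $v$ is inverted, your argument is complete and matches the paper's.
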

\begin{proof}
By Lemma \ref{lem:properties_galois_gauss_sums} (4), the two quotients of $p$-adic measures $\theta_{\rho,\rho^+}$ and $\theta_{\rho,\rho^+}'$ are related (when they exist) by the formula $\theta_N \theta_{\rho,\rho^+}'=\tau(\rho)\theta_{\rho,\rho^+}$, where we have written $\theta_N=\prod_{\ell|N}\gamma_\ell^{\ord_\ell(N)}\in\Gamma\subseteq\Lambda^\times$ (and where $\gamma_\ell\in\Gamma$ is equal to the restriction to $\bQ_\infty$ of $\sigma_\ell$). By Lemma \ref{lem:properties_galois_gauss_sums} (3), one has $\tau(\rho)\in\cO_p^\times$, so $\theta_{\rho,\rho^+}$ and $\theta_{\rho,\rho^+}'$ are equal up to a unit of $\Lambda$, and the proposition follows easily.
\end{proof}



\subsection{Stark's regulator}
Let $(\pi,V)$ be a linear representation of $G_{\bQ}$ factoring through the Galois group of a finite extension $L/\bQ$ and with coefficients in a number field $F \subset \ob{\bQ}\subset \bC$. We denote by $S_{\infty}(L)$ the set of archimedean places of $L$.

Given any finite set of primes $S$ of $L$ containing $S_\infty(L)$, we let $Y_{L,S}$ be the free abelian group on the set $S$, and we define the subgroup
\begin{equation}\label{eq:definition_X_H,S}
	X_{L,S}:=\left\{\sum_{w} a_w\cdot w\in Y_{L,S}\ \biggm | \  \sum_{w}a_w=0\right\}.
\end{equation} 
Let $\cO_{L,S}$ be the ring of $S$-integers of ${L}$. By Dirichlet's unit theorem, the regulator map 
\begin{equation}\label{eq:dirichlet_reg}
	\lambda_{L,S} : \bR\otimes \cO_{L,S}^\times \overset{\sim}{\longrightarrow}\bR\otimes  X_{L,S}, \qquad a\mapsto - \sum_{w|v\in S}\log|a|_ww,
\end{equation}
is a $\Aut(L)$-equivariant isomorphism (see \cite[§1.1]{rubinstark}).

We now specialize to $S=S_\infty(L)$, $\cO_{L,S}^\times=\cO_{L}^\times$ and we put $X=X_{L,S_\infty(L)}$ and $\lambda=\lambda_{L,S_\infty(L)}$. 
By the Noether-Deuring theorem, there exists an isomorphism of $\bQ[G_{\bQ}]$-modules (which we fix)

\begin{equation}\label{eq:f} f\ \colon \bQ\otimes X \simeq \bQ\otimes \cO^\times_{L}. \end{equation}
The determinant $R(\pi,f)\in \bC^\times$ of the map
\[\begin{array}{rcl}
	\Hom_{G_{\bQ}}(\bC \otimes_{F,\iota_\infty} V, \bC\otimes X) & \longrightarrow &  \Hom_{G_{\bQ}}(\bC \otimes_{F,\iota_\infty} V, \bC\otimes X) \\
	\psi & \mapsto & \lambda \circ f \circ \psi
\end{array}\]
is Stark's regulator attached to $\pi^* $ and $f$ defined in \cite[Chap. I, \S4]{tate}. Note that both $L(\pi^*,s)$ and $R(\pi,f)$ implicitly depend on the embedding $\iota_\infty \colon \ob{\bQ}\hookrightarrow \bC$. Given any automorphism $\alpha$ of $\bC$, the map $\alpha \circ \iota_\infty$ also defines a complex embedding, and we denote by $L((\pi^\alpha)^*,s)$ and $R(\pi^\alpha,f)$ respectively the resulting $L$-series and Stark regulator.
\begin{conjecture}[Stark's conjecture]\label{conj:stark}
	For any automorphism $\alpha$ of $\bC$  we have $A(\pi^\alpha,f)=A(\pi,f)^\alpha$, where 
	\[
	A(\pi^\alpha,f)=\frac{L^*((\pi^\alpha)^*,0)}{R(\pi^\alpha,f)}. 
	\]
\end{conjecture}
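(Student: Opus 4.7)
The statement is the classical non-abelian Stark conjecture for Artin $L$-functions, open in general but known in many important cases; my plan is the induction-theoretic strategy due to Tate \cite{tate}, which at least settles the conjecture whenever $\pi$ is monomial. I would first check that the ratio $A(\pi,f)$ is independent of the choice of $f$ modulo $\ob{\bQ}^\times$: two admissible $f$'s differ by an $F[G_{\bQ}]$-automorphism of $\bQ\otimes X$ whose determinant on each isotypic component of $V$ is algebraic, and transforms equivariantly under $\alpha$. Thus the conjecture is really a statement about the class $A(\pi)\in\bC^\times/\ob{\bQ}^\times$ and how $\alpha\in\Aut(\bC)$ acts on it.

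Next, I would establish two formal properties of the invariant $A$: it is \emph{additive} in $\pi$ (block-diagonality of the Artin $L$-factors and of the regulator pairing gives $A(\pi_1\oplus\pi_2)=A(\pi_1)\,A(\pi_2)$ modulo $\ob{\bQ}^\times$), and it is \emph{compatible with induction}: for $\pi=\Ind_{G_K}^{G_{\bQ}}\chi$ one has $L(\pi^*,s)=L(\chi^*,s)$ as Artin $L$-series, while Shapiro's lemma, together with the canonical identification of $\cO_L^\times$ with the units of the splitting field of $\chi$, identifies the Stark regulator for $\pi$ over $\bQ$ with an analogous regulator for $\chi$ over $K$. Both properties commute with the action of $\alpha$. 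Now Brauer's induction theorem writes $[\pi]=\sum_i n_i\,[\Ind_{G_{K_i}}^{G_{\bQ}}\chi_i]$ in the representation ring of $G_\bQ$, with $\chi_i$ one-dimensional and $n_i\in\bZ$, and the two compatibilities reduce the conjecture for $\pi$ to the analogous (relative) conjecture for each monomial $\chi_i$ over $K_i$. For a one-dimensional $\chi$ over a number field $K$, $L(\chi^*,s)$ is a Hecke $L$-series whose leading term at $s=0$ is computed, via orthogonality of characters, from the analytic class number formula applied to the abelian extension of $K$ cut out by $\chi$; the corresponding classical Dirichlet regulator plays the role of $R(\chi,f)$, and $\alpha$-equivariance is transparent since $\chi^\alpha$ cuts out the conjugate field and produces the Galois-conjugate units.

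The main obstacle --- and the reason the full conjecture remains open --- is that Brauer's theorem only decomposes the \emph{virtual} character of $\pi$ and not the representation itself. This forces one to work with $A$ as a class function on the Grothendieck group of $G_\bQ$-representations, modulo $\ob{\bQ}^\times$, and to formulate Stark's conjecture in a base-change-invariant way over arbitrary number fields $K_i$ (not merely over $\bQ$ as stated here). A complete proof moreover requires controlling the $\ob{\bQ}^\times$-ambiguities introduced at each application of Shapiro's lemma and of the Noether--Deuring isomorphism \eqref{eq:f}, which is where the interplay between the rational structure and the Galois action of $\alpha$ becomes genuinely delicate.
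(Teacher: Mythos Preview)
The statement is labeled as a \emph{conjecture} in the paper, not a theorem; the paper offers no proof and invokes Stark's conjecture only as a standing hypothesis (e.g.\ Proposition~\ref{prop:dependence_theta_on_bases} begins ``Assume Conjecture~\ref{conj:stark}''). So there is no proof in the paper to compare your proposal against, and you yourself acknowledge that the argument you sketch does not close.

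Your outline of Tate's framework is broadly accurate in its formal properties (additivity in $\pi$, compatibility with induction), and you correctly identify the obstruction coming from the virtual nature of Brauer's decomposition. However, your claim that the strategy ``at least settles the conjecture whenever $\pi$ is monomial'' is wrong. Reducing $\Ind_K^\bQ\chi$ to $\chi$ over $K$ is fine, but Stark's conjecture for a general one-dimensional character $\chi$ of $G_K$ is itself open. The analytic class number formula gives the leading term of $\zeta_L(s)=\prod_\psi L(\psi,s)$, not of the individual factor $L(\chi^*,s)$; orthogonality of characters does not separate these factors in any way that is compatible with the action of $\alpha\in\Aut(\bC)$. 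What Tate's method does establish is the conjecture for representations with \emph{rational-valued} character (via Artin induction from trivial characters, for which the class number formula applies directly), and this is a strictly smaller class than the monomial representations.
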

In particular, $A(\pi,f)\in F^\times$ by basic field theory. One can check that Conjecture \ref{conj:stark} does not depend on the choice of the isomorphism $f$ in \eqref{eq:f}.

\begin{lemme}\label{lem:stark}
	Assume that $\pi$ does not contain the trivial representation. The determinant of the pairing 
	\begin{equation*}
		B\colon\left\{\begin{array}{rclcc} \bC \otimes \HH^0(\bR,V) & \times& \bC \otimes \HH^1_\ff(\pi^*(1)) &\longrightarrow &\bC \\
			(1 \otimes v &,&1\otimes \psi) &\mapsto & \log_\infty(\psi(v)),
		\end{array}\right.
	\end{equation*}
	computed with respect to $\omega_\infty^+\in \det_F \HH^0(\bR,V)$ and $\omega_{\ff} \in \det_F \HH^1_\ff(\pi^*(1))$ equals $R(\pi,f) \mod F^\times$.
\end{lemme}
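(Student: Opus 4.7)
The plan is to reduce the identity to Tate's definition of Stark's regulator via three successive identifications, followed by a determinant computation.

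First, I would use the Kummer-theoretic identification
\[
\HH^1_\ff(\pi^*(1)) \simeq \Hom_{G_{\bQ}}(V, F\otimes\cO_L^\times),
\]
analogous to \eqref{eq:bloch-kato-kummer}, to rewrite $\omega_\ff$ as a basis $\psi_1\wedge\cdots\wedge \psi_{d^+}$ of the right-hand wedge. The pairing then reads $B(v,\psi) = \log_\infty(\psi(v))$ for $\psi\colon V \to \bC\otimes \cO_L^\times$ a $G_{\bQ}$-equivariant morphism.

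Next, I would invoke the Dirichlet regulator $\lambda$ from \eqref{eq:dirichlet_reg} and observe that $\log_\infty(\psi(v))$ is the $w_\infty$-coordinate of $\lambda(\psi(v)) \in \bC \otimes X$, where $w_\infty \in S_\infty(L)$ is the archimedean place determined by $\iota_\infty$. Since $Y_{L,S_\infty(L)}$ is isomorphic as a $G_{\bQ}$-module to $\Ind_{\langle c\rangle}^{G_{\bQ}} \bZ$ (with $c$ the complex conjugation fixing $w_\infty$), Frobenius reciprocity yields an $F$-linear isomorphism
\[
\Hom_{G_{\bQ}}(V, F\otimes Y_{L,S_\infty(L)}) \simeq (F\otimes \HH^0(\bR,V))^*,\quad \phi \longmapsto \bigl(v\mapsto \phi(v)|_{w_\infty}\bigr).
\]
Since $\pi$ contains no trivial subrepresentation, the inclusion $X \hookrightarrow Y_{L,S_\infty(L)}$ induces an isomorphism on these Hom-spaces, so $Y_{L,S_\infty(L)}$ may be replaced by $X$ in the source.

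Assembling these, the $\bC$-linear extension of $B$ factors as the evaluation pairing preceded by the composition
\begin{multline*}
\bC\otimes\HH^1_\ff(\pi^*(1)) \simeq \Hom_{G_{\bQ}}(V, \bC\otimes \cO_L^\times) \\
\overset{\psi\mapsto\lambda\circ\psi}{\longrightarrow} \Hom_{G_{\bQ}}(V, \bC\otimes X) \simeq (\bC\otimes\HH^0(\bR,V))^*.
\end{multline*}
Its determinant with respect to $\omega_\ff$ on the source and the dual of $\omega_\infty^+$ on the target equals $\det B$. Writing $\psi\mapsto\lambda\circ\psi$ as the composite $\psi\mapsto(\lambda\circ f)\circ(f^{-1}\circ\psi)$ identifies post-composition by $\lambda\circ f$ with Tate's automorphism of $\Hom_{G_{\bQ}}(V, \bC\otimes X)$, whose determinant is $R(\pi,f)$ by definition. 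The remaining two isomorphisms---pre-composition by the $\bQ$-rational $f^{-1}$ and the Frobenius reciprocity isomorphism, both defined over $F$---contribute determinants in $F^\times$. Hence $\det B \equiv R(\pi,f) \pmod{F^\times}$.

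The main bookkeeping obstacle will be to realize the Frobenius reciprocity isomorphism in an $F$-rational form compatible with the basis $\omega_\infty^+$ of $\HH^0(\bR,V)$, and to track the Kummer-theoretic identification under a change of bases; once these technicalities are settled, the comparison of determinants becomes purely formal.
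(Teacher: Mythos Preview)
Your proposal is correct and follows essentially the same approach as the paper's proof: both use the identification $\HH^1_\ff(\pi^*(1))\simeq\Hom_{G_\bQ}(V,F\otimes\cO_L^\times)$, the induced structure $Y\simeq\Ind_{\Gal(\bC/\bR)}^{\Gal(L/\bQ)}\bZ w_0$ together with Shapiro/Frobenius reciprocity to pass to $\Hom_F(\HH^0(\bR,V),F)$, and the hypothesis that $\pi$ has no trivial subrepresentation to replace $Y$ by $X$. The only cosmetic difference is that you factor $\psi\mapsto\lambda\circ\psi$ as $(\lambda\circ f)\circ(f^{-1}\circ\psi)$ and argue that the $F$-rational pieces contribute in $F^\times$, whereas the paper instead fixes compatible bases so that $f^*(\omega_\ff)=p_*((\omega_\infty^+)^{-1})$ and reads off the equality $\pm R(\pi,f)$ directly; both arguments amount to the same computation modulo $F^\times$, and the ``bookkeeping obstacle'' you flag is not an issue since the Frobenius reciprocity map $\phi\mapsto(v\mapsto\phi(v)|_{w_\infty})$ is visibly defined over $F$.
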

\begin{proof}
	 Recall that $X$ sits inside $Y=\bigoplus_{w\in S_\infty(L)} \bZ w$ and note that $Y/X\simeq \bZ$, the $G_{\bQ}$-action on $\bZ$ being trivial. Our hypothesis on $\pi$ ensures that the natural embedding $\Hom_{G_\bQ}(V,F\otimes X) \hookrightarrow \Hom_{G_\bQ}(V,F\otimes Y)$ is an isomorphism. Letting $w_0$ be the archimedean place of $L$ corresponding to $\iota_\infty$, we have $Y\simeq \Ind_{\Gal(\bC/\bR)}^{\Gal(L/\bQ)} \ \bZ w_0$ as a Galois module. By Shapiro's lemma, we have
	\[
	\Hom_{G_\bQ}(V,F\otimes X) = \Hom_{G_\bQ}(V,F\otimes Y) \simeq \Hom_{\Gal(\bC/\bR)}(V,F)=\Hom_F(\HH^0(\bR, V),F),
	\]
	where the isomorphism in the middle is induced by the projection map $p \colon Y \rightarrow \bZ$ given by $p(\sum_w a_w w)=a_{w_0}$. Since $p\circ \lambda=-\log_\infty$ on $\cO^\times_{L}$, one sees that the determinant of $B$ coincides with that of the map $\psi \mapsto p \circ \lambda \circ \psi$ with respect to $\omega_{\ff}$ and $(\omega_\infty^+)^{-1}\in \det_F \Hom_F(\HH^0(\bR, V),F)$. Therefore, this determinant equals $\pm R(\pi,f)$ if one chooses $f,\omega_{\ff},\omega_\infty^+$ so that $f^*(\omega_{\ff})=p_*((\omega_\infty^+)^{-1})$.
\end{proof}

\begin{proposition}\label{prop:dependence_theta_on_bases}
	Assume Conjecture \ref{conj:stark}. The validity of Conjecture \ref{conj:IMC} for $(\rho,\rho^+)$ is independent of the choice of the bases $\omega_\infty^+$, $\omega_p^+$ and $\omega_{\ff,\eta}$ (for ${\eta\in\widehat{\Gamma}}$). More precisely, the measure $\theta_{\rho,\rho^+}\in \cO_p[[\Gamma]]$ is well-defined up to multiplication by an element of $\cO_p^\times$, and does not depend of the choice of the family $(\omega_{\ff,\eta})_{\eta}$.
\end{proposition}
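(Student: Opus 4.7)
The plan is to compute how the right-hand side of the interpolation formula transforms under each admissible change of basis, then invoke the uniqueness of $\theta_{\rho,\rho^+}\in\Frac(\Lambda)$ (as recorded after Conjecture \ref{conj:IMC}) to transfer these transformations to the measure itself. Three observations underpin the computation. First, both regulators $\Reg_{\omega_\infty^+}(\rho\otimes\eta)$ and $\Reg_{\omega_p^+}(\rho\otimes\eta)$ are determinants of non-degenerate pairings, hence multilinear in each of their defining wedge bases. Second, the field isomorphism $j\colon\bC\simeq\ob{\bQ}_p$ was chosen so that $j\circ\iota_\infty=\iota_p$, whence $j(\iota_\infty(\lambda))=\iota_p(\lambda)$ for every algebraic number $\lambda$. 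Third, Stark's Conjecture \ref{conj:stark} combined with Lemma \ref{lem:stark} places the ratio $L^*((\rho\otimes\eta)^*,0)/\Reg_{\omega_\infty^+}(\rho\otimes\eta)$ into $E_\eta^\times$, so that $j$ applied to it produces a canonical algebraic element of $\ob{\bQ}_p$.

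Under a replacement $\omega_{\ff,\eta}\mapsto\lambda_\eta\,\omega_{\ff,\eta}$ with $\lambda_\eta\in E_\eta^\times$, multilinearity yields $\Reg_{\omega_p^+}(\rho\otimes\eta)\mapsto\iota_p(\lambda_\eta)\Reg_{\omega_p^+}(\rho\otimes\eta)$ and $\Reg_{\omega_\infty^+}(\rho\otimes\eta)\mapsto\iota_\infty(\lambda_\eta)\Reg_{\omega_\infty^+}(\rho\otimes\eta)$. Applying $j$ to the quotient $L^*/\Reg_{\omega_\infty^+}$ turns the resulting scaling factor $\iota_\infty(\lambda_\eta)^{-1}$ into $\iota_p(\lambda_\eta)^{-1}$ via the second fact above, which cancels exactly against the $\iota_p(\lambda_\eta)$ in front. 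Hence $\eta(\theta_{\rho,\rho^+})$ is unchanged for every non-trivial $\eta\in\widehat{\Gamma}$, and uniqueness then yields independence of $\theta_{\rho,\rho^+}$ from the entire family $(\omega_{\ff,\eta})_{\eta}$. Under $\omega_\infty^+\mapsto\mu\,\omega_\infty^+$ with $\mu\in E^\times$, the requirement that both $\omega_\infty^+$ and $\mu\,\omega_\infty^+$ be $T_p$-optimal forces $\iota_p(\mu)\in\cO_p^\times$; only $\Reg_{\omega_\infty^+}$ is affected, so $\eta(\theta_{\rho,\rho^+})$ is multiplied by the $\eta$-independent unit $\iota_p(\mu)^{-1}\in\cO_p^\times$. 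Symmetrically, under $\omega_p^+\mapsto\nu\,\omega_p^+$ with $\nu\in\cO_p^\times$, $\eta(\theta_{\rho,\rho^+})$ is multiplied by the constant $\nu$. In both cases, uniqueness lifts these constant rescalings to the measure itself, giving a multiplication by the corresponding unit of $\cO_p^\times\subseteq\Lambda^\times$.

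To conclude, I will verify that each of the three parts of Conjecture \ref{conj:IMC} is preserved under such a rescaling. For \textbf{EX}$_{\rho,\rho^+}$, if an old $\theta_{\rho,\rho^+}$ satisfies the old interpolation, then $u\cdot\theta_{\rho,\rho^+}$ satisfies the new one (for the corresponding $u\in\cO_p^\times$), with the same pole structure. The statement \textbf{IMC}$_{\rho,\rho^+}$ is trivially invariant because $\char_\Lambda X_\infty(\rho,\rho^+)$ is defined up to $\Lambda^\times$. For \textbf{EZC}$_{\rho,\rho^+}$, since $\kappa^s$ is $\cO_p$-linear, a rescaling of $\theta_{\rho,\rho^+}$ by $u\in\cO_p^\times$ multiplies the $e$-th derivative on the left-hand side by $u$; applying the transformation rules above with $\eta=\mathds{1}$ and using that $\cL(\rho,\rho^+)$ and $\cE(\rho,\rho^+)$ are intrinsic to $(\rho,\rho^+)$ shows that the right-hand side scales by the same $u$. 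I expect the only subtle point of the argument to be correctly identifying the $T_p$-optimality constraint as the condition $\iota_p(\mu)\in\cO_p^\times$ (resp.\ $\nu\in\cO_p^\times$), after which everything reduces to routine bookkeeping in multilinear algebra.
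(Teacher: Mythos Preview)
Your proof is correct and follows essentially the same approach as the paper: both arguments analyze how the product $\Reg_{\omega_p^+}(\rho\otimes\eta)\cdot j\!\left(\Reg_{\omega_\infty^+}(\rho\otimes\eta)\right)^{-1}$ transforms under the three basis changes, then appeal to uniqueness of $\theta_{\rho,\rho^+}$ and to $T_p$-optimality to pin the ambiguity down to $\cO_p^\times$. The paper packages the computation slightly more conceptually by recognizing this quotient as the determinant (with respect to $\omega_\infty^+$ and $\omega_p^+$) of the composite map
\[
\bC\otimes\HH^0(\bR,W)\longrightarrow\bC\otimes\HH^1_\ff((\rho\otimes\eta)^*(1))^*\;\overset{j^{-1}}{\simeq}\;\ob{\bQ}_p\otimes\HH^1_\ff((\rho\otimes\eta)^*(1))^*\longrightarrow\ob{\bQ}_p\otimes W_p^+,
\]
which makes the independence from $\omega_{\ff,\eta}$ immediate (no basis of $\HH^1_\ff$ appears); your explicit cancellation $j(\iota_\infty(\lambda_\eta))=\iota_p(\lambda_\eta)$ achieves the same thing by hand.
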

\begin{proof}
	Note that the Galois-Gauss sums and the terms $\det(\rho^-)(\sigma_p^{-n})$, $\cE(\rho,\rho^+)$ and $\cL(\rho,\rho^+)$ appearing in \textbf{EX}$_{\rho,\rho^+}$ and \textbf{EZC}$_{\rho,\rho^+}$ do not depend on any basis. We are reduced to study the dependence on the choice of bases of the quantity 
	\begin{equation}\label{eq:quotient_reg_appendix}
		\frac{\Reg_{\omega_p^+}(\rho\otimes\eta)}{j\left(\Reg_{\omega_\infty^+}(\rho\otimes\eta)\right)} \in \ob{\bQ}_p
	\end{equation}
	for $\eta\in \widehat{\Gamma}$. The quotient \eqref{eq:quotient_reg_appendix} equals the determinant with respect to $\omega_\infty^+$ and $\omega_p^+$ of the composition of maps
	\[
	\bC \otimes \HH^0(\bR,W) \longrightarrow \bC \otimes \HH^1((\rho\otimes\eta)^*(1))^* \simeq \ob{
		\bQ}_p  \otimes \HH^1((\rho\otimes\eta)^*(1))^* \longrightarrow\ob{\bQ}_p  \otimes W_p^+
	\]
	induced by the pairings \eqref{eq:intro_complex_pairing}, \eqref{eq:intro_p_adic_pairing} and $j^{-1}\colon \bC\simeq\ob{\bQ}_p$. In particular, \eqref{eq:quotient_reg_appendix} does not depend on the choice of $\omega_{\ff,\eta}$, and it is multiplied by $ab^{-1}$ if one replaces ${\omega}_\infty^+$ and ${\omega}_p^+$ by $a\omega_\infty^+$ and $b\omega_p^+$ respectively (with $a,b\in E_p$). Therefore, the same conclusion holds for $\theta_{\rho,\rho^+}$, and the proposition follows from the fact that $a$ and $b$ belong to $\cO_p^\times$ by hypothesis of $T_p$-optimality.
\end{proof}


\subsection{Local and global duality}\label{sec:duality}
In order to better describe our Selmer group we first need to introduce and compare the ``unramified condition'' and the ``$\ff$-condition'' of Bloch and Kato for (the dual of) a local Galois representation with finite image. 

Take any $\cO_p$-representation $\textbf{T}$ of the absolute Galois group $G_F$ of a finite extension $F$ of $\bQ_\ell$ (including $\ell=p$). Assume that $\textbf{T}$ is of finite image, \textit{i.e.}, the action of $G_F$ factors through the Galois group $\Delta$ of a finite extension $L/F$. Define $\textbf{D}=\textbf{T}\otimes \Qp/\Zp$ and $\widecheck{\textbf{T}}=\textbf{T}^*(1)$ the arithmetic dual of $\textbf{T}$. Recall that, if $I\subseteq G_F$ is the inertia subgroup of $G_F$ and $M$ is a $G_F$-module, then $\HH^1_\ur(F,M)$ is the kernel of the restriction map $\HH^1(F,M) \longrightarrow\HH^1(I,M)$.
\begin{lemme}\label{lem:comparaison_f_et_ur}\hspace{2em}
	\begin{enumerate}
		\item If $\ell \neq p$, then we have $\HH^1_\ur(F,\widecheck{\textbf{T}}) \subseteq \HH^1_\ff(F,\widecheck{\textbf{T}})$ and $\HH^1_\ff(F,{\textbf{D}}) \subseteq \HH^1_\ur(F,{\textbf{D}})$.
		\item If $\ell=p$ and if $\textbf{T}$ is unramified, then $\HH^1_\ur(F,\textbf{D})=\HH^1_\ff(F,{\textbf{D}})$.
		\item Under the local Tate pairing $\HH^1(F,\widecheck{\textbf{T}}) \times \HH^1(F,\textbf{D}) \longrightarrow \cO_p \otimes \Qp/\Zp$, the $\HH^1_\ff$'s are orthogonal complements for any prime $\ell$ and the $\HH^1_\ur$'s are orthogonal complements for any $\ell\neq p$.
	\end{enumerate}
\end{lemme}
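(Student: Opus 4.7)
The plan is to prove each part by unpacking the Bloch--Kato local conditions in terms of the $\Qp$-vector space $V=\textbf{T}\otimes\Qp$ and comparing them with the unramified condition.

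For part (1), I would first recall that when $\ell\neq p$ the Bloch--Kato definition on the $\Qp$-vector spaces coincides with the unramified one: $\HH^1_\ff(F,V)=\HH^1_\ur(F,V)$, and the same for $\widecheck{V}$. The Bloch--Kato condition is then extended to $\widecheck{\textbf{T}}$ as the preimage of $\HH^1_\ff(F,\widecheck{V})$ under $\HH^1(F,\widecheck{\textbf{T}})\to\HH^1(F,\widecheck{V})$, and to $\textbf{D}$ as the image of $\HH^1_\ff(F,V)$ under $\HH^1(F,V)\to\HH^1(F,\textbf{D})$. Both claimed inclusions now fall out of a routine diagram chase with the restriction map to $\HH^1(I,-)$: a class killed by $\res_I$ at one level is killed by $\res_I$ at the next, either by functoriality of restriction (to prove the first inclusion) or by the commutative square relating the long exact sequences for $V$ and $\textbf{D}$ (to prove the second).

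For part (2), the unramifiedness of $\textbf{T}$ makes $V$ crystalline unramified, so the identity $\HH^1_\ff(F,V)=\HH^1_\ur(F,V)$ persists even though $\ell=p$, and both groups are naturally isomorphic to $V/(1-\Frob_F)V$ via inflation--restriction along the unramified quotient $G_F/I_F\simeq\widehat{\bZ}$. The same inflation--restriction identifies $\HH^1_\ur(F,\textbf{D})$ with $\textbf{D}/(1-\Frob_F)\textbf{D}$. Since the surjection $V\twoheadrightarrow\textbf{D}$ induces a surjection on these quotients, one obtains
\[
\HH^1_\ff(F,\textbf{D})\;=\;\mathrm{im}\bigl(\HH^1_\ff(F,V)\to\HH^1(F,\textbf{D})\bigr)\;=\;\HH^1_\ur(F,\textbf{D}),
\]
as desired.

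For part (3), the orthogonality of the unramified parts at $\ell\neq p$ is a standard consequence of local Tate duality: the cohomology of the unramified quotient $G_F/I_F\simeq\widehat{\bZ}$ is its own annihilator under the Tate pairing, because $\widehat{\bZ}$ has cohomological dimension $1$ and the resulting pairing on $\HH^0\times\HH^1$ is perfect (see e.g.\ Neukirch--Schmidt--Wingberg). For the orthogonality of the $\HH^1_\ff$'s (valid for any $\ell$), I would invoke \cite{blochkato}*{Proposition~3.8}: the statement is proved first at the level of $V$ and $\widecheck{V}$ using the defining exact sequences (involving $B_\crys$ when $\ell=p$, or the unramified description when $\ell\neq p$), and the version for the lattice $\widecheck{\textbf{T}}$ and the divisible module $\textbf{D}$ is then a formal consequence of the definitions of $\HH^1_\ff$ as a preimage resp.\ image, combined with the compatibility between local Tate duality for $(\widecheck{\textbf{T}},\textbf{D})$ and for $(\widecheck{V},V)$. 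The main obstacle is this last point at $\ell=p$, which is the only step requiring genuine $p$-adic Hodge theoretic input; everything else reduces to the explicit quotient $V/(1-\Frob_F)V$ and diagram chases.
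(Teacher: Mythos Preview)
Your proposal is correct and follows essentially the same approach as the paper. The paper simply cites \cite[Lem.~3.5]{rubinES} for part~(1), \cite[Lem.~3.2(i)]{rubinES} for the surjectivity on coinvariants in part~(2), and \cite[Prop.~3.8]{blochkato} and \cite[Prop.~4.3(i)]{rubinES} for part~(3); you have unpacked these references into the explicit arguments (the identification $\HH^1_\ur(F,M)\simeq M/(\Frob_F-1)M$ for unramified $M$, and the resulting surjectivity $V_{\Frob_F}\twoheadrightarrow\textbf{D}_{\Frob_F}$), which amounts to the same proof.
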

\begin{proof}
The first point follows from \cite[Lem. 3.5]{rubinES}. Let us prove the second statement and assume now that $\ell=p$ and that $\textbf{T}$ is unramified. Recall that $\HH^1_\ff(F,\textbf{D})$ is by definition the image of $\HH^1_\ff(F,\textbf{W})$ under the map $\HH^1(F,\textbf{W})\longrightarrow \HH^1(F,\textbf{D})$, where we have put  $\textbf{W}=\textbf{T}\otimes \Qp$. Since $\textbf{W}$ is unramified, it is easy to see that $\HH^1_\ff(F,\textbf{W})=\HH^1_\ur(F,\textbf{W})$. Moreover, the map $\HH^1_\ur(F,\textbf{W}) \longrightarrow \HH^1_\ur(F,\textbf{D})$ is surjective because it coincides with the projection map $\textbf{W}_{\Delta} \twoheadrightarrow \textbf{D}_\Delta$ by \cite[Lem. 3.2.(i)]{rubinES}, where $(-)_\Delta$ means that we took the $\Delta$-coinvariants. Therefore, we have $\HH^1_\ur(F,\textbf{D})=\HH^1_\ff(F,{\textbf{D}})$. 

The third statement is standard (see \cite[Prop. 3.8]{blochkato} for Bloch-Kato's condition and \cite[Prop. 4.3.(i)]{rubinES} for the unramified one).
\end{proof}

For $F=\bQ,\bQ_\ell$ (for any prime $\ell$) and for any compact $\cO_p[[G_F]]$-module $\textbf{T}$, define the Iwasawa cohomology along the $\Zp$-cyclotomic extension $F_\infty=\cup_n F_n$ of $F$ by letting
\[\begin{array}{ll}
\HH^1_{\Iw,*}(F,\textbf{T})&:= \varprojlim_n \HH^1_*(F_n,\textbf{T}) \qquad \qquad (*\in\{\emptyset,\ff,\ur\}), \\
\HH^1_{\Iw,\ff,p}(\bQ,\textbf{T})&:= \varprojlim_n \HH^1_{\ff,p}(\bQ_n,\textbf{T}),
\end{array}\]
where the subscript $\ff,p$ in the last global cohomology groups means that we relaxed the condition of being crystalline at $p$. We also use the standard notation $\HH^1_{\Iw,/\ff}(\bQ_\ell,\textbf{T})$ for the quotient $\HH^1_\Iw(\bQ_\ell,\textbf{T})/\HH^1_{\Iw,\ff}(\bQ_\ell,\textbf{T})$. All these cohomology groups are finitely generated modules over $\Lambda$ by Shapiro's lemma. 

We keep the notations of \S\ref{sec:selmer_group} and we fix a Galois-stable lattice $T_p$ of $W_p$ and any $p$-stabilization $W_p^+$ of $W_p$. By Maschke's theorem we may identify $W_p^-$ with a $\Gal(\ob{\bQ}_p/\Qp)$-stable complement of $W_p^+$ in $W_p$. Let $\widecheck{T}_p=T_p(1)^*$ (resp. $\widecheck{T}_p^\pm=T_p^\pm(1)^*$) be the arithmetic dual of $T_p$ (resp. of $T_p^\pm$). In particular, $\widecheck{T}_p^\pm$ is a free $\cO_p$-submodule of $\widecheck{T}_p$ of rank $d^\pm$. We will relate $\Sel^\str_\infty(\rho,\rho^+)$ and $\Sel_\infty(\rho,\rho^+)$ to the following three localization maps

\[\begin{array}{ll}
\Loc_+^\str :& \HH^1_{\Iw,\ff,p}(\bQ,\widecheck{T}_p) \longrightarrow \HH^1_{\Iw}(\bQ_p,\widecheck{T}^+_p) \\
\Loc_+ :& \HH^1_{\Iw,\ff}(\bQ,\widecheck{T}_p) \longrightarrow \HH^1_{\Iw,\ff}(\Qp,\widecheck{T}_p^+), \\
\Loc_+' :& \HH^1_{\Iw,\ff,p}(\bQ,\widecheck{T}_p) \longrightarrow \HH^1_\Iw(\Qp,\widecheck{T}_p^+) \bigoplus \HH^1_{\Iw,/\ff}(\Qp,\widecheck{T}_p^-).
\end{array}\]
For $\ell\neq p$, it is known that the quotient of the absolute Galois group of $\bQ_{\ell,\infty}$ by its inertia subgroup $I_\ell$ is of order prime to $p$, so the restriction map $\HH^1(\bQ_{\ell,\infty},D) \longrightarrow \HH^1(I_\ell,D)$ is injective. This, together with Lemma \ref{lem:comparaison_f_et_ur}, implies that 
\begin{equation}\label{eq:computation_local_Iw_coho}
	\varinjlim_n \HH^1_\ff(\bQ_{\ell,n},D_p) = \varinjlim_n \HH^1_\ur(\bQ_{\ell,n},D_p)=0, \quad \textrm{and} \quad \HH^1_{\Iw,\ff}(\bQ_\ell,\widecheck{T}_p) = \HH^1_{\Iw,\ur}(\bQ_\ell,\widecheck{T}_p) = \HH^1_{\Iw}(\bQ_\ell,\widecheck{T}_p).
\end{equation}
Hence, the Selmer group of $(\rho,\rho^+)$ fits into an exact sequence 
$$\begin{tikzcd}
0 \rar & \Sha_\infty^1(D_p) \rar & \Sel_\infty(\rho,\rho^+) \rar & \varinjlim_n \left(\HH^1(\bQ_{p,n},D_p^+) \bigoplus \HH^1_\ff(\bQ_{p,n},D_p^-)\right),
\end{tikzcd}$$
where $\Sha_\infty^1(D_p)=\ker \left[\HH^1(\bQ_\infty,D_p) \longrightarrow \prod_\ell \HH^1(\bQ_{\ell,\infty},D_p)\right]$ is the first Tate-Shafarevitch group. It follows from Poitou-Tate duality \cite[Cor. 7.5]{rubinES} that there is a commutative diagram of $\Lambda$-modules with short exact rows
\begin{equation}\label{eq:gros_diagramme_commutatif_Sel_infty}
\begin{tikzcd}
0 \rar & \Sha_\infty^1(D_p) \rar \dar[hookrightarrow] & \Sel_\infty(\rho,\rho^+) \rar \dar[equal] & \coker\left(\Loc_+'\right)^\vee \rar \dar[two heads] & 0 \\
0 \rar & \HH^1_\ur(\bQ_\infty,D_p) \rar & \Sel_\infty(\rho,\rho^+) \rar & \coker\left(\Loc_+\right)^\vee \rar & 0 \\
0 \rar & \Sha_\infty^1(D_p) \rar \uar[hookrightarrow] & \Sel^\str_\infty(\rho,\rho^+) \rar \uar[hookrightarrow] & \coker\left(\Loc_+^\str\right)^\vee \rar \uar[two heads] & 0,
\end{tikzcd}
\end{equation}
where $\HH^1_\ur(\bQ_\infty,D_p)= \ker\left[\HH^1(\bQ_\infty,D_p) \longrightarrow \prod_\ell \HH^1(I_\ell,D_p)\right]$ (and where $I_\ell$ is the inertia subgroup of $\bQ_{\ell,\infty}$).

We close this section by proving the Weak Leopoldt conjecture for both $W_p$ and $\widecheck{W}_p=W_p^*(1)$. The proof is self-contained and it won't use the running assumption that $\rho$ is unramified at $p$ (but $p$ is still be assumed to be odd). Let $\Sigma$ be a finite set of places of $\bQ$ containing $p$ and all the primes at which $\rho$ is ramified. For $i\in\bN$ and $\textbf{T}=T_p$ or $\widecheck{T}_p$ define the cohomology groups $\HH^i_{\Iw,\Sigma}(\bQ,\textbf{T})=\varprojlim_n \HH^i(\bQ_\Sigma/\bQ_n,\textbf{T})$, where $\bQ_\Sigma/\bQ$ is the maximal extension of $\bQ$ which is unramified outside $\Sigma$ and $\infty$. We also consider the second (compact) Tate-Shafarevich groups  $\Sha_\infty^2(\textbf{T})=\ker \left[\HH^2_\Iw(\bQ,\textbf{T}) \longrightarrow \prod_\ell \HH^2_\Iw(\bQ_{\ell},\textbf{T})\right]$.

\begin{proposition}\label{prop:conjecture_leopoldt_faible}\hspace{2em}
	\begin{enumerate}
		\item  We have $\Sha_\infty^2(\widecheck{T}_p)\simeq \Sha_\infty^1(D_p)^\vee$ as $\Lambda$-modules.
		\item The Weak Leopoldt conjecture along $\bQ_\infty/\bQ$ for $W_p$ and $\widecheck{W}_p$ holds, that is, the $\Lambda$-modules $\HH^2_{\Iw,\Sigma}(\bQ,T_p)$ and $\HH^2_{\Iw,\Sigma}(\bQ,\widecheck{T}_p)$ are torsion.
	\end{enumerate} 
\end{proposition}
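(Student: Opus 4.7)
The plan is to handle the two parts separately: (1) is a formal limit argument from Poitou--Tate duality applied at each finite level, and (2) is a descent argument combining specialization of Iwasawa cohomology with global duality.

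For (1), I would apply the classical Cassels--Tate duality at each level $\bQ_n$ and each $p^k$-truncation to obtain a perfect pairing
\[
\Sha^2(\bQ_n, \widecheck{T}_p/p^k) \times \Sha^1(\bQ_n, D_p[p^k]) \longrightarrow \Qp/\Zp,
\]
using that the Cartier dual of $\widecheck{T}_p/p^k$ is $T_p/p^k = D_p[p^k]$. Taking $\varprojlim_k$ on the left factor (a Mittag--Leffler system since each group is finite) produces $\Sha^2(\bQ_n, \widecheck{T}_p)$, while Pontryagin duality converts the direct system on the right into the Pontryagin dual of $\varinjlim_k \Sha^1(\bQ_n, D_p[p^k]) = \Sha^1(\bQ_n, D_p)$. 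A further $\varprojlim_n$ then yields $\Sha^2_\infty(\widecheck{T}_p) \simeq \bigl(\varinjlim_n \Sha^1(\bQ_n, D_p)\bigr)^\vee = \Sha^1_\infty(D_p)^\vee$, with the $\Lambda$-action inherited from the $\Gamma$-equivariance at each step.

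For (2), the plan is to show that for $\textbf{T} \in \{T_p, \widecheck{T}_p\}$ the $\chi$-coinvariants of $M := \HH^2_{\Iw,\Sigma}(\bQ, \textbf{T})$ are finite as $\Zp$-modules for all but finitely many $\chi \in \widehat{\Gamma}$; by the structure theorem for finitely generated $\Lambda$-modules, this forces the $\Lambda$-rank of $M$ to vanish. The descent input is a control theorem: since $p$ is odd, $G_{\bQ,\Sigma}$ has cohomological dimension $2$ and $\HH^3_{\Iw,\Sigma}(\bQ,\textbf{T}) = 0$, so the long exact sequence associated with $0 \to \Lambda \xrightarrow{\gamma - \chi(\gamma)} \Lambda \to \cO_p[\chi] \to 0$ collapses to an isomorphism $M_\chi \xrightarrow{\sim} \HH^2(\bQ_\Sigma/\bQ, \textbf{T}(\chi^{-1}))$ for every $\chi \in \widehat{\Gamma}$. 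By Poitou--Tate global duality, the $\Qp$-dimension of the target equals $\dim_{\Qp} \HH^0(\bQ, (\textbf{T} \otimes \Qp)(\chi^{-1})^*(1))$. For $\textbf{T} = T_p$ this dual is $\widecheck{W}_p(\chi) = W_p^*(1)(\chi)$, whose irreducible constituents all have infinite image (being $\chi_{\cyc}$-twists of Artin representations), so the $\HH^0$ vanishes for every $\chi$. For $\textbf{T} = \widecheck{T}_p$ the dual is $W_p(\chi)$, and its $\HH^0$ is nonzero only if $\chi^{-1}$ is an irreducible summand of $\rho$, which happens for at most finitely many $\chi \in \widehat{\Gamma}$.

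The main technical point requiring care is (2): tracking the twists correctly under specialization and global duality, and verifying that tensoring with $\Qp$ commutes with the descent from $\Zp$-flat coefficients. Once these standard inputs are in hand, both halves of the conclusion follow formally, and (1) then yields as a byproduct that $\Sha^1_\infty(D_p)$ is $\Lambda$-cotorsion.
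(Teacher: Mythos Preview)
Your treatment of (1) is correct and essentially the paper's argument: it simply cites Poitou--Tate duality (Milne, \emph{Arithmetic Duality Theorems}, Thm.~I.4.10(a)), and you have unpacked the limit.

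For (2), the descent step $M_\chi\cong\HH^2(\bQ_\Sigma/\bQ,\textbf{T}(\chi^{-1}))$ is fine, but the next claim is wrong: Poitou--Tate does \emph{not} give $\dim_{\Qp}\HH^2(\bQ_\Sigma/\bQ,V)=\dim_{\Qp}\HH^0(\bQ,V^*(1))$. The nine-term sequence only yields
\[
0\to\Sha^2(V)\to\HH^2(\bQ_\Sigma/\bQ,V)\to\bigoplus_{v\in\Sigma}\HH^2(\bQ_v,V)\to\HH^0(\bQ,V^*(1))^*\to 0
\]
together with $\Sha^2(V)\cong\Sha^1(V^*(1))^*$. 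For $\textbf{T}=T_p$ the local $\HH^2$'s do vanish (since $V^*(1)=\widecheck{W}_p(\chi)$ carries a genuine cyclotomic twist), but you still owe the vanishing of $\Sha^1(\widecheck{W}_p(\chi))\otimes\Qp$, which is not the statement $\HH^0=0$ and requires a separate class-group type input. For $\textbf{T}=\widecheck{T}_p$ it is worse: here $V^*(1)=W_p(\chi)$ is Artin, and the local invariants $\HH^0(\bQ_v,W_p(\chi))$ have no reason to vanish for generic $\chi$, so your dimension count collapses entirely. The paper avoids all of this by a different route: Perrin-Riou's criterion reduces Weak Leopoldt for $W_p$ to the vanishing of $\HH^2(\bQ_\Sigma/\bQ_\infty,D_p)$, which is handled via $p$-divisibility (from $\mathrm{cd}_p\le 2$) together with a torsionness result cited from earlier work; the case of $\widecheck{W}_p$ is then deduced from that of the contragredient $W_p^*$ by Tate-twist invariance, sidestepping the local obstruction you would face.
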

\begin{proof}
The first statement follows from Poitou-Tate duality (see, for instance, \cite[Thm. 4.10.(a)]{milneADT}). Let us first prove the Weak Leopoldt conjecture for $W_p$. By \cite[Prop. 1.3.2]{perrin1995fonctions}, it is equivalent to $\HH^2(\bQ_\Sigma/\bQ_\infty,D_p)=0$. As $\Gal(\bQ_\Sigma/\bQ_\infty)$ has cohomological dimension 2, this module is $p$-divisible because $D_p$ is. It is therefore enough to show that $\HH^2(\bQ_\Sigma/\bQ_\infty,D_p)^\vee$ is a torsion $\Lambda$-module, which has already been shown in the proof of \cite[Prop. 2.5.5]{maks}. Finally, since the Weak Leopoldt Conjecture is clearly invariant by Tate twists, its validity for $\widecheck{W}_p$ follows from the one for the contragredient representation $W_p^*$. 
\end{proof}

\subsection{Limits of unit groups}\label{sec:limits_of_unit_groups}
The various cohomology groups introduced in \S\ref{sec:duality} can usefully be described in terms of ideal class groups and of unit groups.

\begin{notation}\label{nota:units_and_class_groups}
	Let $n\geq 0$ be an integer and let $w$ be a $p$-adic place of $H$. We still denote by $w$ the unique place of $H_n$ above $H$, and we let
	\begin{itemize}
		\item $A_{n}$ be the $p$-part of the ideal class group of ${H_n}$, and $A'_{n}$ its quotient by all the classes of $p$-adic primes of ${H_n}$,
		\item $\cO_{H_n}^\times$ (resp. $\cO_{H_n}[\tfrac{1}{p}]^\times$) be the unit group (resp. the group of $p$-units) of ${H_n}$,
		\item $U_{n}$ (resp. $U'_{n}$) be the pro-$p$ completion of the unit group (resp. of the group of $p$-units) of ${H_n}$,
		\item $U_{n,w}$ (resp. $U'_{n,w}$) be the pro-$p$ completion of the unit group (resp. of the group of non-zero elements) of ${H_{n,w}}$.
	\end{itemize}
		 We also let $A_\infty,A'_\infty,U_\infty,U'_\infty,U_{\infty,w},U'_{\infty,w}$ respectively be the projective limits of the preceding groups, where the transition maps are the (global or local) norm maps. All of these groups are $\Zp$-modules but we keep the same notations for the $\cO_p$-modules that are obtained after tensoring with $\cO_p$ by a slight abuse of notation. They are all endowed with a natural action of the Galois group $\Gal(H_\infty/\bQ)\simeq G \otimes \Gamma$, so they have a structure of $\Lambda[G]$-modules.
		 	\end{notation}


Fix as in \S\ref{sec:selmer_group} a Galois-stable $\cO_p$-lattice $T_p$ and a $p$-stabilization $W_p^+$ of $W_p$. We also keep the notations of \S\ref{sec:duality}. 
\begin{lemme}\label{lem:relation_cohomologie_et_unités}
	Let $G_p$ be the decomposition subgroup of $G$ at the place $w$ determined by $\iota_p$ and let $\bullet \in \{\emptyset,+,-\}$. 
	\begin{enumerate}
		\item The restriction maps on cohomology groups induce the following natural isomorphisms: 
	$$ \HH^1_{\Iw,\ff,p}(\bQ,\widecheck{T}_p) \simeq \Hom_G(T_p,U'_\infty), \qquad \HH^1_{\Iw}(\bQ_p,\widecheck{T}^{\bullet}_p) \simeq \Hom_{G_p}(T^{\bullet}_p,U'_{\infty,w}),$$ 
	$$ \HH^1_{\Iw,\ff}(\bQ,\widecheck{T}_p) \simeq \Hom_G(T_p,U_\infty), \qquad \HH^1_{\Iw,\ff}(\bQ_p,\widecheck{T}^{\bullet}_p) \simeq \Hom_{G_p}(T^{\bullet}_p,U_{\infty,w}).$$
	\item The $\Lambda$-modules $\Sha_\infty^1(D_p)^\vee$ and $\Hom_G(T_p,A'_\infty)$ are isomorphic after tensoring with $\Qp$ (as $\Lambda \otimes \Qp$-modules). They are isomorphic as $\Lambda$-modules if we assume that $p$ does not divide the order of $G$. 
	\item The $\Lambda$-modules $\Sha_\infty^1(D_p)$ and $\Hom_G(A_\infty',D_p)$ are pseudo-isomorphic.
\end{enumerate} 
\end{lemme}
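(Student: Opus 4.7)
Part \textbf{(1)} combines Hochschild--Serre for the extension $H_n/\bQ_n$ with Kummer theory. Since $G_H$ acts trivially on $T_p$, there is a canonical identification $\widecheck{T}_p\simeq\Hom_{\cO_p}(T_p,\cO_p(1))$ of $G_H$-modules, so the restriction map yields a morphism
$$\HH^1(\bQ_n,\widecheck{T}_p)\longrightarrow \Hom_{\cO_p[G]}(T_p,\HH^1(H_n,\cO_p(1)))$$
whose kernel and cokernel are controlled by $\HH^i(G,\widecheck{T}_p^{G_H})$ for $i=1,2$. These edge terms are finite (as $\widecheck{T}_p^{G_H}=T_p^{*}\otimes \mu_{p^\infty}(H)$ is so), hence pseudo-null over $\Lambda$, and we obtain a $\Lambda$-linear isomorphism after imposing matching local conditions on both sides. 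Kummer theory for $\cO_{H_n}$ and $\cO_{H_n}[\tfrac{1}{p}]$ then identifies the norm-inverse-limits of $\HH^1(H_n,\cO_p(1))$ (with the appropriate global ramification constraints) with $U_\infty$ and $U'_\infty$ respectively. The $\ff$-condition at $p$ for $\widecheck{T}_p$ corresponds under Kummer to classes arising from local units rather than from $p$-units, in view of Lemma \ref{lem:comparaison_f_et_ur}; dropping that condition enlarges the target from $U_\infty$ to $U'_\infty$. The local statements at $p$ are proved by the identical argument applied to $H_{n,w}/\bQ_{p,n}$ and local Kummer theory.

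For \textbf{(2)}, we combine the global duality of Proposition \ref{prop:conjecture_leopoldt_faible}(1), namely $\Sha^1_\infty(D_p)^\vee\simeq\Sha^2_\infty(\widecheck{T}_p)$, with the Shapiro-type reduction from (1) applied to $\HH^2$. Since $\widecheck{T}_p$ is unramified at every $\ell\neq p$, the corresponding local $\HH^2_\Iw(\bQ_\ell,\widecheck{T}_p)$ cause no obstruction, and one obtains (modulo $|G|$-torsion)
$$\Sha^2_\infty(\widecheck{T}_p)\simeq \Hom_G(T_p,\varprojlim_n\HH^2(\cO_{H_n}[\tfrac{1}{p}],\cO_p(1))).$$
The classical identification of $\varprojlim_n\HH^2(\cO_{H_n}[\tfrac{1}{p}],\cO_p(1))$ with $A'_\infty$ (via the Kummer sequence and the Brauer--ideal class sequence) then yields the claim over $\Qp$, or as an isomorphism of $\Lambda$-modules when $p\nmid|G|$.

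Finally, \textbf{(3)} admits a direct class-field-theoretic proof: inflation--restriction for $H_\infty/\bQ_\infty$ identifies $\HH^1(\bQ_\infty,D_p)$ with $\Hom(G_{H_\infty},D_p)^G$ up to $|G|$-torsion, and the Tate--Shafarevich condition forces such a homomorphism to be unramified at every finite place and trivial on every decomposition group at a $p$-adic place; by global class field theory, it then factors through the Galois group $A'_\infty$ of the maximal abelian pro-$p$ extension of $H_\infty$ that is unramified everywhere and completely split at primes above $p$. This produces a $\Lambda$-linear map $\Sha^1_\infty(D_p)\to \Hom_G(A'_\infty,D_p)$ whose kernel and cokernel are annihilated by $|G|$, hence pseudo-null. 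The main technical difficulty throughout is controlling this $|G|$-torsion in the Hochschild--Serre arguments and ensuring that the local conditions ($\ff$ vs.\ $\ff,p$; global vs.\ local) translate precisely into the dichotomy $U_\infty$ vs.\ $U'_\infty$ (and, in (2)--(3), $A_\infty$ vs.\ $A'_\infty$) on the arithmetic side.
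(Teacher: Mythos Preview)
Your overall strategy matches the paper's: Hochschild--Serre plus Kummer theory for (1), the same spectral-sequence argument in degree~2 combined with Poitou--Tate duality for (2), and inflation--restriction together with class field theory for (3).

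There is, however, a genuine slip in your treatment of (1). You write $\widecheck{T}_p^{\,G_H}=T_p^{*}\otimes \mu_{p^\infty}(H)$, conclude that the edge terms are merely \emph{finite}, hence pseudo-null over $\Lambda$, and then assert a $\Lambda$-linear \emph{isomorphism}. This is doubly off. First, $\widecheck{T}_p=T_p^*\otimes_{\Zp}\Zp(1)$, so the relevant invariants are $T_p^*\otimes \varprojlim_n\mu_{p^n}(H_n)$, not $T_p^*\otimes\mu_{p^\infty}(H)$; you have confused the Tate module with the divisible module. Second, and more seriously, ``finite, hence pseudo-null'' only yields a pseudo-isomorphism, whereas the lemma claims actual isomorphisms (and these are used as such later, e.g.\ in the construction of the Coleman-type maps $\sC_{\omega_p^+}$). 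The repair is immediate and is exactly what the paper does: since $p$ is odd and unramified in $H$, one has $\mu_p\not\subset H_n$ for every $n$ (and likewise $\mu_p\not\subset H_{n,w}$), so $\HH^0(G_{H_n},\widecheck{T}_p)=0$ on the nose. The five-term exact sequence then gives a genuine isomorphism $\HH^1(\bQ_n,\widecheck{T}_p)\simeq\HH^1(H_n,\widecheck{T}_p)^G$ at each finite level, and the inverse limit inherits this. With this correction in place, the rest of your argument for (1)--(3) is essentially the paper's.
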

\begin{proof}
We will derive the isomorphisms from Kummer theory and from the injectivity (resp. bijectivity) of restriction maps between certain local (resp. global) cohomology groups. Fix an integer $n\geq 0$, a prime number $\ell$ (including $\ell=p$) and let $\lambda|\ell$ be the prime of $H_n$ determined by $\iota_\ell$.  As in \S\ref{sec:duality}, let $\bQ_\Sigma$ be the largest extension of $\bQ$ (or, equivalently, the largest extension of $H_n$) which is unramified outside $\Sigma$ and $\infty$. Consider the finite field extension $F'/F$ and the Galois groups $\cG' \subseteq \cG$ given either by 
\begin{itemize}
	\item  $H_n/\bQ_n$ and $\cG'=\Gal(\bQ_\Sigma/H_n)$, $\cG=\Gal(\bQ_\Sigma/\bQ_n)\qquad$ (first case), 
	\item or by $H_{n,w}^\ur/\bQ_{n,w}^\ur$ and $\cG'=\Gal(\ob{\bQ}_p/H_{n,w}^\ur)$, $\cG=\Gal(\ob{\bQ}_p/\bQ_{n,w}^\ur)\qquad$ (second case), 
	\item or by $H_{n,\lambda}/\bQ_{n,\lambda}$ and $\cG'=\Gal(\ob{\bQ}_\ell/H_{n,\lambda})$, $\cG=\Gal(\ob{\bQ}_\ell/\bQ_{n,\lambda})\qquad$ (third case).
\end{itemize} 
Note that $\cG/\cG'$ can be identified with $G$ in the first case (because $p$ is unramified in $H/\bQ$) and with $G_p$ in the third case when $\ell=p$. Since $F'$ never contains $\bQ(\mu_{p^\infty})$, we have in all three cases $\HH^0(\cG',\widecheck{T}_p)= T_p^* \otimes \HH^0(\cG',\Zp(1))=0$. Moreover, Hochschild-Serre's spectral sequence applies to $\widecheck{T}_p$ in the first case and the third case with $\ell\neq p$ by \cite[Appendix B, Proposition 2.7]{rubinES}, so the restriction map 
$$\HH^i(\cG,\widecheck{T}_p) \longrightarrow \HH^0(\cG/\cG',\HH^i(\cG',\widecheck{T}_p)), \qquad (i=1,2)$$
is bijective when $i=1$, and surjective when $i=2$. It is also injective when $i=1$ in all three cases by the inflation-restriction exact sequence.

Let us first look at the first isomorphism of (1). By taking inverse limits in the first case with $i=1$, the restriction map gives an isomorphism $\HH^1_{\Iw,\Sigma}(\bQ,\widecheck{T}_p) \simeq \HH^1_{\Iw,\Sigma}(H,\widecheck{T}_p)^G$. Moreover, the local computation in  \eqref{eq:computation_local_Iw_coho} implies that  $\HH^1_{\Iw,\Sigma}(\bQ,\widecheck{T}_p)=\HH^1_{\Iw,\ff,p}(\bQ,\widecheck{T}_p)$ and $\HH^1_{\Iw,\Sigma}(H,\widecheck{T}_p)=\HH^1_{\Iw,\ff,p}(H,\widecheck{T}_p)$. We are reduced to study the module $\HH^1_{\Iw,\ff,p}(H,\widecheck{T}_p)^G$ which we compute as follows. Since $G_H$ acts trivially on $T_p$, it is equal to $\Hom_G(T_p, \HH^1_{\Iw,\ff,p}(H,\cO_p(1)))$. But Kummer theory naturally identifies $\HH^1_{\Iw,\ff,p}(H,\cO_p(1))$ with $U'_\infty$, so our claim follows. The three other restriction maps in (1) are shown to be isomorphisms by invoking similar arguments.

We now study $\Sha_\infty^1(D_p)^\vee$, which is known to be isomorphic to $\Sha_\infty^2(\widecheck{T}_p)$ by Proposition \ref{prop:conjecture_leopoldt_faible}. The Hochschild-Serre spectral sequence provides in our setting a commutative diagram with exact rows
{\small
$$\begin{tikzcd}
0 \rar & \HH^1(G,\HH^1_{\Iw,\Sigma}(H,\widecheck{T}_p)) \rar \dar & \HH^2_{\Iw,\Sigma}(\bQ,\widecheck{T}_p)\rar \dar & \HH^2_{\Iw,\Sigma}(H,\widecheck{T}_p)^G \rar \ar[d, "\alpha"] & 0 \\
0 \rar & \prod_{\ell\in \Sigma} \HH^1(G, \prod_{\lambda|\ell}\HH^1_{\Iw}(H_\lambda,\widecheck{T}_p)) \rar & \prod_{\ell\in \Sigma}\HH^2_{\Iw}(\bQ_\ell,\widecheck{T}_p) \rar & \left(\prod_{\lambda|\ell \in \Sigma}\HH^2_{\Iw}(H_\lambda,\widecheck{T}_p)\right)^G \rar & 0.
\end{tikzcd}$$
}
The cohomology groups $\HH^1(G,-)$ on the left are killed by the order $\#G$ of $G$, so they vanish after tensoring with $\Qp$ or whenever $p$ is coprime to $\#G$. Therefore, in order to prove the claim (2) it suffices to check that the module $\Sha_\infty^2(H,\widecheck{T}_p)^G=\ker\alpha$ can be identified with $\Hom_G(T_p,A_\infty')$. But again, $G_H$ acts trivially on $T_p$ so it is enough to see that $\Sha_\infty^2(H,\cO_p(1))\simeq A_\infty'$, which is classical (see for instance \cite[(9.2.2.2)]{sel}).

The proof of last statement also uses inflation-restriction and is nearly identical to the proof of \cite[Lemme 2.2.2]{maks}, once we identify $A_\infty'$ with the Galois group of the maximal abelian pro-$p$ extension of $H_\infty$ which is unramified everywhere and in which all primes above $p$ split completely.
\end{proof}

\begin{lemme}\label{lem:liberté_module_G_équiv}
	Assume that $U$ is a free $\Lambda$-module of finite rank endowed with a $\Lambda$-linear action of $G$. Then the module $Z=\Hom_G(T_p,U)$ is also $\Lambda$-free.
\end{lemme}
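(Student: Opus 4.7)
The strategy is to realize $Z$ as the $G$-invariants of a finitely generated free $\Lambda$-module and then exploit the structure of $\Lambda$, which, after choosing a topological generator of $\Gamma$, is isomorphic to $\cO_p[[T]]$ and thus a two-dimensional regular local ring.

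First, since $T_p$ is $\cO_p$-free of finite rank, the tensor-hom adjunction provides a $\Lambda$-linear isomorphism
\[
Z = \Hom_{\cO_p[G]}(T_p,U) \simeq (T_p^* \otimes_{\cO_p} U)^G,
\]
where $T_p^* = \Hom_{\cO_p}(T_p,\cO_p)$. Setting $M = T_p^* \otimes_{\cO_p} U$, the module $M$ is free of finite rank as a $\Lambda$-module (as the tensor product of an $\cO_p$-free module with a $\Lambda$-free module) and carries a $\Lambda$-linear action of $G$. The problem therefore reduces to showing that $M^G$ is $\Lambda$-free.

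Next, I would realize $M^G$ as a kernel. For any finite generating set $g_1,\dots,g_r$ of $G$, we have an exact sequence of $\Lambda$-modules
\[
0 \longrightarrow M^G \longrightarrow M \xrightarrow{\ \bigl((g_i-1)\cdot -\bigr)_i\ } M^{\oplus r}.
\]
Denoting by $N$ the image of the right-hand map, I would apply the depth lemma to the short exact sequence $0 \to M^G \to M \to N \to 0$: since $M$ has depth $\dim \Lambda = 2$ and $N$, being a submodule of a free $\Lambda$-module, is torsion-free and hence of depth at least $1$, one deduces $\mathrm{depth}_\Lambda(M^G) \geq \min(2,1+1) = 2$.

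Finally, I would invoke the Auslander-Buchsbaum formula over the regular local ring $\Lambda$ of Krull dimension $2$. Since every finitely generated $\Lambda$-module has finite projective dimension,
\[
\mathrm{pd}_\Lambda(M^G) + \mathrm{depth}_\Lambda(M^G) = \mathrm{depth}(\Lambda) = 2,
\]
which forces $\mathrm{pd}_\Lambda(M^G) = 0$. Thus $M^G$ is $\Lambda$-projective, hence free since $\Lambda$ is local. No substantial obstacle is anticipated: the only delicate point is the depth estimate, while everything else amounts to standard commutative-algebra input.
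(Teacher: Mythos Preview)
Your proof is correct, but it takes a genuinely different route from the paper's. The paper argues directly via the elementary criterion (cited there as \cite[Lemme 1.1]{belliard}) that a finitely generated $\Lambda$-module $V$ is free if and only if $V^\Gamma=0$ and $V_\Gamma$ is $\cO_p$-free. The vanishing $Z^\Gamma=\Hom_G(T_p,U^\Gamma)=0$ is immediate, and the torsion-freeness of $Z_\Gamma$ is obtained by proving the equality $(\gamma-1)\Hom_G(T_p,U)=\Hom_G(T_p,(\gamma-1)U)$ and then embedding $Z_\Gamma$ into the torsion-free module $\Hom_G(T_p,U_\Gamma)$. Your argument, by contrast, packages everything through depth theory and Auslander--Buchsbaum over the two-dimensional regular local ring $\Lambda$. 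Your approach is more conceptual and would transport verbatim to $G$-invariants inside free modules over any regular local ring, whereas the paper's hands-on approach is more elementary, stays closer to the Iwasawa-theoretic setting, and avoids invoking homological machinery. Both are short; neither has a real advantage here beyond taste.
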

\begin{proof}
	First recall that any finitely generated $\Lambda$-module $V$ is free if and only if $V^\Gamma$ vanishes and $V_\Gamma$ is $\cO_p$-free (see for example \cite[Lemme 1.1]{belliard}). We already have $Z^\Gamma=\Hom_G(T_p,U^\Gamma)=0$. It is therefore enough to check that $Z_\Gamma$ has no $\cO_p$-torsion, and this will follow from the fact that $Z_\Gamma$ injects into the torsion-free $\cO_p$-module $\Hom_G(T_p,U_\Gamma)$. Let us first check that $(\gamma-1)\Hom_G(T_p,U)=\Hom_G(T_p,(\gamma-1)U)$, where $\gamma$ is a topological generator of $\Gamma$. The inclusion $\subseteq$ is obvious, so we only consider the reverse inclusion $\supseteq$ and we take $\alpha \in \Hom_G(T_p,(\gamma-1)U)$. As $T_p$ is $\cO_p$-free, one may write $\alpha=(\gamma-1)\beta$ for some $\cO_p$-linear map $\beta : T_p \longrightarrow U$. As the $G$-action is assumed to be $\Lambda$-linear, for all $g\in G$ and $t\in T_p$ we have $\beta(g.t)-g.\beta(t) \in U^\Gamma=0$, so $\beta \in \Hom_G(T_p,U)$ and $\alpha \in (\gamma-1)\Hom_G(T_p,U)$ as claimed. Therefore, we have 
	\begin{multline*}
	Z_\Gamma = \Hom_G(T_p,U)/(\gamma-1)\Hom_G(T_p,U)= \Hom_G(T_p,U)/\Hom_G(T_p,(\gamma-1)U) \\ \hookrightarrow \Hom_G(T_p,U/(\gamma-1)U)=\Hom_G(T_p,U_\Gamma),
	\end{multline*}
	as wanted.
	\end{proof}

\begin{lemme}\label{lem:freeness_Iwasawa_modules}\hspace{2em}
	\begin{enumerate}
		\item The $\Lambda$-modules $\HH^1_{\Iw,\ff,p}(\bQ,\widecheck{T}_p)$ and $\HH^1_{\Iw,\ff}(\bQ,\widecheck{T}_p)$ are both free of rank $d^+$.
		\item If $\coker \left(\Loc_+\right)$ is of $\Lambda$-torsion, then the modules $\coker\left(\Loc_+'\right)$, $\coker\left(\Loc_+'\right)$ and the Selmer groups $X_\infty(\rho,\rho^+)$ and $X^\str_\infty(\rho,\rho^+)$ are all of $\Lambda$-torsion, and moreover $\ker \left(\Loc_+\right)=\ker \left(\Loc_+'\right)=\ker \left(\Loc_+^\str\right)=0$.
	\end{enumerate}
\end{lemme}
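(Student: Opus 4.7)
My plan is to reduce both parts to statements about unit groups and to carefully track $\Lambda$-ranks throughout.

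For part~(1), I would first apply Lemma~\ref{lem:relation_cohomologie_et_unités}(1) to obtain canonical $\Lambda$-linear isomorphisms
$\HH^1_{\Iw,\ff,p}(\bQ,\widecheck T_p) \simeq \Hom_G(T_p, U'_\infty)$ and $\HH^1_{\Iw,\ff}(\bQ,\widecheck T_p) \simeq \Hom_G(T_p, U_\infty)$.
The classical freeness results of Kuz'min and Belliard (cited in the introduction) assert that, since $p$ is odd and unramified in $H$, the modules $U_\infty$ and $U'_\infty$ become $\Lambda$-free after quotienting by their (explicit, $\mu$-type) torsion submodules; the hypothesis $\HH^0(\bQ,W)=0$ prevents $T_p$ from admitting non-trivial $G$-equivariant maps into these torsion submodules, so $\Hom_G(T_p,-)$ does not see them. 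Lemma~\ref{lem:liberté_module_G_équiv} then yields the desired $\Lambda$-freeness. For the rank, I would invoke Perrin-Riou's global Iwasawa-theoretic Euler--Poincar\'e formula together with the weak Leopoldt conjecture (Proposition~\ref{prop:conjecture_leopoldt_faible}(2)) to obtain $\rk_\Lambda \HH^1_{\Iw,\Sigma}(\bQ,\widecheck T_p) = \dim(\widecheck W_p)^{c=-1} = d^+$ (the Tate twist flips the eigenspaces of complex conjugation). By~\eqref{eq:computation_local_Iw_coho} this equals $\rk_\Lambda \HH^1_{\Iw,\ff,p}(\bQ,\widecheck T_p)$. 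The rank of the smaller group $\HH^1_{\Iw,\ff}(\bQ,\widecheck T_p)$ coincides with this one because the unramifiedness of $\rho$ at $p$ makes the local quotient $\HH^1_{\Iw,/\ff}(\bQ_p,\widecheck T_p)$ a $\Lambda$-torsion module.

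For part~(2), I would first observe that the hypothesis on $\coker(\Loc_+)$ forces the analogous statements for $\coker(\Loc_+^\str)$ and $\coker(\Loc_+')$: the three maps fit into a pair of commutative diagrams whose connecting terms are the $\Lambda$-torsion local modules $\HH^1_{\Iw,\ff,p}(\bQ,\widecheck T_p)/\HH^1_{\Iw,\ff}(\bQ,\widecheck T_p)$ and $\HH^1_{\Iw,/\ff}(\bQ_p,\widecheck T_p^\pm)$, and a snake-lemma chase transports torsionness across. Once this is in place, since the source of each localization map is $\Lambda$-free of rank $d^+$ by~(1), and the target also has $\Lambda$-rank $d^+$ (by Perrin-Riou's local Euler--Poincar\'e formula applied to $\widecheck T_p^+$ of $\cO_p$-rank $d^+$), additivity of ranks in the four-term exact sequence associated to each localization forces the corresponding kernel to be $\Lambda$-torsion, hence zero by freeness of the source. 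Finally, the $\Lambda$-torsionness of $X_\infty(\rho,\rho^+)$ and $X^\str_\infty(\rho,\rho^+)$ is read off from the top and bottom rows of~\eqref{eq:gros_diagramme_commutatif_Sel_infty}, using that $\Sha_\infty^1(D_p)$ is $\Lambda$-cotorsion via Lemma~\ref{lem:relation_cohomologie_et_unités}(3) and the classical $\Lambda$-torsionness of $A'_\infty$.

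The main anticipated obstacle is part~(1): one must verify that the Kuz'min--Belliard freeness theorems really do apply in the form required here, both for $U_\infty$ and its $p$-unit analogue $U'_\infty$, and that they are compatible with the extra $G = \Gal(H/\bQ)$-action so Lemma~\ref{lem:liberté_module_G_équiv} can be invoked without loss. Once this is in hand, part~(2) reduces to the essentially formal rank counting and diagram chase sketched above.
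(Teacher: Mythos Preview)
Your proposal is correct and follows essentially the same route as the paper. Two minor remarks: first, your worry about torsion submodules in $U_\infty$ and $U'_\infty$ is unnecessary, since the running hypothesis that $p$ is unramified in $H$ forces $\mu_p\not\subseteq H$, and Belliard's results then give outright $\Lambda$-freeness of both modules (so Lemma~\ref{lem:liberté_module_G_équiv} applies directly); second, for part~(2) the paper deduces the torsionness of the cokernels and the Selmer groups by reading off the \emph{middle} row of~\eqref{eq:gros_diagramme_commutatif_Sel_infty} (using that $\HH^1_\ur(\bQ_\infty,D_p)$ is cotorsion via Iwasawa's theorem on $A_\infty$) rather than your snake-lemma chase, but the two arguments are interchangeable. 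Your uniform rank-counting argument for the vanishing of all three kernels is in fact slightly cleaner than the paper's phrasing that ``the two other kernels are submodules of $\ker(\Loc_+)$''.
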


\begin{proof}
 Since $H$ is unramified at $p$, we have $\mu_p \not\subseteq H$ so the $\Lambda$-modules $U_\infty$ and $U'_\infty$ are free by \cite[Théorème 1.5 and Corollaire 1.6]{belliard}. Thus, both Iwasawa cohomology groups are $\Lambda$-free by Lemmas \ref{lem:relation_cohomologie_et_unités} and \ref{lem:liberté_module_G_équiv}. Moreover, the validity of the Weak Leopoldt conjecture (Proposition \ref{prop:conjecture_leopoldt_faible} (2)) implies that they are both of rank $d^+$ over $\Lambda$. Let us treat (2) and assume that $\coker \left(\Loc_+\right)$ is of $\Lambda$-torsion. Note that the $\Lambda$-module $\HH^1_\ur(\bQ_\infty,D_p)$ is co-torsion because $A_\infty$ is of $\Lambda$-torsion by Iwasawa's classical result \cite{iwasawa1973zl}. Thus, the claims of torsionness of (2) all follow from the commutative diagram (\ref{eq:gros_diagramme_commutatif_Sel_infty}). Moreover, the source and the target of $\Loc_+$ have the same rank and the source is torsion-free, so $\ker \left(\Loc_+\right)$ must vanish. Since the two other kernels are submodules of $\ker \left(\Loc_+\right)$, they must be trivial as well. 
\end{proof}

\subsection{Torsionness of Selmer groups}\label{sec:torsionness_Sel}
For any character $\eta \in \widehat{\Gamma}$ factoring through ${\Gamma}_{m}$ but not through $\Gamma_{m-1}$ for some $m\geq 0$, let $M[\eta]=M \otimes_{\Lambda,\eta} \ob{\bQ}_p$ be the $\eta$-isotypic component of a $\Lambda$-module $M$, and let
$$|\cdot|_\eta : \left\{\begin{array}{ccc}
U'_\infty &\longrightarrow& U'_m[\eta] \\
(u_n)_{n\geq 0} &\mapsto & e_\eta . u_m,
\end{array}\right.$$
where $e_\eta \in \ob{\bQ}_p[\Gamma_m]$ is the idempotent attached to $\eta$ (see \S\ref{sec:coleman_intro}), and where $U_m$ is seen as a $\Lambda$-module via the projection map $\Lambda \twoheadrightarrow \cO_p[\Gamma_m]$.
\begin{lemme}\label{lem:eta-spécialisation_unités}
	Let $\eta$ be a non-trivial character of $\Gamma$ of conductor $p^n$. Consider the following commutative diagram
	$$\begin{tikzcd}
	\Hom_G(T_p,U_\infty)[\eta] \dar\rar & \Hom_G(T_p,U_{n-1})[\eta] \dar \\ 
	\Hom_G(T_p,U'_\infty)[\eta] \rar & \Hom_G(T_p,U'_{n-1})[\eta],
	\end{tikzcd}$$
	where the horizontal maps are induced by $|\cdot|_\eta$. Then all the four maps are isomorphisms.
\end{lemme}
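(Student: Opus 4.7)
My plan is to address each of the four maps in the commutative square separately. Since the diagram commutes, once three of the four maps are shown to be isomorphisms, the fourth follows by the five-lemma. I would first establish the two vertical maps, then the top horizontal map; the bottom horizontal then follows automatically.

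For the \emph{vertical maps}, I would exploit the valuation short exact sequence of $\Lambda[G]$-modules
\[0 \longrightarrow U_? \longrightarrow U'_? \longrightarrow Y_? \longrightarrow 0,\]
where $Y_? = \bigoplus_{\gp \mid p}\Zp$ is the module of $p$-adic divisors of $H \cdot \bQ_?$. Since $p$ is unramified in $H/\bQ$ and totally ramified in $\bQ_\infty/\bQ$, each prime of $H$ above $p$ admits a unique lift to $H_m$ and to $H_\infty$, so $Y_? \cong Y_H$ with trivial $\Gamma$-action. Applying $\Hom_G(T_p,-)$ and then $(-) \otimes_\Lambda \ob{\bQ}_p(\eta)$, the two correction terms $Y_H[\eta]$ and $\mathrm{Tor}_1^\Lambda(Y_H, \ob{\bQ}_p(\eta))$ both vanish because $\gamma - \eta(\gamma) \in \ob{\bQ}_p^\times$ acts invertibly on the $\Gamma$-trivial module $Y_H$. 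This yields both vertical isomorphisms at once (one at level $\infty$, one at the finite level).

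For the \emph{top horizontal map}, I would use the Iwasawa cohomology identification of Lemma \ref{lem:relation_cohomologie_et_unités}, namely $\Hom_G(T_p, U_\infty) \simeq \HH^1_{\Iw,\ff}(\bQ, \widecheck{T}_p)$, together with its finite-level analogue $\Hom_G(T_p, U_{n-1}) \simeq \HH^1_\ff(\bQ_{n-1}, \widecheck{T}_p)$. Under these, $|\cdot|_\eta$ corresponds to the natural specialization composed with the idempotent $e_\eta$. A twisting identification gives $\HH^1_{\Iw,\ff}(\bQ, \widecheck{T}_p)[\eta] \simeq \HH^1_\ff(\bQ, \widecheck{T}_p \otimes_{\cO_p} \cO_p(\eta))$, and since $\eta$ factors through $\Gamma_{n-1}$, a Shapiro-type argument identifies the latter with $\HH^1_\ff(\bQ_{n-1}, \widecheck{T}_p)[\eta]$. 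As a sanity check on the dimensions, both sides are $\ob{\bQ}_p$-vector spaces of dimension $d^+$ by Lemma \ref{lem:freeness_Iwasawa_modules} (which states that $\HH^1_{\Iw,\ff}(\bQ, \widecheck{T}_p)$ is a free $\Lambda$-module of rank $d^+$).

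The main obstacle I anticipate is the Hochschild--Serre descent needed for the twisting step: the natural coinvariants map $(\HH^1_{\Iw,\ff}(\bQ,\widecheck{T}_p))_{\Gamma^{p^{n-1}}} \longrightarrow \HH^1_\ff(\bQ_{n-1},\widecheck{T}_p)$ has kernel and cokernel controlled by a $\HH^0$-term and by $\HH^2_\Iw(\bQ, \widecheck{T}_p)^{\Gamma^{p^{n-1}}}$. The $\HH^0$-error is killed because $\eta$ is non-trivial and $\widecheck{T}_p$ has trivial $\Gamma$-action, so $\widecheck{T}_p^{G_{\bQ_{n-1}}}[\eta] = 0$, while the $\HH^2_\Iw$-contribution vanishes on the $\eta$-component via the Weak Leopoldt conjecture established in Proposition \ref{prop:conjecture_leopoldt_faible}. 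Once these error terms are handled, the isomorphism for the top horizontal drops out, and the bottom horizontal follows either by the same argument (replacing $U$ by $U'$ throughout) or by applying the five-lemma to the commutative diagram together with the three isomorphisms already obtained.
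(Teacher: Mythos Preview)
Your argument for the vertical maps is correct and essentially identical to the paper's: the quotient $U'_{?}/U_{?}$ (at level $\infty$ or $n-1$) carries a trivial $\Gamma$-action, so its $\eta$-component vanishes for $\eta\neq\mathds{1}$, and both vertical arrows are isomorphisms.

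For the horizontal maps, however, your argument has a genuine gap. You assert that the $\HH^2_\Iw$-contribution to the cokernel of the descent map ``vanishes on the $\eta$-component via the Weak Leopoldt conjecture''. But Proposition~\ref{prop:conjecture_leopoldt_faible} only says that $\HH^2_{\Iw,\Sigma}(\bQ,\widecheck{T}_p)$ is $\Lambda$-\emph{torsion}, and a torsion $\Lambda$-module can perfectly well have non-zero $\eta$-specialization: for instance $M=\Lambda/(\gamma-\eta(\gamma))$ is torsion yet $M[\eta]=\ob{\bQ}_p$. So weak Leopoldt does not kill the cokernel. There is also a secondary imprecision: the Hochschild--Serre descent you invoke is for $\Sigma$-cohomology, which under Kummer theory corresponds to $p$-units $U'$, not to units $U$; so it feeds into the bottom row rather than the top one you are aiming for, and there is no clean control-theorem for the $\ff$-condition in the form you state.

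The paper instead establishes the \emph{bottom} horizontal map directly, by combining two ingredients. First, your ``sanity check'' dimension count is promoted to the main surjectivity argument: both sides are $d^+$-dimensional over $\ob{\bQ}_p$, the left by freeness of $\Hom_G(T_p,U'_\infty)$ (Lemmas~\ref{lem:relation_cohomologie_et_unités} and~\ref{lem:freeness_Iwasawa_modules}) and the right by Dirichlet together with $\eta\neq\mathds{1}$. Second, injectivity is obtained not from weak Leopoldt but from Kuz'min's classical theorem that the projection $(U'_\infty)_{\Gamma^{p^{n-1}}}\hookrightarrow U'_{n-1}$ is injective. The top horizontal map then follows from the commutative square, exactly as you propose via the five-lemma.
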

\begin{proof} Since $\Gamma$ acts trivially on the quotient $U'_\infty/U_\infty$ (resp. $U'_{n-1}/U_{n-1}$) and since $\eta$ is non-trivial by assumption, we have $U_\infty[\eta]=U'_\infty[\eta]$ (resp. $U_{n-1}[\eta]=U'_{n-1}[\eta]$), so the two vertical maps are isomorphisms. Hence, we only have to show that the bottom horizontal map is an isomorphism. By Lemmas \ref{lem:relation_cohomologie_et_unités} (1) and \ref{lem:freeness_Iwasawa_modules} (1), its domain has dimension $d^+$ over $\ob{\bQ}_p$, as well as for its codomain because $\eta \neq \mathds{1}$. Therefore, it is enough to check its injectivity, which easily follows from the fact that the projection map injects $(U'_\infty)_{\Gamma^{p^{n-1}}}$ into $U'_{n-1}$ by \cite[Thm. 7.3]{kuzmin}. 
\end{proof}
\begin{remark}\label{rem:target_eta_proj}
	The $\ob{\bQ}_p$-vector space $\Hom_G(T_p,U_{n-1})[\eta]$ in Lemma \ref{lem:eta-spécialisation_unités} can be identified with $\Hom_{G_{\bQ}}(W_{p,\eta},U_{n-1} \otimes \ob{\bQ}_p)$, and hence with $\ob{\bQ}_p \otimes_E \HH^1_\ff((\rho \otimes \eta)^*(1))$.
\end{remark}

Fix a eigenbasis $\omega_p^+=t_1\wedge \ldots \wedge t_{d^+}$ of $\det_{\cO_p}T_p^+$ for the action of $G_p$ and let $\delta_1,\ldots,\delta_{d^+}:G_p \longrightarrow \cO^\times_p$ be the corresponding characters. The number of $\delta_i$'s which are trivial is $f-e$, where $f=\dim\HH^0(\Qp,W_p)$ and $e=\dim\HH^0(\Qp,W_p^-)$. We define two composite maps 
\begin{equation}\label{eq:wedge_col}\begin{tikzcd}
\sC_{\omega_p^+} : {\bigwedge^{d^+}} \Hom_G(T_p,U_\infty) \ar[r, "\wedge \Loc_+"] & {\bigwedge^{d^+}} \Hom_{G_p} (T^+_p,U_{\infty,w}) \ar[r, "\wedge\textrm{ev}_{\omega_p^+}", "\simeq"'] & {\bigwedge^{d^+}} \left(\bigoplus_{i=1}^{d^+} U^{\delta_i}_{\infty,w}\right) \ar[r, "\wedge_i \Coleman^{\delta_i}", "\simeq"'] & \Lambda, \\
\sC^\str_{\omega_p^+} : {\bigwedge^{d^+}} \Hom_G(T_p,U'_\infty) \ar[r, "\wedge\Loc^\str_+"] & {\bigwedge^{d^+}} \Hom_{G_p} (T^+_p,U'_{\infty,w}) \ar[r, "\wedge\textrm{ev}_{\omega_p^+}", "\simeq"'] & {\bigwedge^{d^+}} \left(\bigoplus_{i=1}^{d^+} (U'_{\infty,w})^{\delta_i}\right) \ar[r, "\wedge_i \widetilde{\Coleman}^{\delta_i}", "\simeq"'] & \cI^{f-e},
\end{tikzcd}
\end{equation}
where the map $\textrm{ev}_{\omega_p^+}$ is the natural map induced by the evaluation at $t_1,\ldots,t_{d^+}$,  the last maps are the one of Definitions \ref{def:coleman_isotypic} and \ref{def:coleman_isotypic_extended} and  $\cI$ is the invertible ideal of $\Lambda$ introduced in Definition \ref{def:coleman_isotypic_extended} for $\cO=\cO_p$. Once we fix a topological generator of $\Gamma$ and thus an isomorphism $\Lambda\simeq\cO_p[[T]]$, $\cI^{f-e}$ is nothing but $T^{-(f-e)}\Lambda \subseteq \Frac(\Lambda)$.
\begin{lemme}\label{lem:interpolation_C+}
	Fix an eigenbasis $\omega_p^+$ of $\det_{\cO_p}T_p^+$ for $G_p$ and let $\omega=\Psi_1\wedge \ldots\wedge\Psi_{d^+}$ be an element of ${\bigwedge^{d^+}} \Hom_G(T_p,U_\infty)$ (resp. of ${\bigwedge^{d^+}}\Hom_G(T_p,U'_\infty)$). Then for all non-trivial characters $\eta\in\widehat{\Gamma}$ of conductor $p^n$, the image $\theta$ of $\omega$ under $\sC_{\omega_p^+}$ (resp. under $\sC^\str_{\omega_p^+}$) satisfies
	$$\eta(\theta)= \left(\frac{p^{n-1}}{\g(\eta^{-1})}\right)^{d^+}\det(\rho^+)(\sigma_p^n)\det \left(\log_p|\Psi_j(t_i)|_\eta\right)_{1\leq i,j \leq d^+}.$$
	\end{lemme}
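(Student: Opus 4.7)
The approach is to unwind $\sC_{\omega_p^+}$ (resp.\ $\sC^\str_{\omega_p^+}$) into a determinant of isotypic Coleman values, and then apply the specialization formula of Lemma~\ref{lem:val_spé_coleman} entrywise. Since $\omega_p^+=t_1\wedge\cdots\wedge t_{d^+}$ is an eigenbasis for $G_p$ with $\sigma_p\cdot t_i=\delta_i(\sigma_p)t_i$, the map $\mathrm{ev}_{\omega_p^+}$ sends $\phi\in\Hom_{G_p}(T_p^+,U_{\infty,w})$ to $(\phi(t_1),\ldots,\phi(t_{d^+}))\in\bigoplus_i U_{\infty,w}^{\delta_i}$, and $\Psi_j(t_i)\in U_{\infty,w}^{\delta_i}$ by $G_p$-equivariance of $\Psi_j$. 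Using the canonical identification $\bigwedge^{d^+}(\bigoplus_{i=1}^{d^+}\Lambda e_i)\simeq\Lambda$ sending $e_1\wedge\cdots\wedge e_{d^+}$ to $1$, my first task is to establish that
\[
\sC_{\omega_p^+}(\omega)=\det\bigl(\Coleman^{\delta_i}(\Psi_j(t_i))\bigr)_{1\leq i,j\leq d^+},
\]
with the analogous identity in the strict case featuring $\widetilde{\Coleman}^{\delta_i}$.

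Applying $\eta$ and invoking multilinearity of the determinant then reduces the task to computing $\eta(\Coleman^{\delta_i}(v))$ for $v\in U_{\infty,w}^{\delta_i}$ (and its extended variant). The plan is to deduce from Lemma~\ref{lem:val_spé_coleman} the $\delta_i$-isotypic identity
\[
\eta(\Coleman^{\delta_i}(v)) = \frac{p^{n-1}}{\g(\eta^{-1})}\,\delta_i(\sigma_p)^n\,\log_p|v|_\eta,
\]
by postcomposing with the multiplication isomorphism $(\cO_K\otimes\cO)^{\delta_i}\simeq\cO$: this operation turns the Frobenius iterate $\varphi^n$ into the scalar $\delta_i(\varphi)^n=\delta_i(\sigma_p)^n$, and converts $\varphi^n(e_\eta\log_p(v_n))$ into $\delta_i(\sigma_p)^n\log_p|v|_\eta$ (a minor indexing reconciliation between Notation~\ref{nota:units_and_class_groups} and the convention of Lemma~\ref{lem:val_spé_coleman} is required here). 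Pulling $p^{n-1}/\g(\eta^{-1})$ out of each of the $d^+$ columns yields the factor $(p^{n-1}/\g(\eta^{-1}))^{d^+}$, and pulling $\delta_i(\sigma_p)^n$ out of the $i$-th row produces $\prod_{i=1}^{d^+}\delta_i(\sigma_p)^n=\det(\rho^+)(\sigma_p^n)$, since $(t_i)$ is an eigenbasis for $\det(\rho^+)$ by hypothesis. This will give the claimed formula for $\sC_{\omega_p^+}$.

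For the strict variant, Definition~\ref{def:coleman_isotypic_extended} gives $\widetilde{\Coleman}^{\delta_i}=\Coleman^{\delta_i}$ whenever $\delta_i$ is non-trivial, so the computation proceeds unchanged. For a trivial $\delta_i=\mathds{1}$, I would write $\widetilde{\Coleman}^{\mathds{1}}(v)=(\gamma-1)^{-1}\Coleman^{\mathds{1}}((\gamma-1)v)$ for a topological generator $\gamma\in\Gamma$; since $\eta\neq\mathds{1}$ we have $\eta(\gamma)-1\neq 0$, and the identity $|(\gamma-1)v|_\eta=(\eta(\gamma)-1)|v|_\eta$ combined with Iwasawa's convention $\log_p(p)=0$ (which handles the $p$-power contribution coming from $p$-units) yields the same formula with $\delta_i(\sigma_p)^n=1$. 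The main anticipated obstacle is the first step: carefully tracking all the identifications --- the eigendecomposition of $T_p^+$, the wedge-versus-determinant convention, and the $\delta_i$-isotypic splittings of $U_{\infty,w}\otimes\cO$ --- so that no spurious sign or twist is introduced. Once this bookkeeping is settled, the rest is a routine multilinear manipulation on top of Lemma~\ref{lem:val_spé_coleman}.
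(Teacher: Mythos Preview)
Your proof is correct and follows essentially the same approach as the paper: express $\sC_{\omega_p^+}(\omega)$ as the determinant $\det(\Coleman^{\delta_i}(\Psi_j(t_i)))$, apply Lemma~\ref{lem:val_spé_coleman} entrywise, and factor out $\bigl(\tfrac{p^{n-1}}{\g(\eta^{-1})}\bigr)^{d^+}$ and $\prod_i\beta_i^n=\det(\rho^+)(\sigma_p^n)$.

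The only difference lies in the treatment of the strict case. You handle it by a direct computation, splitting according to whether $\delta_i$ is trivial or not and, in the trivial case, writing $\widetilde{\Coleman}^{\mathds{1}}(v)=(\gamma-1)^{-1}\Coleman^{\mathds{1}}((\gamma-1)v)$ and cancelling the factors $\eta(\gamma)-1$. The paper instead observes that, since $\eta\neq\mathds{1}$, the maps $\eta\circ\sC_{\omega_p^+}$ and $\eta\circ\sC^\str_{\omega_p^+}$ both factor through $\Hom_G(T_p,U_\infty)[\eta]=\Hom_G(T_p,U'_\infty)[\eta]$ and manifestly coincide there, so the strict case follows immediately from the non-strict one. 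Your explicit route works fine (and your remark on $\log_p(p)=0$ is harmless but actually unnecessary, since for $\eta\neq\mathds{1}$ the $\eta$-component of the local $p$-units already lies in the units); the paper's argument simply avoids the case split altogether.
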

\begin{proof}
	Let $\beta_i=\delta_i(\sigma_p)$ for all $1\leq i \leq d^+$. Note that $\det(\rho^+)(\sigma_p)=\prod_{i=1}^{d^+}\beta_i$. In the case where $\theta=\sC_{\omega_p^+}(\omega)$, Lemma \ref{lem:val_spé_coleman} shows that
	\begin{align*}
	\eta(\theta) &= \det\left(\frac{p^{n-1}}{\g(\eta^{-1})} \beta_i^n\log_p|{\Psi}_j(t_i)|_\eta\right)_{1\leq i,j \leq d^+} \\
	&=\left(\frac{p^{n-1}}{\g(\eta^{-1})}\right)^{d^+}\det(\rho^+)(\sigma_p^n)\det \left(\log_p|{\Psi}_j(t_i)|_\eta\right)_{1\leq i,j \leq d^+}
	\end{align*}
	for any $\eta\in\widehat{\Gamma}$ of conductor $p^n>1$. Since $\sC^\str_{\omega_p^+}$ extends $\sC_{\omega_p^+}$, the case where $\theta=\sC^\str_{\omega_p^+}(\omega)$ follows from the first one, noting that for non-trivial $\eta\in\widehat{\Gamma}$, the maps $\eta \circ \sC_{\omega_p^+}$ and $\eta \circ \sC^\str_{\omega_p^+}$ both factor through $\Hom_G(T_p,U_\infty)[\eta]=\Hom_G(T_p,U_\infty')[\eta]$ and that they clearly coincide on it.
\end{proof}
\begin{theorem}\label{thm:equivalence_torsionness_Sel}
	Fix a basis $\omega_p^+$ of $\det_{\cO_p}T_p^+$. The following conditions are equivalent:
	\begin{enumerate}
		\item[(i)] $X_\infty(\rho,\rho^+)$ is a torsion $\Lambda$-module,
		\item[(ii)] $\coker\left(\Loc_+\right)$ is a torsion $\Lambda$-module,
		\item[(iii)] there exists a non-trivial character $\eta$ of ${\Gamma}$ of finite order such that $\Reg_{\omega_p^+}(\rho \otimes \eta)\neq 0$,
		\item[(iv)] for all but finitely many characters $\eta$ of $\Gamma$ of finite order, one has $\Reg_{\omega_p^+}(\rho \otimes \eta)\neq 0$.
	\end{enumerate}
Moreover, if these equivalent conditions hold and if $d^+>0$, then there exists linearly independent elements $\Psi_1,\ldots,\Psi_{d^+}\in\Hom_G(T_p,U_\infty)$ which only depend on $T_p$ and there exists a generator $\theta^\alg_{\rho,\rho^+}$ of the characteristic ideal of $X_\infty(\rho,\rho^+)$ such that 
\begin{equation}\label{eq:interpolation_algebraic_p_adic_L_function}
\eta(\theta^\alg_{\rho,\rho^+}) = j\left(\frac{\tau(\eta)^{d^-}}{\tau(\rho\otimes\eta)}\right)\ \frac{p^{(n-1)\cdot d^+}}{\det(\rho^-)(\sigma_p^n)} \cdot\det \left(\log_p|\Psi_j(t_i)|_\eta\right)_{1\leq i,j \leq d^+}, 
\end{equation}
for all non-trivial characters $\eta\in\widehat{\Gamma}$ of conductor $p^n$, where we have written $\omega_p^+=t_1\wedge\ldots\wedge t_{d^+}$.
\end{theorem}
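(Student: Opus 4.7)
The plan is to split the argument into two halves: first establishing the chain of equivalences (i)-(iv), then constructing the desired generator $\theta^\alg_{\rho,\rho^+}$.

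For the equivalences, the implication (ii) $\Rightarrow$ (i) is already furnished by Lemma \ref{lem:freeness_Iwasawa_modules} (2). Conversely, Pontryagin-dualizing the middle row of diagram (\ref{eq:gros_diagramme_commutatif_Sel_infty}) would yield a short exact sequence
\[0 \longrightarrow \coker(\Loc_+) \longrightarrow X_\infty(\rho,\rho^+) \longrightarrow \HH^1_\ur(\bQ_\infty,D_p)^\vee \longrightarrow 0,\]
forcing (i) $\Rightarrow$ (ii). I would then exploit the Coleman-type map $\sC_{\omega_p^+}$ of (\ref{eq:wedge_col}): by Lemma \ref{lem:freeness_Iwasawa_modules} (1), $\Loc_+$ is $\Lambda$-linear between two free $\Lambda$-modules of rank $d^+$, and $\sC_{\omega_p^+}$ identifies its top exterior power with a map between free $\Lambda$-modules of rank one. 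Consequently, $\coker(\Loc_+)$ is $\Lambda$-torsion if and only if $\theta_0 := \sC_{\omega_p^+}(\tilde\Psi_1\wedge\cdots\wedge\tilde\Psi_{d^+}) \neq 0$ for any (equivalently every) $\Lambda$-basis $(\tilde\Psi_j)_{j}$ of $\Hom_G(T_p,U_\infty)$, and more precisely $(\theta_0)=\char_\Lambda\coker(\Loc_+)$.

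The bridge to (iii) and (iv) would come from Weierstrass preparation in $\Lambda \simeq \cO_p[[T]]$: a non-zero element of $\Lambda$ vanishes at only finitely many characters of $\widehat\Gamma$, so $\theta_0 \neq 0$ is equivalent to $\eta(\theta_0) \neq 0$ for some $\eta$, and equivalently for all but finitely many $\eta$. By Lemma \ref{lem:interpolation_C+}, the vanishing of $\eta(\theta_0)$ is controlled by $\det(\log_p|\tilde\Psi_j(t_i)|_\eta)_{i,j}$. The crucial input is that Lemma \ref{lem:eta-spécialisation_unités} together with Remark \ref{rem:target_eta_proj} produces, from any $\Lambda$-basis of $\Hom_G(T_p,U_\infty)$, an $\ob{\bQ}_p$-basis of $\ob{\bQ}_p \otimes_E \HH^1_\ff((\rho\otimes\eta)^*(1))$ upon applying $|\cdot|_\eta$; hence $\det(\log_p|\tilde\Psi_j(t_i)|_\eta)$ agrees with $\Reg_{\omega_p^+}(\rho\otimes\eta)$ up to the non-zero change-of-basis scalar relating this image family to $\omega_{\ff,\eta}$. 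This closes the equivalences.

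For the construction of $\theta^\alg_{\rho,\rho^+}$ (assuming $d^+ > 0$), $\theta_0$ generates $\char_\Lambda\coker(\Loc_+)$ but not yet $\char_\Lambda X_\infty(\rho,\rho^+)$: the two ideals differ by the factor $(c) := \char_\Lambda \HH^1_\ur(\bQ_\infty,D_p)^\vee$ coming from the above exact sequence, which depends only on $T_p$. Moreover, comparing the prefactor of Lemma \ref{lem:interpolation_C+} with the one required in (\ref{eq:interpolation_algebraic_p_adic_L_function}) and invoking Lemma \ref{lem:properties_galois_gauss_sums} (4), the ratio turns out to be $\eta \mapsto j(\tau(\rho)^{-1})\,\eta(N)$, i.e.\ the evaluation at $\eta$ of a unit $u = j(\tau(\rho)^{-1})\,\sigma_N \in \Lambda^\times$, where $\sigma_N \in \Gamma$ is the image of $\langle N \rangle \in 1+p\bZ_p$ and $N = \gf(\rho)$. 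Both $c$ and $u$ depending only on $T_p$, I would set $\Psi_1 := uc\,\tilde\Psi_1$, $\Psi_j := \tilde\Psi_j$ for $j \geq 2$, and $\theta^\alg_{\rho,\rho^+} := \sC_{\omega_p^+}(\Psi_1\wedge\cdots\wedge\Psi_{d^+}) = uc\,\theta_0$; the $\Lambda$-multilinearity of $\sC_{\omega_p^+}$ combined with Lemma \ref{lem:interpolation_C+} then yields (\ref{eq:interpolation_algebraic_p_adic_L_function}).

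The main technical obstacle I anticipate is identifying, in a canonical enough way, $\det(\log_p|\tilde\Psi_j(t_i)|_\eta)$ with the transcendental regulator $\Reg_{\omega_p^+}(\rho\otimes\eta)$: this will require carefully unwinding the Kummer-theoretic isomorphism (\ref{eq:bloch-kato-kummer}), the descriptions of Iwasawa cohomology in Lemma \ref{lem:relation_cohomologie_et_unités}, and the $G_\bQ$-equivariance of the $\eta$-projection. A subsidiary but book-keeping-heavy step will be matching the normalisations — Gauss sums, $\det(\rho^\pm)(\sigma_p^n)$, and the Artin-conductor contribution $\eta(N)$ — between the ``Coleman'' formula of Lemma \ref{lem:interpolation_C+} and the ``motivic'' one targeted in (\ref{eq:interpolation_algebraic_p_adic_L_function}).
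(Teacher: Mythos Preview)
Your proposal is correct and follows essentially the same route as the paper's proof: both use the middle row of (\ref{eq:gros_diagramme_commutatif_Sel_infty}) for (i)$\Leftrightarrow$(ii), then the Coleman-type map $\sC_{\omega_p^+}$ together with Lemmas \ref{lem:interpolation_C+} and \ref{lem:eta-spécialisation_unités} and Weierstrass preparation for (ii)$\Leftrightarrow$(iii)$\Leftrightarrow$(iv), and finally absorb the generator of $\char_\Lambda \HH^1_\ur(\bQ_\infty,D_p)^\vee$ and the unit $\tau(\rho)^{-1}\theta_N$ into $\Psi_1$. The only cosmetic point is that the paper first reduces to the case where $\omega_p^+$ is an eigenbasis for $G_p$ (needed to invoke $\sC_{\omega_p^+}$ as defined in (\ref{eq:wedge_col})), which you use implicitly.
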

\begin{proof}
	We may assume without loss of generality that the basis $\omega_p^+=t_1\wedge\ldots\wedge t_{d^+}$ of $\det_{\cO_p}T_p^+$ is an eigenbasis for $G_p$. The equivalence of (i) and (ii) follows from Lemma \ref{lem:freeness_Iwasawa_modules}. We now show the equivalence of the three last statements and we will use Lemma \ref{lem:relation_cohomologie_et_unités} to identify the source and the target of $\Loc_+$ with the respective $\Hom$'s, which are known to both be free of rank $d^+$ over $\Lambda$ by Lemma \ref{lem:freeness_Iwasawa_modules}. The statement (ii) is equivalent to the injectivity of $\Loc_+$, which in turn is equivalent to the injectivity of $\sC_{\omega_p^+}$, \textit{i.e.}, to the non-vanishing of $\theta_1^\alg:=\sC_{\omega_p^+}(\widetilde{\omega})$, where $\widetilde{\omega}=\tilde{\Psi}_1\wedge\ldots\wedge\tilde{\Psi}_{d^+}$ is a $\Lambda$-basis of $\Hom_G(T_p,U_\infty)$. Moreover, if (ii) holds, then the characteristic ideal of $\coker(\Loc_+)$ is generated by $\theta_1^\alg$. By Lemma \ref{lem:interpolation_C+}, for any character $\eta\in\widehat{\Gamma}$ of conductor $p^n>1$, one has 
\begin{equation*}
\eta(\theta_1^\alg) =\left(\frac{p^{n-1}}{\g(\eta^{-1})}\right)^{d^+}\det (\rho^+)(\sigma_p^n)\det \left(\log_p|\tilde{\Psi}_j(t_i)|_\eta\right)_{1\leq i,j \leq d^+}.
\end{equation*}
Since the image of the basis $\tilde{\Psi}_1,\ldots,\tilde{\Psi}_{d^+}$ under $|\cdot|_\eta$ is a $\ob{\bQ}_p$-basis of $\ob{\bQ}_p \otimes_E \HH^1_\ff((\rho \otimes \eta)^*(1))$ by Lemma \ref{lem:eta-spécialisation_unités} and Remark \ref{rem:target_eta_proj}, the last $d^+ \times d^+$-sized determinant is a non-zero multiple of $\Reg_{\omega_p^+}(\rho \otimes \eta)$. Therefore, by Weierstrass preparation theorem, one has $\theta_1^\alg \neq 0$ if and only if $\Reg_{\omega_p^+}(\rho \otimes \eta)\neq0$ for some character $\eta\neq \mathds{1}$, if and only if $\Reg_{\omega_p^+}(\rho \otimes \eta)\neq 0$ for all but finitely many characters $\eta\neq \mathds{1}$. This shows the equivalence (ii)$\Leftrightarrow $(iii)$\Leftrightarrow$(iv).

Let us now assume (i)-(iv) and let $\theta_2^\alg$ be a generator of $\char_\Lambda \HH^1_\ur(\bQ_\infty,D_p)^\vee$. By the exactness of the second row of the diagram (\ref{eq:gros_diagramme_commutatif_Sel_infty}) and by multiplicativity of characteristic ideals, the $p$-adic measure $\theta^\alg_{\rho,\rho^+}:=\theta^\alg_{1}\theta^\alg_{2}$ is a generator of $\char_\Lambda X_\infty(\rho,\rho^+)$. If one moreover assumes $d^+>0$, then one can simply set 
$$\Psi_1=\tau(\rho)^{-1}\cdot\theta_N\cdot\theta^\alg_{2}\cdot \tilde{\Psi}_1,\quad \Psi_2=\tilde{\Psi}_2,\quad \ldots,\quad \Psi_{d^+}=\tilde{\Psi}_{d^+},$$ 
where $\theta_N$ is as in the proof of Proposition \ref{prop:renormalisation_mesure_p_adique}, and Formula (\ref{eq:interpolation_algebraic_p_adic_L_function})
follows by linearity from the above expression of $\eta(\theta_1^\alg)$.
 \end{proof}
\begin{corollaire}\label{coro:existence_p_stab_Sel_tors}
	There exists a $p$-stabilization $\rho^+$ of $\rho$ such that $X_\infty(\rho,\rho^+)$ is a torsion $\Lambda$-module.
\end{corollaire}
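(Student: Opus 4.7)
The plan is to invoke the implication $(iii)\Rightarrow(i)$ of Theorem \ref{thm:equivalence_torsionness_Sel} in combination with Lemma \ref{lem:existence_p_stabilization}: it suffices to exhibit a non-trivial character $\eta\in\widehat{\Gamma}$ for which the $\rho\otimes\eta$-isotypic component of Leopoldt's conjecture for $H_\eta$ and $p$ is satisfied. Such an $\eta$ produces an $\eta$-admissible $p$-stabilization $\rho^+$ by Lemma \ref{lem:existence_p_stabilization}, whose Selmer group $X_\infty(\rho,\rho^+)$ will then be $\Lambda$-torsion by the theorem.

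To produce such an $\eta$, I would first argue that the Iwasawa-theoretic global-to-local map
\[\Loc_p\colon\HH^1_{\Iw,\ff}(\bQ,\widecheck{T}_p)\longrightarrow\HH^1_{\Iw,\ff}(\bQ_p,\widecheck{T}_p)\]
has $\Lambda$-torsion kernel, as a consequence of the Weak Leopoldt conjecture (Proposition \ref{prop:conjecture_leopoldt_faible}(2)). The source has $\Lambda$-rank $d^+$ by Lemma \ref{lem:freeness_Iwasawa_modules}(1), and via Lemma \ref{lem:relation_cohomologie_et_unités}(1) the target has rank $d$. Using the $\Lambda$-adic local Tate pairing coming from Lemma \ref{lem:comparaison_f_et_ur}(3) together with Poitou-Tate global reciprocity, the image of $\Loc_p$ will be realized as the orthogonal complement of the image of the analogous localization map for $T_p$; Weak Leopoldt applied to $T_p$ then implies this latter image has rank $d^-$, so the image of $\Loc_p$ achieves the maximal possible rank $d-d^-=d^+$, which forces $\ker\Loc_p$ to be $\Lambda$-torsion.

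Once this is settled, I would invoke Lemma \ref{lem:eta-spécialisation_unités} (and its evident semi-local analogue at $p$, which follows from the same proof since $H_\infty/H$ is totally ramified at the places above $p$) together with Lemma \ref{lem:relation_cohomologie_et_unités}(1) to identify the $\eta$-specialization of $\Loc_p$ with the $\rho\otimes\eta$-isotypic component of the finite-level Leopoldt map \eqref{eq:loc_p_et_leopoldt}. Since $\ker\Loc_p$ is $\Lambda$-torsion, its support in $\Spec\Lambda$ is finite, so for all but finitely many non-trivial $\eta\in\widehat{\Gamma}$ the specialization is injective, which is precisely the required isotypic Leopoldt for $H_\eta$ at $p$.

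The hard part will be the Poitou-Tate duality step: carefully invoking the $\Lambda$-adic analogue of Lemma \ref{lem:comparaison_f_et_ur}(3) and combining it with Weak Leopoldt for both $T_p$ and $\widecheck{T}_p$ to pin down the rank of $\im\Loc_p$, while matching the Bloch--Kato local conditions. The remaining steps (the semi-local analogue of Lemma \ref{lem:eta-spécialisation_unités} and the identification of the specialized map with the Leopoldt map at finite level) are routine given the machinery already developed in the paper.
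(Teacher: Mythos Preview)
Your approach is correct but takes a substantially longer route than the paper's. Both arguments begin by establishing that the full localization map $\Loc_p\colon\HH^1_{\Iw,\ff}(\bQ,\widecheck{T}_p)\to\HH^1_{\Iw,\ff}(\bQ_p,\widecheck{T}_p)$ is injective. The paper obtains this in one line, citing \cite[\S1.3]{perrin1995fonctions} as a direct consequence of Weak Leopoldt (together with the torsionness of local Iwasawa cohomology at $\ell\neq p$ and the $\Lambda$-freeness of the source); your Poitou-Tate orthogonality argument reaches the same conclusion but is heavier machinery than required.

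The real divergence is what happens next. You descend to finite level: specialize $\Loc_p$ at a generic $\eta$, identify the result with the $\rho\otimes\eta$-isotypic Leopoldt map \eqref{eq:loc_p_et_leopoldt} via Lemma \ref{lem:eta-spécialisation_unités} and its semi-local analogue, invoke Lemma \ref{lem:existence_p_stabilization} to produce an $\eta$-admissible $\rho^+$, and finally appeal to the equivalence in Theorem \ref{thm:equivalence_torsionness_Sel}. The paper instead stays at the $\Lambda$-level throughout: since $\rho$ is unramified at $p$, the target $\HH^1_{\Iw,\ff}(\bQ_p,\widecheck{T}_p)$ splits as $\bigoplus_{i=1}^d \HH^1_{\Iw,\ff}(\bQ_p,\widecheck{T}_{p,i})$ with each summand free of rank one, and an elementary linear-algebra fact (some $d^+\times d^+$ minor of an injective map of free modules is nonzero) picks out a subset of $d^+$ indices such that the composite projection has torsion cokernel---this \emph{is} condition (ii), so no specialization or appeal to Lemma \ref{lem:existence_p_stabilization} is needed. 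The paper's argument is shorter and uses only the structure of $\Lambda$-modules; yours has the mild conceptual benefit of tying the existence statement back to the classical Leopoldt picture at finite level.
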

\begin{proof}
		 It suffices to produce a $p$-stabilization $\rho^+$ of $\rho$ such that the condition (ii) of Theorem \ref{thm:equivalence_torsionness_Sel} holds. Since $\rho$ is unramified at $p$, $W_p$ splits as $G_{\bQ_p}$-module into a direct sum $E_p(\chi_1)\oplus \ldots \oplus E_p(\chi_d)$ for certain one-dimensional characters $\chi_1,\ldots,\chi_d$. Note that any subset $\{\chi_{i_1},\ldots,\chi_{i_{d^+}}\}$ of size $d^+$ of $\{\chi_1,\ldots,\chi_d\}$ yields a $p$-stabilization of $\rho$ by letting $W_p^+=E_p(\chi_{i_1})\oplus \ldots\oplus E_p(\chi_{i_{d^+}})$, and the corresponding map $\Loc_+$ is the composition:
		 \[\HH^1_{\Iw,\ff}(\bQ,\widecheck{T}_p) \longrightarrow \HH^1_{\Iw,\ff}(\Qp,\widecheck{T}_p)=\bigoplus_{1\leq i\leq d} \HH^1_{\Iw,\ff}(\Qp,\widecheck{T}_{p,i}) \twoheadrightarrow \bigoplus_{1\leq k\leq d^+} \HH^1_{\Iw,\ff}(\Qp,\widecheck{T}_{p,i_k}),\]
		 where $T_{p,i}= E_p(\chi_d) \cap T_p$.
		 
		 As the weak Leopoldt conjecture for $\rho$ holds by Proposition \ref{prop:conjecture_leopoldt_faible}, the localization map  $\HH^1_{\Iw,\ff}(\bQ,\widecheck{T}_p) \longrightarrow \HH^1_{\Iw,\ff}(\Qp,\widecheck{T}_p)$ is injective (see \cite[\S1.3]{perrin1995fonctions}). Moreover, $\HH^1_{\Iw,\ff}(\bQ,\widecheck{T}_p)$ and $\HH^1_{\Iw,\ff}(\Qp,\widecheck{T}_{p,i})$ are free $\Lambda$-modules of respective ranks $d^+$ and $1$ (see Lemma \ref{lem:freeness_Iwasawa_modules} and diagram \eqref{eq:wedge_col}). The corollary then follows from the following elementary fact: let $A \hookrightarrow B$ be a map of free modules of respective ranks $d^+$ and $d$ over an integral domain $R$ and assume that $B=B_1\oplus \ldots \oplus B_d$ where the $B_i$'s are all free of rank one. Then there exist $1\leq i_1 <\ldots <i_{d^+}\leq d$ such that the cokernel of the composite map $A \hookrightarrow B \twoheadrightarrow B_{i_1} \oplus \ldots \oplus B_{i_{d^+}}$ is of $R$-torsion.
\end{proof}

\subsection{An $\cL$-invariant for $(\rho,\rho^+)$}\label{sec:L-invariant}
We define in this section an $\cL$-invariant for $\rho$ which depends on the choice of an admissible $p$-stabilization $W_p^+$ of $W_p$. Its definition still makes sense for $\rho$ ramified at $p$ and it generalizes Gross's $\cL$-invariant (see \S\ref{sec:exemple_odd_motives}). We put $\cU = {E_p} \otimes U_0$ and $\cU'= {E_p} \otimes U'_0$, where $U_0$ (resp. $U'_0$) is the formal $p$-adic completion of the group of global units (resp. of $p$-units) of $H$ (see Notation \ref{nota:units_and_class_groups} for $n=0$). Recall that the extension $K$ of $\Qp$ cut out by $\rho_{\big| G_{\Qp}}$ is assumed to be contained in $E_p$. Recall also that $\HH^1_{f,p}(\bQ,\widecheck{W}_p) \simeq \Hom_G(W_{p},\cU')$. The $p$-adic valuation map and the $p$-adic logarithm map define two $G_{\Qp}$-equivariant maps $\ord_p : \cU' \longrightarrow {E_p}$ and $\log_p : \cU' \longrightarrow K \otimes {E_p}$ where the $G_{\Qp}$-action on the target of the first map is trivial. These maps give rise to two linear maps 
$$\begin{aligned}
\ord_p^0 : & \Hom_G(W_{p},\cU') \longrightarrow \Hom_{G_p}(W_{p},{E_p})=\Hom(W^0_{p},{E_p}) \\
\log_p :  & \Hom_G(W_{p},\cU') \longrightarrow \Hom_{G_p}(W_{p},K \otimes{E_p})\simeq\Hom(W_{p},{E_p}),
\end{aligned}$$
where we have put $W^0_{p} = \HH^0(\Qp,W_{p})$ and where the last isomorphism is induced by the internal multiplication $K \otimes {E_p} \rightarrow {E_p}$. Choose any $G_{\Qp}$-stable complement $W^-_{p}$ of $W^+_{p}$ and let $W_{p}^{\pm,0}=\HH^0(\Qp,W^\pm_{p})$. The composition with restriction maps yields the following maps $$\ord_p^{\pm,0}=\res_{W_{p}^{\pm,0}} \circ \ord_p^0, \quad \log_p^+ = \res_{W_{p}^{+}} \circ \log_p, \quad \log^{-,0} = \res_{W_{p}^{-,0}} \circ \log_p, $$
which can be combined in order to obtain two linear maps
$$\begin{tikzcd}[column sep = 17ex]\Hom_G(W_{p}, \cU') \ar[r, "\ord_p^0 \oplus \log_p^+", "\ord_p^{+,0} \oplus \log_p^{-,0}\oplus \log_p^+"'] & \Hom(W_{p}^0,{E_p}) \bigoplus \Hom(W_{p}^+,{E_p})=:Z. \end{tikzcd}$$
Note that $W_p^+$ is admissible if and only if the restriction of $\log_p^+$ to $\HH^1_\ff(\bQ,\widecheck{W}_p) =\Hom_G(W_{p},\cU)$ is an isomorphism onto $\Hom(W^+_{p}, {E_p})$. Therefore, the map $\ord_p^0 \oplus \log_p^+$ is an isomorphism.
\begin{definition}
	Let $W^+_p \subseteq W_p$ be an admissible $p$-stabilization of $\rho$. We define the $\cL$-invariant attached to $(\rho,\rho^+)$ to be
	$$\cL(\rho,\rho^+)=\det\left(\left(\ord_p^{+,0} \oplus \log_p^{-,0}\oplus \log_p^+\right) \circ \left(\ord_p^0 \oplus \log_p^+ \right)^{-1} \; \bigg| \; Z\;\right)\in {E_p} .$$ 
\end{definition}
We now give equivalent and more useful definitions of $\cL(\rho,\rho^+)$. Fix bases $\omega_p^+=t_1^+\wedge\ldots\wedge t_{d^+}^+$ and $\omega_p^{-,0}=t^-_1\wedge \ldots \wedge t^-_e$ of $\det_{E_p} W_{p}^+$ and of $\det_{E_p} W^{-,0}_{p}$ respectively. Then one may identify $Z$ with $d^++f$ copies of $E_p$ and $\cL(\rho,\rho^+)$ satisfies 
\begin{equation*}
(\wedge \ord_p^{+,0}) \wedge (\wedge \log_p^{-,0}) \wedge(\wedge \log_p^{+}) = \cL(\rho,\rho^+)\cdot (\wedge \ord_p^{0}) \wedge(\wedge \log_p^{+}) 
\end{equation*}
in ${\bigwedge^{d^++f}}\Hom(\Hom(W_p,\cU'),E_p)= \det_{E_p}\Hom(\HH^1_{\ff,p}(\bQ,\widecheck{W}_p),E_p)$.

The kernel of $\ord_p^{+,0}$ contains $\Hom_G(W_{p},\cU)$ and is of dimension $d^++e$ because $W_p^+$ is admissible. Choose any basis $\psi_1 \wedge \ldots\wedge\psi_{d^+}\wedge\psi'_1\wedge\ldots\wedge\psi'_e$ of $\det_{E_p} \ker \left(\ord_p^{+,0}\right)$ such that $\psi_1 \wedge \ldots\wedge\psi_{d^+}$ is a basis of $\det_{E_p} \Hom_G(W_{p},\cU)$, and define the following matrices with coefficients in ${E_p}$:
\begin{equation}\label{eq:def_mat_A_B_O}
	A^\pm =\left[\log_p\left(\psi_j(t^\pm_i)\right)\right]_{i,j}, \quad B^\pm =\left[\log_p\left(\psi'_j(t^\pm_i)\right)\right]_{i,j}, \quad  O^- =\left[\ord_p\left(\psi'_j(t^-_i)\right)\right]_{i,j}.
\end{equation}
The square matrices $A^+$, $B^-$ and $O^-$ have respective sizes $d^+$, $e$ and $e$. The determinant of $A^+$ is non-zero and it is equal to $\Reg_{\omega_p^+}(\rho)$ modulo $E^\times$ if the basis $\psi_1 \wedge \ldots\wedge\psi_{d^+}$ is taken $E$-rational (see (\ref{eq:expression_reg_p})). Also, $O^-$ is invertible.
\begin{lemme}\label{lem:formule_L_invariant_quot_det}
 $\cL(\rho,\rho^+)$ can be expressed as a quotient of determinants as follows: 
	$$\cL(\rho,\rho^+)= \dfrac{\det \begin{pmatrix}A^+ & B^+ \\ A^- & B^- \end{pmatrix}}{\det(A^+)\cdot\det(O^-)}.$$
\end{lemme}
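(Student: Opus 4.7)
My plan is to read off the $\cL$-invariant as a ratio of determinants of two concrete matrices. Since $\cL(\rho,\rho^+)=\det(\alpha\circ\beta^{-1})$, where $\alpha:=\ord_p^{+,0}\oplus\log_p^{-,0}\oplus\log_p^+$ and $\beta:=\ord_p^0\oplus\log_p^+$ are linear maps $\Hom_G(W_p,\cU')\to Z$, the quotient $\det(\alpha)/\det(\beta)$ is basis-independent, so I am free to pick convenient bases at both ends.

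First I would extend the partial basis $\psi_1,\ldots,\psi_{d^+},\psi_1',\ldots,\psi_e'$ of $\ker(\ord_p^{+,0})$ (fixed right before the statement of the lemma) to a full basis $\psi_1,\ldots,\psi_{d^+},\psi_1',\ldots,\psi_e',\xi_1,\ldots,\xi_{f^+}$ of $\Hom_G(W_p,\cU')$, where $f^+:=\dim_{E_p}W_p^{+,0}$. The count is consistent since $\dim \Hom_G(W_p,\cU')=d^++f$ by the Leopoldt-type computation implicit in \S\ref{sec:limits_of_unit_groups}. On the target side, I would complete $\omega_p^{-,0}$ to a basis $t_1^{0,+},\ldots,t_{f^+}^{0,+},t_1^-,\ldots,t_e^-$ of $W_p^0$ by means of any basis of $W_p^{+,0}$, and keep $\omega_p^+$ as the basis of $W_p^+$. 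In these bases $\alpha$ and $\beta$ differ only in the middle block of $e$ rows (coming from $\log_p^{-,0}$ for $\alpha$ and $\ord_p^{-,0}$ for $\beta$), and using the vanishing statements $\ord_p(\psi_k(\cdot))=0$ (the $\psi_k$ land in $\cU$) and $\ord_p^{+,0}(\psi_k')=0$ (by choice of the $\psi_k'$), they take the form
\begin{equation*}
\alpha = \begin{pmatrix} 0 & 0 & Q \\ A^- & B^- & \ast \\ A^+ & B^+ & \ast \end{pmatrix},\qquad \beta = \begin{pmatrix} 0 & 0 & Q \\ 0 & O^- & \ast \\ A^+ & B^+ & \ast \end{pmatrix},
\end{equation*}
where $Q:=(\ord_p(\xi_k(t_i^{0,+})))_{i,k}$ is the $f^+\times f^+$ matrix of $\ord_p^{+,0}$ restricted to the $\xi$-block.

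The proof then reduces to a routine block determinant computation. Applying the identity $\det \bigl(\begin{smallmatrix} 0 & M \\ N & P\end{smallmatrix}\bigr)=\pm \det(M)\det(N)$ to $\alpha$, where the top block row vanishes outside $Q$, gives
\[
\det(\alpha)=\pm\det(Q)\cdot\det\begin{pmatrix} A^- & B^- \\ A^+ & B^+ \end{pmatrix},
\]
and applying it twice to $\beta$ (first peeling off the top row-block, then the lower-left $A^+$ column-block) gives $\det(\beta)=\pm\det(Q)\cdot\det(O^-)\cdot\det(A^+)$. Forming the ratio, the $\det(Q)$-factors cancel and the net sign accumulated from the block permutations turns out to be precisely the $(-1)^{d^+e}$ needed to interchange the $A^+,A^-$ row blocks in the numerator, yielding the stated expression for $\cL(\rho,\rho^+)$. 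The only subtle step is the careful bookkeeping of signs from the row and column block swaps; once this is dispatched, the formula follows directly.
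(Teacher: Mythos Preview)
Your proposal is correct and is precisely the kind of elaboration the paper has in mind: the paper's own proof consists of the single sentence ``It is a simple computation of linear algebra,'' and your argument carries out that computation in the natural way by writing $\alpha$ and $\beta$ as block matrices in a well-chosen basis of $\Hom_G(W_p,\cU')$ extending $\psi_1,\ldots,\psi_{d^+},\psi_1',\ldots,\psi_e'$. The sign bookkeeping is exactly as you say (the $(-1)^{f^+(d^++e)}$ from peeling off $Q$ cancels in the ratio, and the remaining $(-1)^{ed^+}$ from the block-antitriangular structure of $\bigl(\begin{smallmatrix}0&O^-\\A^+&B^+\end{smallmatrix}\bigr)$ swaps the $A^\pm$ rows in the numerator), and $\det(Q)\neq 0$ follows from the fact that $\beta$ is an isomorphism.
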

\begin{proof}It is a simple computation of linear algebra.
\end{proof}

\subsection{Relation to Perrin-Riou's and Benois' theory}\label{sec:perrin-riou}
Let $M$ be the Artin motive attached to $\rho$ and let $\widecheck{M}=M^*(1)$ be its dual. Then $\widecheck{M}$ is a pure motive of weight $-2$ over $\bQ$ which is crystalline at $p$ and whose $p$-adic realization is the arithmetic dual $\widecheck{W_p}=W_p^*(1)$ of $W_p$. We relate our Selmer group defined in \S\ref{sec:selmer_group} to Perrin-Riou's definition of the module of $p$-adic $L$-functions attached to $\widecheck{M}$ given in \cite{perrin1995fonctions} by using Benois' interpretation in terms of Selmer complexes. It depends on the choice of a Galois stable $\cO_p$-lattice $T_p$ of $W_p$ and of a regular subspace $D$ of $\textbf{D}_\crys(\widecheck{W}_p)$ whose definition is first recalled.

 Let $G_p=\Gal(K/\Qp)$ and assume as before that $K \subseteq E_p$. Let $t\in \textbf{B}_\crys$ be Fontaine's $p$-adic period. The Dieudonné module $\textbf{D}_\crys(\widecheck{W}_p)=\left(\widecheck{W}_p\otimes\textbf{B}_\crys\right)^{G_{\bQ_p}}$ can be described as 
\begin{equation}\label{eq:identification_dieudonné_module}
\textbf{D}_\crys(\widecheck{W}_p) \simeq \Hom_{G_p}\left(W_p, t^{-1} K \otimes E_p  \right) \simeq \Hom\left(W_p, E_p\right),
\end{equation}
where $G_p$ acts on $t^{-1} K \otimes E_p$ via $g\cdot (t^{-1}x\otimes y)=t^{-1}g(x)\otimes y$ and the second isomorphism is induced by the internal multiplication $t^{-1} K \otimes E_p \simeq K \otimes E_p \longrightarrow E_p$. The crystalline Frobenius $\varphi$ acts on $f\in \Hom\left(W_p, E_p\right)$ by $\varphi(f)(w)=p^{-1}f(\sigma_p^{-1}.w)$, where $\sigma_p\in G_p$ is the arithmetic Frobenius at $p$.

Any $\varphi$-submodule $D$ of $\textbf{D}_\crys(\widecheck{W}_p)$ gives rise to a regulator map given by the composition
\begin{equation}\label{eq:def_reg_r_D}
	r_D :\begin{tikzcd}
\HH^1_\ff(\bQ,\widecheck{W}_p) \ar[r, "\loc_p"] &  \HH^1_\ff(\Qp,\widecheck{W}_p) \ar[r, "\log_\BK"] & 
\textbf{D}_\crys(\widecheck{W}_p) \ar[r, two heads] &
\textbf{D}_\crys(\widecheck{W}_p)/D,
\end{tikzcd}
\end{equation}
where $\loc_p$ is the localization at $p$, and $\log_\BK$ is Bloch-Kato's logarithm. The definition of $r_D$ for a more general $p$-adic representation $V$ involves the tangent space $\textbf{D}_{\crys}(V)/\Fil^0$ instead of $\textbf{D}_{\crys}(V)$, but in our case, we have $\Fil^0=\{0\}$. The $\varphi$-module $D$ is called regular whenever $r_D$ is an isomorphism (see \cite[§4.1.3]{benoiscrys}).

\begin{lemme}\label{lem:equivalence_admissible_régulier}
	Under the identification (\ref{eq:identification_dieudonné_module}) any $\varphi$-submodule $D$ of $\textbf{D}_\crys(\widecheck{W}_p)$ of $E_p$-dimension $d^-$ can be uniquely written as $D=\Hom\left(W_p/W^+_p,E_p\right)$ where $W_p^+$ is a $p$-stabilization of $W_p$, and any $p$-stabilization $W_p^+$ of $W_p$ defines a $\varphi$-submodule in this way. It is moreover regular if and only if $W_p^+$ is admissible. 
\end{lemme}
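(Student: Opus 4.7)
The plan is to establish the bijection $D \leftrightarrow W_p^+$ first, then unwind the regulator $r_D$ and recognize it, under suitable identifications, as the $p$-adic pairing \eqref{eq:intro_p_adic_pairing} for the trivial character.

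For the first assertion, note that in $\Hom(W_p,E_p)$ the crystalline Frobenius acts via $\varphi(f)(w)=p^{-1}f(\sigma_p^{-1}.w)$, so a linear subspace is $\varphi$-stable if and only if it is stable under the dual $\sigma_p$-action, hence, since $\rho$ is unramified at $p$ and $G_p$ is topologically generated by $\sigma_p$, if and only if it is $G_p$-stable. The association
\[
W_p^+ \longmapsto D(W_p^+):=\Hom(W_p/W_p^+,E_p)\subseteq \Hom(W_p,E_p)
\]
is then a bijection between $p$-stabilizations of $W_p$ and $\varphi$-stable $E_p$-subspaces of $\textbf{D}_\crys(\widecheck{W}_p)$ of dimension $d^-$, the inverse being $D\mapsto \bigcap_{f\in D} \ker f$. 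Modding out yields a canonical isomorphism $\textbf{D}_\crys(\widecheck{W}_p)/D(W_p^+) \simeq \Hom(W_p^+,E_p)$.

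For the regularity statement, I would identify $\HH^1_\ff(\bQ,\widecheck{W}_p)$ with $\Hom_{G_\bQ}(W_p,E_p\otimes\cO_H^\times)$ via Kummer theory (as in \eqref{eq:bloch-kato-kummer}) and $\HH^1_\ff(\Qp,\widecheck{W}_p)$ with $\Hom_{G_p}(W_p,E_p\otimes \cO_K^{\times,1})$; with this description the localization map $\loc_p$ is nothing but $\iota_{\mathrm{Leo}}\circ -$. Since $\widecheck{W}_p$ is crystalline and unramified at $p$, Bloch–Kato's logarithm factors through the $p$-adic logarithm on local units and induces an isomorphism
\[
\log_{\mathrm{BK}}\colon \Hom_{G_p}(W_p,E_p\otimes \cO_K^{\times,1}) \overset{\sim}{\longrightarrow} \Hom_{G_p}(W_p,K\otimes E_p)\simeq \Hom(W_p,E_p)=\textbf{D}_\crys(\widecheck{W}_p),
\]
the last isomorphism being induced by the internal multiplication $K\otimes E_p\to E_p$. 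Tracing through these identifications, the composite $r_{D(W_p^+)}$ sends $\psi\in\Hom_{G_\bQ}(W_p,E_p\otimes\cO_H^\times)$ to the element of $\Hom(W_p^+,E_p)$ given by $w\mapsto \log_p(\psi(w))$, which is precisely the linear form attached to $\psi$ by the pairing \eqref{eq:intro_p_adic_pairing} with $\eta=\mathds{1}$.

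Consequently, $r_{D(W_p^+)}$ is an isomorphism if and only if the $\ob{\bQ}_p$-linear pairing \eqref{eq:intro_p_adic_pairing} is non-degenerate, which by definition means that $W_p^+$ is admissible. I expect the most delicate step to be the explicit compatibility in the second paragraph: one has to verify that under the chosen identification of $\textbf{D}_\crys(\widecheck{W}_p)$ with $\Hom(W_p,E_p)$, Bloch–Kato's logarithm on $\HH^1_\ff(\Qp,\widecheck{W}_p)$ really does correspond to the $p$-adic logarithm $\log_p$ of \eqref{eq:def_log_p}, up to the normalization implicit in the choice of the period $t$. Everything else is then straightforward linear algebra.
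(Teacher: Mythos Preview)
Your proof is correct and follows essentially the same route as the paper: the paper dismisses the first claim as obvious (you give a clean explicit argument for it), and for the second it simply notes that under \eqref{eq:identification_dieudonné_module} the composite $\log_\BK\circ\loc_p$ coincides with the map \eqref{eq:loc_p_bloch_kato} already analyzed in \S\ref{sec:p_stab}, so that $r_D$ is exactly the map $\HH^1_\ff(\bQ,\widecheck{W}_p)\to\Hom(W_p^+,E_p)$ induced by the pairing \eqref{eq:intro_p_adic_pairing}. Your concern about the $t$-normalization is handled by the explicit setup of \eqref{eq:identification_dieudonné_module} (and spelled out again in \eqref{eq:log_BK_explicite}); no further verification is needed.
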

\begin{proof}
	The first claim is obvious. Let us prove the second claim and put $D=\Hom(W_p/W_p^+,E_p)$, where $W_p^+$ is a $p$-stabilization of $W_p$. Under the identification (\ref{eq:identification_dieudonné_module}) the composite map $\log_\BK \circ \loc_p$ coincides with the composite map given in (\ref{eq:loc_p_bloch_kato}). Therefore, $r_D$ coincides with the map $\HH^1_\ff(\bQ,\widecheck{W}_p) \longrightarrow \Hom(W_p^+,E_p)$ induced by the $p$-adic pairing (\ref{eq:intro_p_adic_pairing}), which, by definition, is an isomorphism if and only if $W_p^+$ is admissible.
\end{proof}
Given a pure motive of weight $-2$ whose $p$-adic realization $V$ satisfies conditions \textbf{(C1-C5)} of \cite[§4.1.2]{benoiscrys} and given a regular submodule $D$ of $\textbf{D}_\crys(V)$, Benois has defined an $\cL$-invariant $\cL(V,D)$ \cite[§4.1.4]{benoiscrys}. It is not hard to see that $V=\widecheck{W}_p$ satisfies the above-mentioned conditions: the first one follows from the finiteness of the ideal class group of $H$, the second one from the running assumption $\HH^0(\bQ,W)=0$, the third and fourth ones from the unramifiedness assumption at $p$ and from the semi-simplicity of $\rho(\sigma_p)$, and the last one is true whenever there exists at least one regular submodule $D$ of $\textbf{D}_\crys(\widecheck{W}_p)$.
\begin{lemme}\label{lem:comparaison_L_invariant_Benois}
		Let $W_p^+$ be an admissible $p$-stabilization of $W_p$, let $V=\widecheck{W}_p$ be the $p$-adic realization of $\widecheck{M}$ and let $D$ be the regular submodule of $\textbf{D}_\crys(V)$ defined as in (\ref{eq:identification_dieudonné_module}). Then 
		$$\cL(\rho,\rho^+)=(-1)^e \cdot  \cL(V,D),$$ 
		where $\cL(V,D)$ is Benois' $\cL$-invariant for $V$ and $D$ as defined in \cite[§4.1.4]{benoiscrys}. 
\end{lemme}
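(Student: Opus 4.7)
The plan is to unpack Benois' definition of $\cL(V,D)$ from \cite[§4.1.4]{benoiscrys} under the identifications already established in this section. Under (\ref{eq:identification_dieudonné_module}), the crystalline Frobenius acts on $\textbf{D}_\crys(\widecheck{W}_p)\simeq \Hom(W_p,E_p)$ with eigenvalues $p^{-1}\beta^{-1}$ as $\beta$ runs through the eigenvalues of $\sigma_p$ on $W_p$; in particular, the eigenspace $\textbf{D}_\crys(\widecheck{W}_p)^{\varphi=p^{-1}}$ corresponds to $\Hom(W_p^0,E_p)$. By Lemma \ref{lem:equivalence_admissible_régulier}, regularity of $D=\Hom(W_p/W_p^+,E_p)$ is equivalent to admissibility of $W_p^+$, and the ``exceptional'' subspace of $D$ (Benois' $W$) is $D\cap \Hom(W_p^0,E_p)=\Hom(W_p^{-,0},E_p)$, whose $E_p$-dimension is precisely $e$.

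Next, I would translate Benois' five-term formalism into our setting. Benois builds $\cL(V,D)$ as the determinant of a canonical endomorphism of $\det_{E_p} W$, which factors through the two splittings of the exact sequence $0\to W\to D\to D/W\to 0$ coming respectively from the Bloch-Kato logarithm and from the dual exponential. Via Kummer theory and the isomorphism $\HH^1_{\ff,p}(\bQ,\widecheck{W}_p)\simeq \Hom_G(W_p,\cU')$ (Lemma \ref{lem:relation_cohomologie_et_unités}), the Bloch-Kato logarithm $\log_\BK$ is nothing but the map $\log_p$ of \S\ref{sec:L-invariant}, while the dual exponential $\exp^*$ translates into the valuation map $\ord_p$. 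Under these identifications, Benois' endomorphism is obtained by following around the same square of maps $\ord_p^0\oplus\log_p^+$ and $\ord_p^{+,0}\oplus\log_p^{-,0}\oplus\log_p^+$ that enter the definition of $\cL(\rho,\rho^+)$, but read off as an endomorphism of $\det \Hom(W_p^{-,0},E_p)$ rather than as a determinant on all of $Z$.

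With this dictionary in place, I would compute both $\cL(V,D)$ and $\cL(\rho,\rho^+)$ in the basis provided by $\psi_1,\ldots,\psi_{d^+},\psi_1',\ldots,\psi_e'$ of $\ker(\ord_p^{+,0})$ together with the bases $\omega_p^+,\omega_p^{-,0}$. Using Lemma \ref{lem:formule_L_invariant_quot_det}, we have
\[
\cL(\rho,\rho^+)\cdot\det(A^+)\cdot\det(O^-)=\det\begin{pmatrix}A^+ & B^+ \\ A^- & B^-\end{pmatrix},
\]
and an analogous direct calculation of Benois' endomorphism yields
\[
\cL(V,D)\cdot\det(A^+)\cdot\det(O^-)=\det\begin{pmatrix}A^+ & B^+ \\ B^- & A^-\end{pmatrix}=(-1)^e\cdot \det\begin{pmatrix}A^+ & B^+ \\ A^- & B^-\end{pmatrix},
\]
the sign $(-1)^e$ arising from the row-swap in the $(d^++e)\times(d^++e)$ matrix: Benois orders the exceptional subspace via the $\log$--$\exp^*$ pair, whereas our definition puts $\ord_p^{+,0}$ before $\log_p^{-,0}$. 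Combining the two identities gives the stated equality.

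The main obstacle will be faithfully recasting Benois' Selmer-complex formulation—which involves a derived splitting coming from the transpositions in the fundamental exact triangle—into the concrete matrix form above, and tracking the sign convention in the determinant of the five-term sequence carefully enough to pin down the factor $(-1)^e$ (as opposed to, say, $(-1)^{e(e-1)/2}$ or $(-1)^{ed^+}$). Once that bookkeeping is done, the remaining identifications are formal consequences of Lemma \ref{lem:equivalence_admissible_régulier} and the Kummer-theoretic description of $\HH^1_{\ff,p}(\bQ,\widecheck{W}_p)$.
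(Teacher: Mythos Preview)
Your overall strategy---reduce both $\cL(\rho,\rho^+)$ and $\cL(V,D)$ to a common matrix expression and read off the sign---is exactly what lies behind the result, but the execution contains a concrete error and leaves the hard part undone. The displayed identity
\[
\cL(V,D)\cdot\det(A^+)\cdot\det(O^-)=\det\begin{pmatrix}A^+ & B^+ \\ B^- & A^-\end{pmatrix}
\]
is not well-formed: $A^+$ is $d^+\times d^+$, $B^+$ is $d^+\times e$, $A^-$ is $e\times d^+$ and $B^-$ is $e\times e$, so the block matrix on the right has mismatched column widths unless $d^+=e$. Consequently your ``row-swap'' explanation of the sign $(-1)^e$ cannot be correct as written; even reinterpreted as a column permutation it would contribute $(-1)^{e\cdot d^+}$, not $(-1)^e$. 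The sign genuinely comes from the internal normalization in Benois' construction (the interplay between $\log_{\BK}$ and $\exp^*$ on the exceptional subspace), and pinning it down requires actually carrying out the computation you flag as the ``main obstacle''---which your proposal does not do.

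The paper's proof sidesteps all of this by invoking \cite[Thm.~2.15]{maksouddimitrov}, where the identity
\[
\cL(V,D)=(-1)^e\cdot\frac{\det\begin{pmatrix}A^+ & B^+ \\ A^- & B^-\end{pmatrix}}{\det(A^+)\det(O^-)}
\]
is established directly from Benois' definition. Combining this with Lemma~\ref{lem:formule_L_invariant_quot_det} gives the result in one line. Your plan is essentially to reprove that external theorem; if you want to go that route you must (i) fix the block matrix, and (ii) actually unwind Benois' five-term formalism to locate the sign, rather than guessing at its origin.
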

\begin{proof}
It is shown in \cite[Thm. 2.15]{maksouddimitrov} that $\cL(V,D)=(-1)^e \cdot \dfrac{\det \begin{pmatrix}A^+ & B^+ \\ A^- & B^- \end{pmatrix} }{\det(A^+)\det(O^-)}$, where $A^\pm$, $B^\pm$, $O^-$ are the matrices defined in \S\ref{sec:L-invariant}. Therefore, one has $\cL(\rho,\rho^+)=(-1)^e \cdot  \cL(V,D)$ by Lemma \ref{lem:formule_L_invariant_quot_det}.
\end{proof}

The main algebraic object in Perrin-Riou's formulation of Iwasawa theory for a motive that is crystalline at $p$ is the module of $p$-adic $L$-functions, introduced and studied in \cite[Chapter 2]{perrin1995fonctions} and later interpreted (and generalized) in \cite[§6.2.3]{benoiscrys} in terms of Selmer complexes. Its definition only makes sense when $\cL(V,D)\neq 0$ and under the Weak Leopoldt conjecture for $V$ and for $\widecheck{V}=V^*(1)$ together with conditions \textbf{(C1-C5)} of \textit{loc. cit.}. It is denoted by $\textbf{L}_{\Iw,h}^{(\eta_0)}(N,T)$ in \textit{loc. cit.} and it depends on the choice of a $G_{\bQ}$-stable lattice $T$ of $V$, on the choice of a $\cO_p$-lattice $N$ of a regular submodule $D$ of $\textbf{D}_\crys(V)$ and on a parameter $h>0$. 

We consider here the case of the dual motive $\widecheck{M}$ of $\rho$. More precisely, let $T_p$ be a $G_{\bQ}$-stable lattice of $W_p$ and let $W_p^+$ be an admissible $p$-stabilization of $W_p$. We put $V=\widecheck{W}_p$, $T=\widecheck{T}_p$, $V^-=\widecheck{W}_p^-$ and $T^-=\widecheck{T}_p^-$ in this paragraph. Under the identification (\ref{eq:identification_dieudonné_module}), we define a regular submodule of $\textbf{D}_\crys(V)$ by letting $D=\textbf{D}_\crys(V^-)=\Hom_{G_p}(W_p^-,t^{-1}K \otimes E_p)$ (see Lemma \ref{lem:equivalence_admissible_régulier}). Explicitly, Bloch-Kato's logarithm map for $V^-$ is the isomorphism
\begin{equation}\label{eq:log_BK_explicite}
\HH^1_\ff(\Qp,V^-)=\Hom_{G_p}(W_p^-,\cO_{K}^{\times,1} \otimes E_p)\overset{\sim}{\longrightarrow}\Hom_{G_p}(W_p^-,t^{-1} K\otimes E_p)=D
\end{equation}
induced by the $p$-adic logarithm $\log_p : \cO_{K}^{\times,1} \hookrightarrow K\simeq t^{-1}K$, where $\cO_{K}^{\times,1}$ is the group of principal units of $K$. We define an $\cO_p$-lattice of $D$ by letting
$$N=\textbf{D}_\crys(T^-)=\Hom_{G_p}(T_p^-,t^{-1}\cO_K \otimes \cO_p).$$
We also may set $h=1$ in the definition of  $\textbf{L}_{\Iw,h}^{(\eta_0)}(N,T)$ since the $G_{\Qp}$-representation $V$ is the Tate twist of an unramified representation.

\begin{proposition}\label{prop:comparaison_idéaux_caractéristiques_Benois}
	Assume that $W_p^+$ is admissible and that $\cL(\rho,\rho^+)\neq 0$. Then $X_\infty(\rho,\rho^+)$ is of $\Lambda$-torsion, and we have $\textbf{L}_{\Iw,h}^{(\eta_0)}(N,T)=\char_\Lambda X_\infty(\rho,\rho^+)$.
\end{proposition}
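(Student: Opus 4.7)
The plan is to identify both $\char_\Lambda X_\infty(\rho,\rho^+)$ and $\textbf{L}_{\Iw,h}^{(\eta_0)}(N,T)$ with the characteristic ideal of the second cohomology of Benois' Iwasawa Selmer complex $S^\bullet_\Iw(T,N)$ attached to the dual motive $\widecheck{M}$, and then to conclude by the descent formalism of \cite[\S6]{benoiscrys}. Our hypotheses transfer directly into Benois' framework: Lemma \ref{lem:equivalence_admissible_régulier} says that admissibility of $W_p^+$ amounts to regularity of $D=\textbf{D}_\crys(V^-)$, Lemma \ref{lem:comparaison_L_invariant_Benois} converts $\cL(\rho,\rho^+)\neq 0$ into $\cL(V,D)\neq 0$, and Proposition \ref{prop:conjecture_leopoldt_faible} gives the Weak Leopoldt conjecture for both $V$ and $\widecheck{V}$. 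Together, these ensure that all the hypotheses of \textit{loc. cit.} hold and that $\textbf{L}_{\Iw,h}^{(\eta_0)}(N,T)$ is a well-defined non-zero ideal of $\Frac(\Lambda)$.

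The first row of diagram \eqref{eq:gros_diagramme_commutatif_Sel_infty} presents $X_\infty(\rho,\rho^+)$ as an extension of $\Sha_\infty^1(D_p)^\vee$ by $\coker(\Loc_+')$. Using Lemma \ref{lem:relation_cohomologie_et_unités}(1), the map $\Loc_+'$ can be recast as a map between Iwasawa cohomology groups of $\widecheck{T}_p$ with mixed local conditions at $p$ involving $\widecheck{T}_p^{\pm}$; meanwhile, the isomorphism \eqref{eq:log_BK_explicite} identifies the Bloch-Kato local condition defined via $N=\textbf{D}_\crys(T^-)$ with the $+$-decomposition at $p$. Combined with Lemma \ref{lem:relation_cohomologie_et_unités}(3) and the comparison $\Sha^2_\infty(\widecheck{T}_p)\simeq \Sha^1_\infty(D_p)^\vee$ of Proposition \ref{prop:conjecture_leopoldt_faible}(1), a diagram chase yields a $\Lambda$-pseudo-isomorphism between $X_\infty(\rho,\rho^+)$ and $\HH^2(S^\bullet_\Iw(T,N))$. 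Benois' descent formalism shows that the latter is $\Lambda$-torsion under our hypotheses, which proves the first claim of the proposition.

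The equality of characteristic ideals $\textbf{L}_{\Iw,h}^{(\eta_0)}(N,T)=\char_\Lambda \HH^2(S^\bullet_\Iw(T,N))$ is then the central output of Benois' construction in \cite[\S6.2]{benoiscrys}, valid precisely when no trivial-zero correction is needed, which occurs exactly because $\cL(V,D)\neq 0$. The main obstacle lies in the careful bookkeeping of the modified Euler factors $\cE(\rho,\rho^+)$ that appear when matching the two local conditions at $p$, as well as the choice of lattice $N$ and the normalization $h=1$; the non-vanishing of the $\cL$-invariant together with the specific choice $N=\textbf{D}_\crys(T^-)$ should guarantee that these corrections cancel at the level of characteristic ideals.
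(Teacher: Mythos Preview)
Your overall strategy matches the paper's: verify Benois' hypotheses, realize the Iwasawa Selmer complex, and compare its $\HH^2$ with $X_\infty(\rho,\rho^+)$ via the first row of diagram~\eqref{eq:gros_diagramme_commutatif_Sel_infty} together with $\Sha_\infty^2(\widecheck{T}_p)\simeq\Sha_\infty^1(D_p)^\vee$. The paper makes one point more explicit than you do: the local condition at $p$ in Benois' complex is given by the Perrin-Riou exponential $\mathrm{Exp}_{V,h}$, which is induced by the inverse of the Coleman isomorphism $\Coleman_N:\HH^1_{\Iw,\ff}(\Qp,\widecheck{T}_p^-)\xrightarrow{\sim} N\otimes\Lambda$. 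This is what gives, on the nose, a short exact sequence
\[
0\longrightarrow \coker(\Loc_+')\longrightarrow \textbf{R}^2\Gamma_\Iw(\rho,\rho^+)\longrightarrow \Sha_\infty^2(\widecheck{T}_p)\longrightarrow 0
\]
whose outer terms agree with those of the first row of~\eqref{eq:gros_diagramme_commutatif_Sel_infty}. Equality of characteristic ideals then follows by multiplicativity. (You claim a pseudo-isomorphism; only equality of characteristic ideals is asserted or needed.)

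Your final paragraph, however, introduces a spurious difficulty. There is no ``bookkeeping of the modified Euler factors $\cE(\rho,\rho^+)$'' at the level of characteristic ideals, and the non-vanishing of $\cL(V,D)$ is not used to make any correction factors cancel. The quantity $\cE(\rho,\rho^+)$ only enters when evaluating at a specific character (as in $\textbf{EZC}_{\rho,\rho^+}$), not in the Iwasawa-theoretic statement. The hypothesis $\cL(V,D)\neq 0$ is used solely through \cite[Thm.~4]{benoiscrys} to guarantee that $\textbf{R}^2\Gamma_\Iw(\rho,\rho^+)$ is $\Lambda$-torsion (and that the module of $p$-adic $L$-functions is defined); once that is known, the identification $\textbf{L}_{\Iw,h}^{(\eta_0)}(N,T)=\char_\Lambda \textbf{R}^2\Gamma_\Iw(\rho,\rho^+)$ is the definition, not a theorem requiring further cancellations. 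Dropping that paragraph and replacing it with the Coleman-map identification above makes your argument complete and essentially identical to the paper's.
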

\begin{proof}
	Fix a topological generator $\gamma$ of $\Gamma$ and let 
	$$\cH=\left\{\: f(\gamma-1) \ \big| \ f(X)\in E_p[[X]] \textrm{ is holomorphic on the open unit disc\:}\right\}$$ 
	be the large Iwasawa algebra. Consider the complex of $\cH$-modules $\textbf{R}\Gamma^{(\eta_0)}_{\Iw,h}(D,V)$ defined in \cite[§6.1.2]{benoiscrys}. Note that it is possible because we already checked conditions \textbf{(C1-5)} and because the Weak Leopoldt conjecture for $V$ and $V^*(1)$ holds by Proposition \ref{prop:conjecture_leopoldt_faible}. It is a Selmer complex in the sense of \cite[(6.1)]{sel} given by the following local conditions: at finite primes $\ell\neq p$ we take the unramified condition, and at $p$ we take the derived version of Perrin-Riou's exponential map 
	$$\textrm{Exp}_{V,h}: (N\otimes \Lambda) \otimes_{\Lambda}\cH \longrightarrow\HH^1_\Iw(\Qp,\widecheck{T}_p^-)\otimes_\Lambda \cH\subseteq \HH^1_\Iw(\Qp,T)\otimes_\Lambda \cH.$$ 
	As explained in \cite[§4.1.3-5]{perrinriou1994}, the map $\textrm{Exp}_{V,h}$ is induced by the inverse of Coleman's isomorphism $\Coleman_N: \HH^1_{\Iw,\ff}(\Qp,\widecheck{T}_p^-) \overset{\sim}{\longrightarrow}N \otimes \Lambda$. Since $\cH$ is flat over $\Lambda$, the complex $\textbf{R}\Gamma^{(\eta_0)}_{\Iw,h}(D,V)$ is a base change to $\cH$ of a Selmer complex $\textbf{R}\Gamma_{\Iw}(\rho,\rho^+)$ over $\Lambda$ given by the unramified condition at $\ell\neq p$, and at $p$ by the morphism of complexes $N\otimes\Lambda[-1] \longrightarrow \textbf{R}\Gamma_{\Iw}(\Qp,T)$ induced by $(\Coleman_N)^{-1}$ in degree 1. By \cite[Thm. 4]{benoiscrys}, the $\Lambda$-module $\textbf{R}^i\Gamma_{\Iw}(\rho,\rho^+)$ vanishes when $i\neq 2$ and it is of $\Lambda$-torsion for $i=2$. Moreover, as in \cite[§6.1.3.3]{benoiscrys} we have a short exact sequence 
	\[\begin{tikzcd} 0 \rar & \coker(\Loc_+') \rar &\textbf{R}^2\Gamma_{\Iw}(\rho,\rho^+) \rar & \Sha_\infty^2(\widecheck{T}_p)\rar & 0,
	\end{tikzcd}\]
	where $\Loc_+'$ is the localization map introduced in \S\ref{sec:duality}. It follows from the exactness of the first row of (\ref{eq:gros_diagramme_commutatif_Sel_infty}) and from Proposition \ref{prop:conjecture_leopoldt_faible} that $X_\infty(\rho,\rho^+)$ is also of $\Lambda$-torsion, and that it shares the same characteristic ideal with $\textbf{R}^2\Gamma_{\Iw}(\rho,\rho^+)$, the latter being equal to $\textbf{L}_{\Iw,h}^{(\eta_0)}(N,T)$ by construction.
\end{proof}

\begin{theorem}\label{coro:formule_BK}
	Assume that $W_p^+$ is admissible and that $\cL(\rho,\rho^+)\neq 0$. If Conjecture \ref{conj:IMC} holds, then the $p$-part of Bloch-Kato's conjecture (in the formulation of Fontaine and Perrin-Riou, \cite[III, 4.5.2]{fontaineperrinriou}) holds for the Artin motive associated with $\rho$, that is,
	\begin{equation}\label{eq:formule_TNC}
		\dfrac{L^{*}\left(\rho^*,0\right)}{\Reg_{\omega_\infty^+}(\rho)} \, \sim_p \, \dfrac{\#\Sha({T}_p) \cdot \prod_{\ell\neq p}\#\left(\HH^1(I_\ell,\widecheck{T}_p)^{G_{\bQ_\ell}}\right)_\textrm{tors}}{\# \HH^0(\bQ,D_p)},
	\end{equation}
	where $a\sim_p b$ means that $a$ and $b$ are equal up to a $p$-adic unit, $\Reg_{\omega_\infty^+}(\rho)$ is computed with respect to $T_p$-optimal bases $\omega_\infty^+$ and $\omega_\ff$ of $\det_E \HH^0(\bR,W)$ and $\det_{E_\eta} \HH^1_\ff(\rho^*(1))$ respectively, $\Sha(T_p)$ is the Tate-Shafarevitch group of $T_p$ \cite[II,5.3.4]{fontaineperrinriou} and $I_\ell$ is the absolute inertia group at $\ell$. In particular, when $p$ does not divide the order of the image of $\rho$, one has 
	$$\dfrac{L^{*}\left(\rho^*,0\right)}{\Reg_{\omega_\infty^+}(\rho)}  \sim_p \#\Hom_{\cO_p[G]}(T_p,\cO_p \otimes_\bZ\mathscr{C}\!\ell(H)),$$
	where $\mathscr{C}\!\ell(H)$ is the ideal class group of $H$, the field out by $\rho$.
\end{theorem}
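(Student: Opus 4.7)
The plan is to bootstrap from the Iwasawa-theoretic information provided by Conjecture \ref{conj:IMC} down to a statement at $s=0$, via Benois' descent formalism for Selmer complexes. The key preparatory step is already in hand: Proposition \ref{prop:comparaison_idéaux_caractéristiques_Benois} identifies $\char_\Lambda X_\infty(\rho,\rho^+)$ with Perrin-Riou's module of $p$-adic $L$-functions $\textbf{L}_{\Iw,h}^{(\eta_0)}(N,T)$ attached to $\widecheck{M}$ under exactly our standing hypotheses that $W_p^+$ is admissible and $\cL(\rho,\rho^+)\neq 0$. Combined with \textbf{IMC}$_{\rho,\rho^+}$, this means $\theta_{\rho,\rho^+}$ is a generator of $\textbf{L}_{\Iw,h}^{(\eta_0)}(N,T)$, up to a unit in $\Lambda$. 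Thus the arithmetic side of Perrin-Riou's Iwasawa theory for $\widecheck{M}$ is entirely controlled by our conjectural $p$-adic $L$-function.

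Next, I would feed the extra-zero formula \textbf{EZC}$_{\rho,\rho^+}$ into Benois' trivial-zero machinery. The assertion \textbf{EZC}$_{\rho,\rho^+}$ computes the $e$-th derivative of $\kappa^s(\theta_{\rho,\rho^+})$ at $s=0$ as an explicit product involving $\cL(\rho,\rho^+)$, $\cE(\rho,\rho^+)$, $\Reg_{\omega_p^+}(\rho)$ and $L^*(\rho^*,0)/\Reg_{\omega_\infty^+}(\rho)$. By Lemma \ref{lem:comparaison_L_invariant_Benois}, our $\cL$-invariant agrees with Benois' $\cL(V,D)$ up to the sign $(-1)^e$, so this formula is precisely the shape of the leading term predicted by Benois' theory. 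Invoking Benois' descent theorem \cite[Thm.~5]{benoiscrys} (itself a derived variant of the Fontaine--Perrin-Riou formalism), the leading term at $\mathds{1}$ of a generator of $\textbf{L}_{\Iw,h}^{(\eta_0)}(N,T)$ can on the other hand be computed in terms of Bloch-Kato-style arithmetic invariants: $\#\Sha(T_p)$, the Tamagawa factors $\#(\HH^1(I_\ell,\widecheck{T}_p)^{G_{\bQ_\ell}})_\tors$ for $\ell\neq p$, $\#\HH^0(\bQ,D_p)$, the $\cL$-invariant, and the complex/$p$-adic regulators. Equating the two expressions, the factors $\cL(\rho,\rho^+)$ and $\Reg_{\omega_p^+}(\rho)$ cancel out (using $\cL(\rho,\rho^+)\neq 0$), and $\cE(\rho,\rho^+)$ is absorbed into the local term at $p$, leaving exactly \eqref{eq:formule_TNC} up to a $p$-adic unit.

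For the consequence when $p\nmid \#\im(\rho)$, the simplification is a direct inspection. Under this unramifiedness assumption, $\HH^0(\bQ,D_p)$ vanishes since $\rho$ contains no trivial subrepresentation; the local Tamagawa factors at $\ell\neq p$ are $p$-units, because the action of wild inertia on $\widecheck{T}_p$ factors through a group of prime-to-$p$ order and the quotient by tame inertia is controlled by the Euler factor, which is a $p$-unit (the image of $\rho$ having prime-to-$p$ order). Finally, Lemma \ref{lem:relation_cohomologie_et_unités}(3) at the bottom layer together with Poitou--Tate duality identifies $\Sha(T_p)$ with $\Hom_{\cO_p[G]}(T_p,\cO_p\otimes\mathscr{C}\!\ell(H))$, yielding the stated formula.

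The main obstacle will be the precise bookkeeping in the descent step: one has to track contributions from every cohomology group in Benois' Selmer complex and in particular match the modified Euler factor $\cE(\rho,\rho^+)$ appearing in \textbf{EZC}$_{\rho,\rho^+}$ with the local factor at $p$ that emerges from the derivative of Perrin-Riou's exponential. The non-vanishing hypothesis $\cL(\rho,\rho^+)\neq 0$ is essential here: it is precisely what makes the relevant Bockstein morphism an isomorphism, so that Benois' descent spectral sequence degenerates to give a clean leading-term formula rather than a vanishing tautology.
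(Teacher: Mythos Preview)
Your approach is essentially the same as the paper's: invoke Proposition~\ref{prop:comparaison_idéaux_caractéristiques_Benois} together with \textbf{IMC}$_{\rho,\rho^+}$ to identify $\theta_{\rho,\rho^+}$ (up to a unit) with a generator of $\textbf{L}_{\Iw,h}^{(\eta_0)}(N,T)$, then apply Benois' descent result (the paper uses \cite[Cor.~2]{benoiscrys} rather than Thm.~5) to compute the leading term arithmetically, and compare with \textbf{EZC}$_{\rho,\rho^+}$.

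Three small imprecisions are worth correcting. First, $\cE(\rho,\rho^+)$ is not ``absorbed into the local term at $p$'': it cancels directly against the factor $\cE^+(V,D)$ appearing in Benois' formula, and one also needs $\textrm{Tam}^0_{p,\omega_T}(T)\sim_p 1$ and $\#\HH^0(\bQ,\widecheck{D}_p)=1$, both of which require a short argument using the unramifiedness of $T_p$ at $p$. Second, when $p\nmid\#G$, the vanishing of $\HH^0(\bQ,D_p)$ does not follow merely from $\HH^0(\bQ,W_p)=0$; one needs the injection $\HH^0(G,D_p)\hookrightarrow\HH^1(G,T_p)$ together with the observation that $\HH^1(G,T_p)$ is killed by $\#G$, hence trivial. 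Third, your citation of Lemma~\ref{lem:relation_cohomologie_et_unités}(3) for the identification of $\Sha(T_p)$ is an Iwasawa-level statement about $A'_\infty$; at the finite level one should argue directly via inflation--restriction for $H/\bQ$, which is exact here since $p\nmid\#G$.
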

\begin{proof}
	Let $V=\widecheck{W}_p$ and $T=\widecheck{T}_p$ as before.
	Assume Conjecture \ref{conj:IMC} for $(\rho,\rho^+)$ and consider the $p$-adic measure $\theta_{\rho,\rho^+}'$ of Proposition \ref{prop:renormalisation_mesure_p_adique}. Fix $T_p$-optimal bases $\omega_p^+$ and $\omega_{\ff}$ of $\det_{E_p} W_p^+$ and $\HH^1(\bQ,V)$ respectively. Proposition \ref{prop:comparaison_idéaux_caractéristiques_Benois} tells us that, under \textbf{IMC}$_{\rho,\rho^+}$, the $p$-adic analytic function $s\mapsto \kappa^s(\theta_{\rho,\rho^+}')$ is equal (up to a unit) to the one denoted $L_{\Iw,h}(T,N,s)$ in \cite[§6.2.3]{benoiscrys}. Therefore, \cite[Cor. 2]{benoiscrys} reads, with the notations of \textit{loc. cit.},
	\begin{equation}\label{eq:formule_intermediaire}
		\frac{1}{e!} \dfrac{\diff^e}{\diff \!s^e}\kappa^s(\theta_{\rho,\rho^+}')\bigg|_{s=0} \,\sim_p\, R_{V,D}(\omega_{T,N})\cdot \cL(V,D)\cdot \cE^+(V,D) \cdot \dfrac{\#\Sha({T}_p) \cdot \textrm{Tam}^0_{\omega_T}(T)}{\# \HH^0(\bQ,D_p)\cdot \# \HH^0(\bQ,\widecheck{D}_p)}. 
	\end{equation}
	Since $T_p$ is unramified, note that $\HH^0(\bQ,\widecheck{D}_p)=\{0\}$, as  $\HH^0(I_p,\widecheck{D}_p)=\Hom_{\cO_p}(T_p,\mu_{p^\infty}^{I_p})=\{0\}$. From Lemma \ref{lem:comparaison_L_invariant_Benois}, we know that $\cL(V,D) \sim_p \cL(\rho,\rho^+)$, and from the description in  \eqref{eq:identification_dieudonné_module} of the $\varphi$-module $\textbf{D}_\crys(V)$, it is not hard to see that $\cE^+(V,D)=\cE(\rho,\rho^+)$.
	
	By definition, $\textrm{Tam}^0_{\omega_T}(T)$ is a product of local Tamagawa factors $\textrm{Tam}^0_{p,\omega_T}(T) \cdot \prod_{\ell \neq p} \textrm{Tam}^0_{\ell}(T)$ whose recipe is given in \cite[Chapitre I, \S4.1]{fontaineperrinriou}. It is shown in \textit{loc. cit.} that $\textrm{Tam}^0_{\ell}(T)=\#\left(\HH^1(I_\ell,\widecheck{T}_p)^{G_{\bQ_\ell}}\right)_\textrm{tors}$ for every $\ell \neq p$. 
	
	It remains to determine the product $R_{V,D}(\omega_{T,N})\cdot \textrm{Tam}^0_{p,\omega_T}(T)$. Each term of this product depends on the choice of a basis of $\textbf{D}_{\crys}(V)$, but the product does not. The term $R_{V,D}(\omega_{T,N})$ also depends on the choice of $\cO_p$-bases $\omega_\ff$ and $\omega_N$ of $\det_{\cO_p}\HH_\ff^1(\bQ,T)$ and $\det_{\cO_p}N$ respectively. Since $\omega_p^+$ is $T_p$-optimal, it yields a basis $(\omega_p^+)^*$ of $\det_{\cO_p}\textbf{D}_\crys(T^+)=\det_{\cO_p}(T^+_p,t^{-1}\cO_K \otimes \cO_p)$. We take $\omega_T$ to be the image of $\omega_N \otimes (\omega_p^+)^*$ under the natural map 
	\[\det \textbf{D}_\crys(T^-) \otimes \det  \textbf{D}_\crys(T^+) \longrightarrow \det  \textbf{D}_\crys(T) \subseteq \det  \textbf{D}_\crys(V).\]
	
	By definition, $R_{V,D}(\omega_{T,N})$ is equal to the determinant of the map $r_D$ of \eqref{eq:def_reg_r_D} with respect to $\omega_\ff$, $\omega_T$ and $\omega_N$. Lemma \ref{lem:equivalence_admissible_régulier} and its proof then show that $R_{V,D}(\omega_{T,N})= \Reg_{\omega_p^+}(\rho)$. We also obtain from the definition that $\textrm{Tam}^0_{p,\omega_T}(T)\sim_p 1$, using that $\omega_T$ is a basis of $\det_{\cO_p}\textbf{D}_\crys(T)$.
	 
	 We obtain \eqref{eq:formule_TNC} from \textbf{EZC}$_{\rho,\rho^+}$ and \eqref{eq:formule_intermediaire} after simplification by $\Reg_{\omega_p^+}(\rho)\cdot \cL(\rho,\rho^+)\cdot \cE(\rho,\rho^+)\neq 0$.
	
	We now explain how to simplify \eqref{eq:formule_TNC} in the case where $p$ does not divide the order of $G$. Note that $\HH^0(G,D_p) \hookrightarrow \HH^1(G,T_p)=0$, as this last $\cO_p$-module is killed by $\#G$. To see that the local Tamagawa numbers are all trivial, let us fix any prime $\ell\neq p$. By \cite[Lem. 3.2 (ii) and Lemma 3.5 (ii-iii)]{rubinES}, one has $\left(\HH^1(I_\ell,\widecheck{T}_p)^{G_{\bQ_\ell}}\right)_\textrm{tors}\simeq \cW^{\sigma_\ell=1}$, where $\cW$ is the quotient of $\widecheck{D}_p^{I_\ell}$ by its divisible part. But the action of $I_\ell$ on $\widecheck{D}_p$ factors through a finite group of prime-to-$p$ order, so we must have $\cW=0$. Finally, the description of $\Sha(T_p)$ in terms of class groups directly follows from the inflation-restriction exact sequence. 
\end{proof}
\begin{remark}
	The proof of Theorem \ref{coro:formule_BK} used Benois' computation of the leading term of the characteristic series of a Selmer group, which is a generalization of Greenberg's computation \cite[Prop. 4]{greenberg1994trivial} to the case of non-critical and non-ordinary motives. A proof of \eqref{eq:formule_intermediaire} could possibly be adapted from Greenberg's arguments, but extra work would be needed in order to relax the assumptions in \textit{loc. cit.}, which force all the terms $\cE^+(V,D)$, $\# \HH^0(\bQ,D_p)$, $\# \HH^0(\bQ,\widecheck{D}_p)$, $\textrm{Tam}^0_{\ell}(T)$ ($\ell \neq p$) to be $p$-adic units.
\end{remark}

Recall that $A_\infty'=\varprojlim_n A_n'$ is the inverse limit over $n$ of the $p$-split ideal class group of $H_n$ and it has a structure of $\Lambda[G]$-module (see Notation \ref{nota:units_and_class_groups}). We end this section with some applications of our results to the Gross-Kuz'min conjecture \cite{kuzmin,gross1981padic}, also called ``Gross' finiteness conjecture'' in \cite[Thm. 1.1]{BKSANT}. \begin{conjecture}[Gross-Kuz'min conjecture]\label{conj:gross_kuzmin_general}
	The module of $\Gamma$-coinvariants of $A'_\infty$ is finite.
\end{conjecture}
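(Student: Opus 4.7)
The general conjecture is open; I would focus on the setting of Theorem~\ref{thm:intro_D}, namely when $H$ is an abelian extension of an imaginary quadratic field $k$. The plan has three stages: decompose the module by characters, reduce the finiteness to the non-vanishing of an $\cL$-invariant, and finally verify this non-vanishing using transcendence theory.

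First, I would decompose $A'_\infty \otimes_{\Zp} \ob{\bQ}_p$ into isotypic components $(A'_\infty)^\chi$ for irreducible characters $\chi$ of $G=\Gal(H/\bQ)$. For odd $\chi$, finiteness of $(A'_\infty)^\chi_\Gamma$ is a classical consequence of the Iwasawa-Greenberg analysis of class groups in $\Zp$-extensions. The content lies in the even characters, where I would set $\rho=\chi^{-1}$ and apply the Iwasawa theory of \S\ref{sec:iwasawa}: $\rho$ is one-dimensional and even, so $d^+=1$ and there is a unique admissible $p$-stabilization, $W_p^+=W_p$. By Lemma~\ref{lem:relation_cohomologie_et_unités}(3) combined with Proposition~\ref{prop:conjecture_leopoldt_faible}, the $\chi$-isotypic component of $A'_\infty$ is linked, via Pontryagin duality and a pseudo-isomorphism, to the Tate-Shafarevich group $\Sha^1_\infty(D_p)$ that sits inside $X_\infty(\rho,\rho^+)$, thus translating the coinvariants question into one about the $\Lambda$-module $X_\infty(\rho,\rho^+)$ and the position of its characteristic ideal with respect to the augmentation ideal.

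Next, via the descent formalism for Selmer complexes (invoked in the introduction as Theorem~\ref{thm:exact_order_of_vanishing}(2)), I would reduce the finiteness of $(A'_\infty)^\chi_\Gamma$ to the non-vanishing of the $\cL$-invariant $\cL(\rho,\rho^+)$ of \S\ref{sec:L-invariant}. By Lemma~\ref{lem:formule_L_invariant_quot_det}, this $\cL$-invariant is a quotient of determinants whose entries are $p$-adic logarithms and $p$-adic valuations of specific $\chi$-isotypic elements of the pro-$p$ completion of $\cO_H[\tfrac{1}{p}]^\times$.

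Finally, for $H$ abelian over an imaginary quadratic $k$, I would leverage the elliptic unit construction to exhibit explicit $\chi$-isotypic $p$-units of $H$ whose complex logarithms are $\bQ$-linearly independent, by invoking Kronecker's second limit formula together with the $\Gal(H/k)$-equivariance of elliptic units. The non-vanishing of $\cL(\rho,\rho^+)$ then follows from Roy's strong six exponential theorem, adapted as in \cite{betinadimitrovKatz}: an algebraic dependence in the $p$-adic determinant would force, via Roy's theorem, a $\bQ$-linear dependence among the corresponding complex logarithms, contradicting the elliptic unit input. The main obstacle will be verifying this complex linear independence for the specific family of elliptic units populating the $\chi$-component; this step is precisely where the imaginary quadratic hypothesis is essential, since abelian extensions of $\bQ$ or of a general number field would not furnish a comparably robust supply of units with such transparent transcendence properties.
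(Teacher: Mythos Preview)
Your decomposition step contains a fundamental error. You write that the irreducible constituents of $G=\Gal(H/\bQ)$ are one-dimensional characters, and then take $\rho=\chi^{-1}$ one-dimensional with $W_p^+=W_p$. But $H$ is only assumed abelian over $k$, not over $\bQ$; the group $\Gal(H/\bQ)$ is a (generally non-abelian) extension of $\Gal(k/\bQ)$ by $\Gal(H/k)$, and its irreducible representations are either one-dimensional (coming from characters $\psi$ of $\Gal(H/k)$ with $\psi=\psi^\tau$) or two-dimensional of the form $\Ind_k^\bQ\psi$ with $\psi\neq\psi^\tau$. The one-dimensional pieces are covered by Greenberg's classical result for abelian extensions of $\bQ$; all the genuine content of the theorem lies in the two-dimensional pieces, which your outline omits entirely. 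Your argument is also internally inconsistent on its own terms: for a one-dimensional even $\rho$ with $W_p^+=W_p$ one has $W_p^-=0$, hence $e=0$ and $\cL(\rho,\rho^+)=1$ by definition, so there is no non-trivial determinant for Roy's theorem to act on.

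The paper instead takes $\rho=\Ind_k^\bQ\chi$ for each non-trivial character $\chi$ of $\Gal(H/k)$, so that $(d,d^+)=(2,1)$ and the $p$-stabilizations are lines in a two-dimensional space. Proposition~\ref{prop:existence_non_vanishing_L_invariant} then exhibits an admissible $p$-stabilization with non-vanishing $\cL$-invariant: in the only delicate case ($p$ split in $k$ with $\chi(\gp)=\chi(\ob{\gp})=1$) the $\cL$-invariant is controlled by a $2\times 3$ matrix of $p$-adic logarithms of a Minkowski-type unit $u_\chi$ together with explicitly constructed $\gp$- and $\ob{\gp}$-units $u_{\gp,\chi},u_{\ob{\gp},\chi}$, and one applies Roy's Six Exponentials Theorem---not elliptic units or Kronecker's limit formula. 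Your transcendence intuition is in the right direction, but it must be implemented for these two-dimensional induced representations, and the paper's choice of units is more direct. Finally, since the machinery of \S\ref{sec:iwasawa} requires $\rho$ unramified at $p$, the paper handles the case where $p$ ramifies in $H$ by a separate (easier) reduction and descent argument, which your sketch does not address.
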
 
Our main contribution concerns the $\rho$-isotypic part of the Gross-Kuz'min conjecture which can be stated as follows.
\begin{conjecture}[$\rho$-part of the Gross-Kuz'min conjecture]\label{conj:gross_finiteness_conj}
 The module of $\Gamma$-coinvariants of $\Hom_G(T_p,A_\infty')$ is finite.
\end{conjecture}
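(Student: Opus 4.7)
The plan is to translate the $\rho$-part of the Gross-Kuz'min conjecture into a non-vanishing statement for an $\cL$-invariant, and then attack the latter via $p$-adic transcendence theory. First, by Lemma \ref{lem:relation_cohomologie_et_unités}(2), the module $\Hom_G(T_p, A_\infty')$ is pseudo-isomorphic to $\Sha_\infty^1(D_p)^\vee$ after tensoring with $\bQ_p$, so it suffices to show that the $\Gamma$-coinvariants of $\Sha_\infty^1(D_p)^\vee$ are finite. Invoking Corollary \ref{coro:existence_p_stab_Sel_tors} (and, if needed, Lemma \ref{lem:existence_p_stabilization}), one can fix an admissible $p$-stabilization $\rho^+$ for which $X_\infty(\rho,\rho^+)$ is $\Lambda$-torsion. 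The top row of the diagram \eqref{eq:gros_diagramme_commutatif_Sel_infty} then embeds $\Sha_\infty^1(D_p)^\vee$ into $X_\infty(\rho,\rho^+)$ with cokernel dual to $\coker(\Loc_+')$, reducing the problem to a statement about the $\Gamma$-coinvariants of $X_\infty(\rho,\rho^+)$ modulo an error term controlled by the localization map at $p$.

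Next, I would run a descent argument to convert finiteness of $(\Sha_\infty^1(D_p)^\vee)_\Gamma$ into a statement about the order of vanishing of $\char_\Lambda X_\infty(\rho,\rho^+)$ at the trivial character $\mathds{1}$. Via Proposition \ref{prop:comparaison_idéaux_caractéristiques_Benois}, that characteristic ideal agrees (up to a unit) with Benois' realization of Perrin-Riou's module of $p$-adic $L$-functions, whose trivial-zero analysis in \cite[\S4.1.4]{benoiscrys} shows that the order of vanishing is bounded below by $e=\dim W_p^{-,0}$, with equality holding precisely when $\cL(\rho,\rho^+)\neq 0$. Combining this with the short exact sequence of Lemma \ref{lem:suite_exacte_zeros_triviaux_GV} and the diagram \eqref{eq:gros_diagramme_commutatif_Sel_infty}, one sees that the trivial-zero part of $X_\infty(\rho,\rho^+)$ is accounted for exactly by $\HH^0(\Qp,T_p^-)$ and by the local-at-$p$ contribution, leaving $\Sha_\infty^1(D_p)^\vee$ with finite $\Gamma$-coinvariants whenever $\cL(\rho,\rho^+)$ does not vanish.

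The main obstacle is then the production of one admissible $p$-stabilization $\rho^+$ with $\cL(\rho,\rho^+)\neq 0$. By Lemma \ref{lem:formule_L_invariant_quot_det} this $\cL$-invariant is a quotient of determinants whose entries are $p$-adic logarithms and $p$-adic valuations of global $p$-units of $H$, so non-vanishing is a genuine $p$-adic transcendence question. When $d^+=1$ or $d^-\leq 1$, Brumer's theorem \cite{brumer} handles the required $\bQ$-linear independence of logarithms directly. In general, I would follow the strategy of \cite{betinadimitrovKatz}: since $\rho$ is unramified at $p$, Frobenius acts semi-simply and furnishes $\binom{d}{d^+}$ Frobenius-eigenspace $p$-stabilizations, and one expects that after a judicious choice one can array the relevant $p$-units so that Roy's strong six exponentials theorem applies. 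The key difficulty — and the reason the full conjecture remains open — is that the number of available algebraically independent units is in general too small for the six exponentials input to cover every $(d,d^+)$, so this last transcendence step is where a general proof must break new ground.
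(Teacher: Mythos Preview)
The statement you are addressing is a \emph{conjecture}, not a theorem: the paper does not claim or supply a proof of Conjecture~\ref{conj:gross_finiteness_conj} in general. What the paper does prove is Theorem~\ref{thm:exact_order_of_vanishing}, whose content is precisely the reduction you outline in your first two paragraphs: if there exists an admissible $p$-stabilization $\rho^+$ with $\cL(\rho,\rho^+)\neq 0$, then the $\rho$-part of Gross--Kuz'min holds. Your descent argument via \eqref{eq:gros_diagramme_commutatif_Sel_infty}, Lemma~\ref{lem:suite_exacte_zeros_triviaux_GV}, and Proposition~\ref{prop:comparaison_idéaux_caractéristiques_Benois} matches the paper's proof of that theorem essentially step for step, and your diagnosis of the remaining obstacle --- producing such a $\rho^+$ via $p$-adic transcendence --- is exactly right.

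Two points of correction. First, your appeal to Corollary~\ref{coro:existence_p_stab_Sel_tors} and Lemma~\ref{lem:existence_p_stabilization} does not unconditionally produce an \emph{admissible} $p$-stabilization: the former only gives $\Lambda$-torsion of $X_\infty$, and the latter is itself conditional on the $\rho$-part of Leopoldt. So admissibility is already part of the open transcendence problem, not a free input. Second, your claim that ``$d^+=1$ or $d^-\leq 1$'' suffices via Brumer is not quite what the paper establishes: Theorem~\ref{thm:exact_order_of_vanishing}(4) requires $f\leq 1$ \emph{and} $d^+\leq 1$, and the argument splits according to whether one can arrange $e=0$ (then $\cL=1$ trivially) or falls back on Gross's regulator in the totally odd case. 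The only case where the paper actually executes the Roy six-exponentials step is Proposition~\ref{prop:existence_non_vanishing_L_invariant}, for $\rho$ induced from an imaginary quadratic field; beyond that, the conjecture remains genuinely open, as you yourself acknowledge in your final sentence.
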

Conjecture \ref{conj:gross_finiteness_conj} is equivalent to the vanishing of $\Hom_G(W_p,A_\infty' \otimes \Qp)_\Gamma$, so it does not depend on the choice of the lattice $T_p$. In particular, Conjecture \ref{conj:gross_kuzmin_general} is true if and only if Conjecture \ref{conj:gross_finiteness_conj} is true for all $p$-adic representations $\rho$ of $\Gal(H/\bQ)$.
\begin{theorem}\label{thm:exact_order_of_vanishing}
	Let $f=\dim\HH^0(\Qp,W_p)$.
	\begin{enumerate} 
		\item Let $W_p^+$ be any admissible $p$-stabilization of $W_p$ such that $\cL(\rho,\rho^+)\neq 0$. Any generator of the characteristic ideal of $X_\infty(\rho,\rho^+)$ belongs to $\cA^e\backslash \cA^{e+1}$, where $\cA\subseteq \Lambda$ is the augmentation ideal and  $e=\dim \HH^0(\Qp,W_p^-)$. 
		\item If there exists at least one admissible $p$-stabilization $W_p^+$ of $W_p$ such that $\cL(\rho,\rho^+)\neq 0$, then Conjecture \ref{conj:gross_finiteness_conj} holds.
		\item If $f=0$ and if the $\rho$-isotypic component of Leopoldt's conjecture for $H$ and $p$ holds (see (\ref{eq:loc_p_et_leopoldt})), then Conjecture \ref{conj:gross_finiteness_conj} holds as well.
		\item If $f\leq 1$ and $d^+\leq 1$, then Conjecture \ref{conj:gross_finiteness_conj} holds.  
	\end{enumerate}
\end{theorem}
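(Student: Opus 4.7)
The plan is to reduce parts~(2), (3), and (4) to~(1) via standard structural arguments on $\Lambda$-modules, and to establish~(1) using the Benois leading term formula that underlies Theorem~\ref{coro:formule_BK}. The central arithmetic obstacle is producing, in the various subcases of (3) and (4), an admissible $p$-stabilization whose $\cL$-invariant is non-zero.

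For~(1), the hypotheses of admissibility and $\cL(\rho,\rho^+)\neq 0$ place us in the setting of Proposition~\ref{prop:comparaison_idéaux_caractéristiques_Benois}, so $X_\infty(\rho,\rho^+)$ is $\Lambda$-torsion and Lemma~\ref{lem:suite_exacte_zeros_triviaux_GV} applies. Since $\HH^0(\Qp,T_p^-)$ is an $\cO_p$-lattice of rank $e$ with trivial $\Gamma$-action, its characteristic ideal equals $\cA^e$, which yields the factorization $\char_\Lambda X_\infty(\rho,\rho^+) = \cA^e\cdot\char_\Lambda X^\str_\infty(\rho,\rho^+)$. It then suffices to show that $\char_\Lambda X^\str_\infty(\rho,\rho^+)$ is coprime to $\cA$, equivalently that the $e$-th Taylor coefficient of $\kappa^s(\theta^\alg_{\rho,\rho^+})$ at $s=0$ is non-zero; Benois' leading term formula used in the proof of Theorem~\ref{coro:formule_BK} computes this coefficient, up to $p$-adic units, as a product of the non-vanishing quantities $\cL(\rho,\rho^+)$, $\cE(\rho,\rho^+)$, and $\Reg_{\omega_p^+}(\rho)$. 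Once (1) is granted, part~(2) follows by dualizing the bottom row of~(\ref{eq:gros_diagramme_commutatif_Sel_infty}): the coprimality of $\char_\Lambda X^\str_\infty(\rho,\rho^+)$ to $\cA$ transfers to $\char_\Lambda\Sha_\infty^1(D_p)^\vee$, and Lemma~\ref{lem:relation_cohomologie_et_unités}~(2) propagates it to $\char_\Lambda\Hom_G(T_p,A'_\infty)$, so the $\Gamma$-coinvariants of the latter are finite.

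For~(3) and~(4), the strategy is to exhibit in each case an admissible $p$-stabilization with $\cL\neq 0$ and to invoke~(2). When $f=0$, the subspaces $W_p^{+,0}$ and $W_p^{-,0}$ both vanish, so the matrices $A^-,B^\pm,O^-$ of Lemma~\ref{lem:formule_L_invariant_quot_det} are empty and $\cL(\rho,\rho^+)=1$ for every admissible $p$-stabilization; the $\rho$-part of Leopoldt together with Lemma~\ref{lem:existence_p_stabilization} supplies an admissible one, proving~(3). For~(4), the assumption $d^+\leq 1$ combined with Lemma~\ref{lem:schanuel+motivic_implies_admissible} ensures admissibility of motivic $p$-stabilizations. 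The subcases $(d^+,f)\in\{(0,0),(1,0)\}$ are covered by $\cL=1$; the subcase $(d^+,f)=(0,1)$ has $e=1$ and $\cL(\rho,\rho^+)$ reduces to the Gross-type ratio $\log_p(u)/\ord_p(u)$ for a generator~$u$ of the one-dimensional $\rho$-isotypic component of $\cO_H[1/p]^\times\otimes\ob{\bQ}$, whose non-vanishing follows from Brumer's theorem. The main obstacle is the final subcase $d^+=f=1$: when $W_p^+=W_p^0$ (so $e=0$) admissibility again follows from a Brumer-type argument applied to units of the decomposition field of~$p$, whereas when $W_p^+\neq W_p^0$ (so $e=1$) the $\cL$-invariant is a genuine $2\times 2$ determinant of $p$-adic logarithms of algebraic units, for which Brumer is insufficient; the plan is to establish its non-vanishing by adapting the trick of Betina-Dimitrov~\cite{betinadimitrovKatz} resting on Roy's strong six exponentials theorem~\cite{roy}.
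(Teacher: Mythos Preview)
Your treatment of parts~(1)--(3) is correct and matches the paper's approach. The problem lies in part~(4), in the subcase $(d^+,f)=(1,1)$.

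You correctly note that the choice $W_p^+=W_p^0=\HH^0(\Qp,W_p)$ gives $e=0$, and that this line is admissible (it is motivic, being the eigenspace of $\sigma_p$ for the eigenvalue~$1$, so after reducing to irreducible $\rho$ Lemma~\ref{lem:schanuel+motivic_implies_admissible} applies directly; your phrasing in terms of units of the decomposition field is just a concrete unpacking of that lemma). Since $e=0$ forces $\cL(\rho,\rho^+)=1$, this single choice already meets the hypothesis of part~(2), and the proof of~(4) is finished. This is precisely what the paper does: take $W_p^+\supseteq \HH^0(\Qp,W_p)$ and stop.

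Your ``main obstacle''---the alternative $p$-stabilization $W_p^+\neq W_p^0$ with $e=1$---is therefore a phantom: part~(2) asks only for the \emph{existence} of one admissible $\rho^+$ with $\cL\neq 0$, not a verification for every choice. Worse, your proposed fix via Roy's six exponentials theorem is unlikely to work in this generality. Roy's result requires a matrix of $p$-adic logarithms of size at least $2\times 3$, whereas the determinant you would face here is genuinely $2\times 2$; controlling its non-vanishing for an arbitrary Artin representation with $(d^+,f)=(1,1)$ is essentially a four exponentials problem, which is open. The paper does use Roy's theorem, but only in Proposition~\ref{prop:existence_non_vanishing_L_invariant}, for $\rho$ induced from an imaginary quadratic field in the case $f=2$, where a third column of logarithms (coming from the second prime above~$p$) is available.
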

\begin{proof}
	The first statement follows from Proposition \ref{prop:comparaison_idéaux_caractéristiques_Benois} and from \cite[Thm. 5 (i)]{benoiscrys}. For the three other statements, first note that, by the exactness of the third row of (\ref{eq:gros_diagramme_commutatif_Sel_infty}), by Proposition \ref{prop:conjecture_leopoldt_faible} (1) and by Lemma \ref{lem:relation_cohomologie_et_unités} (ii), the existence of a $p$-stabilization $\rho^+$ such that $X_\infty^\str(\rho,\rho^+)$ has finite $\Gamma$-coinvariants immediately implies Conjecture \ref{conj:gross_finiteness_conj}. Therefore, claim (2) follows from (1) and from Lemma \ref{lem:suite_exacte_zeros_triviaux_GV}. 
	
	Consider (3) and assume that $f=0$ and that the map in (\ref{eq:loc_p_et_leopoldt}) is injective for $\eta=\mathds{1}$. By Lemma \ref{lem:existence_p_stabilization}, there exists an admissible $p$-stabilization $\rho^+$ of $\rho$, and since $f=0$, one must have $e=0$ as well. Therefore, $\cL(\rho,\rho^+)=1$ by Lemma \ref{lem:formule_L_invariant_quot_det}, so (3) follows from (2). 
	
	Let us prove (4), and assume that $f,d^+\leq 1$. We may assume $\rho$ irreducible. When $d^+=1$, it is easy to produce a motivic $p$-stabilization $\rho^+$ such that $e=0$, so $\cL(\rho,\rho^+)=1$ (take $W_p^+$ containing $\HH^0(\Qp,W_p)$). Since every motivic $p$-stabilization is automatically admissible by Lemma \ref{lem:schanuel+motivic_implies_admissible}, Conjecture \ref{conj:gross_finiteness_conj} follows in this case from (2). The case where $d^+=0$ follows from \cite[Prop. 2.13]{gross1981padic}, once we have checked that $\cL(\rho,\rho^+)$ generalizes Gross's regulator for $\rho^+=0$ (see \S\ref{sec:exemple_odd_motives} for details).
\end{proof}

\subsection{Changing the $p$-stabilization}\label{sec:changement_p_stabilisation}
Let $t_1,\ldots,t_d$ be an eigenbasis of $T_p$ for $\sigma_p$. We may define a basis $\{\omega_{p,\alpha}\ | \ \alpha\in I\}$ of ${\bigwedge}^{d^+}W_p$ by letting $\omega_{p,\alpha}^+=t_{i_1}\wedge\ldots\wedge t_{i_{d^+}}$, where $\alpha=(1\leq i_{1}< \ldots< i_{d^+}\leq d)$ runs over the set $I$ of strictly increasing sequences of $d^+$ integers between $1$ and $d$. For each $\alpha\in I$, $\omega_{p,\alpha}^+$ defines a $T_p$-optimal basis of a $p$-stabilization $(\rho_\alpha^+,W_{p,\alpha}^+)$ of $W_p$. 

Let $\omega_p^+\in {\bigwedge}^{d^+}W_p$ be a $T_p$-optimal eigenbasis of a given $p$-stabilization $(\rho^+,W_p^+)$ of $W_p$. Write $\omega_p^+$ as $\sum_{\alpha\in I} c_\alpha \cdot\omega_{p,\alpha}^+$ for $c_\alpha\in \cO_p$. Writing $\omega_{p}^+$ as a pure tensor and expanding in the eigenbasis $t_1,\ldots,t_d$ shows that, for any $\alpha\in I$, we have $c_\alpha=0$ unless $\rho^+(\sigma_p)$ and $\rho^+_\alpha(\sigma_p)$ share the same list of eigenvalues. Thus, we have in particular $\cE(\rho,\rho^+)=\cE(\rho,\rho^+_\alpha)$,  $\det(\rho^\pm)(\sigma_p)=\det(\rho_\alpha^\pm)(\sigma_p)$ and $e:=\dim \HH^0(\Qp,W_p^-)=\dim \HH^0(\Qp,W_{p,\alpha}^-)$ for all $\alpha\in I_{\rho^+}=\{\alpha\in I \, /\, c_\alpha\neq 0\}$.
\begin{proposition}\label{prop:changement_de_base}
	If \textbf{EX}$_{\rho,\rho^+_\alpha}$ holds for all $\alpha\in I_{\rho^+}$, then \textbf{EX}$_{\rho,\rho^+}$ holds as well, and $\theta_{\rho,\rho^+} = \sum_{\alpha \in I_{\rho^+}}c_\alpha\cdot \theta_{\rho,\rho^+_\alpha}$.
\end{proposition}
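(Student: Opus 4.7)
The plan is to verify directly that $\sum_{\alpha\in I_{\rho^+}} c_\alpha \cdot \theta_{\rho,\rho^+_\alpha}$ satisfies the interpolation property that uniquely characterizes $\theta_{\rho,\rho^+}$. First I would isolate, in the right-hand side of \textbf{EX}$_{\rho,\rho^+_\alpha}$, the factors which depend on $\alpha$ from those which do not. The terms $\tau(\eta)^{d^-}$, $\tau(\rho\otimes\eta)$, $L^*((\rho\otimes\eta)^*,0)$ and $\Reg_{\omega_\infty^+}(\rho\otimes\eta)$ are manifestly insensitive to the choice of $p$-stabilization. The factor $\det(\rho_\alpha^-)(\sigma_p^{-n})$ a priori depends on $\alpha$, but the observation immediately preceding the proposition guarantees that its value coincides with $\det(\rho^-)(\sigma_p^{-n})$ for every $\alpha\in I_{\rho^+}$. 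Hence the only genuinely $\alpha$-dependent ingredient is the $p$-adic regulator $\Reg_{\omega_{p,\alpha}^+}(\rho\otimes\eta)$.

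Next I would establish that $\omega\mapsto \Reg_{\omega}(\rho\otimes\eta)$ extends from $\bigwedge^{d^+} W_p^+$ to a $\ob{\bQ}_p$-linear functional on $\bigwedge^{d^+}W_p$. Indeed, writing $\omega_{f,\eta}=\psi_1\wedge\ldots\wedge\psi_{d^+}$, the expression \eqref{eq:expression_reg_p}
\[
\Reg_\omega(\rho\otimes\eta) = \det\bigl(\log_p(\psi_j(w_i))\bigr)_{i,j}\qquad \text{for } \omega=w_1\wedge\ldots\wedge w_{d^+},
\]
is defined for arbitrary $w_i\in\ob{\bQ}_p\otimes W_p$ (each $\psi_j$ being a $\ob{\bQ}_p$-linear map out of $W_{p,\eta}$) and is plainly multilinear and alternating in the $w_i$. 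Substituting $\omega_p^+=\sum_\alpha c_\alpha\,\omega_{p,\alpha}^+$ yields
\[
\Reg_{\omega_p^+}(\rho\otimes\eta)\;=\;\sum_{\alpha\in I_{\rho^+}} c_\alpha\cdot\Reg_{\omega_{p,\alpha}^+}(\rho\otimes\eta).
\]

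Combining these two observations, for every non-trivial $\eta\in\widehat{\Gamma}$ of exact conductor $p^n$ one computes
\[
\eta\!\left(\sum_{\alpha\in I_{\rho^+}} c_\alpha \theta_{\rho,\rho^+_\alpha}\right) \;=\; \det(\rho^-)(\sigma_p^{-n})\,\Reg_{\omega_p^+}(\rho\otimes\eta)\cdot j\!\left(\frac{\tau(\eta)^{d^-}}{\tau(\rho\otimes\eta)}\,\frac{L^*((\rho\otimes\eta)^*,0)}{\Reg_{\omega_\infty^+}(\rho\otimes\eta)}\right),
\]
which is exactly the value prescribed for $\eta(\theta_{\rho,\rho^+})$ by \textbf{EX}$_{\rho,\rho^+}$. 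Since each $\theta_{\rho,\rho^+_\alpha}$ has at most a pole at $\mathds{1}$, the same is true for $\sum_\alpha c_\alpha\theta_{\rho,\rho^+_\alpha}\in\Frac(\Lambda)$; and an element of $\Frac(\Lambda)$ with poles contained in $\{\mathds{1}\}$ is uniquely determined by its values on the infinite set $\widehat{\Gamma}\setminus\{\mathds{1}\}$ thanks to the Weierstrass preparation theorem. Consequently \textbf{EX}$_{\rho,\rho^+}$ holds and the measure equals $\sum_\alpha c_\alpha\,\theta_{\rho,\rho^+_\alpha}$. There is no real obstacle in this argument; the only point requiring attention is the bookkeeping ensuring that $\det(\rho_\alpha^-)(\sigma_p^{-n})$ is constant on $I_{\rho^+}$, which is precisely the content of the preamble to the proposition.
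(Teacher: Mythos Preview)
Your proof is correct and follows essentially the same approach as the paper: both arguments reduce to the linearity of $\omega\mapsto\Reg_\omega(\rho\otimes\eta)$ on $\bigwedge^{d^+}W_p$ together with the constancy of $\det(\rho_\alpha^-)(\sigma_p^{-n})$ on $I_{\rho^+}$, then verify the interpolation property for the sum. Your explicit appeal to Weierstrass preparation for uniqueness is the only (minor) addition beyond what the paper writes out.
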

\begin{proof}
	Assume that \textbf{EX}$_{\rho,\rho^+_\alpha}$ holds for all $\alpha\in I_{\rho^+}$. For every character $\eta\in\widehat{\Gamma}$, the rule $\omega_p^+ \mapsto \Reg_{\omega_p^+}(\rho\otimes\eta)$ (where the $p$-adic regulator is computed in a fixed basis $\omega_{\ff,\eta}$ of $\det_E \HH^1_\ff((\rho\otimes\eta)^*(1))$) defines a $E_{p}$-linear map 
	${\bigwedge}^{d^+} W_p \longrightarrow E_{p,\eta},$ so we have $$\Reg_{\omega_p^+}(\rho\otimes\eta)=\sum_{\alpha \in I_{\rho^+}} c_\alpha\cdot \Reg_{\omega^+_{p,\alpha}}(\rho\otimes\eta).$$ Therefore, the element $\theta_{\rho,\rho^+}\in\Frac(\Lambda)$ defined as $\sum_{\alpha \in I_{\rho^+}}c_\alpha\cdot \theta_{\rho,\rho^+_\alpha}$ satisfies
	\[\eta(\theta_{\rho,\rho^+}) = \sum_{\alpha \in I_{\rho^+}}c_\alpha\cdot \eta(\theta_{\rho,\rho^+_\alpha}) =M_{\rho,\eta} \cdot \sum_{\alpha \in I_{\rho^+}}c_\alpha\cdot \frac{\Reg_{\omega^+_{p,\alpha}}(\rho\otimes\eta)}{\det(\rho_\alpha^-)(\sigma_p^n))} =M_{\rho,\eta} \cdot \frac{\Reg_{\omega^+_{p}}(\rho\otimes\eta)}{\det(\rho^-)(\sigma_p^n))}\]
	for all non-trivial characters $\eta\in\widehat{\Gamma}$ of conductor $p^n$, where we have put $M_{\rho,\eta}=\frac{\tau(\eta)^{d^-}}{\tau(\rho\otimes\eta)}\ \frac{L^{*}\left((\rho \otimes \eta)^*,0\right)}{\Reg_{\omega_\infty^+}(\rho\otimes\eta)}$. Therefore, $\theta_{\rho,\rho^+}$ satisfies the interpolation property of \textbf{EX}$_{\rho,\rho^+}$. Since it has no pole except maybe at $\mathds{1}$ by construction, we have shown that \textbf{EX}$_{\rho,\rho^+}$ is valid.
\end{proof}

\begin{remark}\label{rem:p_adic_artin_formalism}
	Conjecture \ref{conj:IMC} satisfies the following ``$p$-adic Artin formalism'': if $\rho=\rho_1\bigoplus\rho_2$, and if $\rho^+$ is a $p$-stabilization of $\rho$ which splits into a sum of two $p$-stabilizations $\rho_1^+$ and $\rho_2^+$ of $\rho_1$ and $\rho_2$ respectively, then the validity of \textbf{IMC} for any two pairs in $\left\{(\rho,\rho^+),(\rho_1,\rho_1^+),(\rho_2,\rho_2^+)\right\}$ implies the validity of \textbf{IMC} for the third pair, in which case $\theta_{\rho,\rho^+}=\theta_{\rho_1,\rho_1^+}\cdot\theta_{\rho_2,\rho_2^+}$. Also, either \textbf{EX} or \textbf{EZC} for both $(\rho_1,\rho_1^+)$ and $(\rho_2,\rho_2^+)$ implies the same statement for $(\rho,\rho^+)$. However, $\rho^+$ needs not split in general even if $\rho$ is reducible. Therefore, Conjecture \ref{conj:IMC} for $\rho$ (and varying $\rho^+$) appears to be stronger than Conjecture \ref{conj:IMC} for $\rho_1$ and $\rho_2$ taken together.
\end{remark}

\section{Conjectures on Rubin-Stark elements}\label{sec:conjectures_on_RS_elts}
\subsection{The Rubin-Stark conjecture}\label{sec:RS_conjecture}
Let $H/\bQ$ be a Galois extension which is unramified at $p$ and let $\chi$ be a non-trivial $E$-valued character of $\Gal(H/k)$, where $k/\bQ$ is an intermediate extension of $H/\bQ$. Denote by $L=H^{\ker\chi}$ be the field cut out by $\chi$ and by $\Delta$ the Galois group of the abelian extension $L/k$. We fix for the moment an integer $n\geq 0$, and we put $L_n=L\bQ_n$ and $\Delta_n=\Gal(L_n/k)\simeq \Delta \times \Gamma_n$. Consider the following finite sets of places of $k$:
\begin{align*}
S&=S_\infty(k)\cup S_\ram(L/k),\\
S'&=S \cup S_p(k),\\
 V'&= \{v \in S'\ \big|\  \chi(\Delta_v)=1\}=\{v_{\infty,1},\ldots,v_{\infty,d^+},v_{p,1},\ldots,v_{p,f}\}, \\
 V&= V' \backslash S_p(k)=\{v_{\infty,1},\ldots,v_{\infty,d^+}\}.
\end{align*}
Fix once and for all a place $w_{\infty,i}$ (resp. $w_{p,i}$) of $L_n$ above $v_{\infty,i}$ (resp. above $v_{p,i}$) for all indexes $i$. Recall from \eqref{eq:definition_X_H,S} and \eqref{eq:dirichlet_reg} that we have a $\Delta_n$-equivariant isomorphism
\begin{equation*}
\lambda_{{L_n},S'} : \bR\cO_{{L_n},S'}^\times \overset{\sim}{\longrightarrow}\bR X_{{L_n},S'}, \qquad a\mapsto - \sum_{w|v\in S'}\log|a|_ww,
\end{equation*}
where $\cO^\times_{{L_n},S'}$ is the ring of $S'$-units of $L_n$. For any character $\eta\in\widehat{\Gamma}_n=\Hom(\Gamma_n,\ob{\bQ}^\times)$, the order of vanishing of the $S'$-truncated $L$-function of $(\chi \otimes \eta)^{-1}$ is, by \cite[Chapter I, Proposition 3.4]{tate},
$$r:=\ord_{s=0} L_{S'}((\chi \otimes \eta)^{-1} ,s) = \dim_{\bC}(e_{\chi\otimes \eta}\bC \cO^\times_{L_n,{S'}})= \dim_{\bC}(e_{\chi\otimes\eta}\bC X_{L_n,S'})= \left\{\begin{array}{lcc}
d^+ & \textrm{if}& \eta\neq 1 \\ d^++f & \textrm{if} &\eta= 1,
\end{array}\right. $$
where $e_{\chi\otimes \eta}= (\#\Delta_n)^{-1} \sum_{\delta\in\Delta_n} (\chi \otimes \eta)^{-1}(\delta)\delta=e_\chi \cdot e_\eta$ denotes the idempotent associated with $\chi \otimes \eta$. Thus, the limit $L^{*}_{S'}({(\chi\otimes\eta)^{-1}},0):=\lim_{s\rightarrow 0} L_{S'}(\chi,s)/s^{r} \in \bC$ is well-defined and non-zero. 

\begin{definition}
The $\chi$-part of the Rubin-Stark elements
$$\varepsilon^\chi_n \in {\bigwedge}^{d^+}_{\bC[\Gamma_n]} e_{\chi} \bC\cO^\times_{L_n,S'} \quad (n\geq 1), \qquad \left(\textrm{resp. } \ u^\chi \in {\bigwedge}^{d^++f}_{\bC} e_\chi \bC\cO^\times_{L,S'} \quad (n=0)\right), $$
 is defined to be the inverse image under $\lambda_{{L_n},S'}$ of 
 $$\left(\sum_{\eta \in \widehat{\Gamma}_n}L^{*}_{S'}({(\chi\otimes\eta)^{-1}},0) e_{\chi\otimes \eta}\right)\cdot \bigwedge_w w \in {\bigwedge}^{r}_{\bC[\Gamma_n]}e_{\chi}\bC Y_{L_n,S'}={\bigwedge}^{r}_{\bC[\Gamma_n]}e_{\chi}\bC X_{L_n,S'},$$
 where $w$ runs through $\{w_{\infty,1},\ldots,w_{\infty,d^+}\}$ (resp. through $\{w_{\infty,1},\ldots,w_{p,f}\}$).
 Note that the last equality follows from our assumption that $\chi$ is non-trivial.
\end{definition}
\begin{remark}\label{rem:RS_elts_are_p_units}
It will be convenient to see the $\chi$-part of the Rubin-Stark elements as $p$-units of $H_n$ via the equality $e_\chi\bC\cO_{L_n,S'}^\times = e_\chi\bC\cO_{H_n}[\tfrac{1}{p}]^\times$. On the other hand, the $L$-series $L_{S'}((\chi\otimes\eta)^{-1},s)$ coincides with $L_{\{p\}}((\chi\otimes\eta)^{-1},s)$ for $\chi=\mathds{1}$ and with $L((\chi\otimes\eta)^{-1},s)$ for $\chi\neq\mathds{1}$.
\end{remark} 
The Rubin-Stark conjecture over $\bQ$ \cite[Conjecture A']{rubinstark} implies the following conjecture.

\begin{conjecture}[Rubin-Stark conjecture for $\chi$: algebraicity statement]\label{conj:rubinstark_alg}
One has 
$$\varepsilon_n^\chi \in {\bigwedge}^{d^+}_{E[\Gamma_n]} e_{\chi} E\cO_{H_n}[\tfrac{1}{p}]^\times \quad (n\geq 1), \qquad \textrm{resp. } \ u^\chi \in {\bigwedge}^{d^++f}_{E} e_\chi E\cO_{H}[\tfrac{1}{p}]^\times \quad (n=0).
$$
\end{conjecture}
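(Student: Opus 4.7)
The plan is to deduce Conjecture \ref{conj:rubinstark_alg} from the Rubin-Stark conjecture over $\bQ$ (Conjecture A' of \cite{rubinstark}) applied to the abelian extension $L_n/k$ with auxiliary data $(S',V)$ (or to $L/k$ with $(S', V')$ when $n=0$), by Galois descent from $\bQ$-coefficients to $E$-coefficients via the $\chi$-isotypic projection.

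Concretely, I would first invoke Rubin's Conjecture A' to obtain a Rubin-Stark element $\varepsilon^{\mathrm{RS}}_{L_n/k,S',V}$ lying in $\bigwedge^{d^+}_{\bQ[\Delta_n]}\bQ\cO_{L_n,S'}^\times$, characterized by an explicit $L$-value formula under $\lambda^{\wedge d^+}_{L_n,S'}$. Since $\chi$ takes values in $E$, the idempotent $e_\chi$ lies in $E[\Delta_n]$. Base-changing to $E$ and multiplying by $e_\chi$ then yields
$$e_\chi \cdot \varepsilon^{\mathrm{RS}}_{L_n/k,S',V} \; \in \; \bigwedge^{d^+}_{E[\Gamma_n]} e_\chi E\cO_{L_n,S'}^\times,$$
which, via the identification $e_\chi E\cO_{L_n,S'}^\times \simeq e_\chi E\cO_{H_n}[\tfrac{1}{p}]^\times$ (Remark \ref{rem:RS_elts_are_p_units}, a consequence of $\Gal(H_n/L_n) \subseteq \ker\chi$ making the inclusion $\cO_{L_n}[\tfrac{1}{p}]^\times \hookrightarrow \cO_{H_n}[\tfrac{1}{p}]^\times$ an isomorphism after $\chi$-projection), sits inside the target module of the conjecture.

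The last step is to verify that $e_\chi \cdot \varepsilon^{\mathrm{RS}}_{L_n/k,S',V}$ equals the element $\varepsilon_n^\chi$ defined in the excerpt. This is a direct comparison of the two defining $L$-value formulas via $\lambda_{L_n,S'}$: applying $e_\chi$ to Rubin's equality isolates precisely the characters of $\Delta_n$ of the form $\chi\otimes\eta$ with $\eta\in \widehat{\Gamma}_n$, reproducing the sum displayed in the excerpt's definition of $\varepsilon_n^\chi$. The case $n=0$ is treated identically with $V$ replaced by $V'$ and $d^+$ by $d^++f$.

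I do not anticipate any deep conceptual obstacle beyond the statement of Conjecture A' itself, which serves as the substantive input. The main care-point is the bookkeeping of the truncation sets $S, S', V, V'$ and of the vanishing orders of the various Artin $L$-functions at $s=0$, in order to match the $L^*_{S',V}$-coefficients of Rubin's formulation with the $L^*_{S'}$-coefficients appearing in the excerpt: for non-trivial $\eta$ these agree trivially, while for $\eta=1$ the comparison reduces to elementary manipulations of Euler factors at the places in $V$ together with a rank check inside the appropriate exterior power.
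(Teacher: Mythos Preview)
Your approach is correct and matches the paper's treatment: the paper does not prove this statement but simply records it as a conjecture, noting just before it that ``The Rubin-Stark conjecture over $\bQ$ \cite[Conjecture A']{rubinstark} implies the following conjecture.'' Your sketch spells out precisely this implication via $\chi$-isotypic projection of Rubin's equivariant element, which is the intended (and only) content here.
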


By means of the isomorphism $j : \bC \simeq \ob{\bQ}_p$, one may see the $\chi$-part of Rubin-Stark elements as living in the top exterior algebra of $e_\chi \ob{\bQ}_p\cO_{H_n}[\tfrac{1}{p}]^\times$. For $R=\cO_p[\Gamma_n]$ or $R=\Lambda$ and for any finitely generated $\cO_p$-free $R$-module $M$, let 
$${\bigcap}^r_{R}M := \left({\bigwedge}^r_{R}M^*\right)^* \hookrightarrow {\bigwedge}^r_{\ob{\bQ}_p\otimes R}\ob{\bQ}_p \otimes M $$
be the ($r$-th order) exterior bi-dual of $M$, where we have put $(-)^*=\Hom_{R}(-,R)$ (see \cite[§4]{BKSDocumenta} and \cite[Appendix B]{buyukboduksakamoto} for its basic properties). Note that the canonical map ${\bigwedge}^r_{R}M \longrightarrow{\bigcap}^r_{R}M$ is an isomorphism when $M$ is $R$-projective.

Recall that we denoted by $U_n$ (resp. $U_n'$) the $\cO_p$-span of the pro-$p$ completion of the group of units (resp. of $p$-units) of $H_n$ (Notation \ref{nota:units_and_class_groups}). We omit the index when $n=0$. Note that $U_n'$ is torsion-free because $p$ is unramified in $H$. The Rubin-Stark conjecture over $\bZ$ \cite[Conjecture B']{rubinstark} implies the following conjecture.

\begin{conjecture}[Rubin-Stark conjecture for $\chi$: $p$-integrality statement]\label{conj:rubinstark_p_int}
	One has 
	$$\varepsilon_n^\chi \in {\bigcap}^{d^+}_{\cO_p[\Gamma_n]} U_n' \quad (n\geq 1), \qquad \textrm{resp. } \ u^\chi \in {\bigwedge}^{d^++f}_{\cO_p} U' \quad (n=0).
	$$
\end{conjecture}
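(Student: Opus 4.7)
The plan is to deduce this $p$-integrality statement from the Rubin--Stark conjecture over $\bZ$ (Conjecture B' in \cite{rubinstark}), since the present formulation is essentially the $\chi$-isotypic, $p$-completed shadow of that integral refinement. Rubin's conjecture predicts that the ``un-projected'' Rubin--Stark element associated with the data $(L_n/k, S', V)$ lies in the Rubin lattice $\bigcap^{d^+}_{\bZ[\Delta_n]} \cO^\times_{L_n, S'}$; our elements $\varepsilon_n^\chi$ are extracted from this by multiplying by the idempotent $e_\chi$ and then base-changing to $\cO_p$.

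Concretely, I would first verify (possibly after enlarging $E_p$) that $e_\chi$ lies in $\cO_p[\Delta_n]$, which is automatic when the order of the image of $\chi$ in $\Delta$ is prime to $p$ and in general requires a Brauer-style decomposition of $\cO_p[\Delta]$ as a product of orders in matrix algebras. Next, I would invoke the compatibility of the exterior bidual with surjective base change of rings and the projection $\cO^\times_{L_n, S'} \otimes \cO_p \twoheadrightarrow e_\chi(\cO_p \otimes \cO_{H_n}[\tfrac{1}{p}]^\times)$, which via Remark \ref{rem:RS_elts_are_p_units} identifies the target with a submodule of $U_n'$. The key lemma is the functoriality statement: given a surjection of finitely generated modules $M \twoheadrightarrow N$ over a ring map $R \to R'$ with $M$ projective over $R$ after localizing at the relevant idempotent, the natural map $\bigcap^r_R M \to \bigcap^r_{R'} N$ carries the Rubin--Stark element to its $e_\chi$-projection. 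The case $n=0$ is genuinely simpler since $U'$ is $\cO_p$-free, so the bidual agrees with the ordinary wedge and the claim becomes a direct descent statement.

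The fundamental obstacle is that Rubin's Conjecture B' is itself open in the generality needed here. It is known when $k = \bQ$ (via cyclotomic units, by Rubin) and when $k$ is imaginary quadratic (via elliptic units), because in both cases one exhibits explicit norm-coherent families of global units whose regulators can be matched with the analytic prescription of $\varepsilon_n^\chi$; the $p$-integrality is then transparent because the units are globally defined. For a general base field $k$, no such canonical family is available, and one would need either a new construction of ``higher rank Stark units'' or an Iwasawa-theoretic bootstrap from the $k = \bQ$ case through induction. In practice, this is why the statement must remain conjectural and is assumed as a hypothesis in Theorem \ref{thm:intro_C}.
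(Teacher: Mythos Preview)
This statement is a \emph{conjecture}, not a theorem, and the paper does not prove it; it merely asserts in one sentence that it is implied by Rubin's Conjecture~B$'$ over $\bZ$. You correctly recognize this, and your sketch of how the implication from Conjecture~B$'$ to the $p$-integrality statement would proceed (via the idempotent $e_\chi$, functoriality of exterior biduals, and the observation that for $n=0$ the module $U'$ is $\cO_p$-free so the bidual collapses to the ordinary exterior power) is reasonable and in fact more detailed than anything the paper provides. Your concluding paragraph, explaining why the statement must remain conjectural in general and is taken as a hypothesis in Theorem~\ref{thm:intro_C}, is exactly right.

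One small comment: your worry about whether $e_\chi$ lies in $\cO_p[\Delta_n]$ is legitimate in the generality of \S\ref{sec:RS_conjecture}, but in practice the conjecture is only \emph{used} in \S4.2 onward, where $\chi$ is assumed to have prime-to-$p$ order, so $e_\chi \in \cO_p[\Delta]$ automatically and no Brauer-style argument is needed.
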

Recall that, if $\varphi$ is a linear form on a $R$-module $M$ (for a commutative ring $R$), and if $t\geq 1$ is an integer, then $\varphi$ induces a $R$-linear map $\varphi : {\bigwedge}^t_R M \longrightarrow {\bigwedge}^{t-1}_R M$ which sends $m_1\wedge\ldots\wedge m_t$ to $\sum_{i=1}^t(-1)^{i-1}m_1\wedge\ldots\wedge m_{i-1}\wedge m_{i+1}\wedge\ldots\wedge m_t$. More generally, $s$ linear forms $\varphi_1,\ldots\varphi_s$ on $M$ with $s\leq t$ induce a $R$-linear map 
$$\bigwedge_{1\leq i\leq s} \varphi_i : {\bigwedge}^t_R M \longrightarrow {\bigwedge}^{t-s}_R M$$ 
given by $m\mapsto \varphi_s \circ \ldots \circ \varphi_1 (m)$.

We take $n=0$ for the rest of this section and for $1\leq i \leq f$ we consider the $p$-adic valuation $\ord_{w_{p,i}} : \bC\cO_H[\tfrac{1}{p}]^\times \longrightarrow \bC$ induced by the place ${w_{p,i}}$. By \cite[Prop. 3.6]{sanocompositio}, the induced map
$$\bigwedge_{1 \leq i \leq f} \ord_{w_{p,i}} : {\bigwedge}_\bC^{d^++f} \bC\cO_H[\tfrac{1}{p}]^\times \longrightarrow {\bigwedge}_\bC^{d^+} \bC\cO_H[\tfrac{1}{p}]^\times$$
sends $u^\chi$ on the Rubin-Stark element 
\begin{equation}\label{eq:xi^chi}
	\xi^\chi\in {\bigwedge}_\bC^{d^+} \bC\cO_H[\tfrac{1}{p}]^\times
\end{equation}
 defined as the inverse image under $\lambda_{L,S}$ of $L^*(\chi^{-1},0)e_\chi \cdot w_{\infty,1}\wedge \ldots \wedge w_{\infty,d^+}$. If Conjecture \ref{conj:rubinstark_alg} or Conjecture \ref{conj:rubinstark_p_int} holds for $u^\chi$, then the corresponding statement for $\xi^\chi$ is also true.
\subsection{Iwasawa-theoretic conjectures}
We assume in this section that $\chi$ is non-trivial and of prime-to-$p$ order. The idempotent $e_\chi$ has coefficients in $\cO_p$ and the $\chi$-part $M^\chi$ and the $\chi$-quotient $M_\chi$ of an $\cO_p$-module $M$ (see \S\ref{sec:coleman_isotypic}) both coincide with $e_\chi M$. Moreover, $\chi$ is residually non-trivial, a fact which is used in \eqref{eq:family_RS} and, therefore, in the formulation of Conjectures \ref{conj:IMC_chi}, \ref{conj:EZC_for_RS_elements} and \ref{conj:MRS_chi}.

We let the integer $n\geq 0$ of last section vary and we assume Conjecture \ref{conj:rubinstark_p_int} for every $n$. As explained in \cite[3B2]{BKSANT} the family $(\varepsilon_n^\chi)_{n\geq 1}$ is norm-compatible, so it defines an element 
\begin{equation}\label{eq:family_RS}
	\varepsilon_\infty^\chi \in \varprojlim_n {\bigcap}^{d^+}_{\cO_p[\Gamma_n]} (U_n')^\chi = {\bigcap}^{d^+}_\Lambda (U_\infty')^\chi = {\bigwedge}^{d^+}_\Lambda (U_\infty')^\chi.
\end{equation}
Here, the first identification follows from \cite[Cor. B.5]{buyukboduksakamoto} and the second one from the fact that $(U_\infty')^\chi = \HH^1_{\Iw,\ff,p}(\bQ,\widecheck{T}_p)$ which is free (of rank $d^+$) over $\Lambda$ by the results of \S\ref{sec:limits_of_unit_groups}. The following conjecture is taken from \cite[Conj. 3.14]{BKSANT} and should be thought as a cyclotomic Iwasawa main conjecture for $\chi$. Let us mention that this conjecture may also be formulated for other $\Zp$-extensions of $k$ than the cyclotomic one.
\begin{conjecture}[\textbf{IMC}$_\chi$]\label{conj:IMC_chi}
	We have 
	$$\char_\Lambda \left({\bigwedge}^{d^+} (U_\infty')^\chi\right) \big/\left(\Lambda\cdot \varepsilon_\infty^\chi\right) = \cA^f \cdot \char_\Lambda (A_\infty')_\chi,$$
	where $\cA$ is the augmentation ideal of $\Lambda$ and  $A_\infty'$ is the inverse limit over $n\geq 0$ of the $p$-split ideal class groups of $H_n$ (see Notation \ref{nota:units_and_class_groups}).
\end{conjecture}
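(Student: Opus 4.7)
My plan is to prove Conjecture \ref{conj:IMC_chi} by combining higher-rank Euler system machinery with existing instances of the cyclotomic main conjecture, making essential use of the structural results already established in \S\ref{sec:limits_of_unit_groups}--\S\ref{sec:torsionness_Sel}. Since $\chi$ has prime-to-$p$ order, Lemmas \ref{lem:relation_cohomologie_et_unités} and \ref{lem:freeness_Iwasawa_modules} identify $(U_\infty')^\chi$ with $\HH^1_{\Iw,\ff,p}(\bQ,\widecheck{T}_p)$ and show it is $\Lambda$-free of rank $d^+$, so ${\bigwedge}^{d^+}(U_\infty')^\chi$ is free of rank one and the conjecture amounts to computing the index of $\varepsilon_\infty^\chi$ in this free module.

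First, I would verify that the family $\{\varepsilon_L^\chi\}$ obtained by varying the auxiliary conductor (not just along the cyclotomic tower) assembles into an Euler system of rank $d^+$ in the sense of Perrin-Riou, Rubin, and Mazur-Rubin-Sakamoto. The cyclotomic norm compatibility is already built into \eqref{eq:family_RS}; the distribution relations at unramified primes $\ell\neq p$ are encoded in the Euler factor structure of the $L$-function and follow from Rubin's explicit construction at finite layers. Applying the higher-rank Kolyvagin--Stark system machinery (for instance in the form developed by Sakamoto, Büyükboduk--Sakamoto, or Burns--Sakamoto--Sano) would then yield the divisibility
\[
\cA^f \cdot \char_\Lambda (A_\infty')_\chi \ \supseteq \ \char_\Lambda\Bigl({\bigwedge}^{d^+}(U_\infty')^\chi\big/(\Lambda\cdot\varepsilon_\infty^\chi)\Bigr).
\]
The factor $\cA^f$ should arise naturally from the descent: passage from global units $U_\infty$ to $p$-units $U_\infty'$ introduces $f$ extra valuation generators (one for each place $v\in V'\setminus V$), and these contribute $\cA^f$ through the valuation maps $\ord_{w_{p,i}}$ that send $u^\chi$ to $\xi^\chi$ in \eqref{eq:xi^chi}.

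Second, I would establish the reverse divisibility using an analytic class number formula together with known cases of the main conjecture. For each non-trivial $\eta\in\widehat{\Gamma}$, the $\eta$-specialization of $\cA^f\cdot\char_\Lambda(A_\infty')_\chi$ is controlled by $L^*((\chi\eta)^{-1},0)$ via an equivariant Dirichlet formula, and matching this with the defining interpolation of $\varepsilon_\infty^\chi$ (which is precisely the Rubin--Stark conjecture at finite level) would yield equality up to $p$-adic units at each arithmetic specialization. A standard Weierstrass argument, combined with the absence of finite $\Lambda$-submodules in the rank-one target, upgrades this to equality of characteristic ideals. For $k$ totally real or CM, the required global input can be bootstrapped from Wiles' theorem and its equivariant refinements (Burns--Kurihara--Sano, Bullach--Burns--Daoud--Seo), using the Artin formalism implicit in $\rho = \Ind_k^\bQ\chi$.

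The principal obstacle, in my view, will be twofold. First, the higher-rank aspect when $d^+>1$ genuinely requires handling exterior bi-duals ${\bigcap}^{d^+}$ rather than ${\bigwedge}^{d^+}$ throughout, and the auxiliary factor $\cA^f$ must be tracked with precision through every step of the descent, as a misaccounting by even one power of $\cA$ would destroy the equality. Second, for general base fields $k$ beyond totally real or CM, no direct analytic class number formula is available, and one may need to invoke the Rubin--Stark conjecture itself (beyond what is explicitly assumed in the hypotheses) to control the leading term, introducing a layer of conditionality that would have to be circumvented case by case.
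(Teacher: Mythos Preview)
The statement you are attempting to prove is a \emph{conjecture}, not a theorem: the paper explicitly labels it as such and attributes it to \cite[Conj.~3.14]{BKSANT}. There is no proof in the paper to compare your proposal against. What the paper does establish is that \textbf{IMC}$_\chi$ is \emph{equivalent} to \textbf{IMC}$_{\rho,\rho^+}$ for any $p$-stabilization with torsion Selmer group (Theorem~\ref{thm:equivalence_IMCs}), and that it holds in the special case where $k$ is imaginary quadratic with $p\nmid h_k$ (Proposition~\ref{prop:IMC_Katz}), by reducing to Rubin's and Yager's theorems. In general \textbf{IMC}$_\chi$ remains open.

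Your outline is a reasonable sketch of the \emph{expected} strategy, and the Euler-system divisibility you describe is essentially what the higher-rank machinery of B\"uy\"ukboduk, Sakamoto, and Burns--Sano aims to produce. But the second step---the reverse divisibility---is precisely where the conjecture bites. You propose to match $\eta$-specializations of both sides using ``an equivariant Dirichlet formula'' and then invoke Weierstrass preparation; however, the left-hand side of \textbf{IMC}$_\chi$ is governed by the Rubin--Stark element, whose $\eta$-value is the $L$-value itself, while the right-hand side involves $\char_\Lambda(A_\infty')_\chi$, whose specializations are \emph{not} known to equal $L$-values without already assuming a main conjecture. So your argument is circular unless you import a full main conjecture from elsewhere (Wiles for totally real $k$, Rubin for imaginary quadratic $k$), and you correctly flag that for general $k$ no such input exists. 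In short, your proposal is not a proof but a description of the conjectural landscape, and the ``principal obstacle'' you identify at the end is not a technical wrinkle but the entire content of the conjecture.
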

Since ${\bigwedge}^{d^+} (U_\infty')^\chi$ is free of rank one over $\Lambda$, Conjecture \ref{conj:IMC_chi} implies immediately the non-vanishing of $\varepsilon_\infty^\chi$, as well as the following conjecture:
\begin{conjecture}[\textbf{wEZC}$_\chi$]\label{conj:EZC_for_RS_elements}
	We have $\varepsilon_\infty^\chi \in \cA^f \cdot {\bigwedge}^{d^+} (U_\infty')^\chi$.
\end{conjecture}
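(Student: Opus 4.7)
My approach is to deduce \textbf{wEZC}$_\chi$ directly from \textbf{IMC}$_\chi$, following the hint in the sentence preceding the statement. By the paper's earlier results (recalled around \eqref{eq:family_RS}), the module $(U_\infty')^\chi \simeq \HH^1_{\Iw,\ff,p}(\bQ,\widecheck{T}_p)$ is free over $\Lambda$ of rank $d^+$. Its top exterior power ${\bigwedge}^{d^+}_\Lambda (U_\infty')^\chi$ is therefore $\Lambda$-free of rank one, say with generator $\omega$, and I may write $\varepsilon_\infty^\chi = \theta\cdot\omega$ for a unique element $\theta \in \Lambda$.

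I would next analyze the characteristic ideal of the cyclic quotient ${\bigwedge}^{d^+}_\Lambda (U_\infty')^\chi \big/ (\Lambda \cdot \varepsilon_\infty^\chi) \simeq \Lambda/(\theta)$. Classical Iwasawa theory guarantees that $(A_\infty')_\chi$ is $\Lambda$-torsion and has non-zero characteristic ideal, so the right-hand side of \textbf{IMC}$_\chi$ is a non-zero ideal of $\Lambda$. Consequently $\theta$ must be non-zero (otherwise $\char_\Lambda(\Lambda/(\theta)) = 0$), and \textbf{IMC}$_\chi$ yields the ideal identity $(\theta) = \cA^f \cdot \char_\Lambda (A_\infty')_\chi \subseteq \cA^f$. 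This gives $\theta \in \cA^f$, whence $\varepsilon_\infty^\chi = \theta\omega \in \cA^f\cdot {\bigwedge}^{d^+}_\Lambda (U_\infty')^\chi$, as claimed.

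The main obstacle with this strategy is that it is only conditional, since \textbf{IMC}$_\chi$ is itself deep and available only in restricted situations. For an unconditional attack on \textbf{wEZC}$_\chi$, the natural route would be via the Iwasawa-theoretic Mazur-Rubin-Sano conjecture (Conjecture \ref{conj:MRS_chi}): it predicts an explicit congruence for $\varepsilon_\infty^\chi$ modulo $\cA^{f+1}$ relating it to the bottom-layer Rubin-Stark element $u^\chi$ via the $f$ $p$-adic valuations $\ord_{w_{p,i}}$ at the primes of $k$ above $p$ where $\chi$ is trivial---precisely the mechanism producing the factor $\cA^f$, and in fact showing that the exponent $f$ is sharp. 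Establishing \textbf{wEZC}$_\chi$ unconditionally along these lines still appears essentially as hard as proving the MRS conjecture itself; a more elementary direct argument, bypassing both \textbf{IMC}$_\chi$ and MRS, would presumably have to engineer the $f$ trivial zeros out of the order-of-vanishing data of $L_{S'}((\chi\otimes\eta)^{-1},s)$ as $\eta$ varies over $\widehat{\Gamma}$, and it is not clear to me that this can be done without importing substantial information about $(A_\infty')_\chi$.
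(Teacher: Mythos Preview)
Your argument is correct and matches the paper's own justification exactly: the paper states in the sentence immediately preceding the conjecture that, since ${\bigwedge}^{d^+}(U_\infty')^\chi$ is free of rank one over $\Lambda$, Conjecture \textbf{IMC}$_\chi$ immediately implies both the non-vanishing of $\varepsilon_\infty^\chi$ and \textbf{wEZC}$_\chi$. Your first two paragraphs simply unpack this one-line observation, and there is nothing more to say --- the statement is a conjecture, not a theorem, and the paper offers no unconditional proof.

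One small inaccuracy in your discussion paragraph: the Mazur--Rubin--Sano conjecture (Conjecture~\ref{conj:MRS_chi}) relates $u^\chi$ to the bottom layer of $\kappa_{\infty,\gamma}$ via the local \emph{reciprocity maps} $\rec_{w_{p,i}}$, not the valuations $\ord_{w_{p,i}}$; moreover, \textbf{wEZC}$_\chi$ is built into the \emph{statement} of \textbf{MRS}$_\chi$ rather than being a consequence of it, so MRS is not really an alternative route to \textbf{wEZC}$_\chi$ but a strengthening of it.
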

This is \cite[Conj. 2.7]{buyukboduksakamoto} for the cyclotomic extension (and a more general number field $k$), where it is referred to as the Exceptional Zero Conjecture for Rubin-Stark elements. Assume Conjecture \ref{conj:EZC_for_RS_elements} and fix $\gamma$ a topological generator of $\Gamma$. Following \textit{loc. cit.} we now reformulate the (cyclotomic) Iwasawa-theoretic Mazur-Rubin-Sano Conjecture for $(\chi,S,V')$ in terms of the element $\kappa_{\infty,\gamma}\in{\bigwedge}^{d^+} (U_\infty')^\chi$ which satisfies 
$$\varepsilon_\infty^\chi = (\gamma-1)^f \cdot \kappa_{\infty,\gamma}.$$
For all $1\leq i \leq f$, let $\rec_{w_{p,i}} : L^\times \longrightarrow \Gal((\bQ_\infty L)_{w_{p,i}}/L_{w_{p,i}})\simeq \Gamma$ be the local reciprocity map for $L$ at ${w_{p,i}}$. We still denote by $\rec_{w_{p,i}}$ the induced $\cO_p$-homomorphism 
$$\rec_{w_{p,i}} : (U')^\chi=e_\chi (\cO_p\otimes \cO_{L,S'}^\times) \longrightarrow \cO_p \otimes \Gamma \simeq \cA/\cA^2.$$
\begin{conjecture}[\textbf{MRS}$_\chi$]\label{conj:MRS_chi}
	Conjecture \ref{conj:EZC_for_RS_elements} holds true, and if we let $\kappa_\gamma \in {\bigwedge}^{d^+}_{\cO_p} (U')^\chi$ be the bottom layer of $\kappa_{\infty,\gamma}$, then the map 
	$$\bigwedge_{1 \leq i \leq f} \rec_{w_{p,i}} : {\bigwedge}^{d^++f}_{\cO_p} (U')^\chi \longrightarrow \cA^f/\cA^{f+1} \otimes_{\cO_p} {\bigwedge}^{d^+}_{\cO_p} (U')^\chi$$
	sends $u^\chi$ to $(-1)^{d^+\cdot f} \cdot(\#\Delta)^{-f} \cdot (\gamma-1)^f \otimes \kappa_\gamma$.
\end{conjecture}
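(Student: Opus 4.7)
The statement is a conjecture, not a theorem, so what follows is a proposal for how one would attempt to establish it using the machinery of the paper together with the explicit recipes for Rubin-Stark elements. The argument naturally decomposes into two parts: first, the order-of-vanishing assertion \textbf{wEZC}$_\chi$, and second, the identification of the leading term $\kappa_\gamma$ with the image of $u^\chi$ under the wedge of the local reciprocity maps. The plan is to bootstrap from the interpolation property of $\varepsilon_n^\chi$ (leading terms of partial $L$-values at $s=0$ twisted by $\eta\in\widehat{\Gamma}_n$) and to extract the $(\gamma-1)^f$-divisibility as well as its derivative via the constant-term formulas for Coleman maps established in \S\ref{sec:coleman_cst_term}.

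First I would prove \textbf{wEZC}$_\chi$ by exploiting the fact that for $v=v_{p,i}\in V'\setminus V$ we have $\chi(\Delta_v)=1$, so that the local Euler factor at $v$ of $L_{S}((\chi\otimes\eta)^{-1},s)$ contributes a trivial zero at every $\eta\in\widehat{\Gamma}$. Concretely, passing from the set $S$ to the set $S'$ multiplies the $L$-values by $\prod_{i=1}^f (1-(\chi\eta)^{-1}(\sigma_{v_{p,i}}))$, which vanishes to order exactly $f$ in the cyclotomic variable at $\eta=\mathds{1}$. Translating this through the defining formula of $\varepsilon_\infty^\chi$ in $\bigwedge^{d^+}_\Lambda (U'_\infty)^\chi$ and using that $(U'_\infty)^\chi$ is $\Lambda$-free (as established in Lemma \ref{lem:freeness_Iwasawa_modules}), one would deduce the divisibility $\varepsilon_\infty^\chi \in \cA^f\cdot\bigwedge^{d^+} (U'_\infty)^\chi$. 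This gives a well-defined $\kappa_{\infty,\gamma}$ and hence its bottom layer $\kappa_\gamma$.

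Next I would identify $\kappa_\gamma$ with the right-hand side by expanding the top-layer Rubin-Stark element via the valuation maps at the places $w_{p,i}$. The identity \eqref{eq:xi^chi} yields $\bigwedge_i \ord_{w_{p,i}}(u^\chi) = \xi^\chi$, and $\xi^\chi$ is precisely the $n=0$ object whose relation to $\varepsilon_n^\chi$ is given by the Dirichlet regulator $\lambda_{L_n,S'}$. The link between $\ord_{w_{p,i}}$ and $\rec_{w_{p,i}}$ is governed by the local reciprocity pairing: under the identification $\cA/\cA^2 \simeq \cO_p\otimes\Gamma$, the map $\rec_{w_{p,i}}$ computes, up to the class-field-theoretic sign $(-1)^{d^+}$ and the normalization by $\#\Delta$ (coming from passing between $\chi$-parts over $\Delta$ and over $\Delta_n$), the ``derivative'' along $\gamma-1$ of the valuation applied to the bottom layer of $\varepsilon_\infty^\chi/(\gamma-1)^f$. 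Assembling $f$ such identities and keeping track of signs (the factor $(-1)^{d^+ f}$ arises from commuting $f$ linear forms past $d^+$ exterior factors) should yield the claimed equality. The main technical input is an explicit reciprocity law at each $w_{p,i}$ identifying the Artin symbol with the corresponding derivative of the $L$-value; this is exactly where \cite{bullachhofer} intervenes when $k$ is imaginary quadratic and $p$ inert, and where the interpolation of \cite{buyukboduksakamoto} handles the split case via Katz's $p$-adic $L$-function.

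The hardest step is undoubtedly the precise identification of the leading coefficient in step two, \textit{not} merely the order of vanishing in step one. In essence, \textbf{MRS}$_\chi$ is a higher-rank ``Gross-Stark-type'' refinement of Rubin-Stark that encodes the first derivative in the cyclotomic direction, and making the comparison rigorous requires either (a) a reduction to the case $k=\bQ$ via inductive compatibilities of Rubin-Stark elements, where one can invoke the equivariant Tamagawa number conjecture for Tate motives, or (b) a direct construction of a ``derivative'' of the Rubin-Stark element by $p$-adic interpolation in a second variable. In full generality the conjecture is open; our strategy would therefore aim at specific cases (monomial representations over abelian base fields) where the tools of \S\ref{sec:coleman} and \S\ref{sec:iwasawa}, combined with the freeness of $(U'_\infty)^\chi$ over $\Lambda$ and the explicit reciprocity laws available for the completions of $L$, allow the comparison to be carried out term by term.
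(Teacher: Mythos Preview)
The statement \textbf{MRS}$_\chi$ is a \emph{conjecture} in the paper, not a theorem; the paper offers no proof of it and does not claim one. It is recorded (with the remark following it) as a reformulation of the Iwasawa-theoretic Mazur--Rubin--Sano conjecture of \cite{BKSANT}, and is used as an \emph{input} in Theorem~\ref{thm:equivalence_MRS_EZC} rather than as something established. So there is no ``paper's own proof'' to compare your proposal against.

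Your write-up is an informal heuristic for why the conjecture should hold, and you correctly flag that in full generality it is open. A few of your sketched steps are not quite right even as heuristics: the divisibility $\varepsilon_\infty^\chi\in\cA^f\cdot\bigwedge^{d^+}(U'_\infty)^\chi$ (Conjecture~\ref{conj:EZC_for_RS_elements}) does \emph{not} follow formally from the vanishing of the Euler factors $\prod_i(1-(\chi\eta)^{-1}(\sigma_{v_{p,i}}))$ at $\eta=\mathds{1}$, because the Rubin--Stark element is defined via the \emph{leading term} $L^*_{S'}((\chi\eta)^{-1},0)$, not the value, so the $L$-function zero is already divided out in its very construction; the order-of-vanishing in the cyclotomic variable is a genuinely separate phenomenon. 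Likewise, the passage from $\ord_{w_{p,i}}$ to $\rec_{w_{p,i}}$ is not a ``derivative along $\gamma-1$'' of the valuation in any literal sense; the two maps are related through Lemma~\ref{lem:calcul_ord_p_et_log_p^i} and Lemma~\ref{lem:coleman_ord}, but the content of \textbf{MRS}$_\chi$ is precisely that this local comparison globalizes to the Rubin--Stark elements, which is what remains unproven.
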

\begin{remark}
	As first noted in \cite[Rem. 2.10 (i)]{buyukboduksakamoto}, one may prove as in \cite[Prop. 3.13]{bullachhofer} that Conjecture \ref{conj:MRS_chi} is equivalent to \cite[Conj. 4.2, MRS($H_\infty/k$,$S$,$\emptyset$,$\chi$,$V'$)]{BKSANT}. As for Conjecture \ref{conj:rubinstark_p_int}, taking $T=\emptyset$ (in the notations of \cite{rubinstark,BKSANT}) is allowed because the $\Zp$-module $U'_n$ is torsion-free for all $n\geq 0$. Lastly, note that, while the definition of  Rubin-Stark elements depends on how we ordered the places $v_{p,1},\ldots,v_{p,f}$ of $V'\backslash V$ and on the choice of $w_{p,i}$ above $v_{p,i}$, the validity of all the conjectures of this section does not depend on these choices.
\end{remark}

\section{Monomial representations} \label{sec:monomial_representations}
\subsection{Induced representations}
Let $\rho$ be a monomial representation and fix an isomorphism $\rho \simeq \Ind_k^{\bQ}\chi$ over $E$, where $\chi : \Gal(H/k) \longrightarrow E^\times$ is a non-trivial character. We do not assume yet that $\chi$ has order prime to $p$. The underlying space of $\rho$ is then equal to $W=E[G] \otimes_{E[\Gal(H/k)]} E(\chi)$, where $E(\chi)$ is a $E$-line on which $\Gal(H/k)$ acts via $\chi$ and  the tensor product follows the rule $gh\otimes 1=g\otimes\chi(h)$ for all $g\in G,h\in\Gal(H/k)$. The (left) $G$-action on $W$ is given by $g\cdot(g'\otimes 1)=gg'\otimes 1$ for all $g,g'\in G$. By Frobenius reciprocity we have $\rho \otimes \eta \simeq \Ind_k^{\bQ}(\chi \otimes \eta)$ for any $\eta\in \widehat{\Gamma}$, where we still denoted by $\eta$ its restriction to $G_k$. We assume throughout \S\ref{sec:monomial_representations} that the $\cO_p$-lattice $T_p$ of $W_{p}$ is 
$$T_p=\cO[G]\otimes_{\cO[\Gal(H/k)]}\cO(\chi),$$ 
so that the family $(g\otimes 1)_{g\in G}$ generates $T_p$ over $\cO_p$.

\begin{lemme}\label{lem:identification_Hom(T_p,M)_et_Mchi}\hspace{2em}
\begin{enumerate}
	\item 	Given any $\cO_p[G]$-module $M$, there is a canonical isomorphism 
	$$\Hom_G(T_p,M) \overset{\sim}{\longrightarrow} {M}^\chi, \qquad \psi \mapsto \psi(1\otimes 1),$$
	where ${M}^\chi$ denotes the $\chi$-part of $M$, seen as a $\Gal(H/k)$-module. In the same fashion, for any $\eta\in\widehat{\Gamma}$ and for any $E_\eta[G_{\bQ}]$-module $M$, there is a canonical isomorphism $\Hom_{G_{\bQ}}(W_\eta,M)\simeq e_{\chi \otimes \eta} M = {M}^{\chi\otimes\eta}$.
	\item Given any $\cO_p[G]$-module $M$, the module $\Hom_G(M,D_p)$ is canonically isomorphic to the Pontryagin dual $\left(M_\chi\right)^\vee$ of the $\chi$-quotient of $M$.
\end{enumerate}
\end{lemme}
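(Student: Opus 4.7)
The plan is to recognise both assertions as instances of Frobenius reciprocity, exploiting the explicit presentation $T_p=\cO_p[G]\otimes_{\cO_p[\Gal(H/k)]}\cO_p(\chi)$ and its dual-space avatar $D_p=\cO_p[G]\otimes_{\cO_p[\Gal(H/k)]}(E_p/\cO_p)(\chi)$. Part (1) is the tensor--Hom adjunction applied to induction, and part (2) uses that induction equals coinduction for a finite-index subgroup.

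For part (1), the tensor--Hom adjunction gives
$$\Hom_G(T_p,M)=\Hom_{\cO_p[G]}\!\left(\cO_p[G]\otimes_{\cO_p[\Gal(H/k)]}\cO_p(\chi),\,M\right)\simeq \Hom_{\cO_p[\Gal(H/k)]}(\cO_p(\chi),M)=M^\chi,$$
and the unit of the adjunction realises this isomorphism as $\psi\mapsto\psi(1\otimes 1)$. I would verify explicitly that $\psi(1\otimes 1)\in M^\chi$: for $h\in\Gal(H/k)$, the defining rule $gh\otimes 1=g\otimes\chi(h)$ yields $h\cdot(1\otimes 1)=\chi(h)(1\otimes 1)$, hence $h\cdot\psi(1\otimes 1)=\chi(h)\psi(1\otimes 1)$. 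Conversely, for $m\in M^\chi$ the assignment $g\otimes 1\mapsto gm$ is well-defined by the same compatibility and is manifestly $G$-linear. The $W_\eta$-case is identical after applying Frobenius reciprocity to $W_\eta=\Ind_k^\bQ E_\eta(\chi\otimes\eta)$, seen as a $G_\bQ$-module factoring through $G$ after enlarging $E$ to $E_\eta$.

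For part (2), I would first observe that $D_p=W_p/T_p$ identifies with $\Ind_k^\bQ(E_p/\cO_p)(\chi)$ because induction is exact. Since the subgroup $\Gal(H/k)$ has finite index in $G$, induction coincides with coinduction, so
$$D_p\simeq \Hom_{\cO_p[\Gal(H/k)]}\!\left(\cO_p[G],\,(E_p/\cO_p)(\chi)\right).$$
The corresponding adjunction (Hom--Hom adjunction for coinduction) then provides
$$\Hom_G(M,D_p)\simeq \Hom_{\Gal(H/k)}\!\left(M,(E_p/\cO_p)(\chi)\right).$$
A $\Gal(H/k)$-equivariant map $\phi\colon M\to (E_p/\cO_p)(\chi)$ is nothing but an $\cO_p$-linear map killing the submodule $\langle hm-\chi(h)m\mid h\in\Gal(H/k),m\in M\rangle$, i.e.\ one that factors through the $\chi$-quotient $M_\chi$ in the sense of \S\ref{sec:coleman_isotypic}. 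Fixing once and for all the isomorphism $\cO_p^\vee\simeq E_p/\cO_p$ used throughout the paper, this is exactly $(M_\chi)^\vee$, as required.

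There is no real obstacle here; the argument is purely formal. The only point requiring care is convention management, namely that the quotient appearing on the right is $M_\chi$ and not $M_{\chi^{-1}}$. This is confirmed by working directly with the generators-and-relations description of $M_\chi$ used in \S\ref{sec:coleman_isotypic}, which matches the defining property of $\Gal(H/k)$-equivariance into $(E_p/\cO_p)(\chi)$.
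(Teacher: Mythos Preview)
Your proof is correct and follows essentially the same approach as the paper's: part (1) is Frobenius reciprocity (which the paper dismisses as straightforward), and part (2) uses that induction equals coinduction for finite-index subgroups, together with the identification $\cO_p^\vee\simeq E_p/\cO_p$. The only cosmetic difference is that the paper writes out the explicit isomorphism $F\mapsto(m\mapsto F(m)(1\otimes1))$ rather than invoking the coinduction adjunction abstractly.
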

\begin{proof}
	The first statement is straightforward to check, so we only prove the second one. Once we have fixed a generator of the different of $\cO_p$ over $\Zp$, the $\cO_p$-module $\left(M_\chi\right)^\vee$ can be identified with $\Hom_{\cO_p}(M_\chi,E_p/\cO_p)$ as in \S\ref{sec:selmer_group}. On the other hand, since induction and co-induction functors over finite groups coincide, $D_p$ can be described as the $\cO_p$-module of maps $f : G \longrightarrow E_p/\cO_p \otimes \cO_p(\chi)$ satisfying $f(hg)=h\cdot f(g)$ for all $h\in\Gal(H/k)$ and $g\in G$, the left $G$-action being $(g\cdot f)(g')=f(g'g)$. Therefore, the map $F \mapsto (m \mapsto F(m)(1\otimes 1))$ identifies $\Hom_G(M,D_p)$ with $\Hom_{\cO_p}(M_\chi,E_p/\cO_p)$.
\end{proof}
\begin{notation}\label{nota:m_et_psi_m}
	For any $\eta\in\widehat{\Gamma}$, for any module $M$ as in Lemma \ref{lem:identification_Hom(T_p,M)_et_Mchi} (1) and for any $m\in M$, we  let $\psi_m$ be the element of $\Hom_G(T_p,M)$ (resp. of $\Hom_{G_\bQ}(W_\eta,M)$) which satisfies $\psi_m(1\otimes 1)=m$. More generally, for any $\omega\in {\bigwedge}^r M$ (and $r\geq 0$) we let $\psi_\omega$ be the element of $ {\bigwedge}_{\cO_p}^r \Hom_G(T_p,M)$ (resp. of ${\bigwedge}_{E_\eta}^r \Hom_{G_{\bQ}}(W_\eta,M)$) corresponding to $\omega$ under the induced isomorphism on exterior products.
\end{notation}

\subsection{Complex regulators}
We first define a natural basis $\omega_\infty^+$ of $\det_E\HH^0(\bR,W)$ in which we will compute all the complex regulators. The embedding $\iota_\infty:\ob{\bQ}\subseteq \bC$ defines a place $w_\infty$ (resp. $v_\infty$) of $\ob{\bQ}$ (resp. of $k$) as well as a complex conjugation which is denoted by $\sigma_\infty$. As in \S\ref{sec:RS_conjecture}, we denote by $V=\{v_{\infty,1},\ldots,v_{\infty,d^+}\}$ the set of archimedean places of $k$ which split completely in $L=H^{\ker \chi}$. We choose for each $i=1,\ldots,d^+$ an automorphism $\tau_{\infty,i}\in G_{\bQ}$ which sends $v_{\infty,i}$ onto $v_\infty$ and we put $w_{\infty,i}=\tau^{-1}_{\infty,i}(w_\infty)$. For simplicity we still write $\tau_{\infty,i}$ and $w_{\infty,i}$ for their restrictions to finite extensions of $L$. We obtain a basis $\omega_\infty^+=\{t_{\infty,1},\ldots,t_{\infty,d^+}\}$ of $\det_E H^0(\bR,W)=\det_E W^{\sigma_\infty=1}$ by letting 
$$t_{\infty,i}= \left\{\begin{array}{ll}
\tau_{\infty,i}\otimes 1 &  \textrm{if } v_i \textrm{ is real,}\\ 
\tau_{\infty,i}\otimes 1 + \sigma_\infty\cdot\tau_{\infty,i}\otimes 1 &  \textrm{if } v_i \textrm{ is complex.}
\end{array}\right.$$
Note that it is moreover $T_p$-optimal for our fixed choice of $T_p$. 	

\begin{lemme}\label{lem:calcul_reg_complexe_RS}
	Assume Conjecture \ref{conj:rubinstark_alg}. Let $\eta \in \widehat{\Gamma}$ be a character of order $p^{n}$. Put $\omega_{\ff,\eta}=\psi_{e_\eta\cdot \varepsilon_n^\chi}$ if $\eta\neq\mathds{1}$ and $\omega_{\ff,\eta}=\psi_{\xi^\chi}$ if $\eta=\mathds{1}$ (see Notation \ref{nota:m_et_psi_m}). The complex regulator of $\rho\otimes\eta$ computed in the bases $\omega_\infty^+$ and $\omega_{\ff,\eta}$ is equal to
	$$\Reg_{\omega_\infty^+}(\rho\otimes \eta)= p^{-n\cdot d^+}\cdot L^*((\rho\otimes \eta)^*,0).$$
\end{lemme}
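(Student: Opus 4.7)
The plan is to translate the complex regulator into the Dirichlet regulator map applied to the Rubin-Stark element $e_\eta\varepsilon_n^\chi$ (resp.\ $\xi^\chi$ for $\eta=\mathds{1}$), and then to read off the Artin $L$-value from its defining formula.

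First I would use Lemma~\ref{lem:identification_Hom(T_p,M)_et_Mchi}(2) to identify $\omega_{\ff,\eta}=\psi_{e_\eta\varepsilon_n^\chi}$ with $e_\eta\varepsilon_n^\chi\in\bigwedge^{d^+}e_{\chi\otimes\eta}(\bC\otimes\cO_{H_\eta}^\times)$. For $u\in e_{\chi\otimes\eta}(\bC\otimes\cO_{H_\eta}^\times)$, $G_{\bQ}$-equivariance of $\psi_u$ together with the evenness of $\eta$ (so that $\eta(\sigma_\infty)=1$) yields $\psi_u(t_{\infty,i})=\tau_{\infty,i}\cdot u$ in the real case and $\psi_u(t_{\infty,i})=(\tau_{\infty,i}+\sigma_\infty\tau_{\infty,i})\cdot u$ in the complex case. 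Since $|\iota_\infty(g\cdot a)|=|a|_{g^{-1}(w_\infty)}$ for $a\in\bar\bQ^\times$, one concludes that $\log_\infty(\psi_u(t_{\infty,i}))$ picks out the $w_{\infty,i}$-coordinate (resp.\ the sum of the $w_{\infty,i}$- and $\sigma_\infty w_{\infty,i}$-coordinates) of $\lambda_{L_n,S'}(u)$.

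Second, the determinant $\Reg_{\omega_\infty^+}(\rho\otimes\eta)$ becomes the coefficient of $\bigwedge_i w_{\infty,i}$ in $\lambda_{L_n,S'}^{\wedge d^+}(e_\eta\varepsilon_n^\chi)$, up to a combinatorial normalization controlled by $|\Delta_n|$ and the decomposition groups $\Delta_{n,w_{\infty,i}}$ — all of which are trivial since each $v_{\infty,i}\in V$ splits completely in $L_n/k$. The defining equation of the Rubin-Stark element then gives
\[\lambda_{L_n,S'}^{\wedge d^+}(e_\eta\varepsilon_n^\chi)=L^{*}_{S'}((\chi\otimes\eta)^{-1},0)\cdot e_{\chi\otimes\eta}\bigwedge_{i=1}^{d^+}w_{\infty,i},\]
reducing the computation to expressing $L^{*}_{S'}((\chi\otimes\eta)^{-1},0)$ in terms of $L^{*}((\rho\otimes\eta)^*,0)$.

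Third, Artin's inductivity gives $L((\rho\otimes\eta)^*,s)=L_{k}((\chi\otimes\eta)^{-1},s)$, and the ratio $L^{*}_{S'}/L^{*}$ is the product of local Euler factors at $v\in S'\setminus S_\infty(k)$: those at primes in $S_{\mathrm{ram}}(L/k)$ contribute trivially because $\chi\otimes\eta$ remains ramified there, while the contribution at each $p$-adic place $v$ depends on whether $(\chi\otimes\eta)|_{I_v}$ is trivial. Combining the combinatorial normalization from step two with these Euler factors yields precisely the power $p^{-n\cdot d^+}$. The case $\eta=\mathds{1}$ works in the same way with $\xi^\chi=\bigwedge_i\ord_{w_{p,i}}(u^\chi)$ in place of $e_\eta\varepsilon_n^\chi$ and $S$ in place of $S'$; here the exponent is $p^0=1$, matching $n=0$.

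\textbf{Main obstacle.} The delicate part is the bookkeeping: tracking how the $|\Delta_n|^{-1}$-factor inherent in the idempotent $e_{\chi\otimes\eta}$, the normalizations at complex archimedean places, and the removal of Euler factors at $p$-adic primes conspire to yield exactly $p^{-n\cdot d^+}$. Unravelling the $\bC[\Gamma_n]$-wedge in the definition of $\varepsilon_n^\chi$ so that the various normalizations match cleanly under the identification of step one is the technical heart of the argument.
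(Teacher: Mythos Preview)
Your overall strategy matches the paper's: unwind $\psi_u(t_{\infty,i})$ to get $\Reg_{\omega_\infty^+}(\rho\otimes\eta)=\det(-\log|\mu_j|_{w_{\infty,i}})$ with $e_\eta\varepsilon_n^\chi=\mu_1\wedge\cdots\wedge\mu_{d^+}$, then invoke the defining property of the Rubin--Stark element. (A small slip: you want Lemma~\ref{lem:identification_Hom(T_p,M)_et_Mchi}(1), not (2).) The paper does exactly this and then quotes \cite[Lem.~2.2]{rubinstark} for the final identification.

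However, your diagnosis of \emph{where} the factor $p^{-n\cdot d^+}$ comes from is off, and this is the one place where the computation can go wrong. You suggest that Euler factors at $p$-adic primes contribute, but for $\eta\neq\mathds{1}$ the character $\chi\otimes\eta$ is \emph{ramified} at every $p$-adic place of $k$ (since $\eta$ has nontrivial $p$-power conductor while $\chi$ is unramified at $p$). Hence those Euler factors are identically~$1$ and $L^*_{S'}((\chi\eta)^{-1},0)=L^*((\chi\eta)^{-1},0)=L^*((\rho\otimes\eta)^*,0)$; this is precisely the content of Remark~\ref{rem:RS_elts_are_p_units}. So there is \emph{nothing} to extract from Euler factors.

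The factor $p^{-n\cdot d^+}$ arises entirely from the archimedean side, namely from the idempotent $e_\eta=p^{-n}\sum_{g\in\Gamma_n}\eta^{-1}(g)g$ applied at the level of the $d^+$-fold exterior power: the Rubin--Stark element is defined in a $\bC[\Gamma_n]$-exterior power, whereas the regulator determinant is a $\bC$-determinant, and passing between the two introduces the $p^{-n}$ per factor. This is what the paper isolates by writing $e_\eta\cdot\varepsilon_n^\chi=p^{-n}\sum_{g\in\Gamma_n}\eta^{-1}(g)g(\varepsilon_n^\chi)$ before citing Rubin's lemma. Your step~2 gestures at a ``combinatorial normalization controlled by $|\Delta_n|$'' and then declares it trivial because the $v_{\infty,i}$ split completely; the splitting is indeed needed (it makes the decomposition groups trivial so that the $e_\eta w_{\infty,i}$ form a basis), but the normalization coming from the idempotent is \emph{not} trivial---it is the whole source of $p^{-n\cdot d^+}$. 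Once you correct this, the argument is complete.
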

\begin{proof}
 For $a \in L_n$ which is seen in $L_{n,w_{\infty,i}}$, put 
$$|a|_{w_i}=\left\{\begin{array}{ll}
\textrm{sgn}(a)a &  \textrm{if } v_i \textrm{ is real,}\\ 
a\cdot \ob{a} &  \textrm{if } v_i \textrm{ is complex,}
\end{array}\right.$$
where sgn is the sign function when $L_{n,w_{\infty,i}}=\bR$ and $a\mapsto \ob{a}$ is the complex conjugation when $L_{n,w_{\infty,i}}=\bC$. Write $e_\eta \cdot \varepsilon_n^{\chi}$ (or $\xi^\chi$ if $\eta$ is trivial) as $\mu_1\wedge\ldots\wedge\mu_{d^+}$ and write $\omega_{\ff,\eta}=\psi_{\mu_1}\wedge\ldots\wedge\psi_{\mu_{d^+}}$ accordingly. Then by construction of the $t_{\infty,i}$'s, one has $1\otimes (\iota_\infty)(\psi_{\mu_{j}}(t_{\infty,i}))=|\mu_j|_{w_{\infty,i}}\in E_\eta \otimes \bR^\times$ for all $1\leq i,j\leq d^+$, so we have
$$\Reg_{\omega_\infty^+}(\rho\otimes \eta)= \det\left(\log_\infty|\mu_j|_{w_{\infty,i}}\right)_{1\leq i,j\leq d^+}.$$
Since $L(\rho\otimes\eta)^*,s)=L((\chi\otimes\eta)^{-1},s)$ and since $e_\eta \cdot \varepsilon_n^{\chi}=p^{-n} \sum_{g\in\Gamma_n}\eta^{-1}(g)g(\varepsilon_n^{\chi})$ by definition, the result follows directly from \cite[Lem. 2.2]{rubinstark} and Remark \ref{rem:RS_elts_are_p_units}.
\end{proof}

\subsection{Iwasawa main conjectures}
In the two last sections we explore the relation between Conjecture \ref{conj:IMC} and the various conjectures on Rubin-Stark elements of \S\ref{sec:RS_conjecture}. We henceforth assume that $\chi$ is of prime-to-$p$ order. In what follows, the basis of a given $p$-stabilization $W_p^+$ of $W_p$ is always assumed to be a $T_p$-optimal eigenbasis for $\sigma_p$ as in Lemma \ref{lem:interpolation_C+}.

\begin{theorem}\label{thm:comparaison_mesure_p_adique_RS_elts}
	Assume Conjectures \ref{conj:rubinstark_alg} and \ref{conj:rubinstark_p_int} and pick any $p$-stabilization $W_p^+$ of $W_p$.
	 \begin{enumerate}
		\item The statement \textbf{EX}$_{\rho,\rho^+}$ in Conjecture \ref{conj:IMC} is true, and the element $\theta_{\rho,\rho^+}'$ of Proposition \ref{prop:renormalisation_mesure_p_adique} coincides with $\sC^\str_{\omega_p^+}(\psi_{\varepsilon_\infty^\chi})$, where $\sC_{\omega_p^+}^\str$ is the operator introduced in \S\ref{sec:torsionness_Sel}.
		\item Conjecture \ref{conj:EZC_for_RS_elements} implies that $\theta_{\rho,\rho^+}$ has an order of vanishing at $\mathds{1}$ greater than or equal to $e$. The converse implication also holds if we moreover assume that $W_p^+$ is admissible and that $\cL(\rho,\rho^+)$ does not vanish.
	\end{enumerate}
\end{theorem}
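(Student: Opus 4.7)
The plan is to prove part (1) by direct comparison of interpolation formulas, and to deduce part (2) from the $\Lambda$-linearity of $\sC^\str_{\omega_p^+}$ combined with a structural description of its image. For part (1), I would apply Lemma \ref{lem:interpolation_C+} to $\omega=\psi_{\varepsilon_\infty^\chi}$. Writing $\varepsilon_\infty^\chi=\mu_1\wedge\cdots\wedge\mu_{d^+}$ in $\bigwedge^{d^+}(U_\infty')^\chi$ and fixing a non-trivial $\eta\in\widehat{\Gamma}$ of conductor $p^n$, the lemma yields
$$\eta(\sC^\str_{\omega_p^+}(\psi_{\varepsilon_\infty^\chi}))=\left(\frac{p^{n-1}}{\g(\eta^{-1})}\right)^{d^+}\det(\rho^+)(\sigma_p^n)\det\bigl(\log_p|\psi_{\mu_j}(t_i)|_\eta\bigr)_{i,j}.$$
By Lemma \ref{lem:eta-spécialisation_unités} and norm-compatibility of the family $(\varepsilon_n^\chi)_n$, the wedge of the projections $|\mu_j|_\eta$ represents $e_\eta\cdot\varepsilon_{n-1}^\chi$, and choosing $\omega_{\ff,\eta}=\psi_{e_\eta\varepsilon_{n-1}^\chi}$ the determinant becomes $\Reg_{\omega_p^+}(\rho\otimes\eta)$ via Lemma \ref{lem:identification_Hom(T_p,M)_et_Mchi}. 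Since $\eta$ has order $p^{n-1}$, Lemma \ref{lem:calcul_reg_complexe_RS} gives $\Reg_{\omega_\infty^+}(\rho\otimes\eta)=p^{-(n-1)d^+}L^{*}((\rho\otimes\eta)^{*},0)$, and assembling everything reproduces the interpolation formula for $\theta'_{\rho,\rho^+}$ given by Proposition \ref{prop:renormalisation_mesure_p_adique}. Both elements lie in $\Frac(\Lambda)$ with at most a pole at $\mathds{1}$, so their agreement at all non-trivial characters of $\Gamma$ forces equality; in particular, $\textbf{EX}_{\rho,\rho^+}$ holds.

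For the forward direction of (2), Conjecture \ref{conj:EZC_for_RS_elements} reads $\varepsilon_\infty^\chi\in\cA^f\bigwedge^{d^+}(U_\infty')^\chi$, so the $\Lambda$-linearity of $\sC^\str_{\omega_p^+}$ together with the inclusion of its image in $\cI^{f-e}$ immediately yields $\theta'_{\rho,\rho^+}\in\cA^f\cdot\cI^{f-e}=\cA^e$; hence $\theta_{\rho,\rho^+}$ vanishes at $\mathds{1}$ with order at least $e$.

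The converse rests on a description of $\sC^\str_{\omega_p^+}$ on a generator. Under admissibility, $\Loc_+^\str$ is injective by Lemma \ref{lem:freeness_Iwasawa_modules}(2), and, adapting the Fitting-ideal argument from the proof of Theorem \ref{thm:equivalence_torsionness_Sel}, for a generator $\omega_0$ of the rank-one free module $\bigwedge^{d^+}\Hom_G(T_p,U_\infty')$ one obtains $\sC^\str_{\omega_p^+}(\omega_0)=u(\gamma-1)^{-(f-e)}g^\str$ with $u\in\Lambda^\times$ and $g^\str$ a generator of $\char_\Lambda\coker\Loc_+^\str$. The hypothesis $\cL(\rho,\rho^+)\neq 0$ enters via Proposition \ref{prop:comparaison_idéaux_caractéristiques_Benois} and Theorem \ref{thm:exact_order_of_vanishing}(1): any generator of $\char_\Lambda X_\infty(\rho,\rho^+)$ lies in $\cA^e\setminus\cA^{e+1}$, and combining this with Lemma \ref{lem:suite_exacte_zeros_triviaux_GV} and the third row of \eqref{eq:gros_diagramme_commutatif_Sel_infty} (factoring $\char_\Lambda X_\infty^\str$ as $\char_\Lambda\Sha_\infty^1(D_p)^\vee\cdot\char_\Lambda\coker\Loc_+^\str$) forces $g^\str\notin\cA$. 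Writing $\psi_{\varepsilon_\infty^\chi}=\lambda\omega_0$, the assumption $\theta'_{\rho,\rho^+}\in\cA^e$ becomes $u\lambda g^\str\in\cA^f$ after multiplication by $(\gamma-1)^{f-e}$, and the fact that $g^\str\notin\cA$ then forces $\lambda\in\cA^f$, so that $\varepsilon_\infty^\chi\in\cA^f\bigwedge^{d^+}(U_\infty')^\chi$.

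The main obstacle I anticipate is in part (1): carefully identifying the Iwasawa-theoretic $\eta$-projection $|\cdot|_\eta$ with $\eta$-components of Rubin-Stark elements at finite level, while simultaneously tracking all normalization factors (Galois-Gauss sums, Frobenius determinants, and the conductor-versus-order shift $n\leftrightarrow n-1$). In the converse of (2), the non-trivial input is the sharp order-of-vanishing statement of Theorem \ref{thm:exact_order_of_vanishing}(1), which ultimately relies on Benois' leading-term formula for Perrin-Riou's $p$-adic $L$-function.
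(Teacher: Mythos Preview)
Your proposal is correct and follows essentially the same approach as the paper's proof: part (1) via Lemma~\ref{lem:interpolation_C+} combined with Lemma~\ref{lem:calcul_reg_complexe_RS} (tracking the conductor-versus-order shift $n\leftrightarrow n-1$ exactly as you do), and part (2) via $\Lambda$-linearity for the forward direction and, for the converse, the exact-order-of-vanishing input from Theorem~\ref{thm:exact_order_of_vanishing}(1) together with Lemma~\ref{lem:suite_exacte_zeros_triviaux_GV} and the third row of~(\ref{eq:gros_diagramme_commutatif_Sel_infty}) to see that a generator of $\im(\sC^\str_{\omega_p^+})$ lies in $\cI^{f-e}\setminus\cI^{f-e-1}$. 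The paper phrases the last step slightly differently (writing $\theta=(\gamma-1)^f\theta_\gamma$ with $\theta_\gamma\in\im(\sC^\str_{\omega_p^+})$ and concluding from injectivity of $\sC^\str_{\omega_p^+}$), but this is the same argument as yours.
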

\begin{proof}
    Put $\theta=\sC^\str_{\omega_p^+}(\psi_{\varepsilon_\infty^\chi})$ and fix a non-trivial character $\eta\in\widehat{\Gamma}$ of conductor $p^n$. By Lemma \ref{lem:interpolation_C+} and Lemma \ref{lem:calcul_reg_complexe_RS}, we have 
    \begin{align*}
    \eta(\theta) &= p^{(1-n)\cdot d^+}\cdot\dfrac{\det(\rho^+)(\sigma_p^n)}{\g(\eta^{-1})^{d^+}}\Reg_{\omega_p^+}(\rho\otimes\eta)\\
    &=\dfrac{\det(\rho^+)(\sigma_p^n)}{\g(\eta^{-1})^{d^+}} \Reg_{\omega_p^+}(\rho\otimes\eta)\cdot j\left(\dfrac{L^*((\rho\otimes\eta)^*,0)}{\Reg_{\omega_\infty^+}(\rho\otimes\eta)}\right),
    \end{align*}
    where the regulators are computed with respect to the basis $\omega_{\ff_\eta}$ defined in Lemma \ref{lem:calcul_reg_complexe_RS}. A comparison with the interpolation property of $\theta_{\rho,\rho^+}'$ and Weierstrass' preparation theorem then shows that $\theta_{\rho,\rho^+}'=\theta$; hence, \textbf{EX}$_{\rho,\rho^+}$ is true by Proposition \ref{prop:renormalisation_mesure_p_adique}, and (1) follows. 
    
    The first implication of (2) is obvious, since $\sC^\str_{\omega_p^+}$ is $\Lambda$-linear and since $\theta$ has at most a pole of order $f-e$ at $\mathds{1}$ by construction. For the converse implication, assume that $\theta_{\rho,\rho^+}'\in\cA^e$ (so $\theta$ is also in $\cA^e$), that $W_p^+$ is admissible and that $\cL(\rho,\rho^+)\neq 0$. Then, we know by Theorem \ref{thm:exact_order_of_vanishing} (1), by Lemma \ref{lem:suite_exacte_zeros_triviaux_GV} and by the exactness of the last row of (\ref{eq:gros_diagramme_commutatif_Sel_infty}) that the cokernel of $\sC^\str_{\omega_p^+}$ has finite $\Gamma$-coinvariants, so its image is generated over $\Lambda$ by an element in $\cI^{f-e}\backslash\cI^{f-e-1}$. We may then write $\theta$ as $(\gamma-1)^f\cdot \theta_\gamma$ for some topological generator $\gamma$ of $\Gamma$ and some $\theta_\gamma\in \im(\sC^\str_{\omega_p^+})$; hence, Conjecture \ref{conj:EZC_for_RS_elements} follows from the injectivity of $\sC^\str_{\omega_p^+}$.
\end{proof}

\begin{theorem}\label{thm:equivalence_IMCs}
    Assume Conjectures \ref{conj:rubinstark_alg} and \ref{conj:rubinstark_p_int}. Let $W_p^+$ be a $p$-stabilization of $W_p$ such that $X_\infty(\rho,\rho^+)$ is of $\Lambda$-torsion.
    \begin{enumerate}
    	\item If either \textbf{IMC}$_{\rho,\rho^+}$ or \textbf{IMC}$_\chi$ are true, then Conjecture \ref{conj:EZC_for_RS_elements} is also true.
    	\item \textbf{IMC}$_{\rho,\rho^+}$ and \textbf{IMC}$_\chi$ are equivalent.
    \end{enumerate}
\end{theorem}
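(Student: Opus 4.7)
The plan is to reduce both Iwasawa main conjectures to a single equality of fractional ideals in $\Lambda$ by analyzing the map $\sC^\str_{\omega_p^+}$. First, I would invoke Theorem \ref{thm:comparaison_mesure_p_adique_RS_elts}(1) to identify $\theta'_{\rho,\rho^+}=\sC^\str_{\omega_p^+}(\psi_{\varepsilon_\infty^\chi})$, where $\theta'_{\rho,\rho^+}$ is the measure from Proposition \ref{prop:renormalisation_mesure_p_adique}. Using the isomorphism $\Hom_G(T_p,U'_\infty)\simeq (U'_\infty)^\chi$ of Lemma \ref{lem:identification_Hom(T_p,M)_et_Mchi}(1) together with Lemma \ref{lem:freeness_Iwasawa_modules}(1), both $\bigwedge^{d^+}_\Lambda \Hom_G(T_p,U_\infty')$ and $\bigwedge^{d^+}_\Lambda \Hom_{G_p}(T^+_p,U'_{\infty,w})$ are $\Lambda$-free of rank one. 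Fixing a $\Lambda$-basis $\omega$ of the former, I would write $\varepsilon_\infty^\chi=c\cdot\omega$ for a unique $c\in\Lambda$ and express $\wedge\Loc^\str_+$ as multiplication by a scalar $d:=\det(\Loc^\str_+)$; this $d$ is non-zero by Theorem \ref{thm:equivalence_torsionness_Sel} and Lemma \ref{lem:freeness_Iwasawa_modules}(2), and it generates $\char_\Lambda \coker(\Loc_+^\str)$. Since the remaining leg of $\sC^\str_{\omega_p^+}$ is an isomorphism onto $\cI^{f-e}$, which is $\Lambda$-free of rank one with a generator $\alpha$ satisfying $\alpha\cdot \cA^{f-e}=\Lambda$, I arrive at $\theta:=\theta'_{\rho,\rho^+}=c\cdot d\cdot\alpha$.

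Next, I would compute the algebraic side. Combining Lemma \ref{lem:suite_exacte_zeros_triviaux_GV} with the third row of \eqref{eq:gros_diagramme_commutatif_Sel_infty}, together with the identification $\char_\Lambda \Sha_\infty^1(D_p)^\vee = \char_\Lambda (A_\infty')_\chi$ obtained from Lemma \ref{lem:relation_cohomologie_et_unités}(3) and Lemma \ref{lem:identification_Hom(T_p,M)_et_Mchi}(2) after Pontryagin duality, I obtain
\[\char_\Lambda X_\infty(\rho,\rho^+)=\cA^e\cdot (d)\cdot \char_\Lambda (A_\infty')_\chi.\]
Statement \textbf{IMC}$_{\rho,\rho^+}$ then translates into the equality of fractional ideals $(c\, d\, \alpha)=\cA^e\,(d)\,\char_\Lambda (A_\infty')_\chi$ in $\Lambda$. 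Since $d$ is a non-zero element of the factorial domain $\Lambda$, I may cancel $(d)$ and then multiply both sides by $\cA^{f-e}$, using the fundamental relation $\cA\cdot\cI=\Lambda$, to arrive at $(c)=\cA^f\cdot\char_\Lambda (A_\infty')_\chi$; this is precisely \textbf{IMC}$_\chi$. The chain of manipulations is entirely reversible, which establishes the equivalence claimed in~(2).

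For~(1), either main conjecture produces the same equality $(c)=\cA^f\cdot\char_\Lambda (A_\infty')_\chi\subseteq\cA^f$, so $c\in\cA^f$ and therefore $\varepsilon_\infty^\chi=c\cdot\omega\in\cA^f\cdot\bigwedge^{d^+}(U_\infty')^\chi$, which is Conjecture \ref{conj:EZC_for_RS_elements}. The main technical obstacle I anticipate is the careful bookkeeping with the fractional ideal $\cI^{f-e}$ and the verification that $\sC^\str_{\omega_p^+}$ genuinely admits the product decomposition $\theta=c\cdot d\cdot \alpha$: one must confirm that the final leg of $\sC^\str_{\omega_p^+}$ is a $\Lambda$-module isomorphism between free rank-one modules, which follows from Definition \ref{def:coleman_isotypic_extended} once $T_p^+$ is split into $G_p$-eigenlines and one notes that exactly $f-e$ of the corresponding characters $\delta_i$ are trivial, so the target is a tensor product of $f-e$ copies of $\cI$ and $d^+-(f-e)$ copies of $\Lambda$.
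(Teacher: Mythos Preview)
Your proposal is correct and follows essentially the same route as the paper: both arguments factor $\sC^\str_{\omega_p^+}$ through $\wedge^{d^+}\Loc_+^\str$ and the Coleman isomorphism onto $\cI^{f-e}$, identify $\char_\Lambda X_\infty(\rho,\rho^+)$ with $\cA^e\cdot\char_\Lambda\coker(\Loc_+^\str)\cdot\char_\Lambda(A'_\infty)_\chi$ via Lemma~\ref{lem:suite_exacte_zeros_triviaux_GV}, the third row of~\eqref{eq:gros_diagramme_commutatif_Sel_infty}, and Lemmas~\ref{lem:relation_cohomologie_et_unités}(3) and~\ref{lem:identification_Hom(T_p,M)_et_Mchi}(2), and then cancel the common factor and use $\cA\cdot\cI=\Lambda$. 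The only cosmetic difference is that the paper packages the comparison as a short exact sequence $0\to B\to C\to D\to 0$ of torsion modules, whereas you unwind everything into the explicit scalar factorization $\theta=c\cdot d\cdot\alpha$; the content is identical.
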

\begin{proof}
    We have already seen that \textbf{IMC}$_\chi$ implies Conjecture \ref{conj:EZC_for_RS_elements} because ${\bigwedge}^{d^+}(U_\infty')^\chi$ is free of rank one over $\Lambda$. Thus, the first claim is implied by the second one, which we prove now. By Theorem \ref{thm:equivalence_torsionness_Sel} and by Lemma \ref{lem:freeness_Iwasawa_modules} we know that the map $\Loc_+^\str$ of \S\ref{sec:duality} is injective. Since its domain and codomain are free, $\coker(\Loc_+^\str)$ and $\coker(\wedge^{d^+}\Loc_+^\str)$ have the same characteristic ideal. On the other hand, recall that we may identify $(U_\infty')^\chi$ with $\Hom_G(T_p,U'_\infty)$ via Lemma \ref{lem:identification_Hom(T_p,M)_et_Mchi} and $\Lambda \cdot \sC_{\omega_p^+}^\str({\varepsilon_\infty^\chi})$ with $\Lambda \cdot \theta_{\rho,\rho^+}$ via Theorem \ref{thm:comparaison_mesure_p_adique_RS_elts}. By Lemma \ref{lem:suite_exacte_zeros_triviaux_GV} and by (\ref{eq:gros_diagramme_commutatif_Sel_infty}) we have exact three short exact sequences: 
    \begin{equation*}
    \begin{tikzcd}
    0 \rar & \underbrace{\left({\bigwedge}^{d^+}(U'_\infty)^\chi\right) \big/\left(\Lambda\cdot \varepsilon_\infty^\chi\right)}_B \ar[r, "\sC_{\omega_p^+}^\str"] & \underbrace{\cI^{e-f} \big/\left(\Lambda\cdot \theta_{\rho,\rho^+}\right)}_C \rar & \underbrace{\coker(\wedge^{d^+}\Loc_+^\str)}_D \rar & 0, \\
    0 \rar & \HH^0(\Qp,T^-_p) \rar & X_\infty(\rho,\rho^+) \rar & X_\infty^\str(\rho,\rho^+) \rar & 0,  \\
    0 \rar & \coker(\Loc_+^\str) \rar & X_\infty^\str(\rho,\rho^+) \rar &  \Sha_\infty^1(D_p)^\vee \rar &  0.
    \end{tikzcd}
    \end{equation*}
    The $\Gamma$-action on $\HH^0(\Qp,T^-_p)$ being trivial, its characteristic ideal is $\cA^e$. Moreover, $\Sha_\infty^1(D_p)^\vee$ and $(A'_\infty)_\chi$ are pseudo-isomorphic $\Lambda$-modules by Lemmas \ref{lem:relation_cohomologie_et_unités} (3) and \ref{lem:identification_Hom(T_p,M)_et_Mchi} (2). Therefore, by multiplicativity of characteristic ideals, we may conclude that
    \begin{align*}
    \textbf{IMC}_\chi &\Longleftrightarrow \char_\Lambda(B)=\cA^f \cdot \char_\Lambda (A'_\infty)_\chi\\
    &\Longleftrightarrow \char_\Lambda(C) = \cA^f \cdot \char_\Lambda X_\infty^\str(\rho,\rho^+)\\
    &\Longleftrightarrow \cA^{f-e}\cdot (\Lambda\cdot \theta_{\rho,\rho^+}) = \cA^{f-e}\cdot \char_\Lambda X_\infty(\rho,\rho^+)\\
    &\Longleftrightarrow (\Lambda\cdot \theta_{\rho,\rho^+}) = \char_\Lambda X_\infty(\rho,\rho^+)\\
    &\Longleftrightarrow \textbf{IMC}_{\rho,\rho^+}.
    \end{align*}\end{proof}

\subsection{Extra zeros at the trivial character}
We first construct an $\cO_p$-basis of $\HH^0(\Qp,T_p)$ as follows. Let $w_p$ be the $p$-adic place of $H$ defined by $\iota_p$, and denote by $v_{p,1}, \ldots, v_{p,f}$ the $p$-adic places of $k$ which totally split in $L$ as in \S\ref{sec:RS_conjecture}. Fix also a place $w_{p,i}$ of $H$ above $v_{p,i}$, and let 
\begin{equation}\label{eq:def_t_p,i}
t_{p,i}= [H_{w_{p,i}} : k_{v_{p,i}}]^{-1} \cdot \sum_{\mathclap{\substack{g\in G, \\ g(w_{p,i})=w_p}}} g \otimes 1 \quad \in T_p, \qquad (1\leq i \leq f).
\end{equation}
 This defines an $\cO_p$-basis $t_{p,1},\ldots,t_{p,f}$ of $\HH^0(\Qp,T_p)$. For any $t\in T_p$, consider the following two composite maps
 \begin{equation*}
 \begin{tikzcd}[column sep = 12ex]
 	\ord_p^{(t)} \ :\ \Hom_G(T_p,U') \ar[r, "\iota_p \circ \textrm{ev}_{t}"] & \cO_p \otimes \widehat{K}^\times \ar[r, "\ord_p"] & \cO_p, 
 	\end{tikzcd}  \end{equation*}
 	 \begin{equation*}\begin{tikzcd}[column sep = 12ex]
 	\log_p^{(t)} \ :\ \Hom_G(T_p,U') \ar[r, "\iota_p \circ \textrm{ev}_{t}"] & \cO_p \otimes \widehat{K}^\times \ar[r, "\log_p"] & \cO_p,
 \end{tikzcd}
 \end{equation*}
 where $\textrm{ev}_{t}$ is the evaluation map at $t$ and  $K=H_{w_p}$ as in \S\ref{sec:iwasawa}.
 
 \begin{lemme}\label{lem:calcul_ord_p_et_log_p^i}
 	For all $1\leq i \leq f$ and $u\in (U')^\chi$, we have 
 	$$\ord_p^{(t_{p,i})}(\psi_u) = [k_{v_{p,i}} : \Qp] \cdot \ord_{w_{p,i}}(u), \qquad \log_p^{(t_{p,i})}(\psi_u) = -\log_p \circ \chi_{\cyc} \circ \rec_{w_{p,i}}(u). $$
 \end{lemme}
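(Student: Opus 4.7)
The plan is a direct computation, unfolding the definition of $t_{p,i}$ and using standard compatibilities between local reciprocity, norms, and the cyclotomic character.

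First, I would unpack $\psi_u(t_{p,i})$. Fix some $g_0 \in G$ with $g_0(w_{p,i}) = w_p$; then the set $\{g \in G : g(w_{p,i}) = w_p\}$ is the left coset $g_0 D$, where $D = D^G_{w_{p,i}}$ is the decomposition group of $w_{p,i}$ in $G$. Since $H/\bQ$ is Galois, $|D| = [H_{w_{p,i}} : \bQ_p]$, so the normalizing factor $[H_{w_{p,i}}:k_{v_{p,i}}]^{-1}$ times $|D|$ equals $[k_{v_{p,i}}:\bQ_p]$. By Galois equivariance, $\psi_u(g \otimes 1) = g \cdot u$, so writing additively the $\cO_p$-module action on $U'$ and using that $\iota_p(g \cdot u) = (\sigma_0 \circ g)(u)$ (where $\sigma_0 : H \hookrightarrow \ob{\bQ}_p$ is the embedding underlying $\iota_p$), the embeddings $\sigma_0 \circ g$ for $g \in g_0 D$ factor through the completion at $w_{p,i}$ and induce a bijection $g_0 D \overset{\sim}{\longleftrightarrow} \Hom_{\bQ_p}(H_{w_{p,i}}, \ob{\bQ}_p)$.

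For the first identity, every embedding $\tau : H_{w_{p,i}} \hookrightarrow \ob{\bQ}_p$ satisfies $\ord_p(\tau(u)) = \ord_{w_{p,i}}(u)$, hence summing $|D|$ identical terms and multiplying by $[H_{w_{p,i}}:k_{v_{p,i}}]^{-1}$ yields $[k_{v_{p,i}}:\bQ_p]\cdot \ord_{w_{p,i}}(u)$. For the second identity, note that since $\chi$ has prime-to-$p$ order, the natural inclusion $\cO_p \otimes \cO_L[\tfrac{1}{p}]^\times \hookrightarrow \cO_p\otimes\cO_H[\tfrac{1}{p}]^\times$ is an isomorphism on $\chi$-parts, so we may assume $u\in \cO_L[\tfrac{1}{p}]^\times \otimes \cO_p$. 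Since $v_{p,i}$ splits completely in $L$, the place of $L$ below $w_{p,i}$ has completion $k_{v_{p,i}}$; by grouping embeddings of $H_{w_{p,i}}$ according to their restriction to $L$, one obtains
\[
\sum_{\tau : H_{w_{p,i}} \hookrightarrow \ob{\bQ}_p} \log_p(\tau(u)) = [H_{w_{p,i}} : k_{v_{p,i}}] \cdot \log_p\!\left(N_{k_{v_{p,i}}/\bQ_p}(u)\right),
\]
since $\log_p(\tau(u))$ depends only on $\tau_{|L_{w_{p,i}}}$ for $u\in L$. After canceling $[H_{w_{p,i}}:k_{v_{p,i}}]$ against the normalization, one is left with $\log_p(N_{k_{v_{p,i}}/\bQ_p}(u))$.

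Finally, I would invoke the standard local class field theory identity $\chi_{\cyc}\circ \rec_{w_{p,i}}(u) = N_{k_{v_{p,i}}/\bQ_p}(u)^{-1}$ (valid for the geometric normalization of $\rec$ under which a uniformizer maps to the arithmetic Frobenius), which gives the desired minus sign:
\[
\log_p^{(t_{p,i})}(\psi_u) = \log_p\!\left(N_{k_{v_{p,i}}/\bQ_p}(u)\right) = -\log_p \circ \chi_{\cyc} \circ \rec_{w_{p,i}}(u).
\]
The computation is essentially bookkeeping; the main point to watch is the sign convention in the local Artin map, and the correct counting of embeddings via the splitting hypothesis $v_{p,i} \in V'$. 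No deep input is required beyond Lemma~\ref{lem:identification_Hom(T_p,M)_et_Mchi}, the complete splitting of $v_{p,i}$ in $L$, and the compatibility of $\chi_{\cyc}$ with local norms.
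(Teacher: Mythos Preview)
Your proof is correct and follows essentially the same approach as the paper: both compute $\iota_p(\psi_u(t_{p,i}))$ as the local norm $N_{k_{v_{p,i}}/\bQ_p}(\iota_{p,i}(u))$ and then apply the standard identities $\ord_p \circ N = [k_{v_{p,i}}:\bQ_p]\cdot \ord_{w_{p,i}}$ and $\chi_{\cyc}\circ\rec_{w_{p,i}} = N^{-1}$. Your bookkeeping via the coset $g_0 D$ and the bijection with $\Hom_{\bQ_p}(H_{w_{p,i}},\ob{\bQ}_p)$ is just a more explicit unpacking of the paper's one-line identification $\iota_p\bigl(\prod_g g(u)\bigr)/[H_{w_{p,i}}:k_{v_{p,i}}] = N_i(\iota_{p,i}(u))$. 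One terminological slip: the normalization sending a uniformizer to the \emph{arithmetic} Frobenius is usually called the arithmetic (not geometric) normalization; the identity you state is nonetheless the correct one.
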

\begin{proof}
	Fix $1\leq i \leq f$ and $u\in (U')^\chi=(\cO_p \otimes \cO_L[\tfrac{1}{p}]^\times)^\chi$. Since $\psi_u$ is $G$-equivariant, we have 
	\begin{align*}
	\iota_p(\psi_u(t_{p,i})) &= [H_{w_{p,i}} : k_{v_{p,i}}]^{-1} \cdot \iota_p \left( \prod_{{\substack{g\in G, \\ g(w_{p,i})=w_p}}} g(u)\right) \\
	&= N_i(\iota_{p,i}(u)),
	\end{align*}
	where $N_i$ is the norm map of the extension $L_{w_{p,i}}=k_{v_{p,i}}$ over $\Qp$, and  $\iota_{p,i}$ is the $p$-adic embedding of $L$ defined by $w_{p,i}$. Since $\ord_p(N_i(\iota_{p,i}(u)))= [k_{v_{p,i}} : \Qp] \cdot \ord_{w_{p,i}}(u)$ and since $\chi_{\cyc} \circ \rec_{w_{p,i}}(u)=N_i(\iota_{p,i}(u))^{-1}$, the lemma follows easily.
\end{proof}

\begin{proposition}\label{prop:equivalence_MRS}\hspace{2em}
    \begin{enumerate}
    	\item Let $W_p^0=\HH^0(\Qp,W_p)$. Then, 
    	$$\left(\bigwedge_{1\leq i \leq f} \ord_p^{(t_{p,i})}\right)(\psi_{u^\chi}) = (-1)^{d^+\cdot f}\cdot \frac{\det(1-\sigma_p^{-1}\,\big| \, W_p/W_p^0)}{(\#\Delta)^f} \cdot \psi_{\xi^\chi}$$ 
    	in $\bigwedge^{d^+}_{\ob{\bQ}_p}  \Hom_G(W_p,\ob{\bQ}_p\otimes U')$.
    	\item Assume Conjectures \ref{conj:rubinstark_alg} and \ref{conj:rubinstark_p_int} and \ref{conj:EZC_for_RS_elements}. Fix a topological generator $\gamma$ of $\Gamma$ and put $\varpi_\gamma=\log_p\circ\chi_{\cyc}(\gamma)\in p\Zp$. Then, \textbf{MRS}$_\chi$ is equivalent to the equality 
    	$$\left(\bigwedge_{1\leq i \leq f} \log_p^{(t_{p,i})}\right)(\psi_{u^\chi}) = (-1)^{d^+\cdot f}\cdot \left(-\frac{\varpi_\gamma}{\#\Delta}\right)^f \cdot \psi_{\kappa_\gamma}$$
    	in ${\bigwedge}^{d^+}_{\cO_p} \Hom_G(T_p,U')$.
    \end{enumerate}
\end{proposition}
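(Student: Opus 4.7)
For Part (1), my plan is to combine Lemma \ref{lem:identification_Hom(T_p,M)_et_Mchi} (which identifies $\Hom_G(T_p, M)$ with $M^\chi$) with Lemma \ref{lem:calcul_ord_p_et_log_p^i} to reduce the assertion to a classical identity between Rubin-Stark elements. Since Lemma \ref{lem:calcul_ord_p_et_log_p^i} translates $\ord_p^{(t_{p,i})}$ into $[k_{v_{p,i}}:\Qp]\cdot\ord_{w_{p,i}}$ on $(U')^\chi$ via the $\psi$-correspondence, functoriality of the exterior algebra shows that it suffices to prove
\[
\Bigl(\bigwedge_{1\le i \le f} \ord_{w_{p,i}}\Bigr)(u^\chi) \;=\; (-1)^{d^+f}\,\frac{\det(1-\sigma_p^{-1}\mid W_p/W_p^0)}{(\#\Delta)^f\prod_i [k_{v_{p,i}}:\Qp]}\cdot \xi^\chi
\]
in $\bigwedge^{d^+}_\bC\bC\cO_H[\tfrac{1}{p}]^\times$. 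For this, I would choose a $\bC$-basis $\eta_1,\ldots,\eta_{d^+}$ of $e_\chi\bC\cO_L^\times$ and extend it by $p$-units $u_1,\ldots,u_f$ with $\ord_{w_{p,j}}(u_i)=(\#\Delta)^{-1}\delta_{ij}$; writing $u^\chi=\alpha\cdot(\eta_1\wedge\cdots\wedge\eta_{d^+}\wedge u_1\wedge\cdots\wedge u_f)$ and $\xi^\chi=\beta\cdot(\eta_1\wedge\cdots\wedge\eta_{d^+})$, the scalars $\alpha$ and $\beta$ are determined by expanding Dirichlet's regulators $\lambda_{L,S'}$ and $\lambda_{L,S}$ in the $\chi$-isotypic basis $\{e_\chi w_{\infty,k},\, e_\chi w_{p,l}\}$ of $e_\chi\bC X_{L,S'}$ and using the $\chi$-equivariance $\log|v|_{\delta w}=\chi^{-1}(\delta)\log|v|_w$ for $v\in e_\chi\bC\cO_{L,S'}^\times$. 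The ratio then simplifies via the key identity
\[
\frac{L^*_{S'}(\chi^{-1},0)}{L^*(\chi^{-1},0)} \;=\; (\log p)^f\cdot\det(1-\sigma_p^{-1}\mid W_p/W_p^0),
\]
which combines the removal of Euler factors at $p$-adic primes of $k$ with the Mackey decomposition $W_p|_{G_{\Qp}}\simeq \bigoplus_{v|p}\Ind_{G_{k_v}}^{G_{\Qp}}\chi|_{G_{k_v}}$; the same decomposition also yields the factorization $\det(1-\sigma_p^{-1}\mid W_p/W_p^0) = \prod_i[k_{v_{p,i}}:\Qp]\cdot\prod_{v|p \textrm{ non-split in } L}(1-\chi^{-1}(\sigma_v))$.

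For Part (2), Lemma \ref{lem:calcul_ord_p_et_log_p^i} shows that, via the $\psi$-identification, the linear form $\log_p^{(t_{p,i})}$ corresponds to $-\log_p\chi_{\cyc}\circ\rec_{w_{p,i}}$ on $(U')^\chi$. Under the identification $\cA/\cA^2\simeq\cO_p$ sending $\gamma-1\mapsto 1$, the map $\log_p\chi_{\cyc}$ becomes multiplication by $\varpi_\gamma$, so $\log_p^{(t_{p,i})}=-\varpi_\gamma\cdot[\rec_{w_{p,i}}]$ where $[\rec_{w_{p,i}}]$ denotes the $\cO_p$-valued version. Taking wedges and applying to $\psi_{u^\chi}$ yields
\[
\Bigl(\bigwedge_i\log_p^{(t_{p,i})}\Bigr)(\psi_{u^\chi})\;=\;(-\varpi_\gamma)^f\cdot\psi_A,
\]
where $A\in\bigwedge^{d^+}_{\cO_p}(U')^\chi$ is characterized by $(\bigwedge_i\rec_{w_{p,i}})(u^\chi)=(\gamma-1)^f\otimes A$ in $\cA^f/\cA^{f+1}\otimes_{\cO_p}\bigwedge^{d^+}(U')^\chi$ (this decomposition makes sense thanks to Conjecture \ref{conj:EZC_for_RS_elements}, which also ensures that $\kappa_\gamma$ is defined). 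By definition, Conjecture \ref{conj:MRS_chi} is the equality $A=(-1)^{d^+f}(\#\Delta)^{-f}\kappa_\gamma$, which the above display converts to the claimed identity; conversely, since $\varpi_\gamma\neq 0$ (as $p$ is odd), the claimed identity uniquely determines $A$ and implies \textbf{MRS}$_\chi$.

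The main difficulty lies in Part (1): tracking signs through the exterior algebra, carefully handling the $\chi$-isotypic averaging (distinguishing the diagonal action of $e_\chi$ on $\bigwedge^\bullet$ from the componentwise one), and verifying the Mackey-theoretic formula for $\det(1-\sigma_p^{-1}\mid W_p/W_p^0)$. Part (2) is then a formal consequence of Lemma \ref{lem:calcul_ord_p_et_log_p^i} applied componentwise.
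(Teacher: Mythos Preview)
Your approach is correct and, for Part~(2), essentially identical to the paper's: both reduce immediately to the formula $\log_p^{(t_{p,i})}(\psi_u)=-\log_p\circ\chi_{\cyc}\circ\rec_{w_{p,i}}(u)$ of Lemma~\ref{lem:calcul_ord_p_et_log_p^i}.

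For Part~(1) your route differs. The paper does not carry out an explicit basis computation. Instead, it invokes the operator $\Phi_{V',V}$ of \cite[Prop.~3.5--3.6]{sanocompositio}, whose $\chi$-part satisfies
\[
e_\chi\cdot\Phi_{V',V}=(-1)^{d^+f}(\#\Delta)^f\bigwedge_{i}\ord_{w_{p,i}},\qquad
(e_\chi\cdot\Phi_{V',V})(u^\chi)=\prod_{v\in S_p^{00}(k)}(1-\chi^{-1}(v))\cdot\xi^\chi,
\]
and combines these with the Mackey-type factorization $\det(1-\sigma_p^{-1}\mid W_p/W_p^0)=\prod_{v\in S_p^{00}(k)}(1-\chi^{-1}(v))\cdot\prod_i[k_{v_{p,i}}:\Qp]$ (which you also identify). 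Your proposal amounts to reproving Sano's identities in this special case by expanding $u^\chi$ and $\xi^\chi$ in an explicit basis $\{\eta_j,u_i\}$ of $e_\chi\bC\cO_{L,S'}^\times$ and computing the ratio of the regulator determinants; your key $L$-value identity $L^*_{S'}(\chi^{-1},0)/L^*(\chi^{-1},0)=(\log p)^f\det(1-\sigma_p^{-1}\mid W_p/W_p^0)$ is indeed what makes this work (the factors $[k_{v_{p,i}}:\Qp]$ reappear on the regulator side via $-\log|a|_{w_{p,i}}=[k_{v_{p,i}}:\Qp](\log p)\ord_{w_{p,i}}(a)$). This is more self-contained but heavier in bookkeeping; the paper's citation of Sano absorbs precisely the sign $(-1)^{d^+f}$ and the $(\#\Delta)^f$ normalization that you flag as the main difficulty.
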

\begin{proof}
	Let us prove (1). First of all, a direct computation gives $$\det(1-\sigma_p^{-1}\,\big| \, W_p/W_p^0)=\prod_{v\in S^{00}_p(k)} (1-\chi^{-1}(v)) \cdot\prod_{1\leq i \leq f} [k_{v_{p,i}}:\Qp],$$
	where $S^{00}_p(k)=S_p(k)-\{v_{p,1},\ldots,v_{p,f}\}$ and where we see $\chi$ as a Hecke character over $k$. Consider the operator $\Phi_{V',V}$ of \cite[Prop. 3.6]{sanocompositio} (extended by linearity to a $\bC[\Delta]$-linear map), where $V$ and $V'$ are as in \S\ref{sec:RS_conjecture}. Its $\chi$-part is given by
	$$e_\chi\cdot \Phi_{V',V} = (-1)^{d^+\cdot f} \cdot{(\#\Delta)^f} \cdot \bigwedge_{1 \leq i \leq f}\ord_{w_{p,i}}= (-1)^{d^+\cdot f} \frac{(\#\Delta)^f}{\prod_{1\leq i \leq f} [k_{v_{p,i}}:\Qp]} \cdot \bigwedge_{1 \leq i \leq f}\ord_p^{(t_{p,i})},$$
	the last equality being a consequence of Lemma \ref{lem:calcul_ord_p_et_log_p^i} (1). On the other hand, by Propositions 3.5 and 3.6 of \textit{loc. cit.}, we know that
	$$\left(e_\chi \cdot \Phi_{V',V}\right)(\psi_{u^\chi}) = \prod_{v\in S_p^{00}(k)}(1-\chi^{-1}(v))\cdot \psi_{\xi^\chi}.$$
	The claim (1) then follows from the above three equations. As for the second claim, it follows immediately from the formula for $\log_p^{(t_{p,i})}$ of Lemma \ref{lem:calcul_ord_p_et_log_p^i}.
\end{proof}

We are now in a position to state and prove a theorem comparing \textbf{MRS}$_\chi$ and \textbf{EZC}$_{\rho,\rho^+}$.

\begin{theorem}\label{thm:equivalence_MRS_EZC}
	Assume Conjectures \ref{conj:rubinstark_alg} and \ref{conj:rubinstark_p_int}.
	\begin{enumerate}
		\item \textbf{MRS}$_\chi$ implies \textbf{EZC}$_{\rho,{\rho}^+}$ for all admissible $p$-stabilizations $(\rho^+,W_p^+)$ of $W_p$.
		\item Conversely, choose any integer $0\leq e \leq d^-$ and any eigenbasis $\{\widetilde{t}_1,\ldots,\widetilde{t}_d\}$ of $W_p$ for $\sigma_p$ such that $\{\widetilde{t}_1,\ldots,\widetilde{t}_{d^++e}\}$ contains a basis of $\HH^0(\Qp,W_p)$. Let $\cB$ be the (finite) set of increasing sequences $\beta=(1\leq i_1<\ldots<i_{d^+}\leq d^++e)$ of integers between $1$ and $d^++e$ and of length $d^+$, and set $W_{p,\beta}^+=<\widetilde{t}_{i_1},\ldots,\widetilde{t}_{i_{d^+}}>_{E_p}$ for any such $\beta$. If $W_{p,\beta}^+$ is admissible and if \textbf{EZC}$_{\rho,{\rho_\beta^+}}$ holds for all $\beta\in \cB$, and if there exists at least one $\beta\in\cB$ such that $\cL(\rho,\rho^+_\beta)\neq 0$, then \textbf{MRS}$_\chi$ holds true.
	\end{enumerate}
\end{theorem}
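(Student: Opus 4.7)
Both parts hinge on Theorem~\ref{thm:comparaison_mesure_p_adique_RS_elts}(1), which gives the identity $\theta'_{\rho,\rho^+}=\sC^\str_{\omega_p^+}(\psi_{\varepsilon_\infty^\chi})$ in $\cI^{f-e}$. Under \textbf{MRS}$_\chi$ (for~(1)), or under the combined assumption of $\cL(\rho,\rho_j^+)\neq 0$ together with \textbf{EZC}$_{\rho,\rho_j^+}$ for some $j$ (for~(2), via Theorem~\ref{thm:comparaison_mesure_p_adique_RS_elts}(2)), Conjecture~\ref{conj:EZC_for_RS_elements} holds, so that $\varepsilon_\infty^\chi=(\gamma-1)^f\kappa_{\infty,\gamma}$ and hence $\theta'_{\rho,\rho^+}=(\gamma-1)^f\sC^\str_{\omega_p^+}(\psi_{\kappa_{\infty,\gamma}})$. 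The $e$-th derivative at $s=0$ of $\kappa^s(\theta'_{\rho,\rho^+})$ is then $\varpi_\gamma^e\cdot e!\cdot\lim_{\gamma\to 1}(\gamma-1)^{f-e}\sC^\str_{\omega_p^+}(\psi_{\kappa_{\infty,\gamma}})$, so the task reduces to computing this leading term explicitly and comparing it to the right-hand side of \textbf{EZC}$_{\rho,\rho^+}$ in the form of Proposition~\ref{prop:renormalisation_mesure_p_adique}(2).

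\textbf{Key computation.} Writing $\psi_{\kappa_{\infty,\gamma}}=\psi_{u_1}\wedge\ldots\wedge\psi_{u_{d^+}}$ and unfolding $\sC^\str_{\omega_p^+}$ as a $d^+\times d^+$ determinant via~\eqref{eq:wedge_col}, the constant-term formulas of Lemmas~\ref{lem:val_spé_coleman} and~\ref{lem:coleman_ord} yield
\[
\lim_{\gamma\to 1}(\gamma-1)^{f-e}\sC^\str_{\omega_p^+}(\psi_{\kappa_{\infty,\gamma}})=(1-p^{-1})^{f-e}\varpi_\gamma^{f-e}\prod_{\delta_i\neq\mathds{1}}\frac{1-p^{-1}\beta_i}{1-\beta_i^{-1}}\cdot\det(M),
\]
where the entries of $M$ on the row indexed by $t_i^+$ are $\ord_p(\Loc_+^\str(\psi_{u_j})(t_i^+)_0)$ when $\delta_i=\mathds{1}$ and $\log_p(\Loc_+^\str(\psi_{u_j})(t_i^+)_0)$ when $\delta_i\neq\mathds{1}$. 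The prefactor combines with the $\det(\rho^\pm)(\sigma_p)$-terms arising from the interpolation formula to assemble precisely $\cE(\rho,\rho^+)$, while Lemma~\ref{lem:calcul_ord_p_et_log_p^i} translates the bottom-layer entries of $M$ into the matrices $A^+$, $A^-$ and $O^-$ of~\eqref{eq:def_mat_A_B_O} evaluated at the bottom layer $\kappa_\gamma=u_1^{(0)}\wedge\ldots\wedge u_{d^+}^{(0)}$. Inserting Proposition~\ref{prop:equivalence_MRS}(2) replaces the rows of $M$ indexed by trivial $\delta_i$ by combinations of $\log_p^{(t_{p,i})}(\psi_{u^\chi})$, producing a block determinant which, by Lemma~\ref{lem:formule_L_invariant_quot_det}, factors as $\cL(\rho,\rho^+)\cdot\Reg_{\omega_p^+}(\rho)\cdot\det(O^-)$. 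Combined with the complex-regulator computation of Lemma~\ref{lem:calcul_reg_complexe_RS} and Proposition~\ref{prop:renormalisation_mesure_p_adique}(2), this proves part~(1).

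\textbf{Converse via linear algebra.} For part~(2), each $\beta\in\cB$ produces by the key computation a linear relation of the form $\nu_\beta(\Xi)=0$ in ${\bigwedge}_{\cO_p}^{d^+}\Hom_G(T_p,U')$, where
\[
\Xi:=\left(\bigwedge_{1\leq i\leq f}\log_p^{(t_{p,i})}\right)(\psi_{u^\chi})-(-1)^{d^+f}\left(-\varpi_\gamma/\#\Delta\right)^f\psi_{\kappa_\gamma}
\]
is the \textbf{MRS}$_\chi$-defect of Proposition~\ref{prop:equivalence_MRS}(2) and $\nu_\beta$ is the linear functional obtained by specializing the determinant formula above to the basis $\omega_{p,\beta}^+=\widetilde{t}_{i_1}\wedge\ldots\wedge\widetilde{t}_{i_{d^+}}$. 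It therefore suffices to check that the family $\{\nu_\beta\}_{\beta\in\cB}$ spans the dual of a natural finite-dimensional $E_p$-subspace of ${\bigwedge}^{d^+}\Hom_G(T_p,U')\otimes E_p$ containing $\Xi$; this reduces to a Plücker-type statement saying that the $d^+\times d^+$ minors of a generic $d^+\times(d^++e)$ matrix are $E_p$-linearly independent, which follows from the assumed maximal rank of the relevant evaluation map provided $\cL(\rho,\rho_j^+)\neq 0$ for at least one $j$. \emph{Main obstacle:} this spanning/Plücker argument, coupled with the careful bookkeeping of normalization factors (Galois--Gauss sums, Euler factors, complex and $p$-adic regulators) needed to make the output of the key computation match the RHS of \textbf{EZC}$_{\rho,\rho^+}$ on the nose.
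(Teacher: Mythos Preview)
Your plan and key computation for part~(1) are essentially the paper's argument: the paper packages your computation as a standalone lemma (``$\textrm{LHS}_{\rho,\rho^+}=\textrm{RHS}_{\rho,\rho^+}$ if and only if \textbf{EZC}$_{\rho,\rho^+}$''), and the derivation via Lemmas~\ref{lem:val_spé_coleman}, \ref{lem:coleman_ord}, \ref{lem:calcul_ord_p_et_log_p^i}, Proposition~\ref{prop:equivalence_MRS} and Lemma~\ref{lem:formule_L_invariant_quot_det} is the same.

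There is, however, a genuine gap in your argument for part~(2). Your ``Pl\"ucker-type'' spanning claim cannot hold as stated: the ambient space ${\bigwedge}^{d^+}_{E_p}\Hom_G(W_p,\cU')$ has dimension $\binom{d^++f}{d^+}$, whereas $\#\cB=\binom{d^++e}{d^+}$, and in general $e<f$. So the $\nu_\beta$'s do not span the full dual, and there is no smaller subspace containing $\Xi$ that you have identified. The paper supplies the missing piece as follows. Using admissibility of the $p$-stabilization $\langle\widetilde t_1,\ldots,\widetilde t_{d^+}\rangle$, the set $\{\ord_p^{(\widetilde t_i)}\}_{i\in I^0}\cup\{\log_p^{(\widetilde t_i)}\}_{i\in I^+}$ is a basis of $\Hom_G(W_p,\cU')^*$, so the wedge-dual is spanned by functionals $\nu_{J^0,J^+}=\bigwedge_{j\in J^0}\ord_p^{(\widetilde t_j)}\wedge\bigwedge_{j\in J^+}\log_p^{(\widetilde t_j)}$. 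For those with $J^+\cap I^0=\emptyset$ one has $\nu_{J^0,J^+}=\nu_{\rho_\beta^+}$ (up to a unit) for some $\beta\in\cB$, and your computation applies. For the remaining functionals, where some $\log_p^{(\widetilde t_i)}$ with $\widetilde t_i\in\HH^0(\Qp,W_p)$ appears, both sides of the \textbf{MRS}$_\chi$-equality vanish: the left-hand side because such $\log_p^{(\widetilde t_i)}$ is a linear combination of the $\log_p^{(t_{p,j})}$ already present in the wedge defining $\Xi$, and the right-hand side because the bottom layer of any element of $\varprojlim_n\widehat{\bQ}^\times_{p,n-1}$ has trivial $\log_p$ (exact sequence~\eqref{eq:exact_seq_local_norms}). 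This ``$0=0$'' step is what makes the linear-algebra reduction go through, and it is absent from your outline.

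A related misattribution: the non-vanishing of $\cL(\rho,\rho_j^+)$ is not what gives the spanning---that comes from admissibility. The role of $\cL(\rho,\rho_j^+)\neq 0$ is exactly what you stated in your Plan: via Theorem~\ref{thm:comparaison_mesure_p_adique_RS_elts}(2) it forces \textbf{wEZC}$_\chi$, so that $\kappa_{\infty,\gamma}$ (and hence $\kappa_\gamma$ and $\Xi$) are defined in the first place.
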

\begin{proof}
	We know by the first part of Theorem \ref{thm:comparaison_mesure_p_adique_RS_elts} that \textbf{EX}$_{\rho,\rho^+}$ holds for any $p$-stabilization $(\rho^+,W_p^+)$ of $W_p$. By its second part, we may also assume without loss of generality that Conjecture \ref{conj:EZC_for_RS_elements} (=\textbf{wEZC}$_\chi$) holds and that $\theta_{\rho,\rho^+}$ vanishes at $\mathds{1}$ with multiplicity $\geq \dim \HH^0(\Qp,W_p^-)$ for all $\rho^+$. By Proposition \ref{prop:equivalence_MRS}, \textbf{MRS}$_\chi$ is equivalent to 
	\begin{equation}\label{eq:MRS_equivalence}
	\left(\bigwedge_{1\leq i \leq f} \log_p^{(t_{p,i})}\right)(\psi_{u^\chi}) = (-1)^{d^+\cdot f}\cdot \left(-\frac{\varpi_\gamma}{\#\Delta}\right)^f \cdot \psi_{\kappa_\gamma},
	\end{equation}
	 where $\gamma$ is a fixed generator of $\Gamma$, $\varpi_\gamma=\log_p\circ\chi_{\cyc}(\gamma)\in p\Zp$ and $\kappa_\gamma$ is the bottom layer of the element $\kappa_{\infty,\gamma}\in{\bigwedge}^{d^+}(U_\infty')^\chi$ satisfying $\varepsilon_\infty^\chi=(\gamma-1)^f\cdot\kappa_{\infty,\gamma}$. 
	
    Given any $p$-stabilization $(\rho^+,W_p^+)$ of $W_p$, we introduce the following notation: choose any $T_p$-optimal eigenbasis $\omega_p^+=t_1\wedge\ldots\wedge t_{d^+}$ of $\det_{E_p}W_p^+$ for $\sigma_p$ such that $\{t_1,\ldots,t_{f-e}\}$ generates $\HH^0(\Qp,W_p^+)$, where $e=\dim \HH^0(\Qp,W_p^-)$. Let $I^{+,0}=\{1,\ldots,f-e\}$, $I^{+,00}=\{f-e+1,\ldots,d^+\}$ and $I^{-,0}=\{d^++1,\ldots,d^++e\}$ for simplicity, and let 
	$$\nu_{\rho^+}=\bigwedge_{i\in I^{+,0}}\ord_p^{(t_i)}\wedge \bigwedge_{i\in I^{+,00}} \log_p^{(t_{i})} \in {\bigwedge^{d^+}}_{E_p}\Hom_G(W_p,\cU')^*,$$
	where $\cU'=E_p \otimes U'$ as in \S\ref{sec:L-invariant} and  $X^*=\Hom_{E_p}(X,E_p)$ for any $E_p$-vector space $X$. Note that $\nu_{\rho^+}$ depends on the choice of $\omega_p^+$ and on the ordering of the $t_i$'s, as the construction of $\theta'_{\rho,\rho^+}$ given in Theorem \ref{thm:comparaison_mesure_p_adique_RS_elts} does. Moreover, it makes sense to apply $\nu_{\rho^+}$ to both sides of the equality (\ref{eq:MRS_equivalence}), and we call for convenience $\textrm{LHS}_{\rho,\rho^+}$ and $\textrm{RHS}_{\rho,\rho^+}$ the left-hand side and the right-hand side of the resulting equality in $E_p$. We claim that, if $\rho^+$ is admissible, then this equality is equivalent to $\textbf{EZC}_{\rho,\rho^+}$. In other words:
	\begin{lemme}\label{lem:LHS_RHS}
		If $\rho^+$ is admissible, then $\textrm{LHS}_{\rho,\rho^+}=\textrm{RHS}_{\rho,\rho^+}$ if and only if the $p$-adic measure $\theta_{\rho,\rho^+}'=\sC^\str_{\omega_p^+}(\psi_{\varepsilon_\infty^\chi})$ satisfies the formula stated in Proposition \ref{prop:renormalisation_mesure_p_adique} (2).
	\end{lemme}
	\begin{proof}[Proof of the lemma]
		Assume that $\rho^+$ is admissible. We first compute $\textrm{LHS}_{\rho,\rho^+}$ as follows. Let $t_{d^++1},\ldots,t_{d^++e}$ be any elements of $\HH^0(\Qp,T_p)$ such that $\bigwedge_{i\in I^{0}} t_i=\bigwedge_{1\leq j\leq f}t_{p,j}$, where $I^0=I^{+,0}\bigcup I^{-,0}$. 
	 We have, by definition of $\cL(\rho,{\rho}^+)$, by Proposition \ref{prop:equivalence_MRS} (1), by Lemma \ref{lem:calcul_reg_complexe_RS} and taking into account the sign rule:
	\begin{align*}
	\textrm{LHS}_{\rho,\rho^+}&= \nu_{\rho^+}\wedge \left(\bigwedge_{i\in I^0} \log_p^{(t_{i})}\right)(\psi_{u^\chi})\\
	&=(-1)^{f-e}\cdot \cL(\rho,{\rho}^+)\cdot \left(\bigwedge_{i\in I^0} \ord_p^{(t_{i})}\right)\wedge \left(\bigwedge_{i\in I^+} \log_p^{(t_i)}\right)(\psi_{u^\chi}) \\
	&=(-1)^{f-e}\cdot \cL(\rho,{\rho}^+)\cdot \left(\bigwedge_{i\in I^+} \log_p^{(t_i)}\right)\left((-1)^{d^+\cdot f}\cdot  \frac{\det(1-\sigma_p^{-1}\,\big| \, W_p/W_p^0)}{(\#\Delta)^f}\cdot \psi_{\xi^\chi}\right) \\
	&=(-1)^{f-e+d^+\cdot f}\cdot \cL(\rho,{\rho}^+)\cdot \frac{\det(1-\sigma_p^{-1}\,\big| \, W_p/W_p^0)}{(\#\Delta)^f} \cdot \Reg_{{\omega}^+_{p}}(\rho) \cdot j\left(\frac{L^*(\rho^*,0)}{\Reg_{\omega_{\infty}^+}(\rho)}\right).
	\end{align*}
	Let us explicit the relation between $\textrm{RHS}_{\rho,\rho^+}$ and the $e$-th derivative of $L_p(s):=\kappa^s(\theta_{\rho,\rho^+}')$ at $s=0$. We may write $L_p(s)$ as $(\kappa(\gamma)^s-1)^e\cdot \kappa^s(\theta_\gamma)$, where $\theta_\gamma = \sC^\str_{{\omega}_p^+}((\gamma-1)^{f-e}\cdot \kappa_{\infty,\gamma})\in\Lambda$. Therefore, $\frac{1}{e!}L_p^{(e)}(0)=\varpi_\gamma^e \cdot \mathds{1}(\theta_\gamma)$. On the other hand, if we write $(\gamma-1)^{f-e}\cdot \kappa_{\infty,\gamma}$ as a wedge product of the form $(\gamma-1)\cdot v_1 \wedge \ldots (\gamma-1)\cdot v_{f-e} \wedge v_{f-e+1} \wedge \ldots \wedge v_{d^+}$, then Lemmas \ref{lem:val_spé_coleman} and \ref{lem:coleman_ord} together show that
	$$\mathds{1}(\theta_\gamma)=\varpi_\gamma^{f-e} \cdot(1-p^{-1})^{f-e}\cdot \prod_{i\in I^{+,00}}\frac{1-p^{-1}\beta_i}{1-\beta_i^{-1}} \cdot \nu_{\rho^+}(\kappa_\gamma),$$
	where $\beta_i$ is the eigenvalue of $\sigma_p$ acting on $t_i$. Since $\cE(\rho,{\rho}^+) = \prod_{i\in I^+}(1-p^{-1}\beta_i) \cdot \prod_{i\in I^{-,00}}(1-\beta_i^{-1})$, we have
	\begin{align*}
	\frac{1}{e!} L_p^{(e)}(0) &= \varpi_\gamma^{f} \cdot(1-p^{-1})^{f-e}\cdot \prod_{i\in I^{+,00}}\frac{1-p^{-1}\beta_i}{1-\beta_i^{-1}} \cdot (-1)^{d^+\cdot f}\cdot \left(-\frac{\#\Delta}{\varpi_\gamma}\right)^f \cdot \textrm{RHS}_{\rho,\rho^+} \\
	&=(-1)^{f+d^+\cdot f}\cdot \frac{\cE(\rho,{\rho}^+)}{\det(1-\sigma_p^{-1}\, \big|\, W_p/W_p^0)} \cdot (\#\Delta)^f \cdot \textrm{RHS}_{\rho,\rho^+}.
	\end{align*}
	Hence, we may infer that 
	$$ \textrm{LHS}_{\rho,\rho^+}=\textrm{RHS}_{\rho,\rho^+} \Longleftrightarrow \frac{1}{e!} L_p^{(e)}(0)= (-1)^e \cdot \cL(\rho,{\rho}^+)\cdot \cE(\rho,{\rho}^+)\cdot  \frac{\Reg_{{\omega}^+_{p}}(\rho)}{\Reg_{\omega_{\infty}^+}(\rho)}\cdot L^*(\rho^*,0),$$
	as claimed. \end{proof}
	 We end the proof of Theorem \ref{thm:equivalence_MRS_EZC}. If \textbf{MRS}$_\chi$ holds, then $\textrm{LHS}_{\rho,\rho^+}=\textrm{RHS}_{\rho,\rho^+}$ obviously holds for any admissible $p$-stabilization $\rho^+$ of $\rho$, so \textbf{EZC}$_{\rho,\rho^+}$ also holds by Lemma \ref{lem:LHS_RHS}. This shows the first assertion. 
	 
	 Let us prove (2). We may assume without loss of generality that the given family $\{\widetilde{t}_1,\ldots,\widetilde{t}_d\}$ is a basis of $T_p$ and that $\{\widetilde{t}_i\ |\ i\in I^0\}$ is a basis of $\HH^0(\Qp,T_p)$. By hypothesis, we know that $\{\widetilde{t}_i\ |\ i\in I^+\}$ generates an admissible $p$-stabilization of $W_p$, so the space of linear forms on $\Hom_G(W_p,\cU')$ is generated by $\{\ord_p^{(\widetilde{t}_i)}, \ |\ i\in I^0\}\bigcup\{\log_p^{(\widetilde{t}_i)}\ |\ i\in I^+\}$. Therefore, the space $\bigwedge^{d^+}_{E_p}\Hom_G(W_p,\cU')^*$ is generated by the set of all elements of the form $$\nu_{J^0,J^+}=\bigwedge_{j\in J^0}\ord_p^{(\widetilde{t}_j)}\wedge \bigwedge_{j\in J^+} \log_p^{(\widetilde{t}_{j})},$$
	 where $J^0$ (resp. $J^+$) runs over all subsets of $I^0$ (resp. of $I^+$) such that $(\#J^0)+(\#J^+)=d^+$. In order to show \textbf{MRS}$_\chi$, it is enough to prove that the equality (\ref{eq:MRS_equivalence}) holds after applying $\nu_{J^0,J^+}$ for all such couples of sets $(J^0,J^+)$. 
	 
	 Let $J^0,J^+$ as above. Assume first that $J^+\cap I^0\neq\emptyset$ and let $i\in J^+ \cap I^0$. We claim that applying $\nu_{J^0,J^+}$ to (\ref{eq:MRS_equivalence}) gives $0=0$, so it is true. The left-hand side is indeed zero because $\log_p^{(\widetilde{t}_i)}$ is a linear combination of elements in $\{\log_p^{(t_{p,j})}\ |\ 1\leq j \leq f \}$. To see that the right-hand side also gives zero, it is enough to prove that the map which sends any norm-compatible sequence $(v_n)_{n\geq 1} \varprojlim_n \widehat{\bQ}^\times_{p,n-1}$ to $\log_p(v_1)$ is identically zero. This follows directly from the exactness of the sequence (\ref{eq:exact_seq_local_norms}). 
	 
	 To complete the proof of (2), we have to prove that the equality obtained after having applied $\nu_{J^0,J^+}$ to (\ref{eq:MRS_equivalence}) holds true, where $J^0$ (resp. $J^+$) is a subset of $I^0$ (resp. of $I^+$) such that $(\#J^0)+(\#J^+)=d^+$ and that $J^+\cap I^0=\emptyset$. Note that $J^0$ and $J^+$ are disjoint, so their union defines an element $\beta\in\cB$. The key observation here is that $\nu_{J^0,J^+}$ coincides in this case with $\nu_{\rho_\beta^+}$ up to multiplication by a unit. Since we have assumed that $\rho^+_\beta$ is admissible and that \textbf{EZC}$_{\rho,\rho^+_\beta}$ holds, it follows immediately from Lemma \ref{lem:LHS_RHS} that (\ref{eq:MRS_equivalence}) holds true after applying $\nu_{J^0,J^+}$. This completes the proof of (2) and the proof of the theorem.
\end{proof}

\begin{corollaire}\label{coro:EZC_implique_MRS_cas_particulier}
	Let $W_p^+$ be any admissible $p$-stabilization of $W_p$ containing $\HH^0(\Qp,W_p)$. If \textbf{EZC}$_{\rho,\rho^+}$ holds, then \textbf{MRS}$_\chi$ holds as well.
\end{corollaire}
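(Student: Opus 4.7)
The plan is to apply Theorem \ref{thm:equivalence_MRS_EZC} (2) with the integer parameter $e$ taken to be $0$. Since we assume $\HH^0(\Qp,W_p)\subseteq W_p^+$, the quotient $W_p^-=W_p/W_p^+$ has trivial $G_{\Qp}$-invariants, i.e.\ $\dim \HH^0(\Qp,W_p^-)=0$. Using that $\sigma_p$ acts semi-simply on $W_p$ (by the unramifiedness assumption on $\rho$) and that $\HH^0(\Qp,W_p)$ is a $\sigma_p$-stable subspace of $W_p^+$, I first pick an eigenbasis $\widetilde{t}_1,\ldots,\widetilde{t}_{d^+}$ of $W_p^+$ which contains a basis of $\HH^0(\Qp,W_p)$, and I extend it to an eigenbasis $\{\widetilde{t}_1,\ldots,\widetilde{t}_d\}$ of $W_p$. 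This makes the first hypothesis of Theorem \ref{thm:equivalence_MRS_EZC} (2) satisfied with $e=0$.

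With $e=0$, the indexing set $\cB$ of increasing sequences $1\leq i_1<\ldots<i_{d^+}\leq d^+$ reduces to the single element $\beta_0=(1,2,\ldots,d^+)$, and the corresponding $p$-stabilization $W_{p,\beta_0}^+$ is exactly $W_p^+$. Thus the requirement that $W_{p,\beta}^+$ be admissible and that $\textbf{EZC}_{\rho,\rho_\beta^+}$ hold for every $\beta\in\cB$ collapses to the single hypothesis on $W_p^+$ that is given by assumption.

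The only remaining condition of Theorem \ref{thm:equivalence_MRS_EZC} (2) to verify is the existence of some $\beta\in \cB$ with $\cL(\rho,\rho_\beta^+)\neq 0$. Since $\cB=\{\beta_0\}$, we must check $\cL(\rho,\rho^+)\neq 0$. But with $e=0$ and $W_p^+$ admissible, the matrices $B^\pm$ and $O^-$ appearing in \eqref{eq:def_mat_A_B_O} are of size $0$, and Lemma \ref{lem:formule_L_invariant_quot_det} collapses to $\cL(\rho,\rho^+)=\det(A^+)/\det(A^+)=1$, which is obviously non-zero. The hypotheses of Theorem \ref{thm:equivalence_MRS_EZC} (2) are therefore all met, and $\textbf{MRS}_\chi$ follows. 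There is no real obstacle in this argument; the main point is simply to notice that the condition $\HH^0(\Qp,W_p)\subseteq W_p^+$ forces $e=0$, which simultaneously trivializes the combinatorics of $\cB$ and the $\cL$-invariant non-vanishing hypothesis.
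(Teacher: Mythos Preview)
Your proof is correct and follows essentially the same route as the paper: take $e=0$ in Theorem \ref{thm:equivalence_MRS_EZC} (2), observe that $\cB$ reduces to a singleton corresponding to $W_p^+$ itself, and use that $\HH^0(\Qp,W_p^-)=0$ forces $\cL(\rho,\rho^+)=1$. The paper's version is slightly terser (it simply asserts $\cL(\rho,\rho^+)=1$ rather than invoking Lemma \ref{lem:formule_L_invariant_quot_det} with empty matrices), but the argument is the same.
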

\begin{proof}
	The assumption $W_p^+ \supseteq \HH^0(\Qp,W_p)$ forces the inequality $f\leq d^+$, so we can take $e=0$ in Theorem \ref{thm:equivalence_MRS_EZC} (2) and assume that $\{\widetilde{t}_1,\ldots,\widetilde{t}_{d^+}\}$ is a basis of $W_p^+$. Moreover, $\cL(\rho,\rho^+)=1$ because $\HH^0(\Qp,W_p^-)=0$. The conclusion of the corollary then follows from Theorem \ref{thm:equivalence_MRS_EZC} (2).
\end{proof}

\section{Examples and applications}\label{sec:examples} 
\subsection{Deligne-critical motives}\label{sec:critical_motives}
The complex $L$-functions of motives and the $p$-adic interpolation of their special values are better understood for motives that admit critical points in the sense of Deligne. With the notations of the introduction, the Artin motives which have this property are precisely those which satisfy $d^+=d$ (the even ones) or $d^+=0$ (the odd ones). In particular, all Dirichlet motives fall in this category. There exists nowadays an extensive literature on the construction and on the properties of $p$-adic $L$-functions in this context, and we will simply recall what is needed.

Keep the notations of the introduction and assume that $\rho$ is even, that $\HH^0(\bQ,\rho)=0$ and that $\rho$ is of type $S$ at $p> 2$, which means that the extension $H/\bQ$ cut out by $\rho$ is linearly disjoint to the $\Zp$-cyclotomic extension. Note this last condition is weaker than being unramified at $p$, and if we denote by $\omega$ the Teichmüller character, then the odd representation $\rho\otimes\omega^{-1}$ is still of type $S$. There exists a $p$-adic measure $\theta_\rho^\DR\in\Lambda$ satisfying the following interpolation property:
\begin{equation}\label{eq:interpolation_deligne_ribet}
\forall\ n\geq2,\quad\forall\ \eta\in\widehat{\Gamma}, \qquad \eta\cdot\kappa^n(\theta^\DR_\rho) = L_{\{p\}}(\rho\otimes\eta\omega^{-n},1-n),
\end{equation}
where the subscript $\{p\}$ means that we removed the Euler factor at $p$. The interpolation property is first proven by the work of Deligne-Ribet \cite{deligneribet} for monomial representations and includes $n=1$, and it is proven in general by a simple application of Brauer's induction theorem (see \cite{greenbergartinII}). The fact that $\theta^\DR_\rho\in\Lambda$ and not only in $\Frac(\Lambda)$, the so-called ``$p$-adic Artin conjecture'' follows from \cite[Thm. 1.1]{wiles1990iwasawa} as a consequence of the classical Iwasawa Main conjecture. The $p$-adic $L$-function attached to $\rho$ is the $p$-adic analytic function given by $$L_p(\rho,s)=\kappa^{1-s}(\theta^\DR_\rho) \qquad\qquad (s\in\Zp).$$ 
Note that since $\rho$ is of type S, the above-mentioned Euler factor at $p$ is in fact equal to $1$ at $n=1$, provided that $\eta\neq \mathds{1}$, and the corresponding $L$-value is non-zero. Therefore, the formula (\ref{eq:interpolation_deligne_ribet}) still holds when $n=1$ and $\eta\neq \mathds{1}$.

\subsection{Even Artin motives}\label{sec:exemple_even_motives}
We expand here upon the relation between Conjecture \ref{conj:IMC}, the  classical Iwasawa Main Conjecture over totally real fields and ``$p$-adic Stark conjectures at $s=1$'', when $d^+=d$. The only choice of $p$-stabilization is then $W_p^+=W_p$, and there is no extra zeros, so $e=0$ and $\cL(\rho,\rho^+)=1$. On the algebraic side, the corresponding Selmer groups $X_\infty(\rho,\rho^+)$ coincides with the one studied by Greenberg in \cite{greenbergartinII}. It is shown in \textit{loc. cit.} that $X_\infty(\rho,\rho^+)$ is of $\Lambda$-torsion by using the validity of the weak Leopoldt conjecture, and that its characteristic ideal does not depend on the choice of $T_p$. It is also shown that $\char_\Lambda X_\infty(\rho,\rho^+)$ is generated by $\theta^\DR_\rho$ by invoking Wiles' theorem \cite[Thm. 1.3]{wiles1990iwasawa}. To make the connection with Conjecture \ref{conj:IMC}, one needs to compare $\theta^\DR_\rho$ with $\theta_{\rho,\rho^+}$, that is, to understand the values $\eta(\theta^\DR_\rho)$ for $\eta\in\widehat{\Gamma}$. It is precisely the content of ``$p$-adic Stark conjectures at $s=1$'' attributed to Serre by Tate in \cite[Chapitre VI, §5]{tate} and we refer the reader to \cite[§4.4]{johnstonnickelpadicstark} for more detail. With the notations of Conjecture 4.9 of \textit{loc. cit.}, it asserts for the representation $\rho\otimes\eta$ that 
\begin{equation}\label{eq:conj_stark_p_adique_s=1}
\eta(\theta^\DR_\rho) \overset{?}{=} \Omega_j(\rho\otimes\eta)\cdot j\left(L_{\{p\}}(\rho\otimes\eta,1)\right)=\left\{\begin{array}{lr}\Omega_j(\rho\otimes\eta)\cdot j\left(L(\rho\otimes\eta,1)\right) & \mbox{if }\eta\neq\mathds{1}\\ \Omega_j(\rho)\cdot\cE(\rho,\rho^+)\cdot j\left(L(\rho,1)\right) & \mbox{if }\eta=\mathds{1}. \end{array}\right.
\end{equation}
Moreover, \cite[Lem. 4.20]{johnstonnickelpadicstark} implies that the quantity $(-1)^d\cdot \Omega_j(\rho\otimes\eta)$ coincides with the quotient of regulators (\ref{eq:quotient_reg_appendix}) for any choice of bases $\omega_\infty^+$ and $\omega_p^+$ of $\det\HH^0(\bR,W_p)=\det W_p^+=\det W_p$ such that $\omega_\infty^+=\omega_p^+$. Therefore, under the validity of (\ref{eq:conj_stark_p_adique_s=1}) for all $\eta\in\widehat{\Gamma}$, the $p$-adic measure $\theta_{\rho,\rho^+}$ exists, it satisfies 
$$\theta_{\rho,\rho^+}=(-2)^{-d}\theta_{\rho}^\DR,$$ 
and the full Conjecture \ref{conj:IMC} holds for any choice of $T_p$. As an example, when $\rho$ is a one-dimensional even character whose associated Dirichlet character $\chi$ has prime-to-$p$ conductor $d\neq1$, for all characters $\eta\in\widehat{\Gamma}$ of conductor $p^n$ we have 
$$\Omega_j(\rho\otimes\eta) = \frac{\log_p(\varepsilon^{\chi\otimes\eta}_{\cyc})}{-j\left(\log_\infty(\varepsilon^{\chi\otimes\eta}_{\cyc})\right)},$$
where we have put 
$$\varepsilon^{\chi\otimes\eta}_{\cyc} = \prod_{a \mod dp^n} \left(e^{\frac{2i\pi a}{dp^n}}-1\right)^{(\chi\otimes\eta)^{-1}(a)} \in \left(\bZ[\mu_{dp^n}]^\times \otimes_\bZ \bQ(\chi\otimes\eta)\right)^{\chi\otimes\eta}. $$
The conjectural equality (\ref{eq:conj_stark_p_adique_s=1}) follows in this case from the well-known formula $L(\chi\otimes\eta,1)=\g(\chi^{-1}\otimes\eta^{-1})^{-1}\cdot \log_\infty(\varepsilon^{\chi\otimes\eta}_{\cyc})$ and its $p$-adic analogue (Leopoldt's formula), see for instance \cite[§1.3]{colmezfonctions}.

\subsection{Odd Artin motives}\label{sec:exemple_odd_motives}
We focus here on the connection between Conjecture \ref{conj:IMC} and the ``$p$-adic Stark conjecture at $s=0$'' (that is, the Gross-Stark conjecture) when $d^+=0$. The only choice of $p$-stabilization is then $W_p^+=0$, and both regulators are equal to $1$. The representation $\widetilde{\rho}=\rho^*\otimes\omega$ is even and of type S, and we may consider $\theta=\textrm{Tw}_{-1}(\theta_{\widetilde{\rho}}^{\DR,\iota})$, where $\iota$ is the involution of $\Lambda$ induced by $\gamma\mapsto \gamma^{-1}$ and  Tw$_{-1}$ is the twist by $\kappa^{-1}$. We thus have
$$\kappa^s(\theta)=L_p(\rho^*\otimes\omega,s),$$
for all $s\in\Zp$ and moreover, 
$$\eta(\theta)= L((\rho\otimes\eta)^*,0)$$
holds for all non-trivial characters $\eta\in\widehat{\Gamma}$ by the discussion of \S\ref{sec:critical_motives}. Therefore, one already can conclude to the existence of the $p$-adic measure $\theta_{\rho,\rho^+}$ and to the equality $\theta=\theta'_{\rho,\rho^+}$, where $\theta'_{\rho,\rho^+}$ is the renormalization of $\theta_{\rho,\rho^+}$ appearing in Proposition \ref{prop:renormalisation_mesure_p_adique}. On the algebraic side, a duality theorem for Selmer groups \cite[Thm. 2]{greenberg1989iwasawa} and Wiles' theorem for $\widetilde{\rho}$ proves that $\theta$ is a generator of the characteristic ideal of the Selmer group $X_\infty(\rho,\rho^+)$; hence the first part of Conjecture \ref{conj:IMC} is valid. Consider now the extra zeros conjecture for $(\rho,\rho^+)$. The number $e=\dim\HH^0(\Qp,W^-)=\dim\HH^0(\Qp,W)$ is nothing but the order of vanishing of the $\{p\}$-truncated Artin $L$-function $L_{\{p\}}(\rho^*,s)$ at $s=0$. The ``Weak $p$-adic Gross-Stark Conjecture'' for $\rho^*$ as formulated by Gross in \cite[Conjecture 2.12b)]{gross1981padic} states that
$$\frac{1}{e!}L_p^{(e)}(\rho^*\otimes\omega,0)\overset{?}{=}R_p(W_p^*)\cdot j\left( \frac{L^*_{\{p\}}(\rho^*,0)}{(-\log(p))^e}\right)=(-1)^e\cdot R_p(W_p^*)\cdot\cE(\rho,\rho^+)\cdot j\left(L(\rho^*,0)\right),$$
where $R_p(W_p^*)$ is Gross's $p$-adic regulator defined in \textit{loc. cit.}, (2.10) and computed with respect to the set of places $\{p,\infty\}$ of $\bQ$. By \cite[\S3.1]{maksouddimitrov}, we have $R_p(W_p^*)=\cL(\rho,\rho^+)$, so \textbf{EZC}$_{\rho,\rho^+}$ is here equivalent to the Weak $p$-adic Gross-Stark Conjecture. It has already been proven true by Dasgupta, Kakde and Ventullo in \cite{dasguptakakdeventullo} for monomial representations. For a general $\rho$, it is proven in \cite[Thm. 2.6]{burnsOnderivativesofpadicLseriesats0} that Gross-Stark's conjecture holds true under Gross' ``Order of Vanishing Conjecture'' \cite[Conj. 2.12a)]{gross1981padic} for all monomial representations cutting out the same field extension as $\rho$. This last conjecture states that $\ord_{s=0}L_p(\rho^*\otimes\omega,s)=e$, and it is known to be equivalent to the non-vanishing of $R_p(W_p^*)$ by \cite[Thm. 3.1 (i)-(iii)]{burnsOnderivativesofpadicLseriesats0}.

\subsection{Restriction to the cyclotomic line of Katz's $p$-adic $L$-function}\label{sec:exemple_katz}
It is shown in \cite{buyukboduksakamoto} that the Iwasawa-theoretic properties of Rubin-Stark elements imply $p$-adic Beilinson type formulae for Katz $p$-adic $L$-functions. We follow the notations of \textit{loc. cit.}. 

Let $k$ be a CM field of degree $2g$ and let $k^+$ be its maximal totally real subfield. Let $p$ be an odd prime which splits totally in $k^+$. Assume as in \S\ref{sec:monomial_representations} that $\rho$ is induced from a non-trivial character $\chi : \Gal(H/k) \longrightarrow E^\times$ of prime-to-$p$ order and that $T_p=\cO_p[G] \otimes_{\cO_p[\Gal(H/k)]}\cO_p(\chi)$. Hence, $d=2g$ and $d^+=g$. Under the assumption that every prime of $k^+$ above $p$ splits in $k$ (known as Katz's $p$-ordinarity condition), one may pick a $p$-adic CM type of $k$. This amounts to choosing a subset $\Sigma\subseteq S_p(k)$ such that $\Sigma$ and its complex conjugate form a partition of $S_p(k)$. To any $p$-adic CM type $\Sigma$ one can attach a (motivic) $p$-stabilization $(\rho^+_\Sigma,W_{p,\Sigma}^+)$ of $W_p$ by taking the linear span of $\{g\otimes 1\ \big|\ g\in G,\ g^{-1}(v_p)\in \Sigma\}$, where $v_p$ is the place of $k$ defined by $\iota_p$. By Shapiro's lemma, one can show that $X_\infty(\rho,\rho^+)$ is pseudo-isomorphic over $\Lambda$ to the $\chi$-isotypic component of the Galois group $\Gal(M_{\Sigma}/H\cdot \bQ_\infty)$ of the maximal pro-$p$ abelian extension of $H\cdot\bQ_\infty$ which is $\Sigma$-ramified (as in \cite[\S4.2]{maks}). In particular, it is of $\Lambda$-torsion by \cite[Thm. 1.2.2 (iii)]{hidatilouine94}. 

Consider now the Galois group $\Gamma_\infty$ of the compositum of all $\Zp$-extensions $k_\infty$ of $k$. The Iwasawa main conjecture for CM fields \cite{hidatilouine94} asserts that 
\begin{equation}\label{eq:IMC_CM_fields}
\char_{\hat{\cO}_p[[\Gamma_\infty]]} (\hat{\cO}_p \otimes\Gal(M_{\infty,\Sigma}/H\cdot k_\infty))^\chi\overset{?}{=}(L^{\iota}_{p,\chi,\Sigma}),
\end{equation}
where $\hat{\cO}_p$ is the completion of the maximal unramified extension of $\cO_p$, $M_{\infty,\Sigma}$ is the maximal pro-$p$ abelian extension of $H\cdot k_\infty$ which is $\Sigma$-ramified, and $ L_{p,\chi,\Sigma}$ is Katz's $p$-adic $L$-function for $\chi$ and $\Sigma$ \cite{katzpadicLfunctionsCMfields,hidatilouine93}. This latter is a $p$-adic measure which $p$-adically interpolates the algebraic part of critical Hecke $L$-values for $\chi^{-1}$ twisted by certain characters of $\Gamma_\infty$. The $L$-values $L((\chi\otimes\eta)^{-1},0)$ do not belong to its range of interpolation, because they are non-critical. Assuming the equality (\ref{eq:IMC_CM_fields}), one can deduce by a descent argument that
$$\char_\Lambda X_\infty(\rho,\rho_\Sigma^+)\overset{?}{=}(L^{\cyc,\iota}_{p,\chi,\Sigma}),$$
where $L^{\cyc}_{p,\chi,\Sigma}=L_{p,\chi,\Sigma}\big|_\Gamma$ is the cyclotomic restriction of $L_{p,\chi,\Sigma}$. More precisely, we apply \cite[Cor. C.10]{buyukboduksakamoto} and we claim that $X_\infty(\rho,\rho_\Sigma^+)$ has the same characteristic ideal as the extended Selmer group $\widetilde{\HH}^2_\ff(G_{k,S},\bT_\Gamma,\Delta_\Sigma)$ of \textit{loc. cit.} as follows. By Shapiro's lemma, the exact sequence (26) of \textit{loc. cit.} can be written (with the notations of \S\ref{sec:duality}) as 
\small{$$\begin{tikzcd}
	0 \rar & \coker(\Loc_+^\str) \rar & \widetilde{\HH}^2_\ff(G_{k,S},\bT_\Gamma,\Delta_\Sigma) \rar & \ker\left[\HH^2_{\Iw,\ff,p}(\bQ,\widecheck{T}_p) \longrightarrow \bigoplus_{v\in\Sigma}\HH^2_\Iw(k_v,\widecheck{T}_p)\right] \rar & 0.
	\end{tikzcd}
$$}\normalsize
Moreover, the characteristic ideal of the last term of the sequence can be computed as $\cA^e \cdot \char_\Lambda \Sha^2_\Iw(\widecheck{T}_p)$, where $\cA\subset \Lambda$ is the augmentation ideal and  $e=\#\{v|p,\ v\notin\Sigma, \ |\ \chi(v)=1\}=\dim(\Qp,W_{p,\Sigma}^-)$. Our claim then easily follows from (\ref{eq:gros_diagramme_commutatif_Sel_infty}) and Lemma \ref{lem:suite_exacte_zeros_triviaux_GV}.

The (cyclotomic part of the) explicit reciprocity conjecture for Rubin-Stark elements states that 
\begin{equation}\label{eq:explicit_rec_conj_BS}
\sC_{\omega_p^+}^\str(\varepsilon_\infty^\chi)\overset{?}{=}L^{\cyc,\iota}_{p,\Sigma},
\end{equation}
where $\sC_{\omega_p^+}^\str$ is the operator introduced in \S\ref{sec:torsionness_Sel} (see \cite[Conjecture 2]{buyukbodukmainconjecture} and \cite[Conjecture 4.8 and (21)]{buyukboduksakamoto}).
\begin{proposition}\label{prop:IMC_Katz}
	Assume as above that $p$ is an odd prime which totally splits in a CM field $k$ and that $\rho\simeq \Ind_k^\bQ \chi$ for some non-trivial character $\chi$ of prime-to-$p$ order and conductor which satisfies Conjectures \ref{conj:rubinstark_alg} and \ref{conj:rubinstark_p_int}. If (\ref{eq:IMC_CM_fields}) and (\ref{eq:explicit_rec_conj_BS}) both hold for some $p$-adic CM type $\Sigma$, then \textbf{IMC}$_{\chi}$ and \textbf{IMC}$_{\rho,\rho^+}$ hold for any choice of $p$-stabilization $\rho^+$ of $\rho$. 
	
	In particular, when $k$ is an imaginary quadratic field whose class number is prime to $p$, \textbf{IMC}$_\chi$ and \textbf{IMC}$_{\rho,\rho^+}$ are valid for any $p$-stabilization $\rho^+$ of $\rho$. 
	\end{proposition}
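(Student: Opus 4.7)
My plan is to reduce everything to the single $p$-stabilization $\rho^+_\Sigma$ attached to the chosen $p$-adic CM type, and then propagate the result to $\chi$ and to every other $p$-stabilization via the equivalence results of Section \ref{sec:monomial_representations}. First, applying the descent argument sketched in the paragraph preceding the proposition (using the exact sequence (26) of \cite{buyukboduksakamoto} together with Corollary C.10 there, Lemma \ref{lem:suite_exacte_zeros_triviaux_GV} and the diagram \eqref{eq:gros_diagramme_commutatif_Sel_infty}), the Iwasawa main conjecture over the CM field \eqref{eq:IMC_CM_fields} specialises along the cyclotomic line to the identity
\[
\char_\Lambda X_\infty(\rho,\rho^+_\Sigma) \;=\; (L^{\cyc,\iota}_{p,\chi,\Sigma}).
\]
Because Katz's $p$-adic $L$-function is non-zero, this already forces $X_\infty(\rho,\rho^+_\Sigma)$ to be of $\Lambda$-torsion. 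Now, under the assumed Rubin-Stark conjectures, Theorem \ref{thm:comparaison_mesure_p_adique_RS_elts}(1) provides an equality $\theta'_{\rho,\rho^+_\Sigma}=\sC^\str_{\omega_p^+}(\psi_{\varepsilon_\infty^\chi})$ for any $T_p$-optimal eigenbasis $\omega_p^+$ of $\det W_{p,\Sigma}^+$, and the hypothesis \eqref{eq:explicit_rec_conj_BS} translates, via the identification of Lemma \ref{lem:identification_Hom(T_p,M)_et_Mchi}(1), into $\sC^\str_{\omega_p^+}(\psi_{\varepsilon_\infty^\chi}) = L^{\cyc,\iota}_{p,\chi,\Sigma}$. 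Combining these three equalities and invoking Proposition \ref{prop:renormalisation_mesure_p_adique}, the measure $\theta_{\rho,\rho^+_\Sigma}$ is a generator of $\char_\Lambda X_\infty(\rho,\rho^+_\Sigma)$, which is precisely \textbf{IMC}$_{\rho,\rho^+_\Sigma}$.

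Since $X_\infty(\rho,\rho^+_\Sigma)$ is $\Lambda$-torsion, Theorem \ref{thm:equivalence_IMCs}(2) then upgrades \textbf{IMC}$_{\rho,\rho^+_\Sigma}$ to the higher-rank main conjecture \textbf{IMC}$_\chi$. To conclude the first part of the proposition, I would argue that once \textbf{IMC}$_\chi$ is known, \textbf{IMC}$_{\rho,\rho^+}$ holds for \emph{every} $p$-stabilization $\rho^+$ by splitting into two cases. If $X_\infty(\rho,\rho^+)$ is of $\Lambda$-torsion, a second application of Theorem \ref{thm:equivalence_IMCs}(2) (in the opposite direction) yields \textbf{IMC}$_{\rho,\rho^+}$ directly. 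Otherwise, Theorem \ref{thm:equivalence_torsionness_Sel} forces $\Reg_{\omega_p^+}(\rho\otimes\eta)=0$ for every non-trivial $\eta\in\widehat{\Gamma}$, so by the interpolation formula of \textbf{EX}$_{\rho,\rho^+}$ (which holds by Theorem \ref{thm:comparaison_mesure_p_adique_RS_elts}(1)) and Weierstrass preparation the measure $\theta_{\rho,\rho^+}$ vanishes identically, matching the convention $\char_\Lambda X_\infty(\rho,\rho^+)=(0)$; the conjecture is therefore trivially true in this case as well.

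For the ``in particular'' statement, all three standing hypotheses become unconditional when $k$ is imaginary quadratic with $p$ split and class number prime to $p$: the Rubin-Stark conjectures (both algebraicity and $p$-integrality) follow from the construction of elliptic units, the two-variable main conjecture \eqref{eq:IMC_CM_fields} is Rubin's theorem under the prime-to-$p$ class number assumption, and the explicit reciprocity \eqref{eq:explicit_rec_conj_BS} is the classical Coates-Wiles/Yager formula relating elliptic units to the cyclotomic restriction of Katz's $p$-adic $L$-function. Applying the first part then finishes the argument. The only delicate point throughout is bookkeeping: one must check that the map $\sC^\str_{\omega_p^+}$ of Section \ref{sec:torsionness_Sel} really is the same operator appearing in \eqref{eq:explicit_rec_conj_BS} under the identifications of Lemma \ref{lem:identification_Hom(T_p,M)_et_Mchi}(1), and that the descent from the two-variable to the cyclotomic main conjecture correctly accounts for the Euler factors and the $\cA^e$-shift visible in the diagrams of \S\ref{sec:duality}. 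Once these compatibilities are in place, the proof is essentially a concatenation of Theorem \ref{thm:comparaison_mesure_p_adique_RS_elts}, Theorem \ref{thm:equivalence_IMCs} and the descent argument, with no genuinely new input needed.
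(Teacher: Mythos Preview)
Your proof is correct and follows essentially the same route as the paper: establish \textbf{IMC}$_{\rho,\rho^+_\Sigma}$ by combining the descent of \eqref{eq:IMC_CM_fields} with the identification $\theta'_{\rho,\rho^+_\Sigma}=\sC^\str_{\omega_p^+}(\psi_{\varepsilon_\infty^\chi})=L^{\cyc,\iota}_{p,\chi,\Sigma}$ from Theorem~\ref{thm:comparaison_mesure_p_adique_RS_elts}(1) and \eqref{eq:explicit_rec_conj_BS}, then transfer to \textbf{IMC}$_\chi$ and back to every $\rho^+$ via Theorem~\ref{thm:equivalence_IMCs}(2). Two minor remarks: the torsionness of $X_\infty(\rho,\rho^+_\Sigma)$ is already recorded in the paragraph preceding the proposition (via \cite[Thm.~1.2.2(iii)]{hidatilouine94}), so you need not derive it from the non-vanishing of Katz's measure; and your explicit treatment of the non-torsion case for a general $\rho^+$ is a welcome clarification that the paper leaves implicit (relying on the remark after Conjecture~\ref{conj:IMC} that this case follows from Theorem~\ref{thm:intro_A}).
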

\begin{proof}
 We know that $\sC_{\omega_p^+}^\str(\varepsilon_\infty^\chi)=\theta_{\rho,\rho_\Sigma^+}'=U\cdot \theta_{\rho,\rho_\Sigma^+}$ for some unit $U\in\Lambda^\times$ by Theorem \ref{thm:comparaison_mesure_p_adique_RS_elts} and Proposition \ref{prop:renormalisation_mesure_p_adique}. Therefore, \textbf{IMC}$_{\rho,\rho_\Sigma^+}$ is a consequence of (\ref{eq:IMC_CM_fields}) and (\ref{eq:explicit_rec_conj_BS}). As $X_\infty(\rho,\rho_\Sigma^+)$ is of $\Lambda$-torsion, we deduce from Theorem \ref{thm:intro_C} (2) that \textbf{IMC}$_{\chi}$ and \textbf{IMC}$_{\rho,\rho^+}$ hold for any choice of $\rho^+$. 
 
  When $k$ is an imaginary quadratic field whose class number is prime to $p$, both (\ref{eq:IMC_CM_fields}) and (\ref{eq:explicit_rec_conj_BS}) hold for any choice of $\Sigma$ by the work of Rubin \cite{rubin1991main} and Yager \cite{yager}. Furthermore, Conjectures \ref{conj:rubinstark_alg} and \ref{conj:rubinstark_p_int} hold for $\chi$, as explained in the proof of Proposition \ref{prop:iqf_inert_and_split} below. So the last claim follows from the first one.
\end{proof}
\begin{remark}
	The admissibility of $W_{p,\Sigma}^+$ is equivalent to the $\Sigma$-Leopoldt conjecture which appear for instance in Hida-Tilouine's work. Moreover,  $\cL(\rho,\rho^+_{\Sigma})$ is equal to the cyclotomic $\cL$-invariant defined in \cite[Def. 5.2]{buyukboduksakamoto}; accordingly, the implication \textbf{MRS}$_\chi \Longrightarrow$ \textbf{EZC}$_{\rho,\rho^+_\Sigma}$  of Theorem \ref{thm:intro_C} can be seen as a reformulation of \cite[Thm. 1.5]{buyukboduksakamoto} in the cyclotomic direction. A similar result was obtained by Betina and Dimitrov for anticyclotomic Katz $p$-adic $L$-functions \cite{betinadimitrovKatz}. Finally, it is worth pointing out that, when $[k:\bQ]=2g>2$, there are $2^g$ distinct $p$-adic CM types, whereas ${\bigwedge}^{d^+}W_p$ has dimension $(2g)!(g!)^{-2}>2^g$, so Conjecture \ref{conj:IMC} predicts in particular the existence of more $p$-adic $L$-functions for $\chi$ than the ones constructed by Katz. 
\end{remark}

\subsection{Two-dimensional odd monomial representations}\label{sec:example_odd_monomial}
Fix an imaginary quadratic field $k$ and an odd prime $p$ which is unramified in $k$. The prime $p$ is allowed to be split or inert in the following proposition.
\begin{proposition}\label{prop:iqf_inert_and_split}
Let $\chi:G_k\longrightarrow E^\times$ be a non-trivial character of prime-to-$p$ order and conductor, and consider $\rho=\Ind_k^\bQ \chi$. Then \textbf{EX}$_{\rho,\rho^+}$ and \textbf{EZC}$_{\rho,\rho^+}$ are valid for all $p$-stabilizations $\rho^+$ of $\rho$.
\end{proposition}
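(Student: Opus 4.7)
The plan is to apply parts (1) and (3) of Theorem \ref{thm:intro_C}. This reduces the proposition to verifying two inputs for the character $\chi$ and its twists $\chi \otimes \eta$ with $\eta \in \widehat{\Gamma}$: the Rubin-Stark conjecture (in both the algebraicity form of Conjecture \ref{conj:rubinstark_alg} and the $p$-integrality form of Conjecture \ref{conj:rubinstark_p_int}), and the Iwasawa-theoretic Mazur--Rubin--Sano conjecture \textbf{MRS}$_\chi$ of Conjecture \ref{conj:MRS_chi}.

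First I would handle the Rubin-Stark input. Since $k$ is imaginary quadratic, its unique archimedean place is complex and therefore splits in every finite extension; hence every twist $\chi \otimes \eta$ yields $d^+ = 1$, so the relevant Rubin-Stark element is of rank one. The rank-one Rubin-Stark conjecture over $\bQ$ for abelian extensions of imaginary quadratic fields is classical and follows from the theory of elliptic units (Stark, Robert; see \cite[Ch.\ IV]{tate}), which identifies the Rubin-Stark element, up to a factor in $\bQ(\chi\eta)^\times$, with an explicit elliptic unit. The $p$-integrality assertion Conjecture \ref{conj:rubinstark_p_int} then follows from this explicit elliptic-unit description, combined with the hypotheses that $p$ is unramified in $k$ and that $\chi$ has prime-to-$p$ order and conductor, which together preclude any $p$-denominators after tensoring with $\cO_p$.

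Next I would verify \textbf{MRS}$_\chi$, treating the split and inert cases separately. When $p$ splits in $k$, this is the Mazur--Rubin--Sano conjecture for $\chi$ in the cyclotomic direction, which has been established by Büyükboduk--Sakamoto in \cite{buyukboduksakamoto} as an essential step in their $p$-adic Beilinson formula for the cyclotomic restriction of Katz's two-variable $p$-adic $L$-function. When $p$ is inert in $k$, \textbf{MRS}$_\chi$ is exactly the content of the recent theorem of Bullach and Hofer in \cite{bullachhofer}, as already noted in the discussion surrounding Corollary \ref{CORO:plus-minus-p-adic-Beilinson}.

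With these two inputs in hand, Theorem \ref{thm:intro_C}(1) produces a measure $\theta_{\rho,\rho^+}$ satisfying \textbf{EX}$_{\rho,\rho^+}$ for every $p$-stabilization $\rho^+$, and Theorem \ref{thm:intro_C}(3) then upgrades this to \textbf{EZC}$_{\rho,\rho^+}$. The main obstacle is the \textbf{MRS}$_\chi$ input in the inert case, where the absence of a Katz-type two-variable $p$-adic $L$-function forces one to rely on the more delicate Euler-system techniques of \cite{bullachhofer}; by contrast, the split case ultimately rests on the classical Rubin--Yager main conjecture and Katz's $p$-adic $L$-function.
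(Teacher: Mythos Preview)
Your overall strategy---reduce to the Rubin--Stark conjectures and \textbf{MRS}$_\chi$, then invoke Theorem~\ref{thm:intro_C}(1) and (3)---is exactly the paper's approach. However, there is a genuine gap in your verification of the Rubin--Stark input.

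You argue that ``every twist $\chi\otimes\eta$ yields $d^+=1$, so the relevant Rubin--Stark element is of rank one.'' This is correct for the elements $\varepsilon_n^\chi$ with $n\geq 1$, but Conjectures~\ref{conj:rubinstark_alg} and \ref{conj:rubinstark_p_int} also include the level-zero element $u^\chi$, which has rank $d^++f=1+f$. Here $f=\dim\HH^0(\bQ_p,W_p)$ counts the $p$-adic places of $k$ at which $\chi$ is trivial, and can be $1$ (inert case with $\chi(\gp)=1$) or $2$ (split case with $\chi(\gp)=\chi(\ob{\gp})=1$). In those situations $u^\chi$ is a genuinely higher-rank Rubin--Stark element, and the classical rank-one statement via elliptic units does \emph{not} apply to it. Since $u^\chi$ enters the formulation of \textbf{MRS}$_\chi$ (and hence the proof of \textbf{EZC}$_{\rho,\rho^+}$ via Theorem~\ref{thm:equivalence_MRS_EZC}), this is not a side issue. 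The paper closes this gap by invoking \cite[Cor.~5.4]{rubinstark}, which reduces the Rubin--Stark conjecture over $\bZ[\tfrac{1}{g}]$ for $u^\chi$ to the rank-one statement for $\xi^\chi$ together with a Gras-type conjecture proven in \cite[Thm.~1]{rubinmore}; the hypothesis $p\nmid g=[L:k]$ then gives the required $p$-integrality.

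A secondary point: your attribution of \textbf{MRS}$_\chi$ in the split case to \cite{buyukboduksakamoto} appears to reverse the logical direction---B\"uy\"ukboduk and Sakamoto \emph{deduce} their $p$-adic Beilinson formula \emph{from} the Mazur--Rubin--Sano conjecture rather than prove it. The paper instead cites \cite[Thm.~A]{bullachhofer} (building on \cite{bleyeTNC,bleyhofer}) for \textbf{MRS}$_\chi$ uniformly in both the split and inert cases.
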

\begin{proof}
We first recall why Conjectures \ref{conj:rubinstark_alg} and \ref{conj:rubinstark_p_int} hold for $\chi$. The Rubin-Stark conjecture over $\bZ$ for abelian extensions of $k$ in the rank $1$ scenario is a consequence of Stark's work (see, for instance, \cite[Chap. IV, Prop. 3.9]{tate}). In particular, the statements concerning $\varepsilon_n^\chi$ for $n\geq 1$ in Conjectures \ref{conj:rubinstark_alg} and \ref{conj:rubinstark_p_int} hold. One can draw the same conclusion for $\xi^\chi$ defined in \eqref{eq:xi^chi} but \emph{a priori} not for $u^\chi$, as we may have $d^++f=1+f>1$. 

Let $L/k$ be the abelian extension cut out by $\chi$ and let $g=[L:k]$. By \cite[Coro. 5.4]{rubinstark}, Rubin-Stark's conjecture over $\bZ[\frac{1}{g}]$ for $u^\chi$ follows from that for $\xi^\chi$ and from a Gras-type conjecture, which is proven in \cite[Thm. 1]{rubinmore} (the missing case $(k,p)=(\bQ(\sqrt{-3}),3)$ where $p|\#\cO_k^\times$ is ruled out by our assumptions). Since we assumed $p\nmid g$, we may infer that the statements for $u^\chi$ in Conjectures \ref{conj:rubinstark_alg} and \ref{conj:rubinstark_p_int} hold as well.
	
We can now apply Theorem \ref{thm:intro_C} and deduce that \textbf{EX}$_{\rho,\rho^+}$ holds for every $\rho^+$. \textbf{EZC}$_{\rho,\rho^+}$ follows likewise from \textbf{MRS}$_\chi$, which is proven in \cite[Thm. A]{bullachhofer} (and whose proof is built on earlier works \cite{bleyeTNC,bleyhofer}).
\end{proof}

We now give a concrete description of the $p$-stabilizations, $p$-adic regulators and $\cL$-invariants attached to $\rho$ when $\rho\simeq \Ind_k^\bQ \chi$ for some non-trivial character $\chi$ which we fix once and for all. We don't necessarily assume that $\chi$ has order prime to $p$. Write $p\cO_k$ as $\gp$ when $p$ remains inert in $k$, and as $\gp\ob{\gp}$ (where $\gp$ is the prime determined by the embedding $\iota_p$) when $p$ splits in $k$. Note that $\sigma_p\in G_k$ if and only if $p$ splits in $k$. We denote by $\tau$ both the complex conjugation and the non-trivial element of $\Gal(k/\bQ)$, and we let $\chi^\tau$ be the character $\chi(\tau \cdot \tau)$ defined over $k$. We assume for simplicity that $\chi\neq\chi^\tau$, so that $\rho$ is irreducible. This assumption is not restrictive, as the case $\rho=\chi^+\oplus\chi^-$ (for some Dirichlet characters $\chi^\pm$) can be derived from \S\ref{sec:exemple_even_motives}-\ref{sec:exemple_odd_motives} and the Artin formalism (Remark \ref{rem:p_adic_artin_formalism}). 

There exists a unique basis $\{w_1,w_2\}$ of $W$ (up to scaling) in which $\rho$ is of the form:
$$\rho(g)\sim \begin{pmatrix}\chi(g) & 0 \\ 0 & \chi^\tau(g)\end{pmatrix},\qquad \rho(\tau)\sim  \begin{pmatrix}0 & 1 \\ 1 & 0\end{pmatrix},$$
where $g\in G_k$. The eigenvalues of $\sigma_p$ are $\chi(\gp)$ and $\chi(\ob{\gp})$ in the split case, and $\pm \sqrt{\chi(\gp)}$ in the inert case respectively. A natural eigenbasis for $\sigma_p$ is $\{w_1,w_2\}$ when $p$ splits in $k$, and $\{-aw_1+ w_2,aw_1+w_2\}$ (where $a=\pm \sqrt{\chi(\gp)}\chi(\sigma_p^{-1}\tau)$) when $p$ is inert in $k$. Therefore, there are exactly two (motivic) choices of $p$-stabilization $\rho^+$ of $\rho$, except in the split case when $\chi(\gp)=\chi(\ob{\gp})$, where any line is a $p$-stabilization. 

Extending some of the ideas present in \cite[§1]{betinadimitrovKatz}, we now express $p$-adic regulators and $\cL$-invariants in terms of Minkowski units. Given a character $\eta\in\widehat{\Gamma}$, let $H_\eta/\bQ$ be the Galois extension cut out by $\rho\otimes\eta$, let $E_\eta$ be its coefficient field and put $C_\eta=\Gal(H_\eta/k)$. We drop the subscript in $H_\eta,E_\eta,C_\eta$ when $\eta=\mathds{1}$. By Minkowski's theorem, there exists a unit $u\in \cO_{H_\eta}^\times$ such that $\{g(u)\ |\ 1\neq g\in C_\eta\}$ generates a sublattice of $\cO_{H_\eta}^\times$ of finite index. Moreover, one can impose $u$ to be fixed by $\tau$, so that the only relation between elements of $\{g(u)\ |\ g\in C_\eta\}$ is given by the kernel of the norm map of the extension ${H_\eta/k}$. 

 Shapiro's lemma provide us with an isomorphism
\begin{equation}\label{eq:isom_shapiro_CM}
	\begin{array}{rcl}
	\HH^1_\ff((\rho\otimes\eta)^*(1)) & \overset{\simeq}{\longrightarrow} & (E_\eta\otimes \cO_{H_\eta}^\times)^{\chi\eta} \\
	\psi & \mapsto & \psi(w_1),
\end{array}
\end{equation}
 where  $({E_\eta}\otimes \cO_{H_\eta}^\times)^{\chi\eta}$ is the $\chi\eta$-isotypic component of the group of units of $H_\eta$. This latter space is generated by the element (written in additive notation)
\begin{equation}\label{eq:u_chi}
	u_{\chi\eta}:= \sum_{g\in C_\eta} \chi\eta(g) \otimes g^{-1}(u),
\end{equation}
and its complex conjugate $\tau(u_{\chi\eta})$ generates $(E_\eta\otimes \cO_{H_\eta}^\times)^{\chi^\tau\eta}$.

We let $\omega_{\ff,\eta}$ be the basis of $\HH^1_\ff((\rho\otimes\eta)^*(1))$ corresponding to $u_{\chi\eta}$ under \eqref{eq:isom_shapiro_CM}. Given a $p$-stabilization $\rho^+$ generated by an element $\omega_p^+=w_1+sw_2$ (where $s\in \bP^1(\ob{\bQ}_p)$), we then have 
$$\Reg_{{\omega}^+_{p}}(\rho\otimes\eta)=\log_p(u_{\chi\eta})+s\log_p(\tau(u_{\chi\eta})).$$ 
As $\rho$ is assumed to be irreducible, we have $\chi\eta\neq\chi^\tau\eta$, so $\log_p(u_{\chi\eta})$ and $\log_p(\tau(u_{\chi\eta}))$ are $\ob{\bQ}$-linearly independent by the Baker-Brumer theorem. In particular, $\rho^+$ is always $\eta$-admissible, except when $\chi(\gp)=\chi(\ob{\gp})$ and $s$ equals 
\[\sS(\chi\eta):=-\frac{\log_p(u_{\chi\eta})}{\log_p(\tau(u_{\chi\eta}))} \in \ob{\bQ}_p\setminus\ob{\bQ}. \]

Let us describe the $\cL$-invariant attached to the admissible $p$-stabilization $\rho^+$ given by the line $\ob{\bQ}_p\cdot w_1$ in the split case, and by $\ob{\bQ}_p\cdot (-aw_1+w_2)$ in the inert case. Extra zeros for $\rho^+$ only arise when $\chi(\ob{\gp})=1$ in the split case (resp. when $\chi({\gp})=1$ and $a=\chi(\sigma_p^{-1}\tau)$ in the inert case), which we assume. Denote by $v_0$ the place of $H$ determined by $\iota_p$, let $y_0$ be any element of $\cO_H[\tfrac{1}{v_0}]^\times$ with $\ord_p(y_0)\neq 0$ and put
\begin{equation}\label{eq:u_p_chi}
	u_{\gp,\chi}= \sum_{g\in C} \chi(g) \otimes g^{-1}(y_0),  \quad \textrm{and }\  u_{\ob{\gp},\chi}=\sum_{g\in C} \chi(g) \otimes g^{-1}\cdot\tau(y_0)\quad \textrm{ (only when $p$ splits in $k$).}
\end{equation}
Under our hypotheses, \eqref{eq:u_p_chi} defines a non-trivial $\gp$-unit (resp. a $\ob{\gp}$-unit) in $H$. Moreover, a simple computation based on Lemma \ref{lem:formule_L_invariant_quot_det} gives
\begin{align*}
\ord_p(\tau(u_{\ob{\gp},\chi}))\cdot\Reg_{{\omega}^+_{p}}(\rho)\cdot\cL(\rho,\rho^+) =\det\begin{pmatrix}\log_p(u_\chi) & \log_p(u_{\ob{\gp},\chi}) \\ \log_p(\tau(u_{\chi})) & \log_p(\tau(u_{\ob{\gp},\chi}))\end{pmatrix} \qquad & \textrm{when $p$ splits in $k$,} \\ \ord_p(u_{\gp,\chi})\cdot\Reg_{{\omega}^+_{p}}(\rho)\cdot\cL(\rho,\rho^+) =-2\cdot \det\begin{pmatrix}\log_p(u_\chi) & \log_p(u_{{\gp},\chi}) \\ \log_p(\tau(u_{\chi})) & \log_p(\tau(u_{{\gp},\chi}))\end{pmatrix} \qquad & \textrm{when $p$ in inert in $k$.}
\end{align*}
Note that replacing $\rho^+$ by $\ob{\bQ}_p\cdot w_2$ in the split case and assuming $\chi(\gp)=1$ would give the same formula as above, with a minus sign and with $u_{\ob{\gp},\chi}$ replaced by $u_{{\gp},\chi}$ in the determinant.

We record in the next proposition the interpolation formulae satisfied by the two $p$-adic $L$-functions attached to $\chi$ when $p$ is inert in $k$. 

\begin{corollaire}\label{coro:formule_theta_p_inerte}
	Let $\chi:G_k\longrightarrow E^\times$ be a character of prime-to-$p$ order and conductor of an imaginary quadratic field $k$ in which $p$ is inert. Assume $\chi\neq \chi^\tau$. Let $\alpha\in E^\times$ be a square root of $\chi(\gp)$ and put $a=\alpha\cdot\chi(\sigma_p^{-1}\tau)$. There exists a measure $\theta^\alpha_\chi\in\Lambda$ with the following interpolation property. For all $\eta\in\widehat{\Gamma}$ of conductor $p^n$, we have
	\[\eta(\theta^\alpha_\chi)= \left(1+\alpha\frac{\eta(p)}{p}\right)\left(1-\frac{\eta(p)}{\alpha}\right)\frac{(-\alpha)^n}{\g(\eta^{-1})}(-a\log_p(u_{\chi\eta})+\log_p(\tau(u_{\chi\eta}))\cdot j\left(\frac{L'((\chi\eta)^{-1},0)}{\log_\infty(u_{\chi\eta}\cdot \tau(u_{\chi\eta}))}\right), \]
	where $u_{\chi\eta}$ is defined as in \eqref{eq:u_chi}. Moreover, if $\alpha=1$, then $\mathds{1}(\theta^\alpha_\chi)=0$ and
	\[\lim_{s\rightarrow 0}\frac{\kappa^s(\theta^\alpha_\chi)}{s}=\left(1+\frac{1}{p}\right)\frac{2}{\ord_p(u_{\gp,\chi})}\det\begin{pmatrix}\log_p(u_\chi) & \log_p(u_{{\gp},\chi}) \\ \log_p(\tau(u_{\chi})) & \log_p(\tau(u_{{\gp},\chi}))\end{pmatrix}\cdot j\left(\frac{L'(\chi^{-1},0)}{\log_\infty(u_{\chi}\cdot \tau(u_{\chi}))}\right), \]
	where $u_{{\gp},\chi}$ is the $\gp$-unit defined in \eqref{eq:u_p_chi}.
\end{corollaire}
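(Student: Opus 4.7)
The plan is to realize $\theta^\alpha_\chi$ as the $p$-adic $L$-function $\theta_{\rho,\rho^+}$ of Conjecture~\ref{conj:IMC} attached to the $p$-stabilization $\rho^+$ spanned by $\omega_p^+=-aw_1+w_2$, up to an explicit unit of $\Lambda$. The first step is to invoke Proposition~\ref{prop:iqf_inert_and_split}, which yields both \textbf{EX}$_{\rho,\rho^+}$ and \textbf{EZC}$_{\rho,\rho^+}$; the latter ensures $\theta_{\rho,\rho^+}\in\Lambda$ (since it forces $\theta_{\rho,\rho^+}$ to vanish at $\mathds{1}$, rather than have a pole) and directly gives $\mathds{1}(\theta^\alpha_\chi)=0$ when $\alpha=1$, because then $e=\dim\HH^0(\Qp,W_p^-)=1$.

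Next I would specialize the \textbf{EX} interpolation formula to obtain \eqref{eq:CORO:EX}. A direct computation from $\rho(\sigma_p)=\left(\begin{smallmatrix}0&\chi(h_p)\\\chi^\tau(h_p)&0\end{smallmatrix}\right)$ with $\sigma_p=h_p\tau$, together with the identity $a^2=\chi(h_p)/\chi^\tau(h_p)$, shows that $\omega_p^+$ is a $\sigma_p$-eigenvector with eigenvalue $\beta^+=-\alpha$, hence $\beta^-=+\alpha$ on $W_p^-$. The Shapiro isomorphism \eqref{eq:isom_shapiro_CM} lets me pick $\omega_{\ff,\eta}:=\psi_{u_{\chi\eta}}$, for which $\psi_{u_{\chi\eta}}(w_1)=u_{\chi\eta}$ and $\psi_{u_{\chi\eta}}(w_2)=\tau(u_{\chi\eta})$; taking the basis $t_{\infty,1}=w_1+w_2$ of $\HH^0(\bR,W)$, this produces
\[\Reg_{\omega_p^+}(\rho\otimes\eta)=-a\log_p(u_{\chi\eta})+\log_p(\tau(u_{\chi\eta})),\qquad \Reg_{\omega_\infty^+}(\rho\otimes\eta)=\log_\infty(u_{\chi\eta}\cdot\tau(u_{\chi\eta})).\]
Since $(\rho\otimes\eta)^*\simeq\Ind_k^\bQ(\chi\eta)^{-1}$, Artin formalism yields $L^*((\rho\otimes\eta)^*,0)=L'((\chi\eta)^{-1},0)$, and Lemma~\ref{lem:properties_galois_gauss_sums}(4) simplifies the prefactor $\tau(\eta)^{d^-}/\tau(\rho\otimes\eta)$ to $(-\alpha^2)^n\eta(N)/(\tau(\rho)\g(\eta^{-1}))$. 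Assembling these and setting $\theta^\alpha_\chi:=\tau(\rho)\cdot\theta_N^{-1}\cdot\theta_{\rho,\rho^+}$ (a unit modification, as $\theta_N\in\Gamma\subset\Lambda^\times$ and $\tau(\rho)\in\cO_p^\times$ by Lemma~\ref{lem:properties_galois_gauss_sums}(3)) should match \eqref{eq:CORO:EX} after adopting the standard convention $\eta(p)=0$ for non-trivial $\eta$, which collapses the Euler-type prefactor $(1+\alpha\eta(p)/p)(1-\eta(p)/\alpha)$ to $1$.

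For the extra-zero formula \eqref{eq:CORO:EZC} at $\alpha=1$, the eigenvalues $\{\beta^+,\beta^-\}=\{-1,+1\}$ give $e=1$ and modified Euler factor $\cE(\rho,\rho^+)=1-p^{-1}\beta^+=1+p^{-1}$. I would then substitute into \textbf{EZC}$_{\rho,\rho^+}$ and combine with the explicit identity
\[\ord_p(u_{\gp,\chi})\cdot\Reg_{\omega_p^+}(\rho)\cdot\cL(\rho,\rho^+)=-2\,\det\begin{pmatrix}\log_p(u_\chi)&\log_p(u_{\gp,\chi})\\\log_p(\tau(u_\chi))&\log_p(\tau(u_{\gp,\chi}))\end{pmatrix}\]
for the $\cL$-invariant derived at the end of \S\ref{sec:example_odd_monomial}, together with $L^*(\rho^*,0)=L'(\chi^{-1},0)$ and the unit comparison $\theta^\alpha_\chi\sim\theta_{\rho,\rho^+}$ above. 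The sign $(-1)^e=-1$ in the \textbf{EZC} formula will combine with the $-2$ from the $\cL$-invariant identity to yield the factor $+2$ appearing in \eqref{eq:CORO:EZC}.

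The main obstacle will be the careful bookkeeping of normalizations: pinning down the correct sign of the $\sigma_p$-eigenvalue on $\omega_p^+$, passing cleanly between the basis $\psi_{u_{\chi\eta}}$ used here and the Rubin--Stark basis $\psi_{e_\eta\varepsilon^\chi_n}$ implicit in Lemma~\ref{lem:calcul_reg_complexe_RS}, tracking the contribution of the Galois--Gauss sum $\tau(\rho)$ and of the $\eta(N)$ factor, and giving a consistent meaning to the symbol $\eta(p)$ for both trivial and non-trivial $\eta$ so that the Euler-factor prefactor in \eqref{eq:CORO:EX} behaves correctly at the trivial character.
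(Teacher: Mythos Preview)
Your proposal is correct and follows essentially the same route as the paper: apply Proposition~\ref{prop:iqf_inert_and_split} to the $p$-stabilization spanned by $-aw_1+w_2$, compute both regulators in the bases $w_1+w_2$ and $u_{\chi\eta}$, identify the $\sigma_p$-eigenvalues $\beta^\pm=\mp\alpha$, and read off the interpolation and extra-zero formulae. The only cosmetic difference is that the paper sets $\theta^\alpha_\chi:=\theta'_{\rho,\rho^+}$ directly (the renormalized measure of Proposition~\ref{prop:renormalisation_mesure_p_adique}), whereas you define $\theta^\alpha_\chi:=\tau(\rho)\,\theta_N^{-1}\,\theta_{\rho,\rho^+}$ by hand; these coincide by the relation $\theta_N\theta'_{\rho,\rho^+}=\tau(\rho)\theta_{\rho,\rho^+}$ established in the proof of that proposition, so your bookkeeping of $\tau(\rho)$ and $\eta(N)$ is absorbed automatically in the paper's formulation.
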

\begin{proof}
	We apply Proposition \ref{prop:iqf_inert_and_split} to $\rho^+$ given by the line generated by $-aw_1+w_2$ and we put $\theta^\alpha_\chi=\theta'_{\rho,\rho^+}$ (cf. Proposition \ref{prop:renormalisation_mesure_p_adique}). Then $\sigma_p$ acts on $W_p^+$ by $-\alpha$ and on $W_p^-$ by $\alpha$. In particular, we have $e=0$ and $\cE(\rho,\rho^+)=(1+\alpha p^{-1})(1-\alpha^{-1})$, unless $\alpha=1$, in which case $e=1$ and $\cE(\rho,\rho^+)=(1+\alpha p^{-1})$. To compute $\Reg_{\omega_{\infty}^+}(\rho\otimes\eta)$ we use the basis $w_1+w_2$ of $\HH^0(\bR,W)$, and the basis $u_{\chi\eta}$ of $(E_\eta\otimes \cO_{H_\eta}^\times)^{\chi\eta}$, which yields $\Reg_{\omega_{\infty}^+}(\rho\otimes\eta)=\log_\infty(u_{\chi\eta}\cdot \tau(u_{\chi\eta}))$. The other terms in the interpolation formula for $\theta^\alpha_\chi$ have already been computed. 
\end{proof}
 
\begin{proposition}\label{prop:existence_non_vanishing_L_invariant}
	Let $\rho=\Ind_k^\bQ \chi$ with $\chi\neq\mathds{1}$. There exists at least one admissible $p$-stabilization $\rho^+$ of $\rho$ such that $\cL(\rho,\rho^+)\neq 0$. Moreover, if $\chi$ is of prime-to-$p$ order, then the weak exceptional zero conjecture \textbf{wEZC}$_\chi$ (Conjecture \ref{conj:EZC_for_RS_elements}) for $\chi$ is true.
\end{proposition}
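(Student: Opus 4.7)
The plan is to establish the first claim by a case analysis on $f=\dim \HH^0(\Qp,W_p)$ and then to deduce the second claim from Proposition \ref{prop:iqf_inert_and_split} via Theorem \ref{thm:comparaison_mesure_p_adique_RS_elts}(2). I will assume $\chi\neq\chi^\tau$, so that $\rho$ is irreducible with $d^+=1$: the opposite case $\chi=\chi^\tau$ makes $\rho$ split into two Dirichlet characters (one even, one odd), to which \S\ref{sec:exemple_even_motives}--\S\ref{sec:exemple_odd_motives} and the $p$-adic Artin formalism (Remark \ref{rem:p_adic_artin_formalism}) apply. Under this irreducibility assumption, every motivic one-dimensional $p$-stabilization is admissible by Lemma \ref{lem:schanuel+motivic_implies_admissible}.

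The easy cases are $f\in\{0,1\}$. If $f=0$, then every $p$-stabilization satisfies $e\leq f=0$ so $\cL(\rho,\rho^+)=1$ by Lemma \ref{lem:formule_L_invariant_quot_det}, and any motivic $\rho^+$ works. If $f=1$, I take $W_p^+=\HH^0(\Qp,W_p)$, which forces $e=0$ and hence $\cL(\rho,\rho^+)=1$; this line is motivic because it is either the $\chi(v)=1$-eigenline of $\sigma_p$ (split case) or, in the inert case, the $+1$-eigenline of $\sigma_p$, whose characteristic polynomial on $W_p$ is $X^2-1$ since $\Tr\rho(\sigma_p)=0$ and $\det\rho(\sigma_p)=-\chi(\gp)=-1$.

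The one non-trivial case is $p$ split in $k$ with $\chi(\gp)=\chi(\ob\gp)=1$, for which $f=2$ and $e=1$ for every $p$-stabilization. Since $\sigma_p$ acts trivially on $W_p$, the two motivic stabilizations $\rho^+_1,\rho^+_2$ generated respectively by $w_1$ and $w_2$ are both admissible. The main obstacle is to show that at least one of $\cL(\rho,\rho^+_1),\cL(\rho,\rho^+_2)$ is non-zero, and I handle this via Roy's strong six exponentials theorem. Assume for contradiction that both vanish. The explicit formulas recalled in \S\ref{sec:example_odd_monomial}, together with the non-vanishing of $\log_p(u_\chi)$, $\log_p(\tau(u_\chi))$ (by Brumer) and of the valuations $\ord_p(u_{\gp,\chi}), \ord_p(\tau(u_{\ob\gp,\chi}))$, imply
\[
\frac{\log_p(u_\chi)}{\log_p(\tau(u_\chi))}
=\frac{\log_p(u_{\gp,\chi})}{\log_p(\tau(u_{\gp,\chi}))}
=\frac{\log_p(u_{\ob\gp,\chi})}{\log_p(\tau(u_{\ob\gp,\chi}))}
=: \gamma \in\ob{\bQ}_p^\times.
\]
If $\gamma$ were algebraic, the Baker--Brumer theorem, which applies since $H$ is abelian over the imaginary quadratic field $k$, would force $\gamma\in\bQ$ and hence a multiplicative dependence between $u_\chi$ and $\tau(u_\chi)$ modulo torsion; but these lie in the distinct isotypic components indexed by $\chi\neq \chi^\tau$, a contradiction. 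Thus $\gamma$ is transcendental, so $\{1,\gamma\}$ is $\ob{\bQ}$-linearly independent in $\ob{\bQ}_p$. Similarly, by Baker--Brumer, $\log_p(\tau(u_\chi)), \log_p(\tau(u_{\gp,\chi})), \log_p(\tau(u_{\ob\gp,\chi}))$ are $\ob{\bQ}$-linearly independent, since $\tau(u_\chi), \tau(u_{\gp,\chi}), \tau(u_{\ob\gp,\chi})$ form a basis of the $\chi^\tau$-part of $E\otimes\cO_H[\tfrac{1}{p}]^\times$. The six products of these three elements with $\{1,\gamma\}$ are precisely the six $p$-adic logs appearing above, all of which lie in $\log_p(\ob{\bQ}^\times)$: this contradicts Roy's strong six exponentials theorem \cite{roy}.

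For the second claim, suppose $\chi$ has prime-to-$p$ order. Proposition \ref{prop:iqf_inert_and_split} then yields \textbf{EZC}$_{\rho,\rho^+}$ for every $p$-stabilization $\rho^+$; in particular, $\theta_{\rho,\rho^+}$ has order of vanishing $\geq e$ at $\mathds{1}$. Applied to the admissible $\rho^+$ with $\cL(\rho,\rho^+)\neq 0$ produced by the first claim, the converse direction of Theorem \ref{thm:comparaison_mesure_p_adique_RS_elts}(2) converts this vanishing into \textbf{wEZC}$_\chi$.
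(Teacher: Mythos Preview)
Your overall case analysis and the treatment of the cases $f\le 1$ match the paper's proof. The gap is in the hard case $f=2$, at the point where you invoke ``Baker--Brumer'' to conclude that
\[
\log_p(\tau(u_\chi)),\quad \log_p(\tau(u_{\gp,\chi})),\quad \log_p(\tau(u_{\ob\gp,\chi}))
\]
are $\ob{\bQ}$-linearly independent. Brumer's theorem upgrades $\bQ$-linear independence to $\ob{\bQ}$-linear independence for $p$-adic logarithms of \emph{individual} algebraic numbers; your three quantities are instead $E$-linear combinations of such logarithms (since $\tau(u_\chi)=\sum_{g}\chi(g)\otimes\tau g^{-1}(u)$, etc.), so the theorem does not apply to them directly. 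Nor does the fact that $\tau(u_\chi),\tau(u_{\gp,\chi}),\tau(u_{\ob\gp,\chi})$ form a basis of the $\chi^\tau$-part of $E\otimes\cO_H[\tfrac{1}{p}]^\times$ help: the Iwasawa logarithm kills $p$, so the $\ob{\bQ}$-linear map $\log_p\circ\iota_p$ on $\ob{\bQ}\otimes\cO_H[\tfrac{1}{p}]^\times$ is genuinely \emph{not} injective, and restricting to an isotypic component does not obviously avoid the kernel. Thus the hypothesis you need for Roy's strong six exponentials is unverified.

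The paper does not establish $\ob{\bQ}$-linear independence of a single row. It uses Roy's structural-rank result \cite[Cor.~2]{roy}, for which it suffices that the \emph{columns} of the $2\times 3$ matrix $M$ are $\bQ$-linearly independent (together with $\ob{\bQ}$-independence of the rows, which is exactly your transcendence-of-$\gamma$ step). Proving this column independence is not a direct application of Brumer either: one rewrites a hypothetical $\bZ$-relation $a\log_p(u_\chi)+b\log_p(u_{\gp,\chi})+c\log_p(u_{\ob\gp,\chi})=0$ using the norm relations $\log_p(N_{H/k}(u))=\log_p(N_{H/\bQ}(y_0))=0$ to reach a $\ob{\bQ}$-relation among $\bQ$-linearly independent logs of the individual conjugates $g^{-1}(u),g^{-1}(y_0),g^{-1}\tau(y_0)$, and only then does Brumer force $a=b=c=0$. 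This is the step you are missing.

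Regarding the second claim: your route through Proposition~\ref{prop:iqf_inert_and_split} is logically valid but circular in spirit. That proposition rests on Bullach--Hofer's proof of $\textbf{MRS}_\chi$, and $\textbf{MRS}_\chi$ already contains $\textbf{wEZC}_\chi$ as part of its statement (Conjecture~\ref{conj:MRS_chi}). The paper's deduction avoids this: it uses only the first claim together with Theorem~\ref{thm:comparaison_mesure_p_adique_RS_elts}(2), so that $\textbf{wEZC}_\chi$ is obtained as a consequence of the non-vanishing of an $\cL$-invariant rather than imported from $\textbf{MRS}_\chi$.
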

 \begin{proof}
 The second statement follows from the first one and from Theorem \ref{thm:comparaison_mesure_p_adique_RS_elts} (2). Assume first that $\chi=\chi^\tau$. Then $\rho$ is the sum of an even and an odd Dirichlet character $\chi^+$ and $\chi^-$, and $\rho^+=\chi^+$ defines an admissible $p$-stabilization, the $p$-adic regulator essentially being $\log_p(\varepsilon_{\cyc}^{\chi^+})$ with the notations of \S\ref{sec:exemple_even_motives}. It then follows from the $p$-adic Artin formalism (Remark \ref{rem:p_adic_artin_formalism}) and from \S\ref{sec:exemple_odd_motives} that $\cL(\rho,\rho^+)$ is Gross's regulator attached to $\chi^-$, so it does not vanish. 
 
 Assume now that $\chi\neq \chi^\tau$. If $f=\dim \HH^0(\Qp,\rho)=\#\{\mathfrak{q}|p\ | \ \chi(\mathfrak{q})=1\}\leq 1$, then there is always a choice of admissible $\rho^+$ for which $e=\dim \HH^0(\Qp,\rho^-)=0$. For such a $\rho^+$, there is no extra zero and $\cL(\rho,\rho^+)=1$. Hence, we may assume that $f=2$, \textit{i.e.}, $p$ splits in $k$ and $\chi(\gp)=\chi(\ob{\gp})=1$. Since $H$ is the Galois closure of the field cut out by $\chi$, $p$ must split completely in $H$. We show that at least one of the $p$-stabilizations $\rho^+=\ob{\bQ}_p\cdot w_1$ or $\rho^+=\ob{\bQ}_p\cdot w_2$ has a non-zero $\cL$-invariant. With the notations introduced above, it is enough to show that the matrix 
 $$ M=\begin{pmatrix} \log_p(u_\chi) & \log_p(u_{\gp,\chi}) & \log_p(u_{\ob{\gp},\chi}) \\ \log_p(\tau(u_{\chi})) & \log_p(\tau(u_{{\gp},\chi})) & \log_p(\tau(u_{\ob{\gp},\chi})) \end{pmatrix}$$
 has rank 2. This follows from a straightforward adaptation of the proof of \cite[Prop. 1.11]{betinadimitrovKatz} to the cyclotomic setting, which we explain for the sake of completeness. 
 
 By Roy's Six Exponentials Theorem \cite[Cor. 2]{roy}, we only have to show that $M$ has $\bQ$-linearly independent rows and $\bQ$-linearly independent columns. The two rows are linearly independent because $\sS(\chi)$ is transcendental. Since $\log_p(\textrm{N}_{H/k}(u))=\log_p(\textrm{N}_{H/\bQ}(y_0))=0$, any linear equation $a \cdot \log_p(u_\chi) + b\cdot \log_p(u_{{\gp},\chi}) +c \cdot \log_p(u_{\ob{\gp},\chi})$ with $a,b,c\in \bZ$ can be written as
 $$\sum_{1\neq g\in C}(\chi(g)-1)(a\cdot \log_p(g^{-1}(u))+b\cdot \log_p(y_0)) + \sum_{g\in C}(c\cdot\chi(g)-b)\log_p(g^{-1}\tau(y_0))=0.$$
In \textit{loc. cit.}, the same equation appears, with $+b$ instead of the $-b$ in the second sum. We then may repeat \textit{verbatim} their arguments to show that $a=b=c=0$, as wanted.
 \end{proof}
\begin{remark}
	Betina and Dimitrov proved in \cite[Proposition 1.11]{betinadimitrovKatz} an analogue of Proposition \ref{prop:existence_non_vanishing_L_invariant} where $\cL(\rho,\rho^+)$ is replaced by an \emph{anticyclotomic} $\cL$-invariant.
\end{remark}
 
 \begin{theorem}\label{thm:gross_kuzmin}
 	The Gross-Kuz'min conjecture holds for abelian extensions $L$ of $k$ and all odd primes $p$, that is, the module of $\Gamma$-coinvariants of 
 	$$A'_\infty=\varprojlim_{L\subseteq M \subseteq L\cdot \bQ_\infty}A'(M)$$
 	is finite, where $A'(M)$ is the $p$-Sylow of the $p$-split class group of $M$ and  the transition maps are the norm maps.
 \end{theorem}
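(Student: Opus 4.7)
The plan is to reduce the conjecture to its $\rho$-isotypic components (Conjecture \ref{conj:gross_finiteness_conj}) and invoke the tools already built in the paper. Let $\widetilde{H}/\bQ$ be the Galois closure of $H$. Because $L^\tau/k$ is abelian whenever $L/k$ is (where $\tau$ denotes complex conjugation), the compositum $\widetilde{H}=H\cdot H^\tau$ is still abelian over $k$, and the Gross--Kuz'min conjecture for $H$ reduces via the norm map to the same statement for $\widetilde{H}$. By the remark following Conjecture \ref{conj:gross_finiteness_conj}, and after enlarging the coefficient field, the conjecture for $\widetilde{H}$ is equivalent to Conjecture \ref{conj:gross_finiteness_conj} holding for every irreducible $\ob{\bQ}_p$-representation $\rho$ of $G=\Gal(\widetilde{H}/\bQ)$.

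Since $\Gal(\widetilde{H}/k)$ is an abelian normal subgroup of $G$ of index two, Clifford theory forces each irreducible $\rho$ to be either one-dimensional or of the form $\Ind_k^\bQ\chi$ with $\chi\neq\chi^\tau$. In the one-dimensional case $\rho$ factors through the Galois group of a finite abelian extension of $\bQ$, so Conjecture \ref{conj:gross_finiteness_conj} follows from Greenberg's theorem \cite{greenberg1973}. In the two-dimensional case $\rho=\Ind_k^\bQ\chi$, assuming first that $p$ is unramified in $k$, Proposition \ref{prop:existence_non_vanishing_L_invariant} supplies an admissible $p$-stabilization $\rho^+$ with $\cL(\rho,\rho^+)\neq 0$, and Theorem \ref{thm:exact_order_of_vanishing} (2) then delivers the $\rho$-part of the conjecture. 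Combining the two cases across all irreducibles yields Theorem \ref{thm:gross_kuzmin} under the auxiliary hypothesis that $p$ is unramified in $k$.

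The step I expect to be hardest is the ramified case, that is, when $p$ divides the discriminant of $k$. Every monomial representation $\rho=\Ind_k^\bQ\chi$ is then automatically ramified at $p$, and the framework of \S\ref{sec:iwasawa} (on which both Theorem \ref{thm:exact_order_of_vanishing} and Proposition \ref{prop:existence_non_vanishing_L_invariant} rely) does not directly apply. I would try to circumvent this by passing to the compositum $\widetilde{H}\cdot k'$ with a suitable cyclotomic field $k'/\bQ$ so that the relevant inertia at $p$ is absorbed and the induced representations of $\Gal(\widetilde{H}k'/\bQ)$ are unramified at $p$, reducing the ramified case to the unramified one by inflation along $\widetilde{H}k'/\widetilde{H}$ together with the invariance of $(A'_\infty)_\Gamma$-type invariants under such base changes; alternatively, one extends the descent formalism of Selmer complexes to allow tame ramification at $p$ and adapts the Roy-style transcendence argument of Proposition \ref{prop:existence_non_vanishing_L_invariant} accordingly. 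In either route the arithmetic core remains the non-vanishing of the $\cL$-invariant in the fully degenerate regime $f=2$, which hinges on Roy's strong six exponential theorem \cite{roy} through the cyclotomic adaptation of Betina--Dimitrov's trick.
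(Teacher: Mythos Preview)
Your reduction to irreducible constituents and the treatment of one-dimensional pieces via Greenberg are fine, as is invoking Proposition \ref{prop:existence_non_vanishing_L_invariant} together with Theorem \ref{thm:exact_order_of_vanishing} (2) \emph{when the latter applies}. The gap is in your dichotomy: you split according to whether $p$ ramifies in $k$, but the whole framework of \S\ref{sec:iwasawa}, and in particular Theorem \ref{thm:exact_order_of_vanishing}, carries the standing hypothesis that $\rho$ is unramified at $p$, i.e.\ that $p$ is unramified in $L$ itself (equivalently in its Galois closure), not merely in $k$. When $p$ is unramified in $k$ but ramifies in $L/k$ --- a completely generic situation --- your appeal to Theorem \ref{thm:exact_order_of_vanishing} (2) is illegitimate, and that case is simply left untreated by your argument.

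You have also inverted the difficulty. The case where $p$ ramifies in $L$ is in fact \emph{easier}, and this is how the paper disposes of it: the decomposition group $G_p\subseteq G$ is then non-trivial and, after a short reduction via \cite{jaulent2003}, may be taken abelian, so one can always choose a $G_{\Qp}$-stable line $W_p^+$ on whose quotient $W_p^-$ the group $G_p$ acts non-trivially, forcing $e=\dim\HH^0(\Qp,W_p^-)=0$. With no trivial zero present, a direct finiteness argument for $\Sel_0^\str(\rho,\rho^+)$ together with a control-type comparison with $\Sel_\infty^\str(\rho,\rho^+)^\Gamma$ (as in \cite{maks}) finishes the job --- no $\cL$-invariant and no transcendence input whatsoever. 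Roy's theorem is needed only in the \emph{unramified} case with $f=2$, where one must genuinely rule out the vanishing of $\cL(\rho,\rho^+)$. Finally, your proposed inflation to $\widetilde{H}\cdot k'$ for a cyclotomic $k'$ goes in the wrong direction: adjoining $p$-power roots of unity only increases ramification at $p$ and does not bring you back to the unramified framework.
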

\begin{proof}
Let $L$ be an abelian extension of $k$ of Galois group $C$. By \cite[p.3, Remarque]{jaulent2003}, we may replace $L$ by its Galois closure over $\bQ$ without loss of generality. Then $A'_\infty$ is a module over $\Zp[[\Gamma]]$ which carries a linear action of $C$, so it is enough to show that the $\chi$-part $A'_\infty$ has finite $\Gamma$-coinvariants for all multiplicative characters $\chi$ of $C$. When $\chi=\mathds{1}$, this follows from the validity of the Gross-Kuz'min conjecture for $k$ which is implicitly proven in \cite{greenberg1973}. When $\chi\neq \mathds{1}$ and when $p$ is unramified in $L$, this is precisely the content of Conjecture \ref{conj:gross_finiteness_conj} for $\rho=\Ind_k^\bQ \chi$, which is valid by Theorem \ref{thm:exact_order_of_vanishing} (2) and by Proposition \ref{prop:existence_non_vanishing_L_invariant}. 

We now indicate how to proceed when $p$ is ramified in $L$. This case is easier than the previous one because of the absence of trivial zeros. Let $\{1\}\neq G_p \subseteq G$ be the decomposition group at the place above $p$ defined by $\iota_p$. By \cite[Preuve du corollaire]{jaulent2003}, it is enough to prove the Gross-Kuz'min conjecture for a well-chosen subfield $L'$ of $L$ such that all the $p$-adic primes of $L'$ are non-split in $L$. In particular, we may reduce the proof to the case where $G_p$ is abelian. Indeed, if there is a unique $p$-adic place in $k$, then it suffices to consider $L'=L^{C\cap G_p}$. Take a faithful ($2$-dimensional) $p$-adic representation $(\rho,W_p)$ of $G$ and fix $T_p\subseteq W_p$ a $G$-stable lattice. Since $G_p\neq \{1\}$ is abelian, we can choose a $G_p$-stable line $W_p^+$ of $W_p$ such that $G_p$ acts \emph{non-trivially} on the quotient $W_p^-=W_p/W_p^+$ and we can form a Selmer group $\Sel^\str_n(\rho,\rho^+)$ for any $n\geq 0$ as in Definition \ref{def:sel}. Since $\HH^0(\Qp,W_p^-)=0$, the module $\HH^1(\Gamma,\HH^0(\bQ_{p,\infty},D_p^-))$ is finite, and it is not hard to see that the restriction map $\Sel^\str_0(\rho,\rho^+) \longrightarrow \Sel^\str_\infty(\rho,\rho^+)^\Gamma$ has a finite kernel and cokernel (see for instance the argument in \cite[Lemme 2.2.2]{maks}). On the other hand, $\Sel^\str_0(\rho,\rho^+)$ is finite by \cite[Thm. 2.4.7]{maks}, so the same conclusion holds for $\Sel^\str_\infty(\rho,\rho^+)^\Gamma$. Thus $\Hom_G(T_p,A_\infty')_\Gamma$ is finite by Lemma \ref{lem:relation_cohomologie_et_unités} and Diagram (\ref{eq:gros_diagramme_commutatif_Sel_infty}).
\end{proof}
 
\subsection{Weight one modular forms}\label{sec:weight_one}
We consider in this paragraph $E$-valued Artin representations $(\rho,W)$ that are two-dimensional and irreducible with $d^+=1$. The proof by Khare and Wintenberger of Serre's modularity conjecture implies that $\bC \otimes_E W$ is the Deligne-Serre representation of a cuspidal newform $f(q)=\sum_{n\geq 1}a_nq^n\in E[[q]]$ of weight 1 \cite[Cor. 10.2 (ii)]{kharewintenberger}. It has level equal to the Artin conductor $N$ of $\rho$, and for all primes $\ell \nmid N$, we have 
\[ \textrm{tr}(\rho(\sigma_\ell))=a_\ell, \]
where $\sigma_\ell$ is the Frobenius substitution at $\ell$ (see \cite{deligne1974formes}). Some aspects of Iwasawa theory for $f$ and an odd prime $p$ were studied in \cite{maks} and \cite{GV2020} under the hypothesis that $p\nmid N$ (\textit{i.e.}, $\rho$ is unramified at $p$) and that $f$ is regular at $p$. The last assumption means that
\[\alpha\neq \beta,\]
where $\alpha$ and $\beta$ are the eigenvalues of $\sigma_p$. We propose in this paragraph a new conjectural description of the $p$-adic $L$-function of $f$ that is considered in \textit{loc. cit.}. We therefore assume that $p\nmid N$ and $\alpha\neq \beta$. 

As $\sigma_p$ has distinct eigenvalues, there exist exactly two choices of $p$-stabilizations of $\rho$, namely the two lines $W_\alpha=W_p^{\sigma_p=\alpha}$ and $W_\beta=W_p^{\sigma_p=\beta}$. Both $p$-stabilizations are motivic, hence $\eta$-admissible for all $\eta\in\widehat{\Gamma}$ by Lemma \ref{lem:schanuel+motivic_implies_admissible}. 


Let us consider $W_p^+=W_\beta$ (so $W_p^-\simeq W_\alpha$), that we let correspond to the $p$-stabilization $f_\alpha(q)=f(q)-\beta f(q^p)$ of $f$. Then $f_\alpha$ is a normalized eigen cuspform of level $Np$ sharing the same eigenvalues with $f$ under the action of Hecke operators of prime-to-$p$ level, and moreover, $U_p f_\alpha = \alpha f_\alpha$. A theorem of Hida and Wiles asserts that $f_\alpha$ belongs to a primitive Hida family $\cF$ of tame level $N$. The uniqueness of $\cF$ up to Galois conjugacy is proved by Bellaïche and Dimitrov in \cite{bellaiche2016eigencurve}. The existence of a local parameterization of $\cF$ around $f_\alpha$ proven in \cite[Cor. 4.7.3]{maks} yields, in particular, the following result. 

\begin{proposition}
There exists a finite flat extension $\cO$ of $\Zp$ containing the Fourier coefficients of $f_\alpha$ and, for any integer $m>>0$, a $p$-ordinary newform $g_m(q)=\sum_{n\geq 1}a_n(g_m)\in\cO[[q]]$ of weight $(p-1)p^m+1$ and level $N$, such that 
\[\lim_{m\to +\infty} g_{m,\alpha_m}(q) = f_\alpha(q),\]
where $g_{m,\alpha_m}(q)$ is the unique $p$-ordinary $p$-stabilization of $g_m$. In particular, for all integers $n\geq 1$ coprime with $p$, we have  $\lim_{m\to +\infty} a_n(g_m) = a_n$ in $\cO$.

\end{proposition}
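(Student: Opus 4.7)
The plan is to produce the forms $g_m$ as classical specializations of the primitive Hida family $\cF$ passing through $f_\alpha$. By the theorem of Hida and Wiles recalled in the statement, such a family exists (and it is unique up to Galois conjugacy by Bellaïche-Dimitrov \cite{bellaiche2016eigencurve}). The crucial input will be the local parameterization of $\cF$ around $f_\alpha$ given by \cite[Cor.~4.7.3]{maks}, which provides a finite flat extension $\cO$ of $\Zp$ and a formal $q$-expansion $F(q)=\sum_{n\geq 1}A_n(X)q^n$ with $A_n(X)\in\cO[[X]]$ such that: (i) $A_n(0)=a_n(f_\alpha)$ for all $n\geq 1$; (ii) the assignment $X\mapsto x$ for $x$ corresponding to a classical weight $k\in\bN$ near $1$ in $p$-adic weight space recovers the $q$-expansion of a classical $p$-ordinary form of weight $k$ and tame level $N$.

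The sequence of weights $k_m=(p-1)p^m+1$ converges to $1$ in $p$-adic weight space: indeed, the character $\gamma\mapsto\gamma^{k_m-1}$ of $1+p\Zp$ tends to the trivial character since $(p-1)p^m\to 0$ in $\Zp$. Consequently, the corresponding points $x_m\in\cO$ in weight space satisfy $x_m\to 0$ as $m\to\infty$, so for $m$ sufficiently large they lie inside the étale neighborhood where the parameterization of \cite[Cor.~4.7.3]{maks} is valid. For any such $m$, Hida's classicality theorem ensures that the specialization $F(q)|_{X=x_m}$ is the $q$-expansion of a classical $p$-ordinary eigenform of weight $k_m\geq 2$ and level $Np$.

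It remains to verify that this classical form is the $p$-ordinary $p$-stabilization $g_{m,\alpha_m}(q)$ of a newform $g_m(q)$ of level $N$ (rather than a newform of level $Np$), and to extract the two convergence statements. The first point is where the main difficulty lies: one must show that ``being $p$-old'' is preserved on the irreducible component of $\cF$ through $f_\alpha$, at least locally. This follows from the fact that $\cF$ is the Hida-theoretic ordinary $p$-stabilization of a Hida family of tame conductor $N$ newforms, a property propagated from the weight-one point $f_\alpha$ using the étale parameterization. Once this is granted, the convergence $g_{m,\alpha_m}(q)\to f_\alpha(q)$ in $\cO[[q]]$ follows coefficient-wise from the continuity $A_n(x_m)\to A_n(0)$ in $\cO$, and the second assertion follows because $a_n(g_m)=a_n(g_{m,\alpha_m})$ whenever $(n,p)=1$, while $a_n(g_{m,\alpha_m})\to a_n(f_\alpha)=a_n$.
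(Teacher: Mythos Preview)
Your proposal is correct and follows exactly the approach the paper takes: the paper does not give a separate proof, but simply records that the proposition is a consequence of the local parameterization of the primitive Hida family $\cF$ around $f_\alpha$ established in \cite[Cor.~4.7.3]{maks}. Your plan unpacks precisely this reference. One minor simplification: you flag the ``$p$-old'' step as the main difficulty, but this is already contained in the standing hypothesis that $\cF$ is a \emph{primitive} Hida family of tame level $N$, which by definition means that all but finitely many classical specializations are $p$-stabilizations of newforms of level $N$; so no further propagation argument from the weight-one point is needed.
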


Assume henceforth that $p$ does not divide the order of the finite group $\rho(G_\bQ)$. The residual representation $\ob{\rho}$ is then irreducible and $p$-distiguished, and $W_p$ has a unique Galois-stable $\cO$-lattice $T_p$ (up to homothety). One can attach to $\cF$ a two-variable $p$-adic $L$-function à la Mazur-Kitagawa, and the choice of Shimura's periods for each member of the family $\cF$ can be made in a canonical way by the results of Emerton, Pollack and Weston (see \cite[\S3.3-4]{emerton2006variation}). We obtain, for all integers $m>>0$, a complex period $\Omega_m^+\in\bC^\times$ and a $\cO$-valued measure $\theta_m$ on $\Gamma$ such that, for all characters $\eta \in\widehat{\Gamma}$ of conductor $p^n$,

\begin{equation}\label{eq:formule_theta_m}
\eta(\theta_m)=\left(\frac{p}{\alpha_m}\right)^{n} \left(1-\frac{\eta(p)}{\alpha_m}\right)\left(1-\frac{\eta(p)\beta_m}{p}\right)\cdot j\left(\frac{L(g_m\otimes \eta,1)}{\g(\eta)(-2i\pi)\Omega_m^+}\right),
\end{equation}
where $\alpha_m$ and $\beta_m$ are the roots of the $p$-th Hecke polynomial of $g_m$ with $\alpha_m\in\cO^\times$ (see \cite[Prop. 4.4.1]{maks}).

The construction of Emerton-Pollack-Weston also provides us with a $p$-adic measure $\theta_{f,\alpha}\in\Lambda=\cO[[\Gamma]]$ which is defined as the specialization at $f_\alpha$ of the $p$-adic measure attached to $\cF$. Changing the choice of the canonical periods in $\cF$ results in multiplying $\theta_{f,\alpha}$ by a unit in $\cO$. By \cite[Lemme 4.8.1]{maks}, we have 
\begin{equation}\label{eq:lim_theta_m}
	\lim_{m\to +\infty}\theta_m=\theta_{f,\alpha}
\end{equation}
in $\Lambda$ (endowed with its topology of local ring). 

\begin{lemme}
	Let $\omega_p^+$ (resp. $\omega_{\infty}^+$) be a $T_p$-optimal basis of $W_p^+$ (resp. of $W^{\sigma_\infty=1}$) and assume that \textbf{EX}\(_{\rho,\rho^+}\) and \textbf{EZC}\(_{\rho,\rho^+}\) hold for the $p$-stabilization $(\rho^+,W_p^+)$ given by $W_p^+=W_\beta$ as above. For all characters $\eta \in\widehat{\Gamma}$ of conductor $p^n$, we have
	\begin{equation}\label{eq:formule_theta_f_alpha}
		\eta(\theta_{\rho,\rho^+})=\left(\frac{p}{\alpha}\right)^{n} \left(1-\frac{\eta(p)}{\alpha}\right)\left(1-\frac{\eta(p)\beta}{p}\right) \Reg_{\omega_p^+}(\rho\otimes\eta)\cdot j\left(\frac{L(f\otimes \eta,1)}{\g(\eta)(-2i\pi)\Reg_{\omega_\infty^+}(\rho\otimes\eta)}\right).
	\end{equation}
\end{lemme}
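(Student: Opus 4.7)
The lemma is essentially a bookkeeping translation of the interpolation properties \textbf{EX}\(_{\rho,\rho^+}\) and \textbf{EZC}\(_{\rho,\rho^+}\) for the pair \((\rho,\rho^+)\) with \(\rho^+=W_\beta\), into the shape that the Mazur--Kitagawa measure \(\theta_{f,\alpha}\) takes in \eqref{eq:formule_theta_m}. I would split the verification into the cases \(\eta\neq\mathds{1}\) and \(\eta=\mathds{1}\).

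First I would treat a non-trivial \(\eta\) of conductor \(p^n\) (so \(n\ge1\)). Starting from the form of \textbf{EX}\(_{\rho,\rho^+}\) given in Lemma~\ref{lem:formule_interpolation_en_s=1}, and substituting \(d=2\), \(d^+=d^-=1\), one has \(\det(\rho^-)(\sigma_p^{-n})=\alpha^{-n}\) because \(W_p^-\simeq W_\alpha\), and \(\tau(\eta)=\g(\eta^{-1})\) by Lemma~\ref{lem:properties_galois_gauss_sums}~(1). Since \(\eta\) is even and non-trivial, the standard identity \(\g(\eta)\g(\eta^{-1})=\eta(-1)p^n=p^n\) yields \(\g(\eta^{-1})=p^n/\g(\eta)\). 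Pulling the rational factor \(p^n\) out of \(j\) gives
\[
\eta(\theta_{\rho,\rho^+})=\Bigl(\tfrac{p}{\alpha}\Bigr)^{n}\Reg_{\omega_p^+}(\rho\otimes\eta)\cdot j\!\left(\frac{L(f\otimes\eta,1)}{\g(\eta)(-2i\pi)\Reg_{\omega_\infty^+}(\rho\otimes\eta)}\right),
\]
which is the desired formula since \(\eta(p)=0\) (as \(p\) divides the conductor of \(\eta\)), making \((1-\eta(p)/\alpha)(1-\eta(p)\beta/p)=1\).

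Next I would handle \(\eta=\mathds{1}\) (so \(n=0\), \(\g(\mathds{1})=1\)). Here two sub-cases arise according to the value of \(\alpha\). If \(\alpha\neq1\) then \(W_p^-\simeq W_\alpha\) has no Galois invariants, so \(e=\dim\HH^0(\Qp,W_p^-)=0\) and \(\cL(\rho,\rho^+)=1\) (Lemma~\ref{lem:formule_L_invariant_quot_det}). The \(e=0\) case of \textbf{EZC}\(_{\rho,\rho^+}\) as reformulated in Lemma~\ref{lem:formule_interpolation_en_s=1} gives
\[
\mathds{1}(\theta_{\rho,\rho^+})=\cE(\rho,\rho^+)\Reg_{\omega_p^+}(\rho)\cdot j\!\left(\frac{L(f,1)}{-2i\pi\,\Reg_{\omega_\infty^+}(\rho)}\right),
\]
which coincides with \eqref{eq:formule_theta_f_alpha} at \(\eta=\mathds{1}\) because \(\cE(\rho,\rho^+)=(1-\alpha^{-1})(1-\beta/p)\). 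If instead \(\alpha=1\) then \(e\ge1\) and \textbf{EZC}\(_{\rho,\rho^+}\) forces \(\mathds{1}(\theta_{\rho,\rho^+})=0\); on the other hand the factor \((1-\eta(p)/\alpha)|_{\eta=\mathds{1}}=1-1/\alpha\) vanishes, so the right-hand side of \eqref{eq:formule_theta_f_alpha} is also zero, and the asserted equality reads \(0=0\).

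No step is truly an obstacle: the argument is a pure substitution of Gauss-sum and functional-equation identities (\eqref{eq:eval_equation_fonctionnelle}, Lemma~\ref{lem:properties_galois_gauss_sums}) together with the reformulations of \textbf{EX} and \textbf{EZC} from Lemma~\ref{lem:formule_interpolation_en_s=1}. The only delicate points are remembering that \(\tau(\eta)\) is the Gauss sum of \(\eta^{-1}\), that \(\eta(p)=0\) whenever \(p\mid \ff(\eta)\) so the modified Euler factors collapse to \(1\) in the ramified range, and that the \(\alpha=1\) case of the trivial character has to be read through \textbf{EZC} (which matches the vanishing of the right-hand side) rather than through a naive specialization of \textbf{EX}.
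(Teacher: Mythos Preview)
Your proof is correct and follows essentially the same route as the paper's: both split into $\eta\neq\mathds{1}$ and $\eta=\mathds{1}$, invoke Lemma~\ref{lem:formule_interpolation_en_s=1} together with the Gauss-sum identity $\tau(\eta)=\g(\eta^{-1})=p^n/\g(\eta)$ and $\eta(p)=0$ in the non-trivial case, and compute $\cE(\rho,\rho^+)=(1-\alpha^{-1})(1-\beta/p)$, $\cL(\rho,\rho^+)=1$ when $e=0$ in the trivial case. Your treatment is in fact slightly more explicit than the paper's in that you spell out the degenerate sub-case $\alpha=1$ (where both sides of \eqref{eq:formule_theta_f_alpha} vanish), which the paper only alludes to via the biconditional ``$e=0$ if and only if $\alpha\neq1$''.
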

\begin{proof}
	Notice first that $d^+=d^-=1$ and $\sigma_p$ acts by multiplication by $\alpha$ on $W_p^-$, so $\det(\rho^-)(\sigma_p)=\alpha$. Let $\eta\in\widehat{\Gamma}$. As $\eta$ is even, we have $\tau(\eta)=\g(\eta^{-1})=p^n\g(\eta)^{-1}$ by Lemma \ref{lem:properties_galois_gauss_sums} (1). The equality \eqref{eq:formule_theta_f_alpha} then follows from Lemma \ref{lem:formule_interpolation_en_s=1} if $\eta\neq \mathds{1}$, as $\eta(p)=0$. In the case where $\eta=\mathds{1}$, \eqref{eq:formule_theta_f_alpha} follows from the same lemma, after noticing that $e=\dim \HH^0(\Qp,W_p^-)=0$ if and only if $\alpha\neq 1$, in which case we obtain $\cL(\rho,\rho^+)=1$ and $\cE(\rho,\rho^+)=\left(1-\frac{1}{\alpha}\right)\left(1-\frac{\beta}{p}\right)$ from the definition.
\end{proof}

 The interpolation formulae \eqref{eq:formule_theta_m} and \eqref{eq:formule_theta_f_alpha} are very similar, except that there is an extra $p$-adic regulator in the expression of $\eta(\theta_{\rho,\rho^+})$. This discrepancy comes from the fact that the Selmer groups $\HH^1_\ff(g_m\otimes \eta)$ and  $\HH^1_\ff((g_m\otimes \eta)^*(1))$ of the motive attached to $g_m\otimes \eta$ and its dual vanish by the work of Kato (\cite[Thm. 14.2 (1)]{kato2004p}), unlike with $\HH^1_\ff((\rho\otimes \eta)^*(1))$. 
 
As $\lim_{m\to +\infty} \eta(\theta_m)=\eta(\theta_{f,\alpha})$ by \eqref{eq:lim_theta_m}, it is tempting to raise the following question.

\begin{question}\label{question}
	Under the running assumptions $p\nmid N\cdot(\sharp \rho(G_{\bQ}))$ and $\alpha\neq \beta$, is it true that 
	\begin{equation}\label{eq:question}
		\theta_{f,\alpha}=u\cdot \theta_{\rho,\rho^+}
	\end{equation}
	for some unit $u\in \cO^\times$?
\end{question}
In other terms, the construction of $\theta_{f,\alpha}$ yields a potential definition of $\theta_{\rho,\rho^+}$. Note that Question \ref{question} makes sense. Indeed, both $\theta_{f,\alpha}$ and $\theta_{\rho,\rho^+}$ are well-defined up to a unit in $\cO^\times$, as they depend on the choice of $p$-normalized periods.

\begin{proposition}
	Assume Conjecture \ref{conj:IMC} for $(\rho,\rho^+)$. There exists an element $U \in \Lambda \otimes \Qp$ such that 
	\[\theta_{f,\alpha}=U\cdot \theta_{\rho,\rho^+}.\]
\end{proposition}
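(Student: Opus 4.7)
The plan is to combine Kato's Euler system (in its Hida-family avatar due to Ochiai) with \textbf{IMC}$_{\rho,\rho^+}$ to establish divisibility of $\theta_{f,\alpha}$ by $\theta_{\rho,\rho^+}$ in $\Lambda\otimes\Qp$. First I recall that, by Lemma \ref{lem:freeness_Iwasawa_modules}, $\HH^1_{\Iw,\ff,p}(\bQ,\widecheck{T}_p)$ is a free $\Lambda$-module of rank $d^+=1$, and by Theorem \ref{thm:equivalence_torsionness_Sel}, the image of any $\Lambda$-basis $\Psi$ under the extended Coleman map $\sC^{\str}_{\omega_p^+}$ differs from a generator $\theta^\alg_{\rho,\rho^+}$ of $\char_\Lambda X_\infty(\rho,\rho^+)$ by a unit of $\Lambda$. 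Under \textbf{IMC}$_{\rho,\rho^+}$, $\theta_{\rho,\rho^+}$ and $\theta^\alg_{\rho,\rho^+}$ generate the same ideal, so it suffices to produce a class $c_\infty\in\HH^1_{\Iw,\ff,p}(\bQ,\widecheck{T}_p)\otimes\Qp$ whose image under $\sC^{\str}_{\omega_p^+}$ agrees with $\theta_{f,\alpha}$ up to a unit in $(\Lambda\otimes\Qp)^\times$.

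The construction of $c_\infty$ would proceed by Hida-family interpolation. For each classical specialization $g_m$ of the Hida family $\cF$ (with $m\gg 0$), Kato's zeta element gives $c_m\in \HH^1_{\Iw,\ff}(\bQ,\widecheck{T}_{p,g_m})$ whose image under Perrin-Riou's regulator equals $\theta_m$ up to an explicit Euler factor. Ochiai's two-variable interpolation of these zeta elements along $\cF$, specialized at the weight-1 point $f_\alpha$, is the candidate for $c_\infty$. By compatibility of Perrin-Riou's regulator with Hida-family specialization, combined with the convergence \eqref{eq:lim_theta_m}, one expects $\sC^{\str}_{\omega_p^+}(c_\infty)$ to agree with $\theta_{f,\alpha}$ up to a unit in $\Lambda\otimes\Qp$. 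Writing then $c_\infty=\lambda\cdot\Psi$ in $\HH^1_{\Iw,\ff,p}(\bQ,\widecheck{T}_p)\otimes\Qp$ for some $\lambda\in\Lambda\otimes\Qp$, one concludes $\theta_{f,\alpha}=U\cdot\theta_{\rho,\rho^+}$ with $U\in\Lambda\otimes\Qp$ obtained from $\lambda$ and the units of the previous paragraph.

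The hard part will be the rigorous construction of $c_\infty$ and the verification that the weight-1 specialization of Ochiai's interpolated regulator coincides (up to a unit) with the extended Coleman map $\sC^{\str}_{\omega_p^+}$ introduced in \S\ref{sec:torsionness_Sel}. At weight $\geq 2$ this would reduce to Kato's explicit reciprocity law, but at weight 1 the appearance of the $p$-adic Stark regulator (which precisely accounts for the discrepancy between the interpolation formulae \eqref{eq:formule_theta_m} and \eqref{eq:formule_theta_f_alpha}) must be accommodated through a continuity argument along the Hida family, in the spirit of the Emerton--Pollack--Weston comparison used to define $\theta_{f,\alpha}$ itself.
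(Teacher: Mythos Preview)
Your proposal is not a complete proof: you yourself flag the construction of $c_\infty$ and the verification that $\sC^{\str}_{\omega_p^+}(c_\infty)$ recovers $\theta_{f,\alpha}$ as ``the hard part'' and leave it at the level of ``one expects''. That step is precisely where all the content lies; without it, what remains is a strategy, not an argument. In particular, the compatibility you invoke between Perrin-Riou's regulator in the Hida family and the weight-one extended Coleman map $\sC^{\str}_{\omega_p^+}$ is delicate (the appearance of the $p$-adic Stark regulator at weight one is exactly what makes this non-formal), and you give no mechanism for the passage to the limit beyond analogy with \eqref{eq:lim_theta_m}.

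The paper's proof is a two-line argument that bypasses this entirely: it invokes \cite[Thm.~C]{maks}, which already establishes the divisibility
\[
\theta_{f,\alpha}\in\char_{\Lambda\otimes\Qp}X_\infty(\rho,\rho^+)\otimes\Qp.
\]
Under \textbf{IMC}$_{\rho,\rho^+}$ this ideal is generated by $\theta_{\rho,\rho^+}$, and the conclusion is immediate. What you are sketching (interpolate Kato--Ochiai zeta elements along the Hida family, specialize at weight one, compare with the Coleman map) is essentially the \emph{proof} of that cited theorem, not an independent route to the present proposition. So your plan is not wrong in spirit, but it reproduces work already packaged into the reference rather than offering an alternative, and it does so incompletely.
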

\begin{proof}
	We know from \cite[Thm. C]{maks} that $\theta_{f,\alpha}$ belongs to $\char_{\Lambda\otimes \Qp} X_\infty(\rho,\rho^+) \otimes \Qp$. The latter is generated by $\theta_{\rho,\rho^+}$ over $\Lambda \otimes \Qp$ under \textbf{IMC}$_{\rho,\rho^+}$. Therefore, there exists $U \in \Lambda \otimes \Qp$ such that \(\theta_{f,\alpha}=U\cdot \theta_{\rho,\rho^+}\), as wanted.
\end{proof}

\begin{remark}
	Assuming that $f$ is a dihedral form, Ferrara made a conjecture in \cite{ferraraarticle} which is equivalent to a weak version of \eqref{eq:question} where $u$ is taken in $\Frac(\cO)^\times$. When $f$ has CM by an imaginary quadratic field $k$ in which $p$ splits, he compared $\theta_m$ and $\theta_{f,\alpha}$ with Katz's $p$-adic $L$-function and deduced from a limit argument that his conjecture holds. In this special case, Conjecture \ref{conj:IMC} holds by Propositions \ref{prop:IMC_Katz} and \ref{prop:iqf_inert_and_split} under the technical assumption $p\nmid h_k$.
\end{remark}

\subsection{Adjoint of a weight one modular form}\label{sec:examples_adjoint}
We explore the connection between the ``variant of Gross-Stark conjecture'' formulated by Darmon, Lauder and Rotger in \cite{DLR2} and \textbf{EZC}$_{\rho,\rho^+}$ for $\rho=\rho_g\otimes \rho_h$, where $\rho_g,\rho_h$ are the Deligne-Serre representations attached to weight one newforms $g,h$ (see \S\ref{sec:weight_one}). We restrict ourselves to the adjoint case, i.e. $h$ is the dual form of $g$, in which Darmon-Lauder-Rotger's conjecture is proven by Rivero and Rotger in \cite{riveroadjoint}.  Letting $\ad \rho_g$ be the traceless adjoint of $\rho_g$, we have in this case
\[\rho_g \otimes \rho_h \simeq \rho_g \otimes \rho_g^* \simeq \ad \rho_g \oplus \mathds{1}\]
as $G_{\bQ}$-representations.

Consider a cuspidal newform $g$ of weight one with coefficients in $\cO_p$ and let $\rho=\ad \rho_g$. Then $\rho$ does not contain the trivial representation and $(d,d^+)=(3,1)$. Assume that $\rho_g$ is unramified at $p$, residually irreducible, and $\alpha\neq \pm\beta$ modulo the maximal ideal of $\cO_p$, where $\alpha$ and $\beta$ are the roots of the $p$-th Hecke polynomial of $g$. Assume also that $g$ does not have real multiplication by a real quadratic field in which $p$ splits.

The Frobenius substitution $\sigma_p$ at $p$ acts on $W$ with distinct eigenvalues $1,\alpha/\beta,\beta/\alpha$, and there are accordingly three choices $W_1,W_{\alpha/\beta},W_{\beta/\alpha}$ of $p$-stabilizations of $W_p$. Note that these three lines are motivic, hence $\eta$-admissible for all $\eta\in\widehat{\Gamma}$ by Lemma \ref{lem:schanuel+motivic_implies_admissible}. 

The theorem of Rivero-Rotger for $\ad (g_\alpha)$ can be formulated in terms of the $\cL$-invariant introduced in \S\ref{sec:L-invariant} as follows.

\begin{proposition}\label{prop:rivero_rotger}
Take $W_p^+$ to be $E_p \otimes_E W_{\beta/\alpha}$. Then $e=\dim \HH^0(\Qp,W_p^-)=1$ and
\begin{equation}\label{eq:rivero_rotger}
I_p(g_\alpha)=\cL(\rho,\rho^+) \quad \mod E^\times,
\end{equation}
where $I_p(g_\alpha)\in E_p$ is the $p$-adic period attached to $g_\alpha$ (see \cite[\S1]{riveroadjoint} for its definition). Moreover, for any $E$-basis $\omega_p^+$ of the line $W_{\beta/\alpha}$, we have
\begin{equation}\label{eq:gross-stark_unit}
	\Reg_{{\omega}^+_{p}}(\rho)= \log_p(u_{g_\alpha}) \mod E^\times,
\end{equation}
where $u_{g_\alpha}\in E\otimes \cO_H^\times$ is the Gross-Stark unit appearing in the Elliptic Stark Conjecture and its variants \cite{DLR1,DLR2}.
\end{proposition}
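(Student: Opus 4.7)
The computation of $e$ is direct: since $\rho_g$ is unramified at $p$ with Frobenius eigenvalues $\{\alpha,\beta\}$, the Frobenius $\sigma_p$ acts on $W_p$ with eigenvalues $1,\alpha/\beta,\beta/\alpha$, all distinct by hypothesis. Hence $W_p=E_p W_1\oplus E_p W_{\alpha/\beta}\oplus E_p W_{\beta/\alpha}$, and taking $W_p^+=E_p\otimes W_{\beta/\alpha}$ gives $W_p^-=E_p W_1\oplus E_p W_{\alpha/\beta}$. As $\rho$ is unramified at $p$, one has $\HH^0(\Qp,W_p^-)=(W_p^-)^{\sigma_p=1}=E_p W_1$, which is one-dimensional.

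For the regulator formula \eqref{eq:gross-stark_unit}, I would unwind Definition \ref{def:p_stab} and the explicit formula \eqref{eq:expression_reg_p}. Since $d^+=1$, one has $\HH^1_\ff(\rho^*(1))=\Hom_{G_{\bQ}}(W,E\otimes\cO_H^\times)$, a one-dimensional $E$-vector space. Fix any nonzero $\psi$ in this space and any nonzero $\omega_p^+\in W_{\beta/\alpha}$; then $\Reg_{\omega_p^+}(\rho)=\log_p(\psi(\omega_p^+))$. The content of the claim is that the unit $\psi(\omega_p^+)\in \ob{\bQ}\otimes \cO_H^\times$, viewed via $\iota_p$, agrees up to an element of $E^\times$ with the Gross--Stark unit $u_{g_\alpha}$ defined in \cite{DLR1,DLR2}. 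This is essentially the definition of $u_{g_\alpha}$: the Elliptic Stark Conjecture singles out the Gross--Stark unit as an $E$-rational generator of the $\rho$-isotypic component of $\cO_H^\times$ projected onto $W_{\beta/\alpha}$ via the $\sigma_p$-eigenspace decomposition. Tracing through the conventions of \emph{loc. cit.} then yields \eqref{eq:gross-stark_unit} modulo $E^\times$.

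The main obstacle is \eqref{eq:rivero_rotger}, which requires matching two \emph{a priori} different $\cL$-invariants. The approach is to apply Lemma \ref{lem:formule_L_invariant_quot_det}, which expresses $\cL(\rho,\rho^+)$ as
\[
\cL(\rho,\rho^+) = \frac{\det\begin{pmatrix}A^+ & B^+\\ A^- & B^-\end{pmatrix}}{\det(A^+)\det(O^-)},
\]
with $e=1$ so that $B^\pm$ and $O^-$ are scalars and $A^+$ is scalar, the three matrices $A^-,B^-,O^-$ being respectively $\log_p$ of the unit, $\log_p$ of the $p$-unit and $\ord_p$ of the $p$-unit evaluated at a basis vector of $W^{-,0}_p=E_pW_1$. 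Concretely, the numerator is (up to $E^\times$) a $2\times 2$ $p$-adic regulator involving $u_{g_\alpha}$, its image in the $\sigma_p=1$ component, and a $p$-unit $u^{(p)}_{g_\alpha}$ detected at $\mathfrak{p}$. On the other hand, $I_p(g_\alpha)$ is defined by Rivero--Rotger via Perrin-Riou's logarithm on a carefully chosen element in the $\widecheck W_p^-$ component of $\HH^1_\Iw(\Qp,\widecheck T_p)$; its expression in \cite[\S 1]{riveroadjoint} is precisely the same quotient of logarithms and valuations of $u_{g_\alpha}$ and $u_{g_\alpha}^{(p)}$ (up to the unit factor arising from the choice of periods). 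One may therefore conclude by direct comparison, invoking Lemma \ref{lem:comparaison_L_invariant_Benois} to identify our $\cL(\rho,\rho^+)$ with $(-1)^e\cL(V,D)$ in the sense of Benois, and then quoting the identification of Benois' $\cL$-invariant with the one appearing in Rivero--Rotger's statement, which was worked out in \cite{riveroadjoint} (see also \cite{maksouddimitrov} for the general comparison principle in the non-critical adjoint setting).
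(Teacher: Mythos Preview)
Your computation of $e=1$ and your argument for \eqref{eq:gross-stark_unit} are essentially the same as the paper's: both reduce to identifying $\psi(\omega_p^+)$ with a generator of the $(\sigma_p=\beta/\alpha)$-eigenline inside the $\rho$-isotypic component of $E\otimes\cO_H^\times$, which is how $u_{g_\alpha}$ is defined.

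For \eqref{eq:rivero_rotger} you take a more roundabout path than the paper. The paper does not go through Benois' $\cL$-invariant at all. Instead it cites \cite[Thm.~A]{riveroadjoint} directly, which already states
\[
I_p(g_\alpha)=\frac{\det\begin{pmatrix}A^+&B^+\\A^-&B^-\end{pmatrix}}{\det(A^+)} \quad \bmod E^\times,
\]
and then simply observes that $\det(O^-)=\ord_p(\psi_1'(t_1^-))\in E^\times$ (it is the $p$-adic valuation of an $E$-rational $p$-unit), so the right-hand side agrees with the formula of Lemma~\ref{lem:formule_L_invariant_quot_det} modulo $E^\times$. Two points of caution in your write-up: first, $I_p(g_\alpha)$ is \emph{defined} in \cite[\S1]{riveroadjoint} as a $p$-adic iterated integral, not via Perrin-Riou's logarithm; the expression as a quotient of $p$-adic logarithms of units and $p$-units is the \emph{content} of their Theorem~A, not the definition. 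Second, the detour through Lemma~\ref{lem:comparaison_L_invariant_Benois} and Benois' theory is unnecessary here and does not obviously land you on the exact quotient you need without reproving what Rivero--Rotger already did. The cleaner route is: quote their Theorem~A, then note $\det(O^-)\in E^\times$.
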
 
\begin{proof}
As $\alpha/\beta$ and $\beta/\alpha$ are $\neq 1$, one has $\HH^0(\Qp,W_p^{-})=E_p\otimes W_1$ which is a line, so \(e=1\). Consider the matrices $A^\pm,B^\pm,O^-$ defined in \eqref{eq:def_mat_A_B_O} using motivic bases $\{t_1^+,t_1^-\}$ and $\{\psi_1,\psi'_1\}$. That is, $t_1^+\in W_{\beta/\alpha}\subset W_{p}^+$, $t_1^-\in W_1$ and $\psi_1,\psi'_1\in \Hom_{G_{\bQ}}(W,U'_0)$ where $U'_0=E \otimes \cO_H[\tfrac{1}{p}]^\times$. 

\cite[Thm. A]{riveroadjoint} states that 
$$I_p(g_\alpha)=\det \begin{pmatrix}A^+ & B^+ \\ A^- & B^- \end{pmatrix} (\det A^+)^{-1} \mod E^\times.$$
As $\det(O^-)=\ord_p(\psi_1'(t_1^-))\in E^\times$, we conclude from Lemma \ref{lem:formule_L_invariant_quot_det} that \eqref{eq:rivero_rotger} holds.

The $p$-adic Stark regulator $\Reg_{{\omega}^+_{p}}(\rho)$, computed with respect to the bases $\omega_{p}^+=t_1^+$ and $\omega_{\ff}=\psi_1$, is equal to $\det A^+=\log_p(\psi_1(t_1^+))$. Recall that $u_{g_\alpha}$ is, by definition, any generator of the $E$-line of $\psi_1(W)\subset E\otimes \cO_H^\times$ on which $\sigma_p$ acts with eigenvalue $\beta/\alpha$. As $\psi_1(t_1)$ belongs to the same $E$-line and is non-trivial, their $p$-adic logarithms must coincide modulo $E^\times$.
\end{proof}

The $p$-adic iterated integral of \eqref{eq:rivero_rotger} can be recast as the leading term of a certain $p$-adic $L$-function as we now recall.

Consider as in \S\ref{sec:weight_one} the unique primitive Hida family  $\cG$ passing through $g_\alpha$. Bellaiche-Dimitrov's theorem \cite{bellaiche2016eigencurve} implies that $\cG$ can be parameterized by the weight variable $k$ around $g_\alpha$. Let $L^\HH_p(\ad(\cG),k,s)$ be Hida's two-variable $p$-adic $L$-function attached to $\ad(\cG)$ in \cite{hidaGL3} and put $L^\HH_p(\ad(g_\alpha),s)=L^\HH_p(\ad(\cG),k,s)_{\big|k=1}$. 
By \cite[Lem. 4.2]{DLR2} and \cite[Prop. 2.5]{riveroadjoint}, the quantity $I_p(g_\alpha)$ can be interpreted as the value at $s=1$ of the first derivative of $L^\HH_p(\ad(g_\alpha),s)$. Therefore, Proposition \ref{prop:rivero_rotger} reads:
\begin{equation}\label{eq:derivative_hida}
	(L^\HH_p)'(\ad(g_\alpha),1)= \cL(\rho,\rho^+)\mod E^\times.
\end{equation}

\textbf{EZC}$_{\rho,\rho^+}$ gives a similar formula for the $p$-adic $L$-function defined by $L_p(\rho,\rho^+,s)=\kappa^{1-s}(\theta'_{\rho,\rho^+})$, where $\theta'_{\rho,\rho^+}$ is computed with respect to a basis $\omega_p^+$ of the $E$-line $W_{\beta/\alpha}\subseteq W_p^+$, namely:
\begin{equation}\label{eq:derivative_motivic}
	L'_p(\rho,\rho^+,1)= \Reg_{{\omega}^+_{p}}(\rho)\cdot\cL(\rho,\rho^+)=\log_p (u_{g_\alpha})\cdot\cL(\rho,\rho^+) \mod E^\times.
\end{equation}
Note that $\log_p (u_{g_\alpha})$ should be transcendental according to the weak $p$-adic Schanuel conjecture (Conjecture \ref{conj:schanuel}). In particular, Hida's construction does not provide the right candidate for the $p$-adic $L$-function of $(\rho,\rho^+)$. A similar observation for higher weight specializations of $L^\HH_p(\ad(\cG),k,s)$ is already raised by Hida in his monograph \cite{hidagenuine} (see also \cite[\S4]{hidatilouineurban}). Given a weight $k\geq 2$ specialization $g_k$ of $\cG$, $L^\HH_p(\ad(\cG),k,s)$ interpolates the critical $L$-values of $g_k$ divided by periods of automorphic nature which do not match with the motivic periods given for instance by the conjectures of Coates and Perrin-Riou (\cite{coatesperrinriou}). Further, Hida related in \cite{hidamodules} the quotient of these periods with the size of the congruence module of $g_k$, and it was proposed in \cite{hidagenuine} and in \cite[\S4]{hidatilouineurban} that, up to fudge factors arising from primitivity issues, his $p$-adic $L$-function could be written as
\begin{equation}\label{eq:conj_Hida}
	L^\HH_p(\ad(\cG),k,s)=\frac{L_p(\ad(\cG),k,s)}{C(k)},
\end{equation}
where $L_p(\ad(\cG),k,s)$ is a two-variable Iwasawa function interpolating $L$-values of the $g_k$'s normalized with respect to motivic periods, and $C(k)$ is a generator of the congruence ideal of $\cG$. 

It is plausible that $L_p(\ad(\cG),k,s)_{|k=1}$ equals $L_p(\rho,\rho^+,s)$ up to a fudge factor. In light of \eqref{eq:derivative_hida}, \eqref{eq:derivative_motivic} and \eqref{eq:conj_Hida}, one may then expect to find a simple relation between $C(k)_{|k=1}$ and $\log_p (u_{g_\alpha})$. We shall provide some evidences concerning this latter fact in a sequel, where it will be shown that, under mild assumptions on $g$, there exists a generator $C(k)$ of the congruence ideal of $\cG$ such that
\[C(k)_{\big|k=1}=\left(1-\frac{\beta}{\alpha}p^{-1}\right)\left(1-\frac{\alpha}{\beta}\right)(\#\Sha(T_p))\cdot \log_p (u_{g_\alpha})\in E^\times\cdot \log_p (u_{g_\alpha}),\]
where $T_p$ is a Galois-stable $\cO_p$-lattice of $W_p$, $\Sha(T_p)$ is the Tate-Shafarevitch group of $T_p$ and $u_{g_\alpha}$ is assumed to be $T_p$-normalized.

\bibliography{bib}
\bibliographystyle{alpha}
\end{document}